\numberwithin{equation}{section}
\newdimen\vintkern\vintkern12pt
\def\vint{-\kern-\vintkern\int}
\newtheorem{thm}{Theorem}[section]
\newtheorem{lem}[thm]{Lemma}
\newtheorem{cor}[thm]{Corollary}
\newtheorem{prop}[thm]{Proposition}
\newtheorem{quest}[thm]{Question}
\newtheorem{defn}[thm]{Definition}
\newtheorem{rem}[thm]{Remark}
\newtheorem{ex}[thm]{Example}
\newcommand{\tref}[1]{Theorem~\ref{#1}}
\newcommand{\cref}[1]{Corollary~\ref{#1}}
\newcommand{\R}{\mathbb{R}}
\newcommand{\Ss}{\mathbb{S}}
\newcommand{\diam}{\operatorname{diam}}
\newcommand{\blue}{\color{blue}}
\def\co{\colon\thinspace}
\begin{document}
	%\tableofcontents
	\pagebreak
	%\bibliographystyle{alpha}
	
	%\pagenumbering{roman}
	
	\title{ Structure of Submetries}
	
\thanks{V.K.  is partially supported by a Discovery grant from NSERC;
	A. L. was partially supported by the DFG grants   SFB TRR 191 and SPP 2026.}
	\author{Vitali Kapovitch}
	\address{University of Toronto}
\email{vkt@math.toronto.edu}
	
	\author{Alexander Lytchak}
	
	\address
	{Mathematisches Institut\\ Universit\"at K\"oln\\ Weyertal 86 -- 90\\ 50931 K\"oln, Germany}
	\email{alytchak@math.uni-koeln.de}

\keywords
{Alexandrov space, submetry, singular
Riemannian foliation, equidistant decomposition}
\subjclass
[2010]{53C20, 53C21, 53C23}

	\date{\today}
	
	%\thanks{The second author was partially supported by Swiss National Science Foundation Grant 153599}

	\begin{abstract}
	We  investigate the geometric and topological structure of equidistant decompositions of Riemannian manifolds.
	\end{abstract}
	
	\maketitle

	\renewcommand{\theequation}{\arabic{section}.\arabic{equation}}
	\pagenumbering{arabic}
	
	\section{Introduction}
\subsection{Subject of investigations}
An \emph{equidistant decomposition} $\mathcal F$ of a metric space $X$ is a decomposition of $X$ into a collection of  pairwise equidistant closed subsets $L_i, i\in I$, called the \emph{leaves} of the decomposition.
The \emph{space of leaves} $I$ of $\mathcal F$ can be equipped with a natural distance,
such that the canonical projection $P:X\to I$ is a \emph{submetry},
that is a map that sends metric balls in $X$ to metric balls in  $I$ of the same radius.
On the other hand, the fibers of any submetry $P:X\to Y$ provide an equidistant decomposition of the space $X$.  Basic examples of submetries are given by Riemannian submersions and quotient maps under proper isometric group actions.

Submetries were defined by V. Berestovskii  in \cite{Berest2}; in \cite{Berest} it was proved that a map $P:M\to N$ between  complete, smooth Riemannian manifolds is a submetry if and only if $P$ is a $\mathcal C^1$ Riemannian submersion.  Other large
  classical sources of equidistant decompositions  are provided by the decompositions into orbits of isometric group actions and \emph{singular Riemannian foliations} with closed leaves.  Singular Riemannian foliations, defined by P. Molino, \cite{Molino},
  include as subclasses many famous foliations in Riemannian geometry, like the isoparametric foliations, and   have been actively investigated recently from geometric, topological, analytic and algebraic points of view,  \cite{Thor}, \cite{LT},
	\cite{Rade},  \cite{Fer-Rad}, \cite{Al-RadI}, \cite{Mendes-Rad-slice},  \cite{Mendes-Rad}.
  %We  mention, that
	Submetries often appear in connections with rigidity phenomena, see  \cite{GG}, \cite{ Per-soul}, \cite{Ly}, \cite{Wilking}. { Conjecturally,
	collapsing of manifolds with lower curvature bounds  is modelled by submetries, see \cite{Yam}, \cite{CFG}, \cite{Kap-Guij} \cite{Kap-collapse}, \cite{Yam1}, \cite{Yam2}.}
  Recent appearance of submetries  in several completely unrelated settings, \cite{bipolar}, { \cite{GW}, \cite{Mondino},  \cite{Ber-finite},} \cite{Mendes-Rad},  { further motivates} a systematic study of the subject.

\subsection{Main results} The main objective of the present paper is the description of the structure of equidistant decompositions of  Riemannian manifolds. Equivalently, we describe the structure of
 possible spaces of leaves
$Y$ and of  submetries $P:M\to Y$ where $M$ is a   Riemannian manifold.
All Riemannian manifolds appearing in the paper are assumed to be sufficiently smooth,
in particular, they   have  \emph{local two sided curvature bounds}  in the sense of Alexandrov. A sufficient (and almost necessary) condition is that the Riemannian metric is  $\mathcal C^{1,1}$ in some  coordinates,  \cite{Ber-Nik}, \cite{KL}.
 Most results are local and  do not require  completeness of $M$. In fact they  are valid for \emph{local submetries}, see Subsection \ref{subsec: loc}.

  {The first theorem provides a characterization of possible leaves, see
  	Proposition \ref{lem: locsubm}  {and Remark \ref{rem: global}} for a local
  converse statement.}

\begin{thm} \label{thm: leaf}
Let $M$ be a  Riemannian manifold.
Any fiber $L$ of any  submetry $P:M\to Y$   is a set of positive reach in $M$.
%Either $L$ is nowhere dense in $M$ or $L$ is a connected component of $M$.
%
%On the other hand, if $L$ is a set positive reach, nowhere dense in $M$ then there exists a complete Riemannian metric on an open neighborhood $U$ of $L$ in $M$, and a submetry $P:U\to [0,\infty)$ with $L=P^{-1} (0)$.
\end{thm}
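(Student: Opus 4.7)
Positive reach is a local property, so the plan is to fix $p_0 \in L$ and work in a strongly convex ball $B = B_M(p_0, r_0)$ on which both two-sided Alexandrov comparisons are valid and $M$-geodesics are unique and depend continuously on their endpoints. It then suffices to exhibit $r > 0$ such that every $x \in B$ with $d_M(x, L) < r$ has a unique foot on $L$.

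The first step is to record the distance identity $d_M(x, L) = d_Y(P(x), y)$, with $y := P(L)$, which is an immediate consequence of the defining submetry relation $P(B_M(x, \rho)) = B_Y(P(x), \rho)$. The same relation shows that every minimizing $M$-segment from $x$ to a foot $p \in L$ is \emph{horizontal}, meaning its $P$-image is a minimizing $Y$-segment from $P(x)$ to $y$ of the same length.

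The heart of the argument will be a midpoint comparison. Assume for contradiction that some $x \in B$ has two distinct feet $p_1 \ne p_2 \in L$; set $T := d_M(x, L)$ and $\delta := d_M(p_1, p_2) > 0$, and let $m$ be the midpoint of the $M$-segment $[p_1, p_2]$. The CBA comparison in $M$ gives
\[
d_M(x, m)^2 \;\le\; T^2 - \tfrac{1}{4} \delta^2 + O(\delta^4),
\]
while the distance identity and $P(p_1) = P(p_2) = y$ yield only the linear bound $d_M(m, L) = d_Y(P(m), y) \le \delta/2$. The sum of these estimates is inconclusive; to finish one must upgrade the bound on $d_M(m, L)$ to a quadratic one, of order $\delta^2/T$, since then $d_M(x, L) \le d_M(x, m) + d_M(m, L) < T$, a contradiction.

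This quadratic improvement is the crux. The plan is to invoke both two-sided curvature bounds on $M$ together with the submetry structure: the horizontal segments $\gamma_i : [0, T] \to M$ from $p_i$ to $x$ project to minimizing $Y$-segments of length $T$ from $y$, and a hinge comparison at each $p_i$ --- using the CBB lower bound on $Y$ inherited from $M$ via $P$, together with the CBA upper bound on $M$ --- shows that the chord $[p_1, p_2]$ is non-horizontal at $p_i$ with an angle bounded below by a positive multiple of $\delta/T$. This produces a strict gap $\delta - \mathrm{length}(P \circ [p_1, p_2]) \gtrsim \delta^3 / T^2$, which forces $d_Y(P(m), y) \le \delta/2 - c\, \delta^2/T$ and closes the midpoint argument. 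The main obstacle is exactly this last quantitative hinge comparison: it has to simultaneously use the CBA bound upstairs on $M$ and the CBB bound downstairs on $Y$ (the CBA bound does not in general descend to $Y$ under a submetry), and must be made uniform as $p_0$ varies over $L$.
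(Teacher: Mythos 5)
The heart of your argument---the quadratic bound $d_M(m,L) \lesssim \delta^2/T$---is not established, and the route you sketch to it is internally inconsistent. You correctly observe that the CBA midpoint estimate $d(x,m) \le T - \delta^2/(8T) + \cdots$ demands $d(m,L) < \delta^2/(8T)$ for a contradiction. But you then propose deriving a length deficit $\delta - \ell(P\circ[p_1,p_2]) \gtrsim \delta^3/T^2$, which would give at best $d_Y(P(m),y) \le \tfrac12\ell(P\circ[p_1,p_2]) \le \delta/2 - O(\delta^3/T^2)$; the linear term $\delta/2$ dominates when $\delta \ll T$, and the contradiction does not close. (Your stated consequence $d_Y(P(m),y) \le \delta/2 - c\,\delta^2/T$ does not follow from a cubic deficit, and even that expression is far from the required $O(\delta^2/T)$.) More fundamentally, the claim that the midpoint of a short chord between two points of $L$ is quadratically close to $L$ is essentially positive reach rewritten at that scale, so absent an independent mechanism the argument risks circularity. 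The hinge estimate you flag as ``the main obstacle'' is not a technicality; it is the whole content of the theorem, and the plan as written does not supply it.

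The paper (Theorem~\ref{thm-fibers-pos-reach}) avoids all of this by an entirely different route. It takes a Perelman-type strictly concave function $f$ near $y = P(L)$ in $Y$, with unique maximum at $y$ and $|\nabla^+ f|\ge \tfrac12$ off $y$, and pulls it back to $g = f\circ P$, which is semiconcave on a neighborhood $U$ of $L$ by Lemma~\ref{lem: compose} (composition of special semiconcave functions with a submetry remains semiconcave, since $d_A\circ P = d_{P^{-1}(A)}$). Then $L$ is exactly the set of maximum points of $g$ in $U$ and $|\nabla^+ g|\ge \tfrac12$ on $U\setminus L$, so $L$ is a regular sublevel set of the semiconvex function $-g$, and Bangert's theorem yields positive reach. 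The ``second-order'' quantitative information you were trying to extract by a hinge comparison is carried instead by the strict concavity of $f$ and the uniform slope lower bound, and Bangert's criterion converts these into positive reach with no direct midpoint estimate at all. I suggest reading Section~\ref{sec: lift} and Proposition~\ref{prop: Perelman} to see how the construction works.
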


 Recall that a subset $L$ of
$M$ has \emph{positive reach}  if for some neighborhood $U$ of $L$ in $M$ and any $x\in U$ there exists a unique \emph {foot point} $\Pi ^L (x)\in L$ closest to $x$ in $L$, \cite{Federer}.
The structure of  sets of positive reach is well understood,
 \cite{Federer}, \cite{Kleinjohann}, \cite{Bangert}, \cite{Ly-reach}, \cite{Ly-conv},  \cite{Rataj}; some features are summarized in Section \ref{sec: posreach}.
 %In particular, we have:

In general, even for very nice manifolds $M$, some leaves of $P$  may be non-manifolds and have rather complicated local topological structure, see Section   \ref{sec: posreach} for examples.
However, most leaves are manifolds, and  any submetry is a Riemannian submersion on a large set:

\begin{thm} \label{thm: regular}
Let $M$ be a  Riemannian manifold  and let $P:M\to Y$ be a submetry. Then $Y$ has an open, convex,
 dense subset $Y_{reg}$, locally isometric
to a Riemannian manifold with a Lipschitz continuous Riemannian metric, and
$P:P^{-1} (Y_{reg}) \to Y_{reg} $ is a $\mathcal C^{1,1}$ Riemannian submersion.
\end{thm}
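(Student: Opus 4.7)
The plan is to identify $Y_{reg}$ as the set of $y\in Y$ at which the fiber dimension is locally constant and the corresponding fiber is a $C^{1,1}$ submanifold near all of its points. By \tref{thm: leaf} every fiber $L_y$ has positive reach in $M$, so by the structure theory of sets of positive reach (summarized in Section \ref{sec: posreach}) each $L_y$ has a well-defined local dimension function and its top-dimensional stratum, on which $L_y$ is a $C^{1,1}$ submanifold, is open and dense in $L_y$. Let $k(y)$ denote the maximal local dimension of $L_y$. The first step is to prove that $k$ is lower semicontinuous on $Y$: a uniform lower bound on the reach of fibers in a compact region together with the submetry property of $P$ forces fibers close to $L_y$ to carry $C^{1,1}$ submanifolds of dimension $\ge k(y)$. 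Then $Y_{reg}:=\{y:k\text{ is locally constant near }y\}$ is open, and its density follows from maximality.

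Next I would establish the smooth submersion structure on $P^{-1}(Y_{reg})$. Fix $y_0\in Y_{reg}$. On a neighborhood $U$ of $L_{y_0}$ in $M$, the foot-point map $\Pi^{L_{y_0}}$ is a well-defined $C^{0,1}$ retraction; the submetry property translates foot-point distance into $d(\cdot,y_0)$, so the radial curves of $\Pi^{L_{y_0}}$ are horizontal with respect to $P$. This identifies $U$ with an open piece of the normal bundle of the $C^{1,1}$ submanifold $L_{y_0}$ and presents nearby fibers as graphs of Lipschitz sections of this bundle. The horizontal distribution (orthogonal complements of the tangent spaces to the leaves) is therefore Lipschitz continuous on $U$, because the tangent bundles of $C^{1,1}$ submanifolds of fixed reach depend Lipschitz continuously on their base points in the ambient $C^{1,1}$ Riemannian manifold. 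Applying Berestovskii's theorem (as cited in the introduction) on each such neighborhood yields that $P$ is a $C^{1,1}$ Riemannian submersion on $P^{-1}(Y_{reg})$, and the push-forward of the ambient Riemannian tensor along the horizontal distribution endows $Y_{reg}$ with a Lipschitz Riemannian metric making $P$ a local Riemannian submersion in the classical sense.

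Finally, for convexity I would argue that any minimizing geodesic $\gamma$ between $y_0,y_1\in Y_{reg}$ lifts to a horizontal minimizing geodesic $\tilde\gamma$ in $M$; if $k$ were to drop at some interior $t$, then by lower semicontinuity $k$ is also locally larger than $k(\gamma(t))$, and a foot-point argument applied to the fiber $L_{\gamma(t)}$ from nearby horizontal geodesics would produce a strictly shorter path between the endpoints, contradicting minimality of $\gamma$. Hence $\gamma$ remains in $Y_{reg}$.

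The main obstacle is the quantitative step in the second paragraph: upgrading the continuous horizontal distribution to a \emph{Lipschitz} one, and consequently the base metric from continuous to Lipschitz. This is the heart of the theorem, and it relies on a delicate combination of the two-sided Alexandrov curvature bound on $M$, the uniform lower bound on the reach of fibers in compact regions, and the $C^{1,1}$ regularity of the foot-point projection onto a set of positive reach. Once this regularity is in hand, the Berestovskii theorem and the convexity argument are comparatively routine.
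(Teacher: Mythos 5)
Your proposed $Y_{reg}$ is defined by a fiber-side criterion (locally constant fiber dimension plus $\mathcal C^{1,1}$-manifold fibers), but this does not coincide with the regular set the theorem requires, and it is not even correct. Take the reflection $\mathbb Z_2\curvearrowright\R^n$ along a hyperplane: $Y$ is the closed half-space, every fiber is a discrete set (a singleton on the fixed hyperplane, two points off it), so the fiber dimension is constantly zero and your criterion is satisfied at every point of $Y$, including boundary points where $T_yY$ is a half-space and $Y$ is manifestly not locally isometric to a Riemannian manifold. The correct set (as in the paper) must be defined by the Alexandrov geometry of $Y$ itself: $Y_{reg}=\{y:\,T_yY\cong\R^m\}$ with $m=\dim Y$. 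Once you take this definition, openness, density and convexity drop out of general Alexandrov theory (openness of regular points, density, and stability of tangent cones along geodesics), and your custom geodesic-lifting argument for convexity becomes unnecessary.

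The second and deeper issue is the Lipschitz regularity of the metric on $Y_{reg}$. You flag it yourself as the main obstacle, then assert it via the claim that "the tangent bundles of $\mathcal C^{1,1}$ submanifolds of fixed reach depend Lipschitz continuously on their base points." That statement is unproblematic along a single fiber, but you need a Lipschitz modulus for $x\mapsto T_xL_x$ as $x$ ranges over an open set of $M$ transversally to the leaves, i.e. across different nearby fibers, and that is precisely what has to be proved, not quoted; in the paper it is established only for singular strata via a delicate gradient-flow argument (Theorem \ref{thm: posreachstr}), while the proof of the present theorem sidesteps it entirely. The paper's route is: (i) take strainer maps $F:B_s(y_0)\to\R^m$ whose coordinates are distance functions $f_i=d_{y_i}$; (ii) observe $f_i\circ P=d_{L_i}$ with $L_i$ a fiber of positive reach (Theorem \ref{thm: leaf}), hence $\mathcal C^{1,1}$ on $M$; (iii) control $(f_i\circ\gamma)''$ for geodesics $\gamma$ in $Y$ by restricting $d_{L_i}$ to horizontal lifts, giving $\mathcal C^{1,1}$ transition maps; (iv) invoke Perelman's DC-structure result to get the Lipschitz Riemannian metric; and (v) note that $F\circ P$ has $\mathcal C^{1,1}$ components $d_{L_i}$, so $P$ is $\mathcal C^{1,1}$, with $D_xP$ a linear submetry, hence a Riemannian submersion. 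Your plan of pushing the metric forward along a Lipschitz horizontal distribution and invoking Berestovskii is the harder way around, and as written it assumes the Lipschitz control that the theorem is supposed to provide.
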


{ Here and below a   $\mathcal C^{1} $  map is called  $\mathcal C^{1,1} $ if its differentials  depend \emph{locally} Lipschitz continuously on the point.  Even for smooth  manifolds $M$ and $Y$,  $\mathcal C^{1,1}$ regularity  in Theorem \ref{thm: regular}  is optimal, see Example \ref{ex: twoside+}.}

  It was already  observed  in \cite{BGP},  that a lower bound on curvature in the sense of Alexandrov is preserved under submetries. Localizing the argument, we see that a space of leaves $Y$ as in Theorem  \ref{thm: leaf} is an { \emph{Alexandrov region},  
  	a length space in which  every point has a compact convex neighborhood isometric to an Alexandrov space, see Section \ref{sec: alex} below and \cite{Leb-Nep}.}
  % length space, which is	an \emph{Alexandrov region}, in the sense  that  it satisfies the definition of an Alexandrov space locally.   It is shown in  \cite{Leb-Nep} that  every point $y$ in an Alexandrov region has a compact convex neighborhood isometric to an Alexandrov space. Thus, this holds for the space of leaves $Y$ in Theorem  \ref{thm: leaf}.
	We can therefore use all the terminology established in Alexandrov spaces.  In particular, we
have
  spaces of directions $\Sigma _yY$ and tangent cones $T_yY$
at $y\in Y$, which are Alexandrov spaces of curvature $\geq 1$, respectively $\geq 0$.  Also the notions of \emph{boundary} and \emph{extremal subsets}, \cite{Petrunin-semi},
 on any Alexandrov region are well-defined.

 %{ Non-manifold fibers of $P$ are related the boundary $\partial Y$ of $Y$:}
 % is responsible for the non-manifold leaves of $P$:

%\begin{thm} \label{thm: boundary}
%Let $P:M\to Y$ be a submetry from a  Riemannian manifold $M$.
%For any point $y\in Y\setminus \partial Y$, the fiber $P^{-1} (y)$ is a $\mathcal C^{1,1}$-submanifold of $M$.
% \end{thm}

  The space of leaves   has a much more   special structure than a general Alexandrov space. 
	%{Theorem \ref{cor: quotient}.(2)  seems to be new even for orbit spaces  under isometric group actions. }

 %  {\blue In the next theorem and later,   \emph{geodesics} will always be globally globally  minimizing and  parametrized by arclength}.
	%From  Theorem \ref{thm: leaf} we deduce:

   % The following result is local in nature and the assumed uniform lower bound on the sectional curvature is irrelevant.
  % It has pointwise positive injectivity radius and  the distance functions to

%Thus if $P:X\to Y$ is a submetry and $X$ is a Riemannian manifold as above, then $Y$ is an Alexandrov space. However, it is an Alexandrov space of very special type as will be shown below.

 %In the next statement and the rest of the text, geodesics are always minimizing and parametrized by arclength.

\begin{thm} \label{cor: quotient}
 Let  $M$  be a  Riemannian manifold,
 let $P:M\to Y$ be a submetry and let $y\in Y$ be a point.
		%Then $Y$ is a  locally compact, locally geodesic metric space.
  %an Alexandrov space of curvature at least $\kappa$.
 Then   there exists some $r=r(y) >0$ such that the following holds true.
\begin{enumerate}
\item\label{inj-rad-base}   A geodesic of length $r$ starts in every direction $v\in \Sigma _y Y$.
\item\label{convex-balls-base}  For any $s\leq r$, the closed ball $\bar B_s (y)$ is strictly convex in $Y$.
\item\label{alexandrov-spheres-base} For any $s\leq r$, the boundary $\partial B_s (y)$ is an Alexandrov space.
\end{enumerate}	
\end{thm}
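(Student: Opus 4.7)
Let $L:=P^{-1}(y)$, which by \tref{thm: leaf} is a set of positive reach in $M$. Let $\rho>0$ be a lower bound for the reach of $L$ in a neighborhood of a chosen preimage of $y$, and choose $r\in(0,\rho)$ small enough so that the curvature estimates below are available. Since $P$ is a submetry, it preserves distances to fibers, so for every $s\le r$ we have $P^{-1}(\bar B_s(y))=\bar B_s(L)=\{x\in M:d_M(x,L)\le s\}$ and $P^{-1}(\partial B_s(y))=\partial B_s(L)$. For part (\ref{inj-rad-base}) I will use the normal exponential map of $L$: by positive reach, for every unit vector $\tilde v$ in the normal cone $\nu^1_x L$ at $x\in L$, the geodesic $\gamma_{\tilde v}(t)=\exp_x(t\tilde v)$ satisfies $d_M(\gamma_{\tilde v}(t),L)=t$ for $t\le r$, so its projection $\alpha_{\tilde v}:=P\circ\gamma_{\tilde v}$ has length $\le r$ and sits at distance $t$ from $y$ at time $t$; hence $\alpha_{\tilde v}$ is a unit-speed minimizing geodesic of length $r$ from $y$ in $Y$. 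Every $v\in\Sigma_yY$ is obtained as some $\dot\alpha_{\tilde v}(0)$: approximate $v$ by directions of shortest paths $[y,y_n]$ with $y_n\to y$, observe that each $y_n$ is covered by some $\gamma_{\tilde v_n}$ with foot-point lift staying near a fixed preimage of $y$, and extract a convergent subsequence $\tilde v_n\to\tilde v$ by compactness.

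For part (\ref{convex-balls-base}), the main step is strict convexity of the tube $\bar B_s(L)$ in $M$. The function $d_L:=d_M(\cdot,L)$ is $\mathcal C^{1,1}$ on the punctured tube $\{0<d_L<\rho\}$, and along any normal geodesic $\gamma_{\tilde v}$ the shape operators $S_t$ of the $\mathcal C^{1,1}$ equidistant hypersurfaces $\partial B_t(L)$ obey the Riccati equation $S'_t+S_t^2+\mathcal R_t=0$, where $\mathcal R_t$ is the ambient curvature along $\gamma_{\tilde v}$. For small $t$, $S_t$ exhibits the universal singular behavior $(1/t)\Id$ on the subspace transverse to the tangent cone $T_xL$, while positive reach provides a uniform bound on its tangential eigenvalues; combined with the two-sided curvature bound of $M$, this forces $S_t$ to be uniformly positive definite for $t\le r$ after possibly shrinking $r$, whence $\bar B_s(L)$ is strictly convex in $M$. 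Strict convexity descends to $Y$: any shortest path $\alpha$ in $Y$ with endpoints in $\bar B_s(y)$ admits a horizontal lift $\tilde\alpha\subset\bar B_s(L)$, constructed via the $\mathcal C^{1,1}$ Riemannian submersion structure on the regular part supplied by \tref{thm: regular} together with a density and closing-up argument across possibly singular fibers; strict convexity of the tube forces $\tilde\alpha$ into the open tube, so $\alpha$ enters the open ball $B_s(y)$.

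Part (\ref{alexandrov-spheres-base}) then follows from part (\ref{convex-balls-base}) via the standard Alexandrov-geometric fact that the intrinsic boundary of a strictly convex subset of an Alexandrov space is itself an Alexandrov space with a lower curvature bound (cf.\ \cite{Petrunin-semi}), applied locally inside the Alexandrov-region charts that comprise $Y$. The principal obstacle I anticipate is part (\ref{convex-balls-base}): carrying out the Riccati analysis of the shape operator of $\partial B_t(L)$ uniformly near the non-smooth locus of $L$ (where the singular $(1/t)\Id$ behavior on the transverse subspace must be extracted from the positive-reach structure of $L$), and then lifting geodesics of $Y$ across fibers that may themselves be non-manifold sets of positive reach, bridging from the regular submersion of \tref{thm: regular} to the full submetry without losing strict convexity.
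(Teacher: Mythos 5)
Your approach to part~(\ref{inj-rad-base}) is a valid variant of the paper's. The paper instead observes that $d_y$ has ascending slope $1$ on a punctured ball, so its gradient curves are unit-speed geodesics and every direction in $\Sigma_yY$ is a starting direction; your normal-exponential-map-plus-compactness argument reaches the same conclusion at the level of $M$ rather than $Y$. Both work, and the underlying mechanism (normal geodesics from $L$ project to $P$-minimal geodesics from $y$) is the same.

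For part~(\ref{convex-balls-base}), however, there is a genuine missing idea. You try to establish strict convexity of the full tube $\bar B_s(L)$ in $M$ via a Riccati estimate on the shape operator of $\partial B_t(L)$, and you correctly identify that this runs into trouble near the non-smooth locus of $L$, where the singular behavior of the second fundamental form is not straightforward to control uniformly. The paper sidesteps this entirely: it never proves convexity of the whole tube. Instead, it fixes a point $x\in L$ at which a neighborhood of $L$ is a $\mathcal C^{1,1}$ submanifold (such points are open dense in $L$ by Proposition~\ref{cor: fiber}), lifts any short geodesic $\gamma$ in $B_r(y)$ horizontally to a geodesic $\hat\gamma$ in $B_{2r}(x)$, and then uses the semicontinuity result Corollary~\ref{lem: semicont} (applicable because $T_xL$ is a vector space) to conclude that $\hat\gamma$ starts nearly orthogonal to $T_xL$. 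The Hessian estimate of Lemma~\ref{lem: strict}, which is proved only for $\mathcal C^{1,1}$ submanifolds and geodesics nearly orthogonal to them, then gives $(d_L^2\circ\hat\gamma)''\ge 1$, hence $(d_y^2\circ\gamma)''\ge 1$. The point you are missing is that you are free to choose where the lift starts: any geodesic in $Y$ starting near $y$ can be lifted to start near a single, favourably chosen, regular point of $L$, so the singular locus never enters the argument.

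Part~(\ref{alexandrov-spheres-base}) has a more serious problem: the claim that ``the intrinsic boundary of a strictly convex subset of an Alexandrov space is itself an Alexandrov space with a lower curvature bound'' is not a standard established fact. For convex hypersurfaces in smooth Riemannian manifolds this is a classical theorem of Buyalo, but for general Alexandrov spaces (which is what $Y$ is here) the analogous statement is, as far as is known, still open --- it is essentially Perelman's conjecture on intrinsic boundaries, and the reference you cite does not contain it. The paper proves~(\ref{alexandrov-spheres-base}) by a completely different and much cheaper route: the preimage $N^s=P^{-1}(\partial B_s(y))\cap B_{3r}(x)$ is a regular level set of the $\mathcal C^{1,1}$ function $d_L$, hence a $\mathcal C^{1,1}$ hypersurface of $M$, which by \cite{KL} has locally two-sided bounded curvature in its intrinsic metric; the restriction $P:N^s\to\partial B_s(y)$ is then a surjective local submetry between Alexandrov regions, and Proposition~\ref{prop: cbb} together with Corollary~\ref{cor: length} gives that $\partial B_s(y)$ is an Alexandrov region, hence by compactness an Alexandrov space. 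Note also that in the paper's route parts~(\ref{inj-rad-base}) and~(\ref{alexandrov-spheres-base}) are established before and independently of part~(\ref{convex-balls-base}), whereas your proposal deduces~(\ref{alexandrov-spheres-base}) from~(\ref{convex-balls-base}); this inverted dependency compounds the gap.
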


{ Theorem \ref{cor: quotient}.(2)  seems to be new even for orbit spaces  under isometric group actions, see, however,
\cite{Pet-Per}, \cite{Kap-reg}, \cite{Nepe}, for related weaker statements valid in general Alexandrov spaces.}

{ Note that in the formulation of Theorem \ref{cor: quotient}  and below, \emph{geodesic} will always be  globally length minimizing curve parametrized by arclength.}

%{\blue Theorem \ref{cor: quotient}.(2)  seems to be new even for orbit spaces  under isometric group actions. }
	% Theorem  \ref{cor: quotient}.(1) implies that} the exponential map $\exp_y$ defines a  homeomorphism between  the ball $B_r(0_y)$  in the tangent cone $T_yY$
%and the ball $B_r(y)$ around $y$ in $Y$.

%\emph{injectivity radius} of $Y$ at $y$. Thus, it is the largest number $r>0$, such that for every direction $w\in \Sigma _yY$
%there exists an (always minimizing, parametrized by arclength) geodesic $\gamma:[0,r]\to Y$ starting at $y$ in the direction of $w$.   In this case the naturally defined
 % exponential map $\exp_y$ is a homeomorphism between  a ball of radius $r$ in the tangent cone $T_yY$ around the origin $0_y$ and the ball $B_r(y)$ around $y$ in $Y$.
%The curvature bound $\kappa$ in (1)  can be chosen as the lower curvature bound of $M$ if it is finite.
%In the theorem   and in  the rest of the text, geodesics are always minimizing and parametrized by arclength.

%This result {\blue very much}  simplifies the complexity of the local structure of
%the Alexandrov  region $Y$.
{Also the structure of tangent cones  and  spaces of directions turns out to be  very restrictive, as  they are spaces of leaves of equidistant decompositions of  Euclidean spaces and spheres, respectively:}

\begin{thm} \label{thm: link}
	Let
	$P:M\to Y$ be a submetry, where $M$ is an  $n$-dimensional Riemannian manifold. Let $y\in Y$ be arbitrary. If $\Sigma _y Y\neq \emptyset$   then there  exists some $n> k \geq 0$
	and a submetry $\mathbb S ^k \to \Sigma _y Y$.
\end{thm}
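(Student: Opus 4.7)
The plan is to blow up the submetry at a well-chosen point of the fiber $L := P^{-1}(y)$, thereby reducing to a cone submetry from $\R^n$ to $T_yY$, and then to pass to unit spheres.

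First, pick any $x\in L$ (possible by surjectivity of $P$) and rescale the metrics of $M$ at $x$ and of $Y$ at $y$ by a factor $\lambda\to\infty$. By the pointed Gromov--Hausdorff continuity of submetries (which we take for granted), the rescaled $P$'s converge to a $1$-homogeneous submetry $\hat P \co T_xM=\R^n \to T_yY = C(\Sigma_yY)$, whose fiber over the apex $o_y$ is the tangent cone $T_xL$. By \tref{thm: leaf}, $L$ has positive reach, so $T_xL$ is a closed convex cone. Using that differentiability points of sets of positive reach are dense (see \secref{sec: posreach}), refine the choice of $x$ so that $V:=T_xL$ is a linear subspace, of some dimension $d$; the hypothesis $\Sigma_yY\neq\emptyset$ ensures $L$ does not fill a neighbourhood of $x$, so $d<n$.

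The heart of the argument is to show that $\hat P$ factors through the orthogonal projection $\pi\co \R^n\to V^\perp$, yielding a $1$-homogeneous submetry $\bar P\co V^\perp\to T_yY$ with $\bar P^{-1}(o_y)=\{0\}$. Equivalently, every fiber of $\hat P$ is invariant under translations by $V$. The intended argument uses horizontal lifts: for any $q\in\R^n$, the straight segment from $\Pi_V(q)\in V$ to $q$ has length $d(q,V)=d(\hat P(q),o_y)$ and is therefore a horizontal lift of a unit-speed radial geodesic in $T_yY$ issuing from $o_y$; parallel ($V$-translated) segments are likewise horizontal lifts of radial geodesics of the same length, and the flatness of $\R^n$ forces them to project to the \emph{same} geodesic in $T_yY$, giving fiberwise $V$-invariance. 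This factoring is the main obstacle, and the step where one genuinely uses that $T_xM$ is a Euclidean space rather than a general Alexandrov cone.

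With the factoring in hand, $\bar P$ is $1$-homogeneous with singleton fiber over $o_y$, so for any unit vector $v\in \Ss^{n-d-1}\subset V^\perp$ one has $|\bar P(v)|=d(v,\{0\})=1$, and hence $\bar P(v)\in\Sigma_yY$. The restriction $\bar P|_{\Ss^{n-d-1}}\co \Ss^{n-d-1}\to\Sigma_yY$ is then a submetry: using the chord-versus-arc identity $|v_1-v_2|=2\sin(d_{\Ss}(v_1,v_2)/2)$ on the unit sphere of $V^\perp$, and the analogous identity on $\Sigma_yY$ viewed as the unit sphere of the Euclidean cone $T_yY$, the submetry property of $\bar P$ for Euclidean balls translates directly into the submetry property for intrinsic balls on the two spheres. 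Setting $k:=n-d-1$ produces the required submetry $\Ss^k\to\Sigma_yY$ with $0\leq k<n$.
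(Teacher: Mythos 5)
Your proposal follows the same overall strategy as the paper (blow up the submetry at a point of the fiber to get an infinitesimal submetry $\R^n\to T_yY$, show the restriction to a sphere of horizontal directions is a submetry onto $\Sigma_yY$), but you swap out the key technical lemma. The paper works with an \emph{arbitrary} $x\in L$, applies the general quarter-plane argument of Proposition~\ref{prop: structure2} (valid for infinitesimal submetries of any Alexandrov cone) to conclude $P=P\circ\Pi^H$, and then in Proposition~\ref{prop: maininfinite} differentiates a \emph{second} time at an interior point $v$ of the vertical cone $V$ to extract a genuine Euclidean horizontal subspace $H_v$. You instead preselect $x$ to be a differentiability point of the positive-reach set $L$ (via Proposition~\ref{cor: fiber}), so $V=T_xL$ is already linear, and you establish the factorization through $V^\perp$ by a direct argument. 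Your route is shorter and more elementary for Theorem~\ref{thm: link} itself, but you lose the half-sphere statement of Theorem~\ref{thm: halfspace} (which needs the second differentiation at a \emph{boundary} point of $V$), which the paper uses for Theorem~\ref{thm: boundary}.

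One step in your argument is genuinely terse and deserves spelling out: the claim that parallel $V$-translates of a horizontal segment project to the \emph{same} radial geodesic. Saying ``flatness of $\R^n$ forces it'' does not by itself explain why two radial rays in $T_yY$ of equal length with the same base point must coincide. The correct mechanism is a growth comparison: extend the segments to full rays $\tilde\sigma_1(t)=\Pi_V(q)+tu$ and $\tilde\sigma_2(t)=\Pi_V(q)+v_0+tu$, which are horizontal for all $t\geq 0$ and hence project to radial rays $t\mapsto t\xi_i$ in the Euclidean cone $T_yY$. In $\R^n$ the rays remain at constant distance $|v_0|$, while in the cone $d(t\xi_1,t\xi_2)$ grows linearly in $t$ unless $\xi_1=\xi_2$; since $\hat P$ is $1$-Lipschitz this forces $\xi_1=\xi_2$. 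With that gap filled, your argument is correct, including the final cancellation (orthogonal projection $\pi$ is a submetry, $\hat P=\bar P\circ\pi$ is a submetry, hence $\bar P$ is a submetry with trivial fiber over $o_y$) and the chord-versus-arc passage to unit spheres.
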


		This result, \cite{Berest} and   Browder's theorem, \cite{Browder}, imply that the  Euclidean cone $Y=C( CaP^2)$  over the Cayley plane cannot be the base of a submetry $P:M\to Y$. This should be compared to the conjectured 
		impossibility to obtain $Y$ as a collapsed limit of Riemannian manifolds with a lower curvature bound, \cite{Kap-collapse}. 
%This implies that the space of directions $\Sigma_y Y$ and the tangent space $T_yY$  enjoy the same geometric properties which we derive for
%$Y$ in this paper.

 {From Theorem \ref{thm: link} we deduce:}

\begin{cor} \label{cor: directions}
	Let $M$ be a Riemannian manifold and  $P:M\to Y$ be a submetry.
	 Let $y\in Y$ be arbitrary. {Then, for some $l\geq 0$, the
	tangent space $T_yY$  has a canonical decomposition
 $$T _yY =\R ^l \times  T_y ^0  \;,$$
 where $T_y^0 Y$ is the Euclidean cone over an Alexandrov space
 $\Sigma _y ^0 Y $ of diameter at most $\frac \pi 2$.}
\end{cor}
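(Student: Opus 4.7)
The plan is to apply Theorem~\ref{thm: link}, pass to the Euclidean cone, and decompose the resulting non-negatively curved cone by a de Rham-type splitting, with a delicate diameter estimate as the final step.

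By Theorem~\ref{thm: link}, there is $k\geq 0$ and a submetry $\pi\colon\mathbb{S}^k\to\Sigma_yY$ (the case $\Sigma_yY=\emptyset$ being trivial, with $T_yY$ a point). Since Euclidean cones of submetries are again submetries, $\pi$ extends to a cone-equivariant submetry $\hat\pi\colon\R^{k+1}\to T_yY$. In particular, by the preservation of lower curvature bounds under submetries (\cite{BGP}), $T_yY$ is a non-negatively curved Alexandrov space.

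Next, apply the Toponogov--Cheeger--Gromoll splitting theorem for non-negatively curved Alexandrov spaces to obtain a maximal $l\geq 0$ and a canonical isometric decomposition $T_yY=\R^l\times T_y^0$ in which $T_y^0$ contains no geodesic line. Because $T_yY$ is a Euclidean cone with a unique apex, this apex must correspond under the splitting to a point $(0,\ast)$, and the dilation structure of $T_yY$ restricts to a cone structure on $T_y^0$ with apex $\ast$. Hence $T_y^0=C\Sigma_y^0Y$ for an Alexandrov space $\Sigma_y^0Y$ of curvature $\geq 1$; equivalently, this reflects the canonical spherical join decomposition $\Sigma_yY=\mathbb{S}^{l-1}\ast\Sigma_y^0Y$ with maximal round-sphere factor, and the canonicity (i.e., independence of all choices) follows from the canonicity of the maximal Euclidean de Rham factor.

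It remains to show $\diam\Sigma_y^0Y\leq\pi/2$, which is the main obstacle. The submetry $\hat\pi$ respects the splitting: the horizontal lifts of the lines of $T_yY$ generating the $\R^l$ factor span an $l$-dimensional linear subspace $V\subset\R^{k+1}$ on which $\hat\pi$ restricts to an isometry onto $\R^l$, and a routine equivariance check (translations along $V$ descend to translations of $\R^l$) shows that the orthogonal complement $V^\perp\cong\R^{k+1-l}$ submerges onto $T_y^0$. Hence $\Sigma_y^0Y$ is a submetry image of $\mathbb{S}^{k-l}$. The maximality of $l$ forces $\Sigma_y^0Y$ to contain no pair of points at distance $\pi$, since such a pair would produce a line through the apex of $T_y^0$ and permit a further $\R$-factor splitting. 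The diameter bound thus reduces to the following key lemma, where the bulk of the remaining work lies: \emph{a submetry image of a round sphere that is not a spherical suspension has diameter at most $\pi/2$}. The natural approach is by contradiction --- given $\sigma_1,\sigma_2$ in such an image $\Sigma$ with $d(\sigma_1,\sigma_2)>\pi/2$, lift to $p_1,p_2$ on a great circle in the sphere and analyze the projection of this great circle to $\Sigma$, combining the length-decrease along projected arcs with the triangle inequality in $\Sigma$ applied to the four projected vertices $\pi(\pm p_1),\pi(\pm p_2)$ to force the existence of an antipode of $\sigma_1$, contradicting the suspension-free hypothesis. Pushing this rigidity argument through is the delicate final step.
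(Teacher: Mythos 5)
Your structural reduction (steps 1--6) is sound and parallels what the paper does through Proposition~\ref{prop: line}, Proposition~\ref{prop: maininfinite} and Proposition~\ref{prop: directions}: you correctly pass to the coned submetry, split off the maximal Euclidean factor, observe that it is respected by the submetry, and reduce the whole statement to the claim that a submetry image $\Sigma$ of a round sphere with no pair of points at distance $\pi$ has diameter at most $\pi/2$. The canonicity of the decomposition follows, as you say, from uniqueness of the maximal de~Rham factor. Up to that point the argument is essentially the paper's.

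The gap is exactly where you flag it: you do not prove the key lemma, you only outline a possible attack and explicitly defer it (``pushing this rigidity argument through is the delicate final step''). Your sketch --- lift $\sigma_1\sigma_2$ to a great circle and play the triangle inequality against the four projected points $\pi(\pm p_1),\pi(\pm p_2)$ --- does yield estimates such as $d(\pi(-p_1),\sigma_1)\ge 2s-\pi$ and $d(\pi(-p_1),\sigma_2)\le\pi-s$, but these do not by themselves produce an antipode of $\sigma_1$, and it is not clear how to close the argument along these lines. The paper's proof of this lemma (inside the proof of Proposition~\ref{prop: directions}) is both different and complete: choose $p,q\in\Sigma$ realizing the diameter $s>\pi/2$; by equation~\eqref{eq: dA}, the fiber $L_q=\hat P^{-1}(q)$ is contained in the set of points of $\mathbb{S}^k$ at maximal distance from $L_p$, and since this maximal distance exceeds $\pi/2$ that set is a singleton (an elementary fact about closed subsets of the round sphere, obtained by showing the containing caps $\bar B_{\pi-s}(-x)$ around two distinct farthest points $x,y$ would both contain $L_p$, yet their intersection sits in a strictly smaller cap around the midpoint); then the antipode $-x$ of the singleton $L_q=\{x\}$ satisfies $d(q,\hat P(-x))=d_{L_q}(-x)=\pi$, forcing $\diam\Sigma=\pi$, a contradiction. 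Without this (or some completed substitute), your proposal does not establish $\diam\Sigma_y^0Y\le\pi/2$, so the proof is incomplete.
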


%See Section \ref{} for further consequences of Theorem \ref{thm: link}.

{By the \emph{$l$-dimensional stratum $Y^l$} of the space of leaves $Y$  we denote   the set of points $y\in Y$  whose tangent space
	$T_yY$ splits off as a direct factor $\R^l$ but not $\R^{l+1}$,  as in Corollary \ref{cor: directions}.  The strata $Y^l$ define
a topologically and geometrically well behaved stratification of  $Y$:}
% but not $\R^{l+1}$ as a direct factor.

%The following result shows that also the structure of \emph{extremal subsets}
%(see  \cite{Pet-Per}, \cite{AKP}, \cite{Petrunin-semi})
%of $Y$ is rather simple, in comparison to a general Alexandrov space.
%In fact,
%the subsets $\bar E ^y$ appearing in the theorem are exactly the \emph{primitive}
%extremal subsets of $Y$.

\begin{thm}  \label{thm: extremal}
	Let $P:M\to Y$ be a submetry from a  Riemannian manifold $M$. Set $m=\dim (Y)$.  For any   $m\geq l \geq 0$, the stratum  $Y^l$ is an $l$-dimensional topological manifold which is locally closed and locally convex in $Y$.
	The maximal stratum $Y^m$ is open and globally convex.

%	the set of points $y\in Y$ such that
%{\blue $T_yY =\R^l \times T_y^0$ as in Corollary \ref{cor: directions}.}
% but not $\R^{l+1}$ as a direct factor.	
%Then $Y^l$ is an $l$-dimensional  manifold which is locally convex in $Y$  and $Y^m =Y_{reg}$ for $m=\dim Y$ is globally convex.

For any $y\in Y^l$, the  closure $\bar E^y$ of  the connected component $E^y$ of $y$ in $Y^l$
  is the smallest extremal subset of $Y$  which contains $y$.
\end{thm}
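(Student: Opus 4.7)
The plan is to first establish the local manifold structure of the strata $Y^l$ by using the Euclidean splitting from \cref{cor: directions}, and then to deduce the extremal subset characterization.

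\textbf{Local chart and convexity of $Y^l$.} Fix $y\in Y^l$ and let $r=r(y)$ from \tref{cor: quotient}. By \cref{cor: directions}, $T_yY=\R^l\times T_y^0 Y$, and every unit vector $u\in\R^l\subset T_yY$ and its antipode $-u$ give directions in $\Sigma_yY$. By \tref{cor: quotient}.(1), geodesics of length $r$ start in both $\pm u$; strict convexity of balls, \tref{cor: quotient}.(2), and the first variation formula upgrade the concatenation to a length-minimizing segment through $y$. I would use this to define a chart $\Phi: B_r(0)\subset \R^l \to Y$ by $\Phi(v) = \gamma_{v/|v|}(|v|)$ and prove it is a distance-preserving embedding via a local splitting rigidity argument inside the convex ball $\bar B_r(y)$, whose ingredients are \tref{cor: quotient} and the Euclidean splitting of $T_yY$. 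The image lies in $Y^l$: along each Euclidean-direction geodesic the Euclidean factor persists in the tangent cones; combined with the semicontinuity discussed below, this gives $\Phi(B_r(0))$ open in $Y^l$, and hence the local $l$-manifold structure and local convexity.

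\textbf{Semicontinuity and the top stratum.} The function $z\mapsto \dim\mathrm{Eucl}(T_zY)$ is lower semi-continuous, a standard rigidity fact for non-negatively curved tangent cones (a line in a limit cone lifts back to an almost-line of arbitrarily small defect). Hence $\{z:\dim\mathrm{Eucl}(T_zY)\geq l\}$ is open, and by the chart above coincides locally with $Y^l$ near $y\in Y^l$, giving local closedness. For the top stratum, $y\in Y^m$ iff $T_yY=\R^m$, which coincides with the Alexandrov-regular set $Y_\mathrm{reg}$ of \tref{thm: regular}; hence $Y^m$ is open, dense, and globally convex.

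\textbf{Extremality and minimality of $\bar E^y$.} For extremality I verify Petrunin's criterion that at every local minimum $z\in\bar E^y$ of $d(p,\cdot)|_{\bar E^y}$, $z$ is a critical point of $d(p,\cdot)$ in $Y$, i.e., $\angle(v,\uparrow_z^p)\leq\pi/2$ for every $v\in\Sigma_zY$. At $z\in E^y\cap Y^l$ the chart of Step~1 identifies $T_zE^y$ with $\R^l\subset T_zY$; the local-minimum condition and first variation force $\uparrow_z^p\perp \R^l$, so $\uparrow_z^p\in T_z^0Y$. Since $\diam(\Sigma_z^0Y)\leq \pi/2$ by \cref{cor: directions}, the spherical-join structure $\Sigma_zY=\Ss^{l-1}\ast\Sigma_z^0Y$ gives $\angle(v,\uparrow_z^p)\leq \pi/2$ for every $v\in\Sigma_zY$, verifying the criterion; boundary points $z\in\bar E^y\setminus E^y$ are handled by passage to limits using semicontinuity of the stratification. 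For minimality, suppose $E'\ni y$ is extremal. For any $u\in\R^l$ and small $t_0>0$, the gradient flow of $-d(\gamma_u(t_0),\cdot)$ from $y$ coincides with the geodesic $\gamma_u$ on $[0,t_0]$, and extremality of $E'$ forces $\gamma_u([0,t_0])\subset E'$. Letting $u$ range over $\R^l$ gives a Euclidean $l$-disk around $y$ in $E'$; connectedness of $E^y$ and iteration at nearby points yield $E^y\subset E'$, whence $\bar E^y\subset E'$ by closedness.

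The main obstacle will be proving in Step~1 that $\Phi$ is an isometric embedding with open image in $Y^l$ --- a local splitting rigidity statement that must be extracted from \tref{cor: quotient} together with the Euclidean splitting of the tangent cone, and which underlies the identification $T_zE^y=\R^l$ used crucially in the extremality argument.
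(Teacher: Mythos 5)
Your Step~1 contains a genuine error and misses the key technical ingredient of the paper's argument. First, the claim that $\Phi(v)=\gamma_{v/|v|}(|v|)$ is a \emph{distance-preserving} embedding of a Euclidean ball into $Y$ is false in general: the stratum $Y^l$ carries a Lipschitz Riemannian metric which need not be flat (take the Hopf submetry $\mathbb S^3\to\mathbb S^2(\tfrac12)$, where $Y^2=Y$ is a round curved sphere, so no chart built from geodesics out of a point is isometric to a Euclidean ball). The paper does not prove anything like local flatness; it proves only that $\exp_y$ is a \emph{homeomorphism} from a ball in $T_yY$ onto $B_r(y)$, restricting to a homeomorphism from the ball in the $\R^l$-factor onto $B_r(y)\cap Y^l$. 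So the ``local splitting rigidity argument'' you plan to extract does not exist and is not needed; what is needed is a topological, not a metric, identification.

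Second, and more seriously, the one-line assertion that ``along each Euclidean-direction geodesic the Euclidean factor persists in the tangent cones'' glosses over precisely the hard point. To show that a point $z=\gamma_u(t)$ with $u\in\R^l\subset T_yY$ lands in $Y^l$ (and not in some lower stratum $Y^k$ with $k<l$), one must show the full space of directions $\Sigma_zY$ is isometric to $\Sigma_yY$ — persistence of a Euclidean factor alone is not enough, since a priori $\Sigma_zY$ could acquire extra antipodal pairs. The paper handles this with Proposition \ref{prop: difficult} (the ``most technical statement,'' Section~\ref{sec: tech}), which establishes isometry of spaces of directions at the two endpoints of a geodesic both of whose endpoint directions have antipodes; its proof uses Theorem \ref{thm: regular}, Lemma \ref{lem: openmap}, and considerable machinery, and the authors note it fails in general Alexandrov spaces (Lebedeva's counterexample). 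Likewise, Theorem \ref{thm: surprise} is needed to choose $r$ so that geodesics can be extended uniformly from points whose link matches $\Sigma_yY$. Your proposal cites neither result and offers no substitute. The converse inclusion (directions $v\notin\R^l$ give points outside $Y^l$) also needs the blow-up argument with the rescaled convergence $(\tfrac1j Y,y)\to T_yY$ and the structure of $T_v(T_yY)$, which you only gesture at via general semicontinuity. Steps~2 and~3 are broadly aligned with the paper's Lemma \ref{lem: genstrat} and Theorem \ref{thm: extrloc} (the paper argues extremality via \cite[Proposition 1.4]{Pet-Per} applied to the tangent-cone factor $\R^k\subset T_zY$ being extremal, which is the same structural fact your spherical-join computation rests on), but they hinge on the manifold/convexity structure of $Y^l$ established in Step~1, so the gap there propagates.
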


%The last statement in the above theorem means, by definition, that the subsets $\bar E ^y$ appearing in the theorem are exactly the \emph{primitive
%extremal subsets} of $Y$.
% Moreover, it implies,   that the closure   $\bar E^y$ is the union of
% some connected components of some $Y^{l'}$ with  $l'\le l$.

%The set $Y_{reg}$ appearing in Theorem \ref{thm: regular} is nothing but the largest stratum  $Y^m$, with $m=\dim (Y)$.
{ It turns out that, the distance on the  strata $Y^l$ is  locally induced by a Lipschitz continuous Riemannian metric, see Theorem
\ref{thm: sing} below.  %Moreover, the subsets $Y^l$ have positive reach in $Y$:
%\begin{thm}
%	Inhalt...
%\end{thm}
Moreover, the strata $Y^l$  have positive reach in $Y$ and  the topological structure of a submetry over any stratum is rather simple:

% The extremal subsets turn out to be  the  basic building blocks for the topological structure of $P$:

\begin{thm} \label{thm: strata}
	Let $M$ be a   Riemannian manifold and let  $P:M\to Y$ be a submetry.
The preimage $P^{-1} (Y^l)$ of any stratum $Y^l$ is a locally closed subset of positive reach in $M$.   	
	
	If $M$ is complete, then for any connected component $E$ of  $Y^l$, the restriction
	$P^{-1} (E)\to E$ is a fiber bundle.
\end{thm}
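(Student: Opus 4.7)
The plan is to first establish positive reach of $P^{-1}(Y^l)$ by reducing to a positive reach statement for $Y^l$ in $Y$ combined with positive reach of fibers, and then to exhibit $P^{-1}(E)\to E$ as a bundle via a horizontal-lift construction along the stratum.

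For positive reach of $Y^l$ in $Y$, local closedness and local convexity are already provided by Theorem \ref{thm: extremal}. Positive reach then follows from the tangent cone characterization of positive reach for locally convex subsets of Alexandrov spaces combined with Corollary \ref{cor: directions}: at every $y\in Y^l$ the tangent cone splits as $T_yY=\R^l\times T_y^0Y$, with the $\R^l$ factor tangent to the manifold $Y^l$ at $y$ and the normal cone $T_y^0Y$ of diameter at most $\pi/2$; this is precisely the tangent cone condition that upgrades local convexity to positive reach. From here, the submetry property transfers positive reach upstairs: for any $x\in M$ the identity $d(x,P^{-1}(Y^l))=d(P(x),Y^l)$ implies that any foot point $x^*$ of $x$ in $P^{-1}(Y^l)$ projects to the unique foot point $y^*$ of $P(x)$ in $Y^l$, hence lies in the single fiber $L_{y^*}=P^{-1}(y^*)$; since that fiber has positive reach by Theorem \ref{thm: leaf}, the foot point $x^*$ is also unique for $x$ sufficiently close.

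For the bundle statement assume $M$ complete, fix a connected component $E$ of $Y^l$, a basepoint $y_0\in E$, and a small convex neighborhood $V\subset E$ of $y_0$ (available from Theorem \ref{thm: extremal} and Theorem \ref{cor: quotient}). The aim is to trivialize $P^{-1}(V)\to V$ by horizontal lifts. The splitting of Corollary \ref{cor: directions} supplies, at each $x\in P^{-1}(E)$, a canonical $\R^l$-subspace of horizontal directions projecting isometrically to the $\R^l$ factor of $T_{P(x)}Y$; these subspaces form a ``stratum-horizontal'' distribution on $P^{-1}(E)$ whose integral structure mirrors that of $E$. Using the Lipschitz Riemannian metric on $E$ from Theorem \ref{thm: sing}, completeness of $M$, and positive reach of $P^{-1}(Y^l)$ to prevent escape into more singular strata, every geodesic $\gamma\colon[0,\ell]\to V$ starting at $y_0$ admits a horizontal lift $\tilde\gamma$ starting at any prescribed $x\in L_{y_0}$, and the lift is unique by canonicity of the horizontal $\R^l$-subspace at each point. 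The map $\Phi\colon V\times L_{y_0}\to P^{-1}(V)$, $(v,x)\mapsto\tilde\gamma_v(\ell)$, is then a continuous bijection, with continuous inverse obtained by the symmetric lifting construction from $v$ back to $y_0$; this yields the required local trivialization.

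The main obstacle is the construction and control of horizontal lifts in the singular setting. Fibers over singular strata need not be embedded submanifolds, the ``horizontal distribution'' is only available in the tangent cone sense, and standard ODE existence/uniqueness does not apply directly. Positive reach of $P^{-1}(Y^l)$, together with the Lipschitz Riemannian structure of the stratum provided by Theorem \ref{thm: sing}, are the two technical ingredients needed to pin down existence, uniqueness, and continuous dependence of lifts, and thereby to complete the trivialization.
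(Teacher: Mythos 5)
Your approach to the positive reach statement has a genuine gap. You want to first prove that $Y^l$ has positive reach in $Y$ and then transport this upstairs, but the first step invokes a ``tangent cone characterization of positive reach for locally convex subsets of Alexandrov spaces'' that is not established in this paper and is not available in general. The positive reach machinery used here (\cite{Federer}, \cite{Bangert}, \cite{Ly-reach}, \cite{Ly-conv}, Section \ref{sec: posreach}) is developed for subsets of Riemannian manifolds with \emph{two-sided} local curvature bounds; $Y$ is only an Alexandrov region, and the paper never defines or proves a positive-reach theorem for subsets of such spaces. Local convexity plus a diameter bound on the normal link does not obviously force single-valued foot points in a one-sided curvature bound setting, and you offer no reference or proof. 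Even if one granted that step, your transfer argument (the foot point $x^*$ lands in the fiber $L_{y^*}$, which has positive reach) needs a \emph{uniform} lower bound on the reach of the fibers $L_{y^*}$ as $y^*$ varies near a base point; this would have to be extracted from Theorem \ref{thm: surprise} and Theorem \ref{thm: sing}, but you do not address it. The paper's proof (Theorem \ref{thm: posreachstr}) takes a completely different and more self-contained route, working entirely in $M$: it shows that $P^{-1}(Y^l)$ is a $\mathcal C^{1,1}$ submanifold by verifying, via Lemma \ref{lem: c11+}, that the tangent distribution $p\mapsto T_pN=U_p\times V_p$ is locally Lipschitz — the horizontal part $U_p$ because it is spanned by gradients of $\mathcal C^{1,1}$ distance functions, the vertical part $V_p$ because it is propagated by gradient flows that preserve leaves (Lemma \ref{lem: liftflow}, Lemma \ref{lem: lipcont}). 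This gives a strictly stronger conclusion ($\mathcal C^{1,1}$, not merely positive reach) without ever leaving $M$.

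For the fiber bundle statement your approach is in the same spirit as the paper's (Proposition \ref{prop: loctop}): trivialize over a small convex neighborhood in the stratum by horizontal lifts of geodesics. However, you leave precisely the decisive technical points unresolved — you explicitly list ``construction and control of horizontal lifts in the singular setting'' as the main obstacle, and your appeal to a ``stratum-horizontal distribution'' with ``ODE existence/uniqueness'' is not how the paper proceeds. The paper instead uses Lemma \ref{lem: lift} for existence of lifts, Proposition \ref{prop: directions} for \emph{uniqueness} (the starting direction lies in the $\R^l$-factor, which is exactly the set of horizontal vectors with singleton horizontal fiber), and then verifies openness of the trivializing map $J$ by a compactness argument with converging $P$-minimal geodesics. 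Your claim that the ``symmetric lifting construction'' gives a continuous inverse is plausible but not an argument; the openness verification is where the content lies, and it is missing from your proposal.
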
	

We mention, that \emph{quasigeodesics} in the space of leaves $Y$ are much simpler than in general Alexandrov spaces: they are concatenations of geodesics, Corollary \ref{cor: quasigeod}. Moreover, the quasigeodesic flow exists almost everywhere and preserves the Liouville measure, Section \ref{subsec: quasigeod}. }

 { Non-manifold fibers of $P$ are related to the boundary $\partial Y$ of $Y$:}
 % is responsible for the non-manifold leaves of $P$:

\begin{thm} \label{thm: boundary}
Let $P:M\to Y$ be a submetry from a  Riemannian manifold $M$.
For any point $y\in Y\setminus \partial Y$, the fiber $P^{-1} (y)$ is a $\mathcal C^{1,1}$-submanifold of $M$.
 \end{thm}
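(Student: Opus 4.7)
Since by Theorem~\ref{thm: leaf} the fiber $L := P^{-1}(y)$ has positive reach in $M$, and since a set of positive reach in a Riemannian manifold is a $\mathcal{C}^{1,1}$-submanifold near a point $x$ precisely when its tangent cone $T_xL \subset T_xM$ is a linear subspace (a standard fact from Federer's theory of sets of positive reach, summarised in Section~\ref{sec: posreach}), the problem reduces to showing that $T_xL$ is a linear subspace of $T_xM \cong \mathbb R^n$ for every $x \in L$.

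To compute $T_xL$ I would use the infinitesimal version of $P$. The submetry has a well-defined tangent-cone differential $d_xP \colon T_xM \to T_yY$, itself a homogeneous submetry sending $0$ to the apex $o_y \in T_yY$; this infinitesimal submetry is part of the general Alexandrov-differential formalism developed earlier in the paper. One identifies $T_xL = (d_xP)^{-1}(o_y)$. Because $d_xP$ is homogeneous, this preimage is a Euclidean cone in $\mathbb R^n$, and because tangent cones of positive-reach sets are convex (Federer), it is in fact a \emph{closed convex Euclidean cone}; such a cone is a linear subspace iff it is central-symmetric, i.e.\ coincides with its negative. The task is now to produce this central symmetry from the hypothesis $y \notin \partial Y$.

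The hypothesis is translated, via the recursive definition of the boundary of an Alexandrov region, into the statement that $\Sigma_yY$ has empty boundary, equivalently that $o_y$ is an interior point of $T_yY$. The core infinitesimal claim to be proved is then: \emph{if $o_y$ is interior in $T_yY$ then $F := (d_xP)^{-1}(o_y)$ satisfies $F = -F$.} The strategy is to argue by contradiction. If $F \neq -F$, convex separation in $\mathbb R^n$ provides a vector $w_0 \in F$ with $-w_0 \notin F$ and a hyperplane through $0$ supporting $F$. By the submetry property, $d_xP(-w_0)$ is a nonzero vector in $T_yY$ and determines a direction $v \in \Sigma_yY$. Using first-variation considerations for the submetry $d_xP$, the failure of $-w_0$ to lie in $F$ is converted into the absence of an antipodal direction to $v$ in $\Sigma_yY$, producing a boundary point of $\Sigma_yY$ and contradicting $y \notin \partial Y$.

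The main obstacle is precisely this last step: converting the one-sidedness of the convex cone $F$ at the origin of $\mathbb R^n$ into the absence of an opposite direction in $\Sigma_yY$, since $d_xP$ is merely a homogeneous submetry between Alexandrov cones rather than a linear map. The natural line of attack is induction on $\dim Y$: one first uses Corollary~\ref{cor: directions} to split off the Euclidean factor of $T_yY$ and reduce to the case where $T_y^0 Y$ contains no line; one then invokes Theorem~\ref{thm: link} to realise $\Sigma_yY$ as a submetric quotient of a sphere, and applies the inductive hypothesis to the induced submetry on spheres of directions of strictly smaller dimension to complete the argument.
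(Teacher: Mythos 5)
Your reduction is on the right track: by Proposition~\ref{cor: fiber} (Federer), the positive‑reach set $L$ is a $\mathcal C^{1,1}$ submanifold iff every tangent cone $T_xL$ is a linear subspace, and by Corollary~\ref{cor: differential} this vertical cone is the fiber $(D_xP)^{-1}(0_y)$. So the claim does indeed collapse to: if $T_xL$ is a convex cone with $T_xL\neq -T_xL$, then $\partial\Sigma_yY\neq\emptyset$. But you candidly concede that you cannot carry out this step, and that concession is the whole theorem — the rest is bookkeeping. The ``convex separation plus first-variation'' idea will not by itself produce a boundary point of $\Sigma_yY$: the differential $D_xP$ is a nonlinear homogeneous submetry, and the absence of an antipode in one fiber of $D_xP$ restricted to the unit sphere is not obviously related to the boundary of $\Sigma_yY$. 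In fact, a priori, a submetry can have image without boundary even if its domain has boundary, so some genuine input about which Alexandrov spaces can receive a submetry from a half-sphere is required, and none is supplied.

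The paper's argument is structurally quite different in the hard part. First, from $T_xL\neq -T_xL$ it does not take an arbitrary $x$; rather, via Proposition~\ref{prop: maininfinite} it finds a particular boundary point of the vertical cone where the tangent to $T_xL$ is a Euclidean half-space, producing a submetry $Q:\mathbb S^k_+\to\Sigma_yY$ from a \emph{closed hemisphere} (Theorem~\ref{thm: halfspace}). Then the argument that this forces $\partial\Sigma_yY\neq\emptyset$ is genuinely deep: one uses Theorem~\ref{thm: link} to realise $\Sigma_yY$ also as the base of a submetry from a round sphere $\mathbb S^k$, applies Proposition~\ref{prop-liuv} (the vanishing of the metric–measure boundary and the Liouville-measure argument of \cite{KLP}) to obtain a dense family of bi-infinite local geodesics in $\Sigma_yY$ when $\partial\Sigma_yY=\emptyset$, and finally invokes Lemma~\ref{cor: boundary}, whose proof hinges on the strict $1$-concavity of $\sin\circ d_{\partial X}$ from \cite{AB-convex}: lifting a bi-infinite local geodesic of $\Sigma_yY$ horizontally to $\mathbb S^k_+$ would give a curve on which a positive solution of $f''+f\le 0$ lives on all of $\R$, which is impossible. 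Your proposal contains neither the hemisphere step nor either of these two ingredients (the Liouville/metric-measure-boundary input, and the Alexander–Bishop concavity of distance to the boundary), and there is no elementary replacement in sight. So the gap is real and it is exactly where the theorem's content lies.
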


%Due to Theorem \ref{thm: boundary}, most fibers of any  submetry $P:M\to Y$ are  $\mathcal C^{1,1}$-submanifolds of $M$.
%It very often happens that the fibers over points in $\partial Y$ can be $\mathcal C^{1,1}$-manifolds as well.
 Of particular importance are
 {   submetries
 for which \emph{all} fibers are $\mathcal C^{1,1}$-submanifolds.
 %
%the so-called \emph{transnormal  submetries}, { i.e. submetries}
%for which \emph{all} fibers are $\mathcal C^{1,1}$-submanifolds.
 They are defined under the name \emph{manifold submetry} in  \cite{Grovesubmet} and investigated further in a more specific situation in \cite{Mendes-Rad}. We  call them \emph{transnormal submetries}, borrowing the term \emph{transnormality}  from the theory of singular Riemannian foliations, \cite{Molino}.}

 % We emphasize,   that  here and below  manifolds are allowed to have components of different dimensions.
 %The last point of the following characterizations relates our definition to the notion of  \emph{transnormality}
 %in the theory of singular Riemannian foliations, \cite{Molino}.

\begin{thm} \label{thm: transnormal}
Let  $M$ be a Riemannian manifold and $P:M\to Y$ be a  submetry.
 %with $M$ a Riemannian manifold.
 Then the following are equivalent:
\begin{enumerate}
\item $P$ is a transnormal submetry.
\item All fibers of $P$ are topological manifolds.
\item Any local geodesic in $M$ which starts  normally to any fiber of $P$ remains normal to all fibers it intersects.
\end{enumerate}
\end{thm}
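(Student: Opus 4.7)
The plan is to prove $(1)\Rightarrow(2)\Rightarrow(1)$ and $(1)\Rightarrow(3)\Rightarrow(1)$. The implication $(1)\Rightarrow(2)$ is immediate since every $\mathcal C^{1,1}$-submanifold is a topological manifold. For $(2)\Rightarrow(1)$, I would invoke \tref{thm: leaf}: every fiber $L$ of $P$ is a set of positive reach in $M$. A standard fact from the theory surveyed in Section \ref{sec: posreach} (see \cite{Ly-reach}) asserts that a set of positive reach which is simultaneously a topological manifold is automatically a $\mathcal C^{1,1}$-submanifold of the same dimension; applied fiberwise, this yields (1).

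For $(1)\Rightarrow(3)$, I would argue by first variation. Let $\gamma\co[0,T)\to M$ be a unit-speed local geodesic with $\gamma(0)=p\in L:=P^{-1}(y)$ and $\dot\gamma(0)\perp T_p L$. Since $L$ has positive reach and $\gamma$ leaves $L$ orthogonally, one has $d(\gamma(t),L)=t$ for small $t>0$, so the submetry property forces $P\circ\gamma$ to be a geodesic in $Y$ with $d_Y(P(\gamma(t)),y)=t$. For any $t$, set $L_t:=P^{-1}(P(\gamma(t)))$. The submetry yields $d(L,L_t)\geq d_Y(y,P(\gamma(t)))=t$, while $\gamma|_{[0,t]}$ is a competitor of length $t$, so equality holds and $\gamma|_{[0,t]}$ realizes the distance from $L$ to $L_t$. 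Both $L$ and $L_t$ are $\mathcal C^{1,1}$-submanifolds by (1), so the first-variation formula at the endpoint forces $\dot\gamma(t)\perp T_{\gamma(t)}L_t$.

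For $(3)\Rightarrow(1)$, which is the main step, I would fix $p\in L=P^{-1}(y)$ and show that the tangent cone $\mathrm{Tan}(L,p)$ is a linear subspace of dimension $\dim M-\dim Y$; by \tref{thm: leaf} and the positive-reach machinery this promotes $L$ to a $\mathcal C^{1,1}$-submanifold near $p$. The transnormality condition (3) produces a slice at $p$: for every $v\in \mathrm{Nor}(L,p)$ of small length, the geodesic $\gamma_v(s):=\exp_p(sv)$ remains normal to every fiber it meets, and (by the computation of the previous paragraph) $P\circ\gamma_v$ is a geodesic in $Y$ of the same length. Choosing $v$'s whose geodesics reach the dense regular part $P^{-1}(Y_{reg})$ from \tref{thm: regular}, I can transport the honest $\mathcal C^{1,1}$ Riemannian submersion structure there back to $p$: the induced map $\mathrm{Nor}(L,p)\cap B_\varepsilon\to Y$ given by $P\circ\exp_p$ is a locally bi-Lipschitz surjection onto a neighborhood of $y$, and combined with \cref{cor: directions} this forces $\mathrm{Nor}(L,p)$, and dually $\mathrm{Tan}(L,p)$, to be genuine linear subspaces of $T_pM$ of complementary dimensions. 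The principal obstacle is precisely this rigidification: one must prevent $\mathrm{Tan}(L,p)$ from being a proper convex subcone, particularly at points $y\in\partial Y$ where \tref{thm: boundary} does not already apply, and it is the geodesic transnormality (3) coupled with the Alexandrov structure of $Y$ that supplies the extra rigidity.
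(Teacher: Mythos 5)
Your plan for $(1)\Leftrightarrow(2)$ is correct and matches the paper: fibers have positive reach by \tref{thm: leaf}, and Proposition \ref{cor: fiber} upgrades a topological-manifold fiber of positive reach to a $\mathcal C^{1,1}$ submanifold. The two remaining implications, however, each have a real gap.

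For $(1)\Rightarrow(3)$, your first-variation argument establishes that $\dot\gamma(t)\perp T_{\gamma(t)}L_t$ only for $t$ in the initial interval on which $P\circ\gamma$ is a \emph{minimizing} geodesic in $Y$ and $\gamma|_{[0,t]}$ realizes $d(L,L_t)$. The statement $d_Y(P(\gamma(t)),y)=t$ fails once $t$ exceeds the cut radius; beyond that, $\gamma|_{[0,t]}$ is no longer a competitor of minimal length and first variation says nothing. What is missing is a continuation argument: one takes the maximal $r$ such that $\gamma|_{[0,r)}$ is horizontal, observes (via \pref{prop: horvector} applied in the standard local picture around $\gamma(r)$) that the \emph{incoming} direction $-\gamma'(r)$ is horizontal, then uses hypothesis $(1)$ — namely that the normal space $T^\perp_{\gamma(r)}L_{\gamma(r)}$ is a linear subspace, not just a convex cone — to conclude that $\gamma'(r)$ is horizontal too, and restarts. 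Without this open-closed step, the proof only yields orthogonality near $t=0$. This is exactly how the paper proceeds in \pref{prop-subnormal}.

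For $(3)\Rightarrow(1)$ you sketch a slice construction that is supposed to force $\mathrm{Tan}(L,p)$ to be linear by transporting the submersion structure from $Y_{reg}$, but you yourself flag that the "rigidification" is the obstacle and do not close it; the mechanism by which \cref{cor: directions} would rule out a proper convex subcone is not supplied. The paper's argument is much more direct and avoids $Y_{reg}$ entirely: if some $L=P^{-1}(y)$ is not a $\mathcal C^{1,1}$ submanifold, then by \pref{cor: fiber} there is $x\in L$ with $T_xL$, hence also the horizontal cone $T_x^\perp L$, a proper (non-linear) convex cone; one picks a unit horizontal $h$ with $-h$ not horizontal, chooses $0<t<\epsilon$ so that $\gamma_h|_{[0,\epsilon]}$ is horizontal, and sets $w=-\gamma_h'(t)$. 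Then $\gamma_w$ is horizontal on $[0,t]$ (it retraces $\gamma_h$) but continues past $x$ in the direction $-h$, which is not horizontal, so $\gamma_w$ leaves the horizontal distribution — a direct contradiction to $(3)$. You should replace your slice construction with this contrapositive argument; the slice route would need substantial additional work and would still have to deal with the boundary points you worry about, whereas the contrapositive sidesteps them entirely.
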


Transnormal submetries are stable under some limit operations, Corollary \ref{cor: transstable}. In particular, if $P\co M\to Y$ is a transnormal submetry then, for any $x\in M$, the differential $D_xP :T_xM\to T_yY$ is  a transnormal submetry as well, Proposition \ref{prop-subnormal}.
Moreover, transnormal submetries have the property known as \emph{equifocality} in the theory of singular Riemannian foliations, see Proposition \ref{prop: equifoc}.
{ Finally, for any leaf $L$ of a  transnormal submetry $P:M\to Y$,
the foot point projection $\Pi ^L:U\to L$ in a neighborhood of $L$ restricts as a fiber bundle to any leaf $L' \subset U$, see Theorem \ref{thm-tubular}.}       
 	
 %	For a transnormal submetry $P:M\to Y$, the images in $Y$ of two horizontal local geodesics in $M$ coincide if these images coincide

 For a transnormal submetry, the preimage $P^{-1} (Y^l)$ of any stratum is a
locally closed $\mathcal C^{1,1}$ submanifold of $M$ and the restriction of $P$ to this submanifold is a $\mathcal C^{1,1}$ Riemannian submersion.
Even if $M$ is smooth one cannot expect higher regularity of the stratification of  $P$.  However,  $\mathcal C^{1,1}$ submanifolds of a manifold with curvature locally bounded from both sides again has curvature locally bounded from both sides, \cite{KL}, so we stay in the  category of manifolds we have chosen to work with.

\subsection{Questions}
There are many open questions about finer structural properties of submetries. We would like to collect a few of them below.

%A positive answer to the following question will provide a more natural converse to Theorem \ref{thm: leaf}.

%\begin{quest}
%	Given any subset of positive reach $L$ of a Riemannian manifold $M$,
%	can one  change the Riemannian metric on a small neighborhood $U$ of $L$
%	so that the new metric is  smooth and complete on $U$ and $L$ has reach $\infty$ in this metric?
%\end{quest}

We do not know if base spaces of transnormal submetries are different from base spaces of general submetries. Even if $\dim (Y)=1$ the following question is absolutely non-trivial:

\begin{quest}
Given a submetry $P:\mathbb S^n\to Y$, does there exist a \emph{transnormal} submetry of the round sphere (possibly of different dimension) with the
same quotient space $Y$?
%We do not know the answer even for $1$-dimensional spaces $Y$.
\end{quest}

 One obtains a closely related question if $\mathbb S^n$ is replaced by a general manifold $M$.

 The positive answer  to the following question in the non-collapsed case is provided in Section \ref{sec: transnorm}. We expect that in the collapsed case the answer is affirmative as well.

\begin{quest}
 Let a sequence $P_i:M_i\to Y_i$   of submetries converge in the Gromov--Hausdorff sense to a submetry $P:M\to Y$. Assume  that
 $M_i$ and $M$ have the same dimension and curvature bounds and  that $P_i$ are transnormal. Is  $P$
  transnormal?  % even if $Y$ has a lower dimension than $Y_i$?
\end{quest}

The following question is natural in view of the structural results  Theorem \ref{thm: sing} and  Corollary \ref{cor: strict}:
%In  $M$ is sufficiently smooth, the answer to the next question is affirmative in the regular part, as will be shown in the continuation of this paper

\begin{quest}
Do the strata  $Y^l$ defined in Theorem \ref{thm: extremal} have curvature locally bounded from both sides?
%Does the preimage $P^{-1} (Y^l)$ contain a $\mathcal C^{1,1}$ submanifold, open and dense in $P^{-1} (Y^l)$?
\end{quest}

%The next question is related to the previous one and to  the slice theorem, which might be true for
%transnormal submetries, cf. \cite{Mendes-Rad-slice}.
%\begin{quest}%
%	Given a transnormal submetry $P:M\to Y$ do the preimages of the strata $P^{-1} (Y^l)$ define a Whitney stratification of $M$?	
%\end{quest}

 %If the slice theorem holds true then  the differentials of a submetry along a fixed fiber should  be pairwise isometric.
 { It should be possible to derive  a positive answer to the following question as  a consequence   from Theorem \ref{thm: strata}  and Corollary \ref{lem: semicont}}:
	\begin{quest}
Given a transnormal submetry $P:M\to Y$, does
the decomposition of $M$  into   connected components  $M_i^l$ of  preimages of strata  $P^{-1} (Y^l)$ satisfy Whitney's conditions (A) and  (B)?
 \end{quest}

\begin{comment}
 An affirmative answer to the  following question can be expected in view of  Corollary \ref{cor: op} and  the validity of the result in case of 
 sufficiently smooth manifold and fibers \cite{Mendes-Rad}[Lemma 45].  
% The result and a possible extension to the non-transnormal case might be achieved by applying classical results in geometric topology, summarized 

 \begin{quest}
 Let $M$ be complete and  $P:M\to Y$ be a transnormal submetry.  Let $L',L$ be fibers of $P$ such that the foot-point projection $\Pi^L$ is uniquely defined on $L'$.
 Is $\Pi^L:L'\to L$  a fiber bundle?
 \end{quest}
\end{comment}
%{\red Theorem~\ref{thm-tubular} shows that fibers of transnormal submetries admit nice tubular neighborhoods.
% Much more challenging  is the following:
% }

{ The following question is related to the previous one and Theorem
\ref{thm-tubular} but is likely much more challenging:}
 \begin{quest}
 Let $P:M\to Y$ be a transnormal submetry. Does a version of  the slice theorem hold in $M$, compare \cite{Mendes-Rad-slice}?
 \end{quest}

{  A closely related question is the Lipschitz version of a well-known problem in singular Riemannian foliations, \cite{Molino}, \cite{Wilking}: }
\begin{quest}
	Given a transnormal submetry $P:M\to Y$, do there exist Lipschitz vector fields everywhere tangent to the fibers and generating the tangent spaces to the fibers at all points?
\end{quest}

  { The continuous dependence of differentials of a submetry along a manifold fiber,  Lemma \ref{lem: openn},  leads to the following question for   $M=\mathbb S^n$}:
\begin{quest}  \label{quest: infinite}
	How large can be the set of transnormal  submetries $P:M\to Y$ modulo isometries of $M$, for a fixed compact manifold $M$ and a fixed Alexandrov space $Y$?
%	If $M$ is fixed but $Y$ is allowed to vary, can the space of submetries be not
%	totally disconnected?
\end{quest}
{  For some non-compact manifolds or for non-transnormal submetries,
the space of submetries can be  infinite-dimensional,  Examples  \ref{ex: twoside+}, \ref{ex: infinite}.}

At least for quotient spaces $Y$ of Riemannian manifolds, the answer to the next question, related to \cite{Li}, should be affirmative.

\begin{quest}
Given an Alexandrov space $Y$, does there exist a description of all discrete submetries $P:X\to Y$, with $X$ an Alexandrov space,
similar to the Riemannian orbifold case, \cite{Lange}?
\end{quest}

 There are many  basic topological questions. {For instance:}
 %{\blue In view of the conjectural connections to collapsing with lower curvature bounds,  {\red the following question is also of interest}

\begin{quest}
	Which  manifolds  admit  non-trivial { (transnormal)} submetries for some Riemannian metric?% {\red For example, we conjecture that a K3 surface does not admit any nontrivial transnormal submetries with connected fibers.}
% {\blue Which manifolds admit non-trivial transnormal submetries? }
\end{quest}

{ See \cite{GRad}, for
  related  results for singular Riemannian foliations.}
  
%{\red It is expected that convergence with lower sectional curvature bound should in some sense be topologically modeled by submetries. This is known to be true for convergence with two-sided curvature bounds by work of Cheeger, Fukaya and Gromov. If the limit is smooth it's known to be true by Yamaguchi's fibration theorem. It also holds for noncollapsing convergence by Perelman's stability Theorem. 
%To illustrate possible implications of this conjecture let us observe that the cone over the Caley plane $\Sigma=CaP^2$ can not occur as the base of a submetry $M^n\to C\Sigma$. Indeed if such submetry existed, by Proposition~\ref{prop: maininfinite} there would exist a submetry $\Ss^k\to CaP^2$ for some $k\le n$ which would have to be a fiber bundle. This is known to be impossible by a result of Browder \cite{Browder}. Therefore we conjecture that the cone over  $CaP^2$ can not occur as a  pointed  limit of manifolds $M^n_i$ with sectional curvature bounded below. This conjecture is supported by results from \cite{Kap-collapse} and \cite{Kap-Guij}.

%Topological versions of transnormal submetries should be appropriately defined stratified maps. This naturally leads to the following question
%\begin{quest}
%Let $P\co M\to Y$ be a transnormal submetry. Is $P$ a stratified map in the sense of Whitney?
%\end{quest}
%}

\subsection{Structure of the paper}
 In Section \ref{sec: prel} we fix notation and collect  basic
facts about submetries. In particular, we introduce the notion of local submetries and discuss horizontal lifts of curves. In Section \ref{sec: alex}
we discuss some basic facts about submetries between general Alexandrov regions.
In particular, we recall that submetries preserve lower curvature bounds and have differentials at all points.  In Section \ref{sec: lift} we observe that a submetry between Alexandrov spaces lifts many semiconcave functions to semiconcave functions and commutes with the corresponding gradient flows and discuss the first structural consequences of this fact. In Section \ref{sec: infinite} we discuss the structure of differentials of submetries between Alexandrov spaces.  All these sections are of general and auxiliary character, most statements contained in them  have appeared in \cite{Lysubm} and might be known to specialists.   The findings of Section \ref{sec: infinite}  include
the proofs of Theorem \ref{thm: link} and Corollary \ref{cor: directions}.

 Only  in  Section \ref{sec: posreach} we turn to the main subject of this paper, submetries of Riemannian manifolds and prove
Theorem \ref{thm: leaf}.

 In Section \ref{sec: onefiber}  we begin to investigate the structure of the base space $Y$ and prove  a part of Theorem \ref{cor: quotient}.
 In the most technical Section \ref{sec: tech} we collect some  observation about  (semi-) continuity of differentials of a submetry.
These are used in Section \ref{sec: convex} to prove that small balls in the base space are convex and to finish the proof of Theorem \ref{cor: quotient}.
In Section \ref{sec: reg}, the structure of the regular part is investigated and Theorem \ref{thm: regular} is verified.    In Section \ref{sec: sing}
we study the properties of the natural stratification of the base space and prove Theorem \ref{thm: extremal} and Theorem \ref{thm: strata}.
 In Section \ref{sec: transnorm} we discuss manifold fibers and transnormal submetries and prove Theorem \ref{thm: boundary} and Theorem \ref{thm: transnormal}.

%Finally, in the Appendix we provide a new proof of Theorem \ref{thm: factor}.
%and also  discuss the structure of quasigeodesics and quasigeodesic flows in
%base spaces.}

\subsection{Acknowledgements}
 The study was initiated many years ago in the PhD thesis of A.L.,  but the results were not brought into a final form. In the meantime some results were found and used by other authors and the interest in the subject seem to have increased, justifying { a systematic investigation}.

{We express our} gratitude to many people who over the years have { motivated us to publish the findings}. The non-complete list includes Marcos Alexandrino, Werner  Ballmann,  Claudio Gorodski, Karsten Grove, Ricardo Mendes, Marco Radeschi.

   We are very grateful to Marco Radeschi for helpful comments.

\section{Preliminaries and basics} \label{sec: prel}
\subsection{Notations}
By $d$ we denote the distance in metric spaces.
A metric space is \emph{proper} if its closed bounded subsets are compact.

% By $B_r(x)$ we denote the open metric ball of radius
%$r$ around $x$.
 For a subset $A$ of a metric space $X$ we denote by $d_A:X\to \R$ the distance function to the set $A$
and by $B_r(A)$ the open $r$-neighborhood around $A$ in  $X$.

The length of a curve $\gamma$ will be denote by $\ell (\gamma)$.  Curves of finite length are called \emph{rectifiable}.
For a locally Lipschitz curve $\gamma :I\to X$ in a metric space
$X$ we denote by $|\gamma' (t)| \in [0,\infty)$ the \emph{velocity}
of  $\gamma $ in $t\in I$.  The velocity
is defined for almost all $t\in I$  and $\ell (\gamma)  =\int _I |\gamma'(t)| \, dt$.

 A metric is a \emph{length metric} if the distance between any pair of points equals the infimum of the lengths of curves connecting the points.

A \emph{geodesic} will denote an isometric  (i.e. distance preserving) embedding of an interval.
In particular, all geodesics are parametrized by arc-length.  A metric space $X$ is \emph{geodesic} if any pair of its points is connected by a geodesic.

\subsection{Main definitions}
Two subsets $L_1,L_2$ of a metric space $X$ are called \emph{equidistant} if $d_{L_i}$ is constant on $L_j$, for $i,j=1,2$.

Recall from the introduction that a map $P:X\to Y$ is a \emph{submetry} if for any $x\in X$ and any $r>0$ the equality
$P(B_r(x))= B_r(P(x))$ holds true.  We call $X$ the \emph{total space} and $Y$ the \emph{base} of a the submetry $P$.

The following observation  is a direct consequence of the definition.

\begin{lem} \label{lem: clear}
A map $P:X\to Y$ between metric spaces is a submetry if and only if $P$ is surjective and, for any  $y\in Y$, we have $d_y \circ P= d_{P^{-1} (y)}$.
\end{lem}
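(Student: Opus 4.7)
The plan is to prove both implications by unpacking the two set-theoretic inclusions hidden in the equality $P(B_r(x)) = B_r(P(x))$ and translating each into a distance inequality.

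For the forward direction, I would first observe that surjectivity of $P$ is immediate by applying the submetry condition with $r$ large enough so that $B_r(x) = X$. Next, the inclusion $P(B_r(x)) \subseteq B_r(P(x))$, valid for every $r>0$, is a rewording of $P$ being $1$-Lipschitz. Applying this to any $z \in P^{-1}(y)$ gives $d(P(x),y) \leq d(x,z)$, and taking the infimum over $z \in P^{-1}(y)$ yields $d_y(P(x)) \leq d_{P^{-1}(y)}(x)$. For the reverse inequality, I would use the other inclusion $B_r(P(x)) \subseteq P(B_r(x))$: whenever $r > d_y(P(x))$, the point $y$ lies in $B_r(P(x)) = P(B_r(x))$, so there is some $z \in B_r(x)$ with $P(z) = y$, giving $d_{P^{-1}(y)}(x) < r$; letting $r \searrow d_y(P(x))$ gives the remaining inequality.

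For the reverse direction, I would verify both inclusions directly. Given $z \in B_r(x)$ and setting $y := P(z)$, the hypothesis yields
\[
d(P(x), y) = d_{P^{-1}(y)}(x) \leq d(x,z) < r,
\]
which shows $P(z) \in B_r(P(x))$. Conversely, given $y \in B_r(P(x))$, the hypothesis gives $d_{P^{-1}(y)}(x) = d(P(x),y) < r$, so there exists $z \in P^{-1}(y)$ with $d(x,z) < r$, i.e.\ $z \in B_r(x)$ and $P(z) = y$.

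There is no real obstacle here; the lemma is essentially a reformulation of the submetry condition, and the only small point to handle carefully is that a strict inequality in an infimum produces an actual element strictly below the bound, which is where surjectivity is implicitly used to ensure $P^{-1}(y) \neq \emptyset$.
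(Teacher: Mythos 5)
Your proof is essentially correct and follows the only natural route; the paper itself offers no argument, simply calling the lemma ``a direct consequence of the definition,'' so any careful unpacking of the two inclusions is exactly what is wanted. The main body of your argument (both directions of the distance identity, and the observation that surjectivity is needed so that $d_{P^{-1}(y)}(x)$ is an infimum over a nonempty set) is sound.

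One small slip: your surjectivity argument ``take $r$ large enough so that $B_r(x)=X$'' does not work. If $X$ is unbounded no such $r$ exists, and even if $X$ is bounded, $P(B_r(x))=B_r(P(x))$ only tells you that the image of $P$ equals one particular ball in $Y$, not that it is all of $Y$ — which is precisely what surjectivity asserts. The fix is to choose $r$ relative to the target point rather than to $X$: given $y\in Y$, pick any $x_0\in X$ and any $r>d(P(x_0),y)$; then $y\in B_r(P(x_0))=P(B_r(x_0))\subseteq P(X)$, so $y$ lies in the image. With that repair, your proof is complete and correct.
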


%A map $f:X\to Y$ is a submetry if and only if $f$ is surjective and, for any point $y\in Y$, we have $d_y \circ f= d_{f^{-1} (y)}$.
In particular, for any pair of points $y_1,y_2 \in Y$ with fibers $L_i := P^{-1} (y_i)$,  the function $d_{L_i}$ is  constantly equal $d(y_i,y_j)$ on $L_j$. Hence $L_1$ and $L_2$ are equidistant.
%of a submetry $f$ are equidistant.
On the other hand, if a metric space $X$ is  decomposed in a family of closed, pairwise equidistant subsets $L_i, i\in I$, then the \emph{set of leaves} $I$
becomes a metric space, when equipped with the natural distance between the corresponding subsets in $X$.  The canonical projection $P:X\to I$ sending a point $x$ to the  leaf $L_x:=P^{-1} (P(x))$ through $x$
is a submetry with respect to this metric.

Hence, there is a one-to-one correspondence (up to isometries between base spaces) of submetries with total space $X$ and decompositions of $X$ into closed equidistant  subsets, \cite[Lemma 8.1]{Mondino}.
In particular, any isometric group action on a space $X$, with all orbits closed, determines a unique submetry, the \emph{quotient map}, whose fibers are the orbits of the  action.

%If $f:X\to Y$ is a submetry, we say that  $x_1,x_2 \in X$ are
%\emph{$f$-near points}   if $d(x_1,x_2) =d(f(x_1),f(x_2))$.
%The points $x_1$ and $x_2$ are $f$-near if and only if
%$x_2$ is the closest point to $x_1$ on the fiber $f^{-1} (f(x_1))$.

\begin{rem}
	In \cite{Berest}, submetries are defined by the slightly stronger requirement that the images of all \emph{closed} balls are closed balls of the same radius.
	 For proper spaces both notions coincide.
\end{rem}

\subsection{Basic properties and operations with submetries}
Any submetry is $1$-Lipschitz and surjective.
%  In particular, it does not increase length of curves and Hausdorff dimensions of subsets.
	
	 For a submetry $P:X\to Y$, a point $y\in Y$ is isolated in $Y$ if and only if
the fiber $P^{-1} (y)$ has  non-empty interior. Thus, if $X$ is connected  then either $Y$ is a singleton, or  any fiber $P$ is nowhere dense in $X$.

  % the Hausdorff dimension of the base is not larger than the Hausdorff dimension of the total space.
 Many properties of the total space are inherited under submetries,   \cite[Proposition 1]{Berest2}:
%\cite[Section 4.6]{BGP} or \cite{bipolar} for the last and
% \cite[Proposition 1]{Berest2}.
 % We formulate the corresponding local versions, the  proofs are straight forward as in \cite{Berest2}:
% for the remaining statements.

\begin{lem} \label{lem: inherit}
	Let $P:X\to Y$ be a  submetry.
	% let $x\in X$ and $r>0$ be as in Definition \ref{defn: loc}. If  $\bar B_r (x)$ is compact, respectively a length space, then for any $s\leq \frac r 2$
	  If $X$   is  compact or proper or complete or  length space  then $Y$ has the corresponding property.
\end{lem}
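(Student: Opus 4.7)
The plan is to verify each of the four inheritance properties in turn; none is deep, but each uses a different aspect of the submetry condition collected in Lemma \ref{lem: clear}.

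For the \emph{compact} case, the statement is immediate: $P$ is continuous (being $1$-Lipschitz) and surjective, so $Y=P(X)$ is compact whenever $X$ is.

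For the \emph{proper} case, I would take a closed, bounded $A\subseteq Y$, fix $y_0\in A$ and $x_0\in P^{-1}(y_0)$, and pick $r>0$ with $A\subseteq \bar B_r(y_0)$. Using the submetry identity $P(B_s(x_0))=B_s(y_0)$ for all $s>r$, every point of $A$ has a preimage in $\bar B_r(x_0)$ (or arbitrarily close to it), so $A\subseteq P(\bar B_r(x_0))$. Since $X$ is proper, $\bar B_r(x_0)$ is compact, hence so is its continuous image; $A$ is a closed subset of a compact set, thus compact.

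For the \emph{complete} case, I would pick a Cauchy sequence in $Y$, pass to a subsequence $(y_n)$ with $d(y_{n+1},y_n)<2^{-n}$, and lift it inductively: starting from any $x_1\in P^{-1}(y_1)$, the submetry property $P(B_{2^{-n}}(x_n))=B_{2^{-n}}(y_n)\ni y_{n+1}$ yields $x_{n+1}\in P^{-1}(y_{n+1})$ with $d(x_{n+1},x_n)<2^{-n}$. The lift $(x_n)$ is Cauchy, its limit $x\in X$ exists by completeness, and continuity of $P$ gives $P(x)=\lim y_n$.

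For the \emph{length space} case, I would use Lemma \ref{lem: clear}: given $y_1,y_2\in Y$ and $\varepsilon>0$, the identity $d_{P^{-1}(y_2)}=d(\cdot,y_2)\circ P$ on $P^{-1}(y_1)$ produces, for any $x_1\in P^{-1}(y_1)$, a point $x_2\in P^{-1}(y_2)$ with $d(x_1,x_2)<d(y_1,y_2)+\varepsilon/2$. Choose a curve $\gamma$ in $X$ joining $x_1$ to $x_2$ with $\ell(\gamma)<d(x_1,x_2)+\varepsilon/2$; then $P\circ\gamma$ joins $y_1$ to $y_2$ with $\ell(P\circ\gamma)\le\ell(\gamma)<d(y_1,y_2)+\varepsilon$, because $P$ is $1$-Lipschitz. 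Hence $Y$ is a length space. There is no serious obstacle; the only point requiring slight care is the compatibility between the open-ball definition of submetry used here and the closed-ball arguments needed for properness, but for proper $X$ the two formulations agree, which suffices.
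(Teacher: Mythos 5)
Your proof is correct. Note that the paper does not supply its own argument for this lemma---it cites \cite[Proposition 1]{Berest2}---so there is nothing internal to compare against, but the arguments you give are the standard ones and each is sound. Compactness and the length-space property follow just from $P$ being $1$-Lipschitz and surjective, together with the defining identity $d_{P^{-1}(y)} = d_y \circ P$ from Lemma \ref{lem: clear} for the length case. Completeness is handled by the usual fast-Cauchy lifting: after thinning to $d(y_{n+1},y_n)<2^{-n}$, the ball identity $P(B_{2^{-n}}(x_n))=B_{2^{-n}}(y_n)$ lets you choose $x_{n+1}$ with $d(x_{n+1},x_n)<2^{-n}$, so the lifted sequence is Cauchy, and a Cauchy sequence in $Y$ with a convergent subsequence converges. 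For properness, the key step is that any $y\in A\subseteq\bar B_r(y_0)$ lies in $B_{r+\varepsilon}(y_0)=P(B_{r+\varepsilon}(x_0))$ for every $\varepsilon>0$, so compactness of $\bar B_{r+1}(x_0)$ in the proper space $X$ gives a limit preimage in $\bar B_r(x_0)$; hence $A\subseteq P(\bar B_r(x_0))$, which is compact, and $A$ closed makes $A$ compact. Your closing remark is apt: the open-ball and closed-ball definitions of submetry coincide exactly when $X$ is proper, and that is the only case where you need the closed-ball form, so there is no circularity.
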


For any submetry $P:X\to Y$ and $A \subset Y$ we get from Lemma \ref{lem: clear}:
\begin{equation} \label{eq: dA}
d_A\circ P=d_{P^{-1} (A)} \;.
\end{equation}

A composition of submetries is a submetry.  Moreover, if $P:X\to Y$ is a submetry and, for some  map $Q:Y\to Z$, the composition $Q\circ P$
is a submetry then $Q$ must be a submetry too.
% as  follows directly from the definition.

Any isometry $P:X\to X$ and the projection $P:X\to \{0\}$ to a singleton are submetries.
  A direct product of submetries is a submetry. In particular, the projection of a direct product onto a factor is a submetry.

   If $P:X\to Y$ is a submetry
then the cone $C(P):C(X)\to C(Y)$ between the Euclidean cones over $X$ and $Y$ is a submetry as well.  Any Euclidean cone $C(X)$ admits a submetry $C(X)\to [0,\infty)$ given by  $v\to |v|$, i.e. by the distance to the vertex of the cone.

 For any submetry, $P:X\to Y$ and any subset $Y'\subset Y$ the restriction  $P:P^{-1} (Y') \to Y'$ is a submetry.

Submetries are stable under convergence:

%Since the definition of submetry is defined in terms of distance of finitely many points in the spaces, the notion is stable under (pointed) Gromov--Hausdorff convergence. Moreover, under this convergence fibers converge to fibers in the following sense.

\begin{lem}  \label{lem: stable}
Let $P_j:(X_j,x_j)\to (Y_j,y_j)$ be a sequence of submetries between
pointed proper spaces. If the sequence of spaces $(X_j,x_j)$ converges in the pointed Gromov--Hausdorff topology to a space $(X,x)$ then, after choosing a suitable subsequence, $(Y_j,y_j)$ converge to a space $(Y,y)$, the submetries $P_j$ converge to a submetry $P:(X,x) \to (Y,y)$. Finally, under this convergence,
the fibers $P^{-1} (y_j)$ converge to $P^{-1} (y)$.
\end{lem}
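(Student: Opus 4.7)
The plan is to combine standard pointed Gromov--Hausdorff precompactness with an Arzel\`a--Ascoli argument for $1$-Lipschitz maps, then verify the submetry property and fiber convergence by passing to the limit in the ball condition.

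First I would establish precompactness of $(Y_j,y_j)$. Since each $P_j$ is $1$-Lipschitz and surjective with $P_j(x_j)=y_j$, we have $B_R(y_j)=P_j(B_R(x_j))$ for every $R>0$, and any $\varepsilon$-net in $B_R(x_j)$ projects to an $\varepsilon$-net in $B_R(y_j)$. Because the sequence $(X_j,x_j)$ converges, its balls admit uniform (in $j$) covers by finitely many $\varepsilon$-balls for all $R,\varepsilon>0$, and the same uniform bounds then hold for $(Y_j,y_j)$. Gromov's precompactness theorem yields a subsequence (not relabeled) with $(Y_j,y_j)\to (Y,y)$ for some pointed proper space. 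After realizing the convergence of $(X_j,x_j)$ and $(Y_j,y_j)$ in a common ambient space, an Arzel\`a--Ascoli argument for equi-Lipschitz maps in the pointed GH setting produces a further subsequence along which $P_j$ converges to a $1$-Lipschitz map $P\co X\to Y$ with $P(x)=y$ (here convergence means: whenever $a_j\in X_j$ converges to $a\in X$, the points $P_j(a_j)\in Y_j$ converge to $P(a)\in Y$).

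Next I would verify that $P$ is a submetry, i.e. $P(B_r(x'))=B_r(P(x'))$ for every $x'\in X$ and $r>0$. The inclusion $P(B_r(x'))\subseteq B_r(P(x'))$ is immediate from $1$-Lipschitzness. For the reverse, fix $y'\in B_r(P(x'))$ and choose $r'\in(0,r)$ with $d(y',P(x'))<r'$. Pick approximants $x'_j\to x'$ and $y'_j\to y'$; by continuity of $P_j$ under convergence we have $d(y'_j,P_j(x'_j))<r'$ for all large $j$. The submetry hypothesis gives $a_j\in B_{r'}(x'_j)$ with $P_j(a_j)=y'_j$. The points $a_j$ stay in a fixed bounded set about $x_j$, so by properness and the convergence $(X_j,x_j)\to(X,x)$ we may pass to a subsequence with $a_j\to a\in X$. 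Passing to the limit yields $d(a,x')\le r'<r$ and $P(a)=y'$, as desired.

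Finally I would prove the convergence of fibers $P_j^{-1}(y_j)\to P^{-1}(y)$ in the pointed Hausdorff sense inside the limiting ambient space. By the identity \eqref{eq: dA} (or equivalently Lemma~\ref{lem: clear}), $d_{P_j^{-1}(y_j)}=d_{y_j}\circ P_j$, and the right-hand side converges uniformly on bounded sets to $d_y\circ P=d_{P^{-1}(y)}$; this forces the convergence of the closed zero sets. Concretely, if $a\in P^{-1}(y)$ and $a_j\to a$, then $d_{P_j^{-1}(y_j)}(a_j)\to 0$, so we may perturb $a_j$ within $X_j$ by $o(1)$ to land in $P_j^{-1}(y_j)$; conversely, any accumulation point of a sequence $b_j\in P_j^{-1}(y_j)$ satisfies $P(b)=\lim P_j(b_j)=\lim y_j=y$.

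The one genuine technical point, and the step that requires the most care, is the Arzel\`a--Ascoli extraction for the maps $P_j$ in the pointed GH category: this must be set up so that the definition of convergence of the $P_j$ to $P$ is strong enough to pass to the limit both in the surjectivity argument and in the distance-to-fiber identity above. Once this is in place, the remaining steps are straightforward limiting arguments using properness of the spaces.
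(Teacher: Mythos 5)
Your proposal is correct and follows essentially the same route as the paper: precompactness of $(Y_j,y_j)$ from $1$-Lipschitz surjectivity, Arzel\`a--Ascoli extraction of a limiting $1$-Lipschitz map, verification of the submetry property by a limit argument, and fiber convergence via the identity $d_{P_j^{-1}(y_j)} = d_{y_j}\circ P_j$. The paper's own fiber argument lifts points using the submetry property directly (choosing $\hat z_j\in P_j^{-1}(y_j)$ at distance $d(P_j(z_j),y_j)$ from $z_j$), which is the same computation you perform through the distance-function identity; the substance and level of generality are identical, your version merely spells out details the paper leaves implicit.
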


\begin{proof}
The uniform compactness of balls of fixed radius around $x_j$ in $X_j$  imply the  uniform compactness of the corresponding balls around $y_j$. Since the maps $P_j$ are $1$-Lipschitz, we can choose a subsequence and assume that $(Y_j,y_j)$ converges to a space $(Y,y)$ and that $P_j$ converges to a map $P$.

  By the definition of Gromov--Hausdorff convergence,  $P$ is a submetry. Clearly, the limit of any sequence of points in $P^{-1} (y_j)$  is contained in $P^{-1} (y)$. On the other hand,
any  $z \in f^{-1} (y)$ is a limit of a sequence of points
$z_j \in X_j$ such that  $P_j(z_j)$ converges to $y$.  Consider
 $\hat z_j \in P^{-1}(y_j)$, with $d(\hat z_j, z) = d(y_j,y)$
  and observe that $\hat z_j$ converge to $z$.
\end{proof}

Readers familiar with the ultralimits  will easily verify the more general statement that any ultralimit of submetries is a submetry.

%\subsection{Examples}

Basic examples of submetries were mentioned in the introduction:
%We collect here a set ob basic examples of submetries.

%\begin{ex}
%Any metric space $X$ admits two trivial submetries: the identity $f:X\to X$  and the constant map to a one-point space $Y=\{0\}$.
%\end{ex}

%Any metric space $X$ admits a trivial submetry $f:X\to \{ 0\}$.

%If a pair of subsets $A_1,A_2$  of $X$ are equidistant then so are the closures of the subsets.

\begin{ex}
	For any isometric  action of a group $G$   on a metric space $X$
	the orbits are pairwise equidistant. Thus, the closures of the orbits  of $G$ define an equidistant decomposition of $X$ in closed subsets  and, therefore, a submetry onto the quotient space.
\end{ex}

The properties  of  isometric actions of a closed Lie group on a Riemannian manifold is a classical object of investigations \cite{Bredon}, see also  \cite{Guijarro-2}, \cite{Harvey}
for similar results on Alexandrov spaces. The present paper aims at
the generalization of the starting points of the  theory to the non-homogeneous setting.

\begin{ex}
The leaves of  a singular Riemannian foliation $\mathcal F$ of any complete Riemannian manifold $M$  are equidistant, \cite{Molino}. If all leaves are closed then $\mathcal F$  defines a submetry
 with total space $M$.
 %We refer to \cite{Rade}, for many such submetries,  which do not come from an isometric group action.
\end{ex}

\subsection{Localization, horizontal lifts of  curves and globalization} \label{subsec: loc}
Since we would like  to restrict submetries to open subsets, we localize the definition of a submetry.

\begin{defn} \label{defn: loc}
	Let $P:X\to Y$ be a map between  metric spaces.  We say that $P$ is a local submetry if for any point $x\in X $ there exists some $r>0$ with the following property.
	For any point $x' \in B_r (x)$ and any $s<r-d(x,x')$ we have $P(B_s(x'))=B_s(P(x'))$.
\end{defn}

While a restriction of a submetry $P:X\to Y$ to an open subset $U$ of $X$ { is rarely} a submetry, it is always a local submetry.
%\footnote{\blue A: The former Lemma 2.8 i now just text}

	Any local submetry $P:X\to Y$ is an open map which is locally $1$-Lipschitz. In particular, $P$ does not increase length of curves and
	Hausdorff dimension of subsets.

	For a local submetry $P:X\to Y$ we call a rectifiable curve $\gamma :I\to X$ \emph{horizontal} (with respect to $P$) if $\ell (\gamma)=\ell (P\circ \gamma)$.   In this case, we call $\gamma$  a \emph{horizontal lift} of $P\circ \gamma$.

 Let $I\subset \R$ be an interval.  A locally Lipschitz curve $\gamma :I\to X$ is horizontal if and only if for almost all $t\in I$ the velocities $| \gamma '(t)|$  and
$|(P\circ \gamma )'(t)|$ coincide.

For $1$-Lipschitz curves
the following Lemma is the special case of \cite[Lemma 4.4]{Lyt-open}.
For general rectifiable curves, the result  follows after a reparametrization of the curve by arclength.

\begin{lem} \label{lem: lift}
	Let $P:X\to Y$ be a local submetry. Assume that for some $x\in X$ and $r>0$, the closed ball $\bar B_r (x)$ is compact.  Then, for any
	curve $\eta :I\to Y$ of length  at most $ r$ which starts at $P(x)$, there exists a horizontal lift of $\eta$ starting at $x$.
\end{lem}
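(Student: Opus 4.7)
\emph{Plan.} First I would reduce to the case in which $\eta$ is parametrized by arclength, so $\eta:[0,L]\to Y$ with $L=\ell(\eta)\le r$. It then suffices to produce a $1$-Lipschitz $\gamma:[0,L]\to X$ with $\gamma(0)=x$ and $P\circ\gamma=\eta$, since the inequalities $L=\ell(P\circ\gamma)\le\ell(\gamma)\le L$ force $\gamma$ to be horizontal. Next I would use compactness of $K:=\bar B_r(x)$ to promote the pointwise local submetry radius of \dref{defn: loc} to a uniform constant $\rho>0$: cover $K$ by finitely many balls of the type appearing in the definition and set $\rho$ to be half the minimum of the associated radii, so that the local submetry property applies at every $z\in K$ with $s\le\rho$. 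The remark following the definition of submetry, combined with properness, then gives the closed-ball version $P(\bar B_s(z))=\bar B_s(P(z))$ for all $z\in K$ and $s\le\rho$.

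With $\rho$ in hand, I would build discrete approximate lifts. Choose partitions $\mathcal P_n=\{0=t^n_0<t^n_1<\dots<t^n_{k_n}=L\}$ with mesh $\mu_n<\rho$ and $\mu_n\to 0$. Set $\gamma_n(t^n_0)=x$; having produced $\gamma_n(t^n_i)\in K$, the uniform closed-ball submetry property together with $d(\eta(t^n_i),\eta(t^n_{i+1}))\le t^n_{i+1}-t^n_i\le\mu_n$ allows us to choose $\gamma_n(t^n_{i+1})$ with $P(\gamma_n(t^n_{i+1}))=\eta(t^n_{i+1})$ and $d(\gamma_n(t^n_i),\gamma_n(t^n_{i+1}))\le t^n_{i+1}-t^n_i$. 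A telescoping estimate gives $d(\gamma_n(t^n_i),x)\le t^n_i\le r$, so the whole sequence of nodes stays in $K$, and on nodes $\gamma_n$ is genuinely $1$-Lipschitz.

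Finally I would extend each $\gamma_n$ to a piecewise constant map on $[0,L]$, fix a countable dense $D\subset[0,L]$, and use compactness of $K$ together with a diagonal argument to extract a subsequence converging pointwise on $D$ to some $g:D\to K$. Since $\gamma_n$ is $1$-Lipschitz up to an error $\mu_n\to 0$, the limit $g$ is $1$-Lipschitz on $D$ and extends uniquely to a $1$-Lipschitz $\gamma:[0,L]\to K$. The identities $P(\gamma_n(t^n_i))=\eta(t^n_i)$, together with continuity of $P$ and $\eta$, give $P\circ\gamma=\eta$ on $D$, hence on $[0,L]$. The only real obstacle is obtaining the uniform lower bound $\rho$ for the local submetry radius along the whole lift; this is precisely where compactness of $\bar B_r(x)$ is used, and once $\rho$ is available the remainder is a standard Arzel\`a--Ascoli-type compactness argument.
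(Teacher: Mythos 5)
The paper does not argue this lemma directly: it reparametrizes by arclength (exactly your first reduction) and then cites Lemma~4.4 of \cite{Lyt-open} for the resulting $1$-Lipschitz curve. Your polygonal-approximation argument is the standard proof of such a lifting statement and is in the same spirit as what underlies the cited result, so the routes are substantively the same; yours has the merit of being self-contained.

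Two small repairs are worth making. First, to extract the uniform radius $\rho$ it is not enough to cover $K=\bar B_r(x)$ by balls $B_{r_z}(z)$ as in \dref{defn: loc} and halve the minimum radius: a point $w\in K$ may sit arbitrarily close to the boundary of its covering ball, in which case the admissible $s<r_z-d(z,w)$ is arbitrarily small. Instead cover $K$ by the shrunken balls $B_{r_z/3}(z)$, extract a finite subcover with centers $z_1,\dots,z_N$, and set $\rho=\min_i r_{z_i}/3$; then each $w\in K$ lies in some $B_{r_{z_i}/3}(z_i)$, and $P(B_s(w))=B_s(P(w))$ holds for all $s<2r_{z_i}/3$, hence in particular for all $s\le\rho$. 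Second, the appeal to the remark after the definition of submetry is not quite on point: that remark concerns global submetries on proper spaces, not the local setting. What you actually need is that, with $z=\gamma_n(t^n_i)\in K$ and $s=t^n_{i+1}-t^n_i\le\mu_n<\rho$ (so that $\bar B_s(z)\subset K$ by your telescoping bound), the point $\eta(t^n_{i+1})$ lies in $P(\bar B_s(z))$. This follows by lifting the intermediate points of the path: for each $\tau<s$ one has $\eta(t^n_i+\tau)\in B_s(P(z))=P(B_s(z))$, so one can choose $w_\tau\in B_s(z)$ with $P(w_\tau)=\eta(t^n_i+\tau)$; a subsequential limit $w$ of $w_\tau$ as $\tau\nearrow s$ lies in the compact set $\bar B_s(z)$ and, by continuity of $P$ on $K$ (local $1$-Lipschitzness) and of $\eta$, satisfies $P(w)=\eta(t^n_{i+1})$ and $d(z,w)\le s$. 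This also covers the boundary case $t^n_{i+1}=L=r$, where one cannot enlarge the ball without leaving $K$. With these adjustments the argument is complete.
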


In particular, for a local submetry $P:X\to Y$ between proper spaces, any rectifiable curve in $Y$ admits a horziontal lift with the prescribed lift of a starting point.   From  Lemma  \ref{lem: lift} we deduce the following local-to-global property:

\begin{cor} \label{cor: locglob}
	Let $P:X\to Y$ be a local submetry between length spaces. {  Assume that for some $x\in X$ and $r>0$, the closed ball $\bar B_r (x)$ is compact.
	% Let $x\in X$ be arbitrary. If the closed ball $\bar B_r(x)$ in $X$ is compact,
	Then $P(B_s(x)) =B_s(P(x))$, for any $s\leq r$.}
	
	Moreover,  equality \eqref{eq: dA} holds on $B_{\frac {r} {3}} (x)$ for any subset $A\subset Y$ with non-empty { intersection} $A\cap B_{\frac {r} 3} (P(x))$.
	
	A local submetry between proper length spaces is a submetry.
\end{cor}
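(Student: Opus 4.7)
The plan is to reduce the three assertions to a single mechanism: combining local $1$-Lipschitzness of $P$ (which together with the length-space assumption gives global $1$-Lipschitzness) with the horizontal lifting provided by Lemma~\ref{lem: lift}. The inclusion $P(B_s(x))\subseteq B_s(P(x))$ in the first assertion and the inequality $d_{P^{-1}(A)}\geq d_A\circ P$ in the second will follow for free from this $1$-Lipschitz property. The content is the reverse inclusion/inequality, and the only real tool for it is Lemma~\ref{lem: lift}; the game is to verify that the compactness assumption of that lemma is available where we need it.

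First I would prove (1). Given $y\in B_s(P(y_0))$ with $y_0:=P(x)$ and $s\leq r$, pick a curve $\eta$ in $Y$ from $y_0$ to $y$ of length $\ell(\eta)<s\leq r$ using that $Y$ is a length space. Since $\bar B_r(x)$ is compact, Lemma~\ref{lem: lift} produces a horizontal lift $\tilde\eta$ of $\eta$ starting at $x$; its endpoint $x'$ satisfies $P(x')=y$ and $d(x,x')\leq\ell(\tilde\eta)=\ell(\eta)<s$, so $x'\in B_s(x)$. Statement (3) falls out immediately: in a proper space every closed ball is compact, so (1) applies to every $x$ and every $s>0$, and one obtains $P(B_s(x))=B_s(P(x))$ without restriction on $s$.

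For (2), the non-trivial direction is $d_{P^{-1}(A)}(x')\leq d_A(P(x'))$ for $x'\in B_{r/3}(x)$. The $r/3$ is there exactly to guarantee that around $x'$ we still have a usable compact ball: from $d(x',x)<r/3$ together with the existence of $a\in A\cap B_{r/3}(P(x))$ one computes $d_A(P(x'))<2r/3$, and $\bar B_{2r/3}(x')\subset \bar B_r(x)$ is compact. For any $\delta>0$, choose $a\in A$ with $d(P(x'),a)<d_A(P(x'))+\delta$, still less than $2r/3$, and a curve $\eta$ in $Y$ from $P(x')$ to $a$ of length at most $2r/3$. Lifting horizontally via Lemma~\ref{lem: lift} based at $x'$ produces a point in $P^{-1}(a)\subset P^{-1}(A)$ at distance at most $\ell(\eta)<d_A(P(x'))+2\delta$ from $x'$. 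Letting $\delta\to 0$ yields the desired inequality.

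The main obstacle, and essentially the only subtlety, is bookkeeping for the constants in (2): Lemma~\ref{lem: lift} only lifts curves whose \emph{length} is bounded by the compactness radius of a single closed ball in $X$, whereas a priori the distance-minimizing curves to $A$ in $Y$ from various $x'\in B_{r/3}(x)$ could be long. The factor $1/3$ is precisely what absorbs the two triangle-inequality jumps (from $x$ to $x'$ and from $P(x)$ to the witness $a\in A$) so that everything stays inside a compact neighborhood where Lemma~\ref{lem: lift} applies. Once this is set up correctly, the three assertions follow in one stroke.
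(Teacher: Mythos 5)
Your proof is correct and is essentially the intended argument: the paper supplies no explicit proof of this corollary, remarking only that it is deduced from Lemma~\ref{lem: lift}, and your use of horizontal lifting of near-minimizing curves (with the $1$-Lipschitz direction upgraded from local to global via the length-space hypothesis, and the factor $1/3$ absorbing the two triangle-inequality jumps so that the lift stays inside the compact ball $\bar B_r(x)$) is exactly that deduction.
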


In fact, \cite[Proposition 4.3]{Lyt-open} shows that the property of being a submetry can be recognized not only locally, but infinitesimally.

Another  consequence of Lemma  \ref{lem: lift} is the following
statement that  allows us to replace the induced metric on a subset by the intrinsic one:

\begin{cor}  \label{cor: length}
	Let $X$ be locally compact and $P:X\to Y$ be a local submetry.
	 If  any pair of point in $X$ is connected by a rectifiable curve, then equipping $X$ and $Z=P(X)$ with their induced length metrics $d^X$ and $d^Z$, we obtain a local submetry
	 $P:(X, d^X) \to (Z,d^Z)$.
	 	 %any pair of point in $Z:=P(X)$ is connected by a rectifiable %curve and the }the induced length metric  $d ^X $ on $X$ is finite then the induced length metric $d^Z $ on  the image $Z=P(X)$ is finite.     If, in addition, $X$ is locally compact, then
	%the map $P:(X, d^X) \to (Z,d^Z)$ is a local submetry.
	\end{cor}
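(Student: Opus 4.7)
The plan is to verify the defining property of a local submetry from Definition \ref{defn: loc} for the map $P\co (X, d^X) \to (Z, d^Z)$, using two facts about local submetries already recorded in the excerpt: that $P$ does not increase the length of curves (a consequence of $P$ being locally $1$-Lipschitz), and that rectifiable curves in the base admit horizontal lifts of equal length (Lemma \ref{lem: lift}). The induced length metric $d^Z$ on $Z$ is well-defined because every rectifiable curve in $X$ projects to a rectifiable curve in $Z$ of no greater length, so any two points of $Z$ are connected by such a curve.

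First I would fix $x \in X$ and, using local compactness of $X$, choose $r > 0$ small enough that the closed $d$-ball $\bar B_r(x)$ is compact and that the local submetry property of Definition \ref{defn: loc} holds at $x$ with this radius $r$. Since $d \leq d^X$, every $x' \in B_r^{d^X}(x)$ automatically lies in $B_r^d(x)$, and the closed $d$-ball $\bar B_{r - d(x, x')}(x')$ is a closed subset of the compact set $\bar B_r(x)$, hence compact. This compactness is precisely what Lemma \ref{lem: lift} requires at $x'$.

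Next, fix such an $x'$ and some $s < r - d^X(x, x')$; the goal is $P(B_s^{d^X}(x')) = B_s^{d^Z}(P(x'))$. The forward inclusion is immediate: if $x_1 \in B_s^{d^X}(x')$, a rectifiable curve in $X$ from $x'$ to $x_1$ of length $< s$ projects under $P$ to a curve in $Z$ from $P(x')$ to $P(x_1)$ of length $< s$, so $d^Z(P(x'), P(x_1)) < s$. For the reverse inclusion, given $y \in B_s^{d^Z}(P(x'))$, pick a rectifiable curve $\eta$ in $Z \subseteq Y$ from $P(x')$ to $y$ with $\ell(\eta) < s < r - d(x, x')$. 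The compactness observation above lets Lemma \ref{lem: lift} produce a horizontal lift $\tilde\eta$ of $\eta$ starting at $x'$, whose endpoint $x_1$ satisfies $P(x_1) = y$ and $d^X(x', x_1) \leq \ell(\tilde\eta) = \ell(\eta) < s$.

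The argument is essentially bookkeeping among three distances on the same underlying sets ($d$ and $d^X$ on $X$, and $d$ and $d^Z$ on $Z$); the one substantive ingredient is the horizontal lifting lemma. The main obstacle to watch for is ensuring that, although the relevant radii $s$ are measured in the length metrics, the lift takes place inside a set that is compact for the \emph{original} metric on $X$, so that Lemma \ref{lem: lift} actually applies. The elementary inequality $d \leq d^X$ disposes of this cleanly.
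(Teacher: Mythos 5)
Your proof is correct and follows exactly the route the paper signals (it states the corollary as an immediate consequence of Lemma \ref{lem: lift} with no further argument): the forward inclusion follows from $P$ being length non-increasing, the backward inclusion from horizontal lifting, and the only point that needs care — checking compactness of the relevant $d$-ball around $x'$ so that Lemma \ref{lem: lift} applies — you handle cleanly via $d\le d^X$ and the containment $\bar B_{r-d(x,x')}(x')\subset \bar B_r(x)$.
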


\subsection{Gradient curves and  submetries} \label{subsec: Ambrosio}
We refer to  \cite[Chapter 2]{Ambrosio}, \cite{AGS} for
gradient curves of general functions in general metric spaces and
to \cite{Petrunin-semi}, for the case of semiconcave functions in Alexandrov spaces, important for us.

Recall that for a locally Lipschitz function $g:Z\to \R$ on a metric space $Z$
%Let $U$ be an open subset of $X$ or $Y$. Let $f:U\to \R$ be  locally Lipschitz continuous.
the \emph{ascending slope}   of $g$ at a point $x$ is defined as
$$|\nabla ^+ g| (x)=\limsup _{y\to x} \frac {\max \{g(y)-g(x), 0 \} }  {d(x,y)}   \in [0,\infty )\, .$$
A  locally Lipschitz continuous curve $\gamma :[0, t )\to Z$ is called  a gradient curve of $g$ starting
at $x=\gamma (0)$  if for almost all  $s\in [0,t)$
$$|\gamma '(s)| =|\nabla ^+ g | (\gamma (s))  \; \;  \text{and}   \; \;(g\circ \gamma )' (s)= |\nabla ^+ g |^2 (\gamma (s)) \;.$$
Let now $P:X\to Y$ be a local submetry, let $g:Y\to \R$ be a locally Lipschitz  function. Then $P\circ g$ is locally Lipschitz
and, for all $x\in X$,  $$|\nabla ^+ (P\circ g ) | (x) =|\nabla ^+ g| (P(x)) \;.$$

It follows from the definition of  gradient curves that
a locally Lipschitz curve $\gamma :[0,t) \to X$ is a gradient curve of $P\circ g$ if and only if  $\gamma$ is a horizontal curve and
$P\circ \gamma$ is a gradient curve of $ g$.

\section{Submetries and lower curvature bounds} \label{sec: alex}
\subsection{Alexandrov spaces, Alexandrov regions} 
{ Following the notation in \cite{AKP}, for  $\kappa \in \R$  and
 points $x,y,z$ in a metric space $X$, we denote  by $\tilde \angle _{\kappa} (x_y ^z)$ \emph{the $\kappa$-comparison angle at $x$}, whenever it is defined.} %Thus, $\tilde \angle _{\kappa} (x_y ^z)$ is the angle at $\bar x$ in the  simply connected surface $M^2_{\kappa}$ constant curvature $\kappa$ of a triangle
%$\bar x,\bar y, \bar z$ with the same side-lengths as $x,y,z$.

 The metric space $X$ is $CBB(\kappa)$ if for any
 $p,x,y,z \in X$ the following inequality holds true, whenever all $\kappa$-comparison angles are defined:
 $$  \tilde \angle _{\kappa} (p_x ^y) +  \tilde \angle _{\kappa} (p_y ^z)  + \tilde \angle _{\kappa} (p_z ^x)   \leq 2\pi    \;.   $$
 An \emph{Alexandrov space of curvature $\geq \kappa$} is  a complete  length space of finite Hausdorff dimension which is $CBB(\kappa)$. Any such space is   proper,  \cite{BGP}, in particular it is geodesic.
 For an extensive literature on such spaces see \cite{BGP},
  \cite{Petrunin-semi}, \cite{AKP} and the bibliography therein.
  We will assume some familiarity with the theory of Alexandrov spaces.

   A metric space $X$ is an \emph{Alexandrov region}  if it is a length space of finite Hausdorff dimension in which every point $x$ has a $CBB(\kappa)$ neighborhood, with $\kappa$  possibly depending on $x$.

   For instance, any smooth Riemannian manifold is an Alexandrov region. Due to \cite{Leb-Nep}, a length space $X$ is an Alexandrov region if and only if every  point
  $x\in X$ admits  a compact neighborhood $U$ of $x$ in $X$ which is an Alexandrov space.
  %(respectively an Alexandrov space of curvature $\geq \kappa$.)
 % For instance, any smooth Riemannian manifold is an Alexandrov region.
  %Any Alexandrov space is an Alexandrov region, \cite{Petrunin-semi}; in fact,
 % compact convex  neighborhoods  can be found in any small ball.

   %\begin{rem}
  %	Due to \cite{Nepechiy}, for any Alexandrov space $X$ small compact convex   neighborhoods $U$ as in the definition can be in fact chosen very close
  %\end{rem}

  %An Alexandrov region $X$ of curvature $\geq \kappa$  is an Alexandrov space if and only if $X$  is complete. Due to \cite{Leb-Nep}, a locally compact length space $X$ is an Alexandrov region of curvature $\geq \kappa$ if and only if
  %$X$ is locally $CBB(\kappa)$ and  $X$ has finite Hausdorff dimension.

  \subsection{Basic geometric objects in Alexandrov spaces and regions}
  For any point $x$ in an Alexandrov space $X$ (and therefore in an Alexandrov region $X$) we have a well-defined \emph{tangent space} $T_xX$.
  This tangent space is a Euclidean cone  with vertex $0_x$ over the \emph{space of directions}
  $\Sigma _x X$.

We refer to \cite{Petrunin-semi} for a detailed discussion of semiconcave functions on Alexandrov spaces and their gradient flows.
We will only use the following facts. For any (in the sequel always locally Lipschitz continuous) semiconcave function $g:U\to \R$ on an   Alexandrov region $U$, there is a unique maximal
gradient curve starting at any point of $U$.  {  The local gradient flow $\Phi _t $ is locally Lipschitz continuous on $U$. }
% More precisely,
%The local Lipschitz constants of $\Phi _t$  depend only on the time $t$ and the semiconcavity  constant  of $g$: {\red
% if $g$ is $\lambda$-concave then $\Phi_t$ is locally $e^{\lambda t}$-Lipschitz.}

%For any sequence of subsets $A_1,...,A_k$ of an Alexandrov space $Z$   and any function
%$\Theta :\R^k  \to \R$ which is semiconcave and non-decreasing in each argument, the function
%$ q_{\Theta, A_1,...,A_k} := \Theta (d_{A_i}^2,....,d_{A_k}^2) :Z\to \R$ is semi-concave.  We will call any such function \emph{special}.

A  subset $E$ of an Alexandrov space $Z$ is called
an \emph{extremal subset} if   it is invariant under the gradient flow of any
 semiconcave function, \cite{Petrunin-semi}, \cite{Pet-Per}.  Equivalently, $E$ is extremal if it is invariant under the gradient flows of all functions  $d_q^2$, $q\in Z$.
     The same definition  provides a notion of an extremal subset in an Alexandrov region.
% {\blue One can show
%that  a closed subset $E$ of an Alexandrov space (region) $X$ is extremal if and only if for any $x\in E$ the \emph{tangent space} $T_xE$ is an extremal subset of $T_xX$.}

 The \emph{boundary} $\partial X$ of an Alexandrov region   is defined inductively on dimension as the set of all points, for which $\Sigma_xX$ has non-empty boundary. {  The boundary  $\partial X$ is an extremal subset of $X$ and any extremal subset is closed in $X$, \cite{Pet-Per}.}
% and is equal to
%the union of all extremal subsets of $X$ of codimension one \cite{P2,Pet-Per}.}%; equivalently, it is the set of all points, for which $T_xX$ has non-empty boundary.

A \emph{quasigeodesic} in an Alexandrov region $X$  is a  curve $\gamma :I\to X$ parametrized by arclength  such that,  for any $t\in I$,  we have
the  following  inequality for $q\in X$ converging to $\gamma (t)$:
\begin{equation} \label{eq: quasi}
(\frac 1 2 (d_q  \circ \gamma)^2) ''  (t)\leq 1 + o (d(q,\gamma (t)).
\end{equation}
 This is equivalent to the more common definition, \cite[1.7]{PP-quasi}, that
the restriction of distance functions to all points is  as concave as
the restriction of a distance function to a geodesic in the comparison space.
Any \emph{local geodesic} in an Alexandrov region is a quasigeodesic,  but in general
quasigeodesics   can be  much more complicated.
We refer to \cite{Petrunin-semi} and \cite{PP-quasi} for the theory of quasigeodesics
in Alexandrov spaces.

\subsection{Local submetries preserve lower curvature bounds}
The following result is essentially contained in \cite{BGP}.
% \cite{AKP}.
\begin{prop} \label{prop: cbb}
	Let $P:X\to Y$ be a surjective local submetry. If $X$
 is an Alexandrov region (of curvature $\geq \kappa$) then $Y$ is
 an Alexandrov region (of curvature $\geq \kappa$), if we equip $Y$ with the induced length metric.

  If $P:X\to Y$ is a submetry and $X$ is an Alexandrov space of curvature $\geq
  \kappa$ then so is $Y$.
\end{prop}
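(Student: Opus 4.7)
The plan is to lift the four-point inequality defining $CBB(\kappa)$ from $X$ to $Y$, using the submetry property to realise point lifts at the correct distances and using that $P$ is $1$-Lipschitz to transport comparison-angle inequalities in the right direction. Since an Alexandrov region is defined as a length space in which every point has a $CBB(\kappa)$ neighborhood, it suffices to verify the four-point condition in a small neighborhood of each point $p\in Y$; in the global submetry case, where $X$ is a genuine Alexandrov space, the same argument will run at every scale.

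The key auxiliary step is a pointwise lifting lemma: given $\tilde p\in X$ with $P(\tilde p)=p$ and any $x\in Y$ sufficiently close to $p$, there exists $\tilde x\in X$ with $P(\tilde x)=x$ and $d(\tilde p,\tilde x)=d(p,x)$. In the global submetry case, properness of $X$ yields $P(\bar B_s(\tilde p))=\bar B_s(p)$ for every $s$, and combined with the $1$-Lipschitz property this forces equality of distances for any preimage chosen in $\bar B_s(\tilde p)\cap P^{-1}(x)$. For a local submetry, the same lifting is available inside a ball of controlled radius by Corollary \ref{cor: locglob}, after replacing the subspace metric on $Y$ by the induced length metric as in Corollary \ref{cor: length}.

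Given four points $p,x,y,z$ in $Y$ (taken inside the small ball in the local case), I fix an arbitrary lift $\tilde p$ and produce lifts $\tilde x,\tilde y,\tilde z$ with $d(\tilde p,\tilde a)=d(p,a)$ for each $a\in\{x,y,z\}$. Since $P$ is $1$-Lipschitz, $d(\tilde a,\tilde b)\geq d(a,b)$ for each pair, and since $\tilde\angle_\kappa$ is monotone nondecreasing in the opposite side with the two adjacent sides fixed, one obtains
\[
\tilde\angle_\kappa(p_a^b)\leq\tilde\angle_\kappa(\tilde{p}_{\tilde{a}}^{\tilde{b}})
\]
whenever both angles are defined. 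Summing over the three pairs appearing in the four-point condition and applying $CBB(\kappa)$ at $\tilde p$ in $X$ transports the inequality to $p$ in $Y$. Finite Hausdorff dimension of $Y$ is inherited via $P$, and the length-space property follows from Lemma \ref{lem: inherit} in the global submetry case and by construction in the local case. The main technical care, and really the only obstacle, is the $\kappa>0$ regime, where $\kappa$-comparison triangles require a perimeter bound: because lifting can only increase pairwise distances, one has to restrict to neighborhoods small enough that the lifted perimeter in $X$ stays below $2\pi/\sqrt{\kappa}$, which is available by local compactness of $X$.
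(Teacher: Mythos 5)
Your proof is correct and follows essentially the same strategy as the paper: lift the four points so that the two adjacent sides of each comparison triangle are preserved, use the $1$-Lipschitz property of $P$ to see that opposite sides can only grow, invoke monotonicity of $\tilde\angle_\kappa$, and sum. The only cosmetic difference is that the paper anchors at a fixed $x\in P^{-1}(y)$ and first chooses $\bar p$ at distance $d(y,p)$ from $x$ (to guarantee all lifts stay inside the compact $CBB(\kappa)$ ball $\bar B_{3r}(x)$), while you fix an arbitrary lift $\tilde p$ and control the radius afterwards; both bookkeeping choices work. Your explicit remark on the $\kappa>0$ perimeter bound is a reasonable point the paper leaves implicit, and your handling of the global case by running the argument at all scales is equivalent to the paper's appeal to Toponogov globalization.
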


\begin{proof}
 Due to Corollary \ref{cor: length}, we may assume that $Y$ is a length space.   For any  $y\in Y$, choose an arbitrary $x\in P^{-1} (y)$.  Find   $r>0$  such that $\bar B_{3r}(x)$ is compact
 and $CBB(\kappa)$.
 % moreover we may assume that $6r$ is smaller than  the diameter of $M^2 _{\kappa}$.
  We claim that $B_ r  (y)$ is $CBB(\kappa)$.  Indeed, for any $p,y_1,y_2,y_3 \in  B_r (y)$
 we consider any $\bar p \in P^{-1}(p)$  and then $\bar y_j \in P^{-1} (y_j)$ such that $d(x,\bar p)=d(y,p)$ and $d(\bar p, \bar y_j) = d(p,y_j)$.

 Then $d(\bar y_i,\bar y_j ) \geq d(y_i,y_j)$, Thus,
 the $\kappa$-comparison angles at $\bar p$ are not smaller than the corresponding $\kappa$-comparison angles at $p$.  Therefore,
  $$ \sum _{i=1} ^3 \tilde \angle _{\kappa} (p_{y_i} ^{y_{i+1}})   \leq
  \sum _{I=1} ^3 \tilde \angle _{\kappa} (p_{\bar y_i} ^{\bar y_{i+1}})	   \leq 2\pi    \;.   $$
 Since local submetries do not increase the Hausdorff dimension, $Y$ is a finite-dimensional, locally compact, length space which is locally $CBB(\kappa)$.  By \cite{Leb-Nep},  $Y$ is an Alexandrov region.

 The global statement follows by Toponogov's globalization, in fact, it is already  contained in \cite{BGP}.
\end{proof}

 \subsection{Lifts and images  of geodesics} \label{subsec: liftgeod}
 Let $P:X\to Y$ be a local submetry between Alexandrov regions. { We call
 a curve $\gamma :[a,b]\to X$ a \emph{$P$-minimal geodesic} if $\gamma$ is parametrized by arclength and $$b-a= d(P(\gamma (a)), P(\gamma (b))) \;.$$
 If $\gamma$ is a $P$-minimal geodesic, then $\gamma$ is a geodesic in $X$, $P\circ \gamma$ is a geodesic in $Y$ and $\gamma$ is a horizontal lift of $P\circ \gamma$.
 On the other hand, any horizontal lift $\gamma$ of a geodesic $\hat \gamma :[a,b]\to Y$  is a $P$-minimal geodesic.

The image of a horizontal geodesic $\gamma $ in $X$ under a submetry does not need to be a  geodesic.
%(see e.g.  \cite[Example 3.1]{Guijarro}).
However,   cf. [Proposition 4]\cite{Guijarro}:}
\begin{prop} \label{prop: image}
Let $P:X\to Y$ be a local submetry between Alexandrov regions.
Let $\gamma:I\to X$ be a geodesic. Then $\gamma $ is horizontal if and only if
the composition $P\circ \gamma :I\to Y$ is a quasigeodesic.
\end{prop}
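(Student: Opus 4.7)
The ``$\Leftarrow$'' direction is immediate: every quasigeodesic is parametrized by arclength by definition, and so is the geodesic $\gamma$; hence $\ell(\gamma|_{[a,b]}) = b-a = \ell(P\circ\gamma|_{[a,b]})$ on every subinterval $[a,b] \subset I$, which is the horizontality condition $\ell(\gamma) = \ell(P\circ\gamma)$.

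For the forward direction I would set $\eta := P\circ\gamma$. Horizontality together with $|\gamma'|\equiv 1$ makes $\eta$ unit-speed, verifying the arclength condition in the definition of a quasigeodesic. The content is the second-order inequality \eqref{eq: quasi}. Fix $t_0 \in I$ and let $q \in Y$ vary near $\eta(t_0)$; set $A := P^{-1}(q)$, a non-empty closed subset of $X$. From \eqref{eq: dA}, $d_q\circ P = d_A$, so $d_q\circ\eta = d_A\circ\gamma$, and I must show
\[
\bigl(\tfrac12 (d_A\circ\gamma)^2\bigr)''(t_0) \;\le\; 1 + o\bigl(d_A(\gamma(t_0))\bigr)
\]
interpreted in the barrier (upper second-difference-quotient) sense customary for semiconcave functions on Alexandrov spaces.

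The key step is to reduce distance-to-a-set to distance-to-a-point. Since $d_A(\gamma(t_0)) = d(q,\eta(t_0))$ is small for $q$ close to $\eta(t_0)$, local compactness of the Alexandrov region $X$ together with closedness of $A$ furnishes a foot point $p \in A$ with $d(p,\gamma(t_0)) = d_A(\gamma(t_0))$. Then $d_A \le d_p$ globally with equality at $\gamma(t_0)$, so $\tfrac12 (d_p\circ\gamma)^2$ is an upper barrier for $\tfrac12 (d_A\circ\gamma)^2$ at $t_0$, and the barrier upper second derivative of the former dominates that of the latter. Because $\gamma$ is a genuine geodesic in the $CBB(\kappa)$ space $X$, the comparison inequality for the distance function from $p$ along a geodesic (i.e.\ the quasigeodesic inequality for $\gamma$ against $d_p$, cf.\ the discussion preceding \eqref{eq: quasi}) yields
\[
\bigl(\tfrac12 (d_p\circ\gamma)^2\bigr)''(t_0) \;\le\; 1 + O(|\kappa|)\, d(p,\gamma(t_0))^2 \;=\; 1 + o\bigl(d_A(\gamma(t_0))\bigr),
\]
which closes the estimate for $\tfrac12(d_A\circ\gamma)^2$ and therefore for $\tfrac12(d_q\circ\eta)^2$.

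The main technical point is the passage from the pointwise barrier inequality $d_A \le d_p$ touching at $\gamma(t_0)$ to an inequality of barrier second derivatives; this is a standard device in Alexandrov geometry (cf.\ \cite{Petrunin-semi}, \cite{PP-quasi}) reducing semiconcavity estimates for distances to closed subsets to those for distances to points. A minor subtlety, not really an obstacle, is the possible non-uniqueness of the foot point $p$, but any choice supplies a valid upper barrier and the estimate above is independent of the choice.
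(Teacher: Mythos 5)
Your argument is correct and follows the same route as the paper's (very terse) proof: the reverse direction is the arclength observation, and the forward direction uses equality~\eqref{eq: dA} to rewrite $d_q\circ(P\circ\gamma)$ as $d_{P^{-1}(q)}\circ\gamma$ and then the fact that geodesics in Alexandrov regions are quasigeodesics. The foot-point reduction from $d_A$ to $d_p$ that you spell out is exactly the standard unpacking the authors leave implicit when they say the estimate ``follows from \eqref{eq: dA} and Corollary~\ref{cor: locglob}.''
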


\begin{proof}
If $P\circ \gamma$ is a quasigeodesic, then it is parametrized by arclength. Hence $\gamma$ must be a horizontal in this case.

If $\gamma$ is horizontal then $P\circ \gamma$ is parametrized by arclength and the property \eqref{eq: quasi}
follows from    the equality   \eqref{eq: dA}  and  Corollary \ref{cor: locglob}.
\end{proof}

 \subsection{Differentiability}
 The following result in the case of submetries can be found in \cite[Proposition 11.3]{diff}. For local submetries, the differentiability  is a direct
 consequence of  Corollary \ref{cor: locglob} and
 \cite{diff}.
 %The statement that the  differential is a submetry is a consequence of Lemma \ref{lem: stable}.

 \begin{prop} \label{prop: differ}
 	Any local   submetry $P:X\to Y$ between Alexandrov regions
 	is differentiable at each point $x\in X$.   In other words, for $y=P(x)$, there exists a map $D_xP:T_xX\to T_yY$,
 	such that
 	for every sequence $t_i\to 0$, the
 	submetry $P$ seen as a map between rescaled spaces $P: (\frac 1 {t_i}X,x) \to (\frac 1 {t_i} Y,y)$
 	converge to the map $D_xP$.
 	
 	The map $D_xP:T_xX \to T_yY$ is a submetry and commutes with natural dilations of the Euclidean cones $T_xX$ and $T_yY$.
 \end{prop}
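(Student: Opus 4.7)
The plan is to combine the stability of submetries under Gromov--Hausdorff convergence (Lemma \ref{lem: stable}) with the fact that a pointed Alexandrov region, blown up at a point, converges to its tangent cone, and then to reduce the local submetry case to the genuine submetry case already treated in \cite{diff}. First, I would choose $r>0$ so small that $\bar B_r(x)$ is compact and $CBB(\kappa)$ for some $\kappa$, and so that, by Corollary \ref{cor: locglob}, $P(B_s(x'))=B_s(P(x'))$ for every $x'\in B_{r/3}(x)$ and $s\leq r/3$. On such a neighborhood $P$ behaves as a genuine submetry onto its image, and by Corollary \ref{cor: length} we may equip domain and image with their induced length metrics, landing in a proper Alexandrov space setting by Proposition \ref{prop: cbb}.

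Next, for any sequence $t_i\to 0$, I would consider the rescaled pointed spaces $(\tfrac{1}{t_i}X,x)$ and $(\tfrac{1}{t_i}Y,y)$, which converge in the pointed Gromov--Hausdorff sense to the tangent cones $(T_xX,0_x)$ and $(T_yY,0_y)$. The same underlying map defines rescaled local submetries $P_i\colon(\tfrac{1}{t_i}X,x)\to(\tfrac{1}{t_i}Y,y)$, each of which is a genuine submetry on the ball of radius $r/(3t_i)\to\infty$ around the basepoint. Applying Lemma \ref{lem: stable} to the restrictions on balls of increasing radius and taking a diagonal subsequence, one extracts a subsequential limit $D\colon(T_xX,0_x)\to(T_yY,0_y)$ that is itself a submetry.

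The essential remaining point is that this limit does not depend on $t_i$ or on the extracted subsequence; here I would invoke \cite[Proposition 11.3]{diff}, which provides the uniqueness of the differential for a submetry between Alexandrov spaces. Since each rescaled map $P_i$ on a ball is such a submetry, every subsequential limit must agree with the canonical differential $D_xP$ produced there, giving genuine convergence for all $t_i\to 0$. Homogeneity under the natural dilations of the Euclidean cones is then automatic: replacing $t_i$ by $\lambda t_i$ corresponds, on source and target, to composition with the dilation by $\lambda$, so $D_xP$ intertwines these dilations. I expect the main technical obstacle to be verifying that the uniqueness statement of \cite{diff}, formulated for submetries, transfers without loss to our local submetries; this is precisely what Corollary \ref{cor: locglob} is designed to ensure, so the reduction should be clean.
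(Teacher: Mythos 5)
Your proposal is correct and follows essentially the same route as the paper, which states that for local submetries the differentiability is a direct consequence of Corollary \ref{cor: locglob} and \cite[Proposition 11.3]{diff}. You have simply filled in the intermediate details (localization to a compact ball, passing to induced length metrics via Corollary \ref{cor: length}, subsequential limits via Lemma \ref{lem: stable}, and the dilation argument for homogeneity) that the paper's terse justification leaves implicit.
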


 %The map $D_xP:T_xX\to T_yY$ is automatically a submetry, which moreover commutes with canonical dilatations of the Euclidean cones $T_xX$ and $T_yY$.
 By definition, for any curve  $\gamma :[0,\epsilon) \to X$ starting in $x$ in the direction $v\in T_xX$ the curve $P\circ \gamma$ starts  in the direction
 $D_xP (v)$.

 For submetries the following result is a direct consequence of
 Lemma \ref{lem: stable}.
 %we deduce the following result for submetries.
  For local submetries the statement follows, by  adapting  the proof of Lemma \ref{lem: stable}:

 \begin{cor} \label{cor: differential}
 	Let $P:X\to Y$ be a local submetry between Alexandrov regions. Consider
 	$x\in X$ and $y=P(x)$ and the fiber $L=P^{-1} (y)$.  Then, under the Gromov--Hausdorff convergence $(\frac 1 t X, x) \to T_x X$ the
 	sets $(\frac 1 t L,x) \subset  (\frac 1 t X, x)$ converge for $t\to 0$ to the fiber $D_xP ^{-1} (0_y) $ of the differential
 	$D_xP:T_xX\to T_yY$.	
 	%
 	% In other words, $T_xL = (D_xP) ^{-1} (0_y)$.
 	  	\end{cor}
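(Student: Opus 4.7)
The plan is to reduce the statement to Lemma \ref{lem: stable} applied to the rescaled submetries. By Proposition \ref{prop: differ}, the rescaled maps $P_t\co (\tfrac 1 t X, x) \to (\tfrac 1 t Y, y)$ are (local) submetries converging in the pointed Gromov--Hausdorff sense to $D_xP\co T_xX \to T_yY$. Crucially, the fiber of $P_t$ over the basepoint $y$ is, as a set, exactly $L$, but endowed with the rescaled distance; so what we must identify as a limit is the pointed space $(\tfrac 1 t L, x)$, viewed as a subset of $(\tfrac 1 t X, x)$.

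If $P$ is a (global) submetry, then the $P_t$ are submetries between proper spaces, and Lemma \ref{lem: stable} applies verbatim: after passing to a subsequence $t_i\to 0$, the fibers $P_{t_i}^{-1}(y) = \tfrac 1 {t_i} L$ converge to $D_xP^{-1}(0_y)$. Since the limit is independent of the chosen subsequence, the full rescaling converges.

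For a local submetry, I would adapt the proof of Lemma \ref{lem: stable} on each bounded scale. Fix any $R>0$. Choose $r>0$ from Definition \ref{defn: loc} such that the submetry property $P(B_s(x'))=B_s(P(x'))$ holds for all $x'\in B_r(x)$ and $s<r-d(x,x')$. For all $t$ sufficiently small, one has $tR\ll r$, so on the ball $B_R(x)\subset \tfrac 1 t X$ (i.e.\ $B_{tR}(x)\subset X$) the map $P_t$ enjoys the submetry property at every scale up to a large $\tfrac{r}{t}-R$, which diverges. One inclusion is immediate: any accumulation point $v$ in $B_R(0_x)\subset T_xX$ of a sequence $\hat x_{t_i}\in \tfrac 1 {t_i}L$ satisfies $D_xP(v)=\lim D_{t_i}P(\hat x_{t_i})=\lim \tfrac 1 {t_i}y = 0_y$, using that $P_{t_i}\to D_xP$ pointwise on convergent sequences. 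For the converse inclusion, given $v\in D_xP^{-1}(0_y)\cap B_R(0_x)$, pick $x_{t_i}\in X$ with $x_{t_i}\to v$ in the rescaled sense. Then $P_{t_i}(x_{t_i})\to D_xP(v)=0_y$, i.e.\ $\tfrac 1 {t_i} d(y,P(x_{t_i}))\to 0$. For $t_i$ small, $x_{t_i}\in B_{r/2}(x)$ and $d(y,P(x_{t_i}))<r-d(x,x_{t_i})$, so the local submetry property yields $\hat x_{t_i}\in L=P^{-1}(y)$ with $d(\hat x_{t_i},x_{t_i})=d(y,P(x_{t_i}))$. Then $\tfrac 1 {t_i}d(\hat x_{t_i},x_{t_i})\to 0$, and therefore $\hat x_{t_i}\to v$ in the rescaled sense, as required.

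The main obstacle, and the only place where something must be checked beyond copying the submetry argument, is ensuring that the local submetry property can be invoked at the correct (shrinking) scale $tR$ to produce the foot points $\hat x_{t_i}\in L$; this is handled by the comparison $tR<r$ for small $t$. Everything else — compactness of balls in $T_xX$ needed to extract limits, and the stability of fibers under GH convergence — is taken care of by the proof of Lemma \ref{lem: stable}, which transfers without change once the submetry inequality is available on the relevant balls.
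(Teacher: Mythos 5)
Your proposal is correct and follows exactly the route the paper indicates: reduce to Lemma~\ref{lem: stable} via the rescaled submetries from Proposition~\ref{prop: differ}, and for local submetries adapt the same argument on bounded scales. The paper itself only states this reduction in a sentence without details, so your write-up is essentially the paper's proof with the local-submetry adaptation actually carried out.
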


The last statement can be interpreted as the fact that the tangent cone $T_xL \subset T_xX$ is well-defined and coincides with  $(D_xP) ^{-1} (0_y)$.
% {\red We will adopt this point of view from now on.}
%of the fiber

 %Moreover, the local semi-concavity constant
 %of this function at a point $z\in Z$ depends only  on $\Theta$ in a neighborhood the point  is bounded in terms of

\subsection{Measure and coarea formula}
We will denote here and below by $\mathcal H^m$ the $m$-dimensional  Hausdorff measure.
For any Alexandrov region $Y$ the Hausdorff dimension is a natural number $m$.
The set $Y_{reg}$ of points $y\in Y$ with $T_yY =\R^m$ has full $\mathcal H^m$ measure and  is contained in an $m$-dimensional  Lipschitz  manifold $Y^{\delta}$, \cite{BGP}.

In particular, $m$-dimensional Alexandrov regions are countably $m$-rectifiably metric spaces and the metric differentiability theorem, the area and coarea formula applies to Lipschitz maps between Alexandrov regions \cite{Amb-Kir}, \cite{Kar}.

Let now $P:X \to Y$ be a  local submetry.
From Proposition \ref{prop: differ} and  \cite{Amb-Kir} directly follows:
\begin{lem}
	Let $P:X\to Y$ be a local submetry,  where $X$ and $Y$ are
	$n$- and $m$-dimensional Alexandrov regions.   Then, for $\mathcal H^n$-almost every point $x\in X$ the point $y=P(x)$ is a regular point
	of $Y$  and the submetry  $D_xP:T_xX =\R^n \to T_yY \cong \R^m$ is a linear map. %between
%	$T_xX=\R^n$ and $T_yY=\R^m$.
\end{lem}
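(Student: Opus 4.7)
The plan is to combine Proposition \ref{prop: differ}, which supplies the differential $D_xP:T_xX\to T_yY$ as a submetry at every point, with Kirchheim's metric differentiability theorem (\cite{Amb-Kir}). By the rectifiability results recalled right above the lemma, the set $X_{reg}=\{x:T_xX=\R^n\}$ has full $\mathcal H^n$-measure, and applying Kirchheim's theorem to the Lipschitz map $P$ in a bi-Lipschitz chart on $X^\delta$ gives, at $\mathcal H^n$-a.e. $x\in X_{reg}$, a metric derivative $md_xP$ which is a seminorm on $\R^n$. By Proposition \ref{prop: differ} this seminorm equals $v\mapsto|D_xP(v)|_{T_yY}$.

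The key observation is then the submetry identity $|D_xP(v)|_{T_yY}=d_{\R^n}(v,\,D_xP^{-1}(0_y))$. Combined with the seminorm property, it forces the fiber $V:=D_xP^{-1}(0_y)$ to coincide with the kernel of $md_xP$ and hence to be a linear subspace of $\R^n$; consequently $|D_xP(v)|=|\pi_{V^\perp}(v)|$ is the Euclidean distance to $V$, so the seminorm is genuinely Euclidean. The coarea formula of \cite{Kar} applied to $P$ now pins down the rank of $md_xP$ as exactly $m$ at $\mathcal H^n$-a.e. $x$: the integrand $\mathbf{J}_m(md_xP)$ on the left side of
\[
\int_X \mathbf{J}_m(md_xP)\,d\mathcal H^n \;=\; \int_Y \mathcal H^{n-m}(P^{-1}(y))\,d\mathcal H^m(y)
\]
vanishes where the rank drops below $m$, whereas the right-hand side is positive because $P$ is surjective onto the $m$-dimensional space $Y$. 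Thus $\dim V^\perp=m$, and $D_xP$ factors as an orthogonal projection $\pi_{V^\perp}:\R^n\to V^\perp\cong\R^m$ followed by a surjective $1$-Lipschitz map $\phi:\R^m\to T_yY$ that preserves distance from the basepoint.

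It then remains to upgrade $\phi$ to a full isometry. For each unit vector $u\in V^\perp$, the homogeneity of $D_xP$ together with the Euclidean-seminorm identity $|D_xP(tu)|=|t|$ forces the image of the Euclidean line $\R u\subset V^\perp$ to be length-preserved through the apex of $T_yY$; by Proposition \ref{prop: image} the two resulting rays meet at the apex along opposite ends of a true geodesic line. Iterating the Cheeger--Gromoll cone-splitting theorem in the non-negatively curved cone $T_yY$ along an orthonormal basis of $V^\perp$ yields the isometric direct-factor decomposition $T_yY=\R^m$. In particular $y$ is a regular point of $Y$, $\phi$ is a linear isometry, and $D_xP$ is the linear orthogonal projection $\R^n\to\R^m$.

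The main obstacle is this last step: upgrading the basepoint-distance-preserving surjection $\phi$ to a genuine isometry $V^\perp\cong T_yY$. Kirchheim's theorem controls only radial distances in $T_yY$, and matching distances between arbitrary pairs of points requires the full cone rigidity of $T_yY$. The splitting argument is the cleanest route, but one must carefully verify that horizontal Euclidean lines lift to actual geodesic lines (and not just quasigeodesics) in $T_yY$, and that splittings in orthogonal $V^\perp$-directions are mutually compatible so that iterating them yields a clean Euclidean $\R^m$ factor.
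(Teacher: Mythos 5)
The key technical gap is in your final ``upgrade'' step, and interestingly the difficulty you flag there is self-inflicted: you write that ``Kirchheim's theorem controls only radial distances in $T_yY$,'' but this is not what Kirchheim's metric differentiability theorem says. The theorem from \cite{Amb-Kir} asserts that at $\mathcal H^n$-a.e.\ $x$ (in a bi-Lipschitz chart) there is a seminorm $\md_x P$ with
\[
d\bigl(P(x+v'),P(x+w')\bigr)=\md_xP(v'-w')+o(|v'|+|w'|),\qquad v',w'\to 0,
\]
i.e.\ it controls distances between \emph{pairs} of nearby points, not just distances from $P(x)$. Scaling $v'=tv$, $w'=tw$ and passing to the blow-up of Proposition \ref{prop: differ} gives $d_{T_yY}\bigl(D_xP(v),D_xP(w)\bigr)=\md_xP(v-w)$ for all $v,w\in\R^n$. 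Once you know (as you correctly argued via the seminorm property and \eqref{eq: dA}) that $\ker\md_xP=V$ is linear and $\md_xP=|\pi_{V^\perp}(\cdot)|$, this pairwise identity immediately says that $D_xP$ restricted to $V^\perp$ is an isometric \emph{embedding}, and that $D_xP(v)=D_xP(\pi_{V^\perp}v)$ for all $v$. Surjectivity of $D_xP$ (Proposition \ref{prop: structure2}) then forces $D_xP|_{V^\perp}$ to be a surjective isometric embedding, hence an isometry $V^\perp\cong T_yY$. This yields $T_yY=\R^{\dim V^\perp}$, and since $\dim T_yY=m$ one gets $\dim V^\perp=m$ for free --- so the coarea argument to pin the rank, and the entire cone-splitting step, are unnecessary.

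Your replacement argument via cone splitting does not in fact close the gap as stated. Proposition \ref{prop: image} only gives that the image of the horizontal line $\R u$ is a \emph{quasigeodesic} through the apex of $T_yY$; the quasigeodesic inequality \eqref{eq: quasi} at a cone apex, for the two rays in directions $\xi_\pm\in\Sigma_yY$, amounts to $\cos\angle(\xi_+,\eta)+\cos\angle(\xi_-,\eta)\ge 0$ for all $\eta\in\Sigma_yY$, which does \emph{not} force $\angle(\xi_+,\xi_-)=\pi$. So the claim that ``the two resulting rays meet at the apex along opposite ends of a true geodesic line'' is unjustified, and the iterated Cheeger--Gromoll splitting you invoke never gets off the ground. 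Everything up to and including the identification $\md_xP=|\pi_{V^\perp}(\cdot)|$ is correct and matches the intended argument; the fix is simply to use the full pairwise content of metric differentiability rather than its radial shadow.
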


In this situation the coarea formula   \cite{Amb-Kir}, \cite{Kar}   reads as:
\begin{cor}
	Let $P:X\to Y$  be a   local submetry,  where $X$ and $Y$ are
	$n$- and $m$-dimensional Alexandrov regions.  Then, for $\mathcal H^m$-almost every point $y\in Y$, the preimage $P^{-1} (y)$ is countably $(n-m)$-rectifiable. For every Borel subset $A\subset X$ we have the equality
	\begin{equation} \label{eq: coarea}
\mathcal H^n (A) =\int _Y  \mathcal H^{n-m} (A\cap P^{-1} (y)) \; d\mathcal H^m (y) \;.
	\end{equation}

\end{cor}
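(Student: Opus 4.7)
The plan is to deduce the statement directly from the metric coarea formula of Ambrosio--Kirchheim (respectively Karmanova) applied to the locally $1$-Lipschitz map $P$ between the countably rectifiable metric spaces $X$ and $Y$, using the preceding lemma to identify the coarea Jacobian almost everywhere.

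First, I would recall that the version of the coarea formula we need, for a Lipschitz map $f:X\to Y$ between countably rectifiable metric spaces of dimensions $n$ and $m$, states that
\begin{equation*}
\int_A \mathbf{J}_m(\md_x f)\, d\mathcal H^n(x) \;=\; \int_Y \mathcal H^{n-m}(A\cap f^{-1}(y))\, d\mathcal H^m(y),
\end{equation*}
where $\md_x f$ denotes the metric differential (a seminorm on $\R^n$ at $\mathcal H^n$-a.e.\ $x$, see \cite{Amb-Kir}, \cite{Kar}), and $\mathbf{J}_m$ is the associated $m$-dimensional coarea factor. Since $P$ is locally $1$-Lipschitz and $X$ is separable, exhausting $X$ by a countable family of relatively compact open sets on which $P$ becomes $1$-Lipschitz reduces the assertion to the case where this formula applies directly; an approximation with Borel subsets $A$ localised in such sets then yields the global identity.

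Next I would compute the Jacobian. By the preceding lemma, for $\mathcal H^n$-almost every $x\in X$ the point $y=P(x)$ is regular in $Y$ and the differential $D_xP:T_xX=\R^n\to T_yY\cong \R^m$ is a linear submetry. A linear map $L:\R^n\to \R^m$ which is a submetry is nothing but an orthogonal projection onto its image, once $\R^m$ is identified isometrically with the orthogonal complement of $\ker L$; in particular $LL^T=\Id_{\R^m}$, so the coarea factor equals $\mathbf{J}_m(L)=\sqrt{\det(LL^T)}=1$. At such regular points the metric differential $\md_x P$ coincides with $|D_xP|$, hence $\mathbf{J}_m(\md_x P)=1$ almost everywhere on $X$, and the formula above specialises to \eqref{eq: coarea}.

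The countable $(n-m)$-rectifiability of $\mathcal H^m$-almost every fiber is a structural by-product of the same Ambrosio--Kirchheim machinery, since fibers of a Lipschitz map between rectifiable metric spaces inherit rectifiability of the complementary dimension for a.e.\ value. The only subtle point in the argument is ensuring the passage from the local to the global statement, i.e.\ verifying that the Borel measurability of $y\mapsto \mathcal H^{n-m}(A\cap P^{-1}(y))$ and the additivity of both sides in $A$ survive the exhaustion; this is where I expect the main (but still routine) technical work to lie, and it is settled by standard monotone class arguments once the formula is known on each relatively compact piece.
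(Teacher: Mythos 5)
Your proposal is correct and follows exactly the route the paper takes: the corollary is presented in the paper as a direct consequence of the Ambrosio--Kirchheim/Karmanova coarea formula together with the preceding lemma, which supplies the fact that $D_xP$ is a linear submetry $\R^n\to\R^m$ at $\mathcal H^n$-a.e.\ $x$, hence has coarea factor $1$. The paper leaves all of this implicit (it simply states that ``the coarea formula reads as'' the displayed identity), and your write-up fills in precisely the intended details -- the identification of the metric differential with $D_xP$, the observation that a linear submetry is an orthogonal projection with $LL^T=\Id$, and the localization to relatively compact sets where $P$ is globally Lipschitz.
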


\section{Lifts of semiconcave functions} \label{sec: lift}
\subsection{Special semiconcave functions} \label{subsec: special}
Let $Y$ be an Alexandrov region.  For  $A\subset Y$  and  $y\in Y$ with
 $t=d_A(y) >0$, let $0<2r<t$ be such that $\bar B_{2r} (y)$ is compact.  Then on  $B_r(y)$ we have the equality
$$d_A = d_{S_{t-r}} + r \;,$$
where $S_{t-r}$ is the set of points  $p\in \bar B_{2r} (y)$ with
$d_A(p)=t-r$.

The semicontinuity of the squared distance functions on \emph{Alexandrov spaces} and the previous observation show that  for any \emph{Alexandrov region} $Y$ and any subset
$A\subset Y$ the squared distance function $d_A^2$ is semiconcave on $Y$.

Let $Y$ be an Alexandrov region and  let  $A_1,...,A_k\subset Y$ be closed.
Let  $\Theta :\R^k  \to \R$ be semiconcave and non-decreasing in each argument.
The function
$ q_{\Theta, A_1,...,A_k} := \Theta (d_{A_i}^2,....,d_{A_k}^2) :Y\to \R$
will be  called
%
% By definition,   $f=q_{\Theta, A_1,...,A_k}$ is
 \emph{special semiconcave}  on $Y$.

%Note that for any $A \subset Y$, any $y\in Y$ with $t:=d_A(y)$ and any $0<r<t$ such that $\bar B_{2r} (y)$ is compact we have the equality $_

Since $d_{A_i}^2$ is semiconcave, it follows that
 any special semiconcave function is semiconcave,  cf. \cite[Section 6]{Petrunin-semi}.

\subsection{Lifts of special semiconcave functions} \label{subsec: lift}
For us, the importance of special semiconcave functions is  due to the following

\begin{lem} \label{lem: compose}
Let $P:X\to Y$ be a local submetry between Alexandrov regions.  Let $f:Y\to \R$ be a special semiconcave function.  Then $P\circ f :X\to \R$ is semiconcave.
\end{lem}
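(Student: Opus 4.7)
The statement should read $f\circ P$ rather than $P\circ f$, since $f:Y\to\R$. Writing $f=\Theta(d_{A_1}^2,\ldots,d_{A_k}^2)$, we have $f\circ P=\Theta(d_{A_1}^2\circ P,\ldots,d_{A_k}^2\circ P)$. Because $\Theta$ is semiconcave and non-decreasing in each coordinate, its composition with any $k$-tuple of semiconcave functions on $X$ is again semiconcave; this is a standard chain rule for semiconcave functions, used also in \secref{subsec: special} and \cite[Section 6]{Petrunin-semi}. The lemma thus reduces to proving that for each single closed subset $A:=A_i$ of $Y$, the function $d_A^2\circ P$ is semiconcave on $X$.

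Since semiconcavity is a local property, I fix an arbitrary $x\in X$, set $y=P(x)$, and choose a locality radius $r>0$ at $x$ (\dref{defn: loc}) with $\bar B_r(x)$ compact. If $A\cap B_{r/3}(y)\neq\emptyset$, \cref{cor: locglob} gives $d_A\circ P=d_{P^{-1}(A)}$ on $B_{r/3}(x)$, so $d_A^2\circ P=d_{P^{-1}(A)}^2$ is semiconcave by the general fact that squared distance functions to subsets of Alexandrov regions are semiconcave (\secref{subsec: special} applied inside the Alexandrov region $X$).

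In the complementary case $t:=d_A(y)>0$ with $A\cap B_{r/3}(y)=\emptyset$, I choose $s>0$ with $2s<\min\{t,r/3\}$ and set $S:=\{p\in\bar B_{2s}(y):d_A(p)=t-s\}$. The observation in \secref{subsec: special} gives $d_A=d_S+s$ on $B_s(y)$, and $S$ is non-empty and contained in $\bar B_{2s}(y)\subset B_{r/3}(y)$. Applying \cref{cor: locglob} to $S$ yields $d_S\circ P=d_{P^{-1}(S)}$ on $B_{r/3}(x)$. Combining these identities on $B_{s/2}(x)$ gives
\[ d_A\circ P = d_{P^{-1}(S)} + s,\qquad (d_A\circ P)^2 = d_{P^{-1}(S)}^2 + 2s\,d_{P^{-1}(S)} + s^2.\]
The first summand is a squared distance in the Alexandrov region $X$, hence semiconcave; the last is constant; and the middle summand is semiconcave because $d_{P^{-1}(S)}$ is bounded below on $B_{s/2}(x)$ by the positive constant $t-\tfrac{3s}{2}$ (from $d_{P^{-1}(S)}(z)=d_A(P(z))-s\geq t-s/2-s$), and on any region where the squared distance is semiconcave and bounded below by a positive constant, the unsquared distance $d_{P^{-1}(S)}=\sqrt{d_{P^{-1}(S)}^2}$ inherits semiconcavity, since $\sqrt{\cdot}$ is smooth and concave on intervals bounded away from zero.

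The principal technical point is the second case: the desired identity $d_A\circ P=d_{P^{-1}(A)}$ is unavailable when $A$ is far from $y$. Replacing $A$ by the nearby level set $S$ restores the applicability of \cref{cor: locglob}, at the cost of an additive constant which forces us to verify semiconcavity of the unsquared distance $d_{P^{-1}(S)}$—a verification that is possible only after further shrinking the ball so that $d_{P^{-1}(S)}$ stays uniformly positive.
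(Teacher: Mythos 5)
Your proof is correct and follows the same route as the paper's: reduce via the chain rule for $\Theta$ to the single statement that $(d_A\circ P)^2$ is semiconcave, then combine \eqref{eq: dA}, \cref{cor: locglob}, and the level-set identity $d_A=d_S+s$ from Subsection \ref{subsec: special}. The paper compresses this into a one-sentence citation of those three ingredients; you have simply unpacked the case distinction and the small additional point (that the linear term $2s\,d_{P^{-1}(S)}$ is semiconcave because $d_{P^{-1}(S)}$ stays bounded away from zero, so $\sqrt{\cdot}$ applied to the semiconcave $d_{P^{-1}(S)}^2$ stays well behaved), which is exactly the gap the paper's terse phrasing leaves to the reader. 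You also correctly note the typo $P\circ f$ for $f\circ P$.
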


\begin{proof}
If $f= \Theta (d_{A_i}^2,....,d_{A_k}^2) :Y\to \R$ then
$$f\circ P =	\Theta (( d_{A_1})^2 \circ P, ...., (d_{A_k})^2\circ P)$$
and it suffices to prove that $(d_A \circ P)^2$ is semiconcave for every subset $A\subset Y$.
But this follows from equality \ref{eq: dA}, Corollary \ref{cor: locglob} and the first observation in Subsection \ref{subsec: special}.
	\end{proof}

%Let $P:X\to Y$ be a local submetry between Alexandrov regions.
%Let $x\in X$ be arbitrary, let $r$ be such that $\bar B_{3r} (x)$ is compact
%and contained in an Alexandrov space $X'\subset X$. Set $y=P(x)$ and assume further, by making $r$ smaller if needed, that $B_{3r} (y)$ is contained in an Alexandrov space $Y'\subset Y$.

%Let $A_1,...,A_k$ be closed subsets of $Y$, all of them intersecting $B_r(y)$.
%Let  $\Theta :\R^k  \to \R$ be semiconcave and non-decreasing in each argument.
%By definition,
%$ q_{\Theta, A_1,...,A_k} := \Theta (d_{A_i}^2,....,d_{A_k}^2) :Y\to \R$
%will be  called
%
% By definition,   $f=q_{\Theta, A_1,...,A_k}$ is
%a \emph{special semiconcave} function on $Y'$.

%A special semiconcave function is indeed semiconcave, \cite{}.

%For a special semiconcave function $f=q_{\Theta, A_1,...,A_k}$ as above, we apply  Corollary \ref{cor: locglob}, and see that the composition  $P\circ f$ on $B_r(x)$ is a special semiconcave function
%\begin{equation} \label{eq: dA2}
%P\circ  q_{\Theta, A_1,...,A_k}= q_{\Theta, f^{-1} (A_1),..., f^{-1} (A_k) } \;.
%\end{equation}
%Thus, the lift  $P\circ f$ of the special semi-concave function $f$ is  special semi-concave
%on the ball $B_r(x)$.
% is  special semi-concave function $f$ is semi-concave.

% Under the local assumption that $X$ and $Y$ are Alexandrov regions and $P$ is a local submetry, we have the validity \eqref{eq: dA2} in the ball
% $B_r(x)$, whenever, the closed ball $\bar B_{3r} (x)$ is compact and whenever
% $A_i$ intersect   the ball $B_r (f(x))$ by Corollary \ref{cor: locglob}.

As explained  in Subsection \ref{subsec: Ambrosio}, the gradient curves of $f\circ P$ are exactly the horizontal lifts of the gradient curves of $f$.    Thus, $P$ sends the gradient flow of $f\circ P$ to the gradient flow of $f$.
More precisely,
%This and the controlled Lipschitz continuity of the gradient flows of semi-concave functions immediately implies:
\begin{lem} \label{lem: liftflow}
	In the above notation, let $\Phi_t$ be the gradient flow  of a special semiconcave function $f$ on $Y$ and let  $\hat {\Phi} _t $ be the gradient flow of the  function $f\circ P$ on  $X$.
	Then, for all $(z,t)$ in the domain of definition of $\hat \Phi $, we have
	$$P(\hat {\Phi} _t (z)) = \Phi _t (P(z)) \,.$$
\end{lem}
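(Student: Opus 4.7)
The plan is to reduce the statement to the characterization of gradient curves of $f\circ P$ already recorded in Subsection \ref{subsec: Ambrosio}, together with the uniqueness of gradient curves of semiconcave functions on Alexandrov regions.

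First I would note that Lemma \ref{lem: compose} guarantees that $f\circ P$ is semiconcave on $X$, so its gradient flow $\hat\Phi_t$ indeed exists and is locally Lipschitz continuous, as is $\Phi_t$ on $Y$. Fix $z\in X$, set $y=P(z)$, and consider the gradient curve $\hat\gamma(s):=\hat\Phi_s(z)$ of $f\circ P$ defined on the maximal interval $[0,\tau)$. Subsection \ref{subsec: Ambrosio} (which uses the slope identity $|\nabla^+(f\circ P)|(x)=|\nabla^+ f|(P(x))$, valid for any locally Lipschitz $f$) asserts that $\hat\gamma$ is a gradient curve of $f\circ P$ if and only if $\hat\gamma$ is horizontal with respect to $P$ and the projected curve $\eta:=P\circ\hat\gamma$ is a gradient curve of $f$ in $Y$. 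In our setting, the forward direction is what we need: $\eta$ is a gradient curve of $f$ starting at $y$.

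By uniqueness of gradient curves of a semiconcave function on an Alexandrov region (see \cite{Petrunin-semi}, recalled in Subsection \ref{subsec: special}), $\eta(s)$ must coincide with the flow orbit $\Phi_s(y)$ on the common interval of definition. Therefore $P(\hat\Phi_s(z))=\Phi_s(P(z))$ for every $s$ in the domain of $\hat\Phi$, which is precisely the asserted intertwining.

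I do not expect any real obstacle here: the substantive work has already been done, namely the slope identity and the characterization of horizontal gradient curves in Subsection \ref{subsec: Ambrosio}, together with the semiconcavity in Lemma \ref{lem: compose}. The only point worth double-checking is that $\eta$ is defined on the same interval as $\hat\gamma$ (which is automatic since $\eta=P\circ\hat\gamma$) and that the uniqueness statement for $\Phi_t$ applies on $Y$; the latter holds because $f$ is special semiconcave, hence semiconcave on the Alexandrov region $Y$.
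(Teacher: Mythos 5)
Your argument is exactly the one the paper intends: the equivalence recorded in Subsection \ref{subsec: Ambrosio} (gradient curves of $f\circ P$ are precisely horizontal lifts of gradient curves of $f$), combined with semiconcavity of $f\circ P$ from Lemma \ref{lem: compose} and uniqueness of maximal gradient curves of semiconcave functions on Alexandrov regions, gives the intertwining. The only nit is that the uniqueness statement from \cite{Petrunin-semi} is recalled in the subsection on basic objects in Alexandrov regions rather than in Subsection \ref{subsec: special}, but this does not affect the correctness of the argument.
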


\subsection{Perelman's function and its lift}
For any Alexandrov region $Y$ and any point $y\in Y$ there exists a strictly concave function in a neighborhood of $y$ which has its maximum at  $y$.
 More precisely, \cite[Theorem 7.1.1]{Petrunin-semi},
there exists   $ \epsilon =\epsilon(y) >0$ and a special semiconcave function
$f=f_y$  on $Y$ with the following properties:

The restriction of $f$ to the ball $B_{\epsilon} (y)$ is a  strictly concave  function  and has a unique maximum at the point $y$.
Moreover,
%$B_{\epsilon} (y)$ is invariant under the gradient flow of $f$ and
the ascending slope at any point  $z \in B_{\epsilon} (y) \setminus \{ y\}$ satisfies
 $|\nabla ^+ f| (z) \geq \frac 1 2$.

 Using this function we can now easily derive:
 \begin{prop} \label{prop: Perelman}
 	Let $P:X\to Y$  be a local submetry between Alexandrov regions.
 	Let $y\in P(X)$ be arbitrary. Then there exist  a neighborhood
 %	 $O$ of $y$ in $Y$ and
 	$U$ of the fiber $L:=P^{-1} (y)$ in $X$
 	and  a special semiconcave function  $f:Y\to \R$  with the following properties.
 	 	
 	The function $g =f\circ P$ is semiconcave on $X$.
 	% and $P(U)=O$.
 %	 and Lipschitz continuous
 %	on $U$.
 	The set of maximum points of $g$ in $U$ is exactly $L$ and   $|\nabla ^+ g| (q) \geq \frac 1 2$, for any
 	$q\in U\setminus L$.

 	%The gradient flow $\Phi$ of $f$ is defined in $O$ for all times,
 	The gradient flow $\hat \Phi$ of $g$ is defined in $U$ for all times %and %the gradient lines of $g$ are exactly horizontal lifts of gradient lines in $f$.  	
 and 	for some $\delta >0$ we have $\hat \Phi _{\delta}  (U) = L$.
 %For any compact subset $K\subset L$ there exists a neighborhood $O_K$ of $y$ in $Y$ such that the image of the Lipschitz map $\hat {\Phi} _{\delta}:P^{-1} (z) \to L$ contains $K$, for any $z\in O_K$.
%
% If $X$ is complete, one can take $U=P^{-1} (O )$ for some  neighborhood $O$ of $y$. In this case,  $\hat \Phi _{\delta} :P^{-1} (z) \to L$ is surjective for all $z\in O$.
 %
 %The last statement applies to   $K=L$ and $O_L=O$.
 \end{prop}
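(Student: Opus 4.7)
The plan is to take the Perelman function $f = f_y \colon Y \to \R$ recalled immediately before the statement and to lift it to $X$ by composition with $P$. All three conclusions then follow from general results established in the preceding sections.

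First I choose a radius $\epsilon' \in (0,\epsilon)$ with $\overline{B_{\epsilon'}(y)}$ compact. By strict concavity of $f$ on $B_\epsilon(y)$ with unique maximum at $y$, the value $c := \sup_{\partial B_{\epsilon'}(y)} f$ is strictly less than $f(y)$. Fixing any $c' \in (c, f(y))$, I set $V := \{ z \in B_{\epsilon'}(y) : f(z) > c' \}$ and $U := P^{-1}(V) \subset X$, an open neighborhood of $L = P^{-1}(y)$, and define $g := f \circ P$. The set $V$ is forward-invariant under the gradient flow $\Phi_t$ of $f$: if $z \in V$, then $f$ is non-decreasing along $\Phi_t(z)$, so $f(\Phi_t(z)) > c' > c$, which by the choice of $c$ forbids the curve from exiting $\overline{B_{\epsilon'}(y)}$. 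By Lemma \ref{lem: compose}, $g$ is semiconcave on $X$, and the slope identity from Subsection \ref{subsec: Ambrosio} gives $|\nabla^+ g|(q) = |\nabla^+ f|(P(q))$; in particular $|\nabla^+ g|(q) \geq \frac{1}{2}$ for every $q \in U \setminus L$. The maximum of $g|_U$ equals $f(y)$ and is attained precisely on $L$, because $f|_V$ is maximized only at $y$.

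For the gradient flow, Lemma \ref{lem: liftflow} yields $P \circ \hat\Phi_t = \Phi_t \circ P$, so the forward invariance of $V$ under $\Phi$ implies that $\hat\Phi_t$ is defined on $U$ for all $t \geq 0$ and maps $U$ into itself. For $q \in U \setminus L$ and as long as the gradient curve stays in $U \setminus L$, one has $(g \circ \hat\Phi)'(t) = |\nabla^+ g|^2(\hat\Phi_t(q)) \geq \frac{1}{4}$ for almost every $t$. Since $g > c'$ on $U$ and $g \leq f(y)$ globally, the curve must enter $L$ by time $\delta := 4(f(y) - c')$. On $L$, $g$ attains its maximum value on $U$, hence $|\nabla^+ g| = 0$ there and the flow is stationary, giving $\hat\Phi_\delta(U) \subseteq L$; the reverse inclusion follows from $\hat\Phi_\delta|_L = \mathrm{id}_L$.

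The only substantive technical point is the construction of the forward-invariant neighborhood $V$ with compact closure in $B_\epsilon(y)$; this is what furnishes both all-time existence of $\hat\Phi$ on $U$ and the uniform hitting-time estimate. Everything else reduces to a mechanical application of the two lifting lemmas \ref{lem: compose} and \ref{lem: liftflow} to the already established Perelman function.
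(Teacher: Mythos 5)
Your overall plan matches the paper's, and most of the routine steps (semiconcavity of $g$ via Lemma~\ref{lem: compose}, the slope identity, the identification of the maximum set, and the hitting-time estimate giving $\delta=4(f(y)-c')$) are handled correctly. However, there is a genuine gap at the crucial step. You set $U := P^{-1}(V)$ and assert that "the forward invariance of $V$ under $\Phi$ implies that $\hat\Phi_t$ is defined on $U$ for all $t\geq 0$." This inference is not valid. Lemma~\ref{lem: liftflow} gives $P\circ\hat\Phi_t = \Phi_t\circ P$ only \emph{on the domain where $\hat\Phi_t$ already exists}; it says nothing about whether the maximal gradient curve of $g$ through $q$ persists. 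Since $X$ is merely an Alexandrov region — locally compact but not assumed complete or proper — a gradient curve of finite length may escape $X$ in finite time (i.e.\ converge to a point of the metric completion that lies outside $X$), even though its projection $\Phi_t(P(q))$ remains inside the compact set $\overline V\subset Y$. The compactness of $\overline V$ bounds the \emph{length} of the lifted curve, but nothing forces $\overline B_R(q)\subset X$ to be compact for $R$ of the order of that length, which is what Lemma~\ref{lem: lift} would require.

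The paper avoids this precisely by \emph{not} taking $U$ to be a full preimage. It works with $\tilde U := P^{-1}(B_\epsilon(y))$, fixes $\delta$ so that every $\Phi$-flow line in $B_\epsilon(y)$ that survives for time $\delta$ ends at $y$, and then \emph{defines} $U$ to be the set of $p\in\tilde U$ whose maximal $\hat\Phi$-orbit in $\tilde U$ is defined for at least time $\delta$. This $U$ is open because the gradient flow is locally Lipschitz (so the survival time is lower semicontinuous in the initial point), and it contains $L$ because $L$ is exactly the fixed-point set of $\hat\Phi$ in $\tilde U$. Once $U$ is chosen this way, $\hat\Phi_\delta(U)\subset L$ follows from Lemma~\ref{lem: liftflow} and the choice of $\delta$, and all-time existence follows since the flow becomes stationary on $L$. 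To repair your argument you should replace your $U$ by the intersection of $P^{-1}(V)$ with this survival set; your choice of $V$ as a superlevel set is a clean way to get the compactly contained forward-invariant region in $Y$, but by itself it does not furnish existence of the flow upstairs.
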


 \begin{proof}
 Consider a small relatively compact ball $B_{\epsilon} (y)$ around $y$ and  Perelman's function $f=f_y :Y\to \R$ as described above.

 %Since $f$ is special semiconcave,
 The function  $g=P\circ f$ is semiconcave on $X$, by Lemma \ref{lem: compose}. % Since $f$ is Lipschitz continuous on $\tilde O$,
 %the function $g$ is Lipschitz continuous on the open neighborhood
 %$\tilde U:=  P^{-1} (\tilde O)$ of $L$.    Moreover,
 Clearly, $L$ is exactly the set of maximum points  of $g$ in $\tilde U :=P^{-1} (B_{\epsilon} (y)) $.

  For any $z\in B_{\epsilon} (y) \setminus \{y \}$ we have $|\nabla ^+ f|  (z) \geq \frac 1 2$.  By the definition of gradient curves, the point $y$ is the unique fixed point  of the partial gradient flow $\Phi$ of $f$ on $B_{\epsilon}$.
  Moreover, for $$\delta := 2 \cdot \inf _{z\in B_{\epsilon} (y)}  ( f(y)-f(z)) \;,$$
  any flow line of $\Phi$ defined at least  for time $\delta$ ends in $y$.

  Let $U =U_{\delta}  \subset \tilde U$ be the set of points $p\in \tilde U$ at which the flow line of $\hat \Phi$  on $\tilde U$ is defined at least for the time $\delta$.  Since $L$
  is the set of fixed points of $\hat \Phi $ in $\tilde U$, the set $U$ is an open neighborhood of $L$.   By construction and Lemma \ref{lem: liftflow},
  $\hat \Phi _{\delta} (p) \in L$, for any $p\in U$. In particular, $\hat \Phi$ is defined in $U$ for all times and $\hat \Phi _{\delta} (U) =L$.

  By Subsection \ref{subsec: Ambrosio},  $|\nabla ^+ g| (q) \geq \frac 1 2$, for any
  $q\in U\setminus L$.
  % Moreover,
  %
  %By construction, $U$ is invariant under $\hat \Phi$ and by Subsection \ref{subsec: lift},
  % the gradient lines of $g$ are exactly   horizontal lifts of  gradient lines of $f$.
%
   %For $K\subset L$ compact, find some $r>0$ such that $B_r(K) \subset  U$ is compact.
   %We find a neigborhood $O_r \subset P(U)$ of $y$ such that the gradient lines
   %$\eta ^{z}$ of $f $ starting at points $z$ of $O_r$ (and ending in $y$) have length less than $r$.
%
   %For any $x\in K$ and $z\in O_r$, we find a horizontal lift of $\eta ^{z}$ ending  at $x$, by Lemma \ref{lem: lift}.  By   Subsection \ref{subsec: Ambrosio}, this horizontal lift  is  a gradient line of $g$
   %starting on $P^{-1} (z)$. Thus, $\hat \Phi _{\delta} (P^{-1} (z) )$ contains $K$.
%
  %If $X$ is complete then,   by construction and Lemma \ref{lem: liftflow},
  %$O = P(U)$ is the set of points $z\in B_{\epsilon} (y)$, for which the flow lines of $\Phi $ are contained in $B_{\epsilon} (y)$. Again by Lemma \ref{lem: liftflow}, $U=P^{-1} (O)$.
%
  %The  final argument  in the non-complete case and the completeness of $X$ imply that $\hat \Phi _{\delta} :P^{-1} (z)\to L$ is surjective, for all $z\in O$.
%  and the last statement follows by construction.
  % By construction, $\Phi _{\delta } (z)=y$, for all $z \in O$.
  % From  Lemma \ref{lem: liftflow}, the set $O$ is invariant under the flow $\Phi$ and  $\hat \Phi _{\delta} $ sends $U$ to $L$.
  %In particular, the gradient flows $\hat \Phi$ on $U$ and the $\Phi $ on $O$ are defined for all times.
   \end{proof}
%{\red \begin{rem}If $X$ is complete then in the above Lemma we can choose $O$ to be a convex neighborhood of $y$ equal to the the superlevel set $\{f\ge f(y)-\eps\}$ for small $\eps$.
%\end{rem}
%}

% \begin{cor}
% 	Let $P:X\to Y$ be a local submetry between Alexandrov space. For any
% 	$y\in P(X)\subset Y$ and any compact subset $K $ in the fiber $L=P^{-1} (y)$, there exists a neighborhood $O$ of $y$ in $Y$ with the following property.
% \end{cor}
\subsection{Extremal subsets and dimension of fibers}
While subsequent results have  local versions, we prefer to state them only for "global" submetries, for the sake of simplicity. The first result follows directly from the definition of extremal subsets, via gradient flows of
semiconcave functions, Lemma \ref{lem: compose} and Lemma \ref{lem: liftflow}.
 An alternative more direct proof of the following statement can be found in
\cite{Guijarro}:

\begin{prop}
	Let $P:X\to Y$ be a submetry between Alexandrov spaces.
	Let $E\subset X$ be an extremal subset. Then  the image $P(E)$
	is an extremal subset of $Y$.
\end{prop}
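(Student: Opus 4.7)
My plan is to verify the equivalent form of extremality recalled after the definition: it suffices to show that $P(E)$ is invariant under the gradient flow of $d_q^2$ on $Y$ for every $q \in Y$. The strategy is to transfer the flow invariance of $E$ across $P$ by means of the two functorial statements already in hand: Lemma \ref{lem: compose}, which shows that the pullback by $P$ of any special semiconcave function on $Y$ is semiconcave on $X$, and Lemma \ref{lem: liftflow}, which shows that the corresponding gradient flows are intertwined by $P$.

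Concretely, I will fix $q\in Y$ and set $f := d_q^2 \colon Y \to \R$. Taking $\Theta(t) = t$ in the definition of Subsection \ref{subsec: special}, $f$ is special semiconcave on $Y$. Applying Lemma \ref{lem: compose} will give that the pullback $g := f \circ P$ is semiconcave on $X$; in fact, by \eqref{eq: dA}, $g$ coincides with $d_{P^{-1}(q)}^2$. Letting $\Phi_t$ and $\hat\Phi_t$ denote the (partial) gradient flows of $f$ on $Y$ and of $g$ on $X$ respectively, Lemma \ref{lem: liftflow} will yield
\[
P \circ \hat\Phi_t \;=\; \Phi_t \circ P
\]
wherever both sides are defined. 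Since $E$ is extremal in $X$ and $g$ is semiconcave, $\hat\Phi_t(E)\subseteq E$, and hence $\Phi_t(P(E)) = P(\hat\Phi_t(E)) \subseteq P(E)$. Because $q \in Y$ was arbitrary, this will establish that $P(E)$ is invariant under the gradient flow of every $d_q^2$, hence extremal.

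There is not really any obstacle here: once the two intertwining lemmas are in place, the argument is a formal diagram chase. The one point that might look delicate is the closedness of $P(E)$, but it requires no separate treatment, since the equivalent characterization asks only for gradient-flow invariance and closedness is then a consequence of the general structure of extremal subsets recalled in Section \ref{sec: alex}. The genuine work therefore sits entirely inside Lemmas \ref{lem: compose} and \ref{lem: liftflow}.
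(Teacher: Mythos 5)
Your proposal is correct and follows essentially the same route the paper indicates: the authors themselves only remark that the statement "follows directly from the definition of extremal subsets, via gradient flows of semiconcave functions, Lemma \ref{lem: compose} and Lemma \ref{lem: liftflow}," which is exactly the diagram chase you carry out. Your handling of the closedness point is also consistent with the paper's conventions, since the definition of extremality given in Section \ref{sec: alex} imposes only gradient-flow invariance and records closedness as a consequence; the one thing worth saying explicitly is that you invoke the full definition (invariance under flows of arbitrary semiconcave functions) for $E \subset X$ in order to use the pulled-back function $g = d_{P^{-1}(q)}^2$, which is not itself of the form $d_p^2$, while using the $d_q^2$-characterization only on the target side.
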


However, extremal subsets  in the quotient are  often much more numerous than in the total space.    For instance:

% \subsection{Coarea formula revisited}
% As a first application of lifts of Perelman's function we show the following
% statements about semicontinuity of Hausdorff dimension of fibers:

  \begin{lem} \label{lem: strataext}
  	Let $P:X\to Y$ be a  submetry between Alexandrov spaces.
  	Then, for any $k\geq 0$, the set $Y(k)$ of points $y\in Y$ such that
  	$\mathcal H^k (P^{-1} (y)) =0$ is an extremal subset of $Y$.
  \end{lem}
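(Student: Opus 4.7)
The plan is to verify that $Y(k)$ is forward-invariant under the gradient flow of $d_q^2$ for every $q\in Y$; by the equivalent characterization of extremal subsets recalled in Section~\ref{sec: alex}, this suffices. Fix $q\in Y$, set $f=d_q^2$, and let $\Phi_t$ denote its (upward) gradient flow on $Y$. By Lemma~\ref{lem: compose}, $f\circ P=d_{P^{-1}(q)}^2$ is semiconcave on $X$; write $\hat\Phi_t$ for its gradient flow.

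The central step will be to establish the equality
$$\hat\Phi_t(P^{-1}(y))=P^{-1}(\Phi_t(y))$$
for every $y\in Y$ and every $t\geq 0$ in the domain of definition. The inclusion $\subseteq$ is immediate from Lemma~\ref{lem: liftflow}. For the reverse inclusion, given $x'\in P^{-1}(\Phi_t(y))$, I would horizontally lift the \emph{reversed} gradient curve $s\mapsto \Phi_{t-s}(y)$, $s\in [0,t]$, starting at $x'$, invoking Lemma~\ref{lem: lift} (properness of $X$ and boundedness of the gradient curve guarantee compactness of the relevant closed ball). Denoting the endpoint of this lift by $x$, we have $x\in P^{-1}(y)$. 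Reversing the lift then yields a horizontal curve from $x$ to $x'$ whose $P$-projection is the gradient curve $s\mapsto \Phi_s(y)$. By the characterization recalled at the end of Subsection~\ref{subsec: Ambrosio}, a horizontal lift of a gradient curve of $f$ is itself a gradient curve of $f\circ P$; by uniqueness of gradient curves of semiconcave functions, this lift coincides with $s\mapsto\hat\Phi_s(x)$, so $\hat\Phi_t(x)=x'$.

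The rest is measure-theoretic. Since $f\circ P$ is semiconcave, the flow $\hat\Phi_t$ is locally Lipschitz on $X$ and so does not increase $\mathcal{H}^k$-measure beyond a factor depending on $t$ and the region. Writing the closed set $P^{-1}(y)$ as a countable union of compact subsets, we will obtain
$$\mathcal{H}^k\bigl(P^{-1}(\Phi_t(y))\bigr)=\mathcal{H}^k\bigl(\hat\Phi_t(P^{-1}(y))\bigr)=0$$
whenever $y\in Y(k)$, which finishes the invariance.

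The main obstacle is the reverse inclusion in the key equality: since gradient curves of semiconcave functions have no intrinsic inverse, one cannot simply ``flow backward''. The trick is to reverse the projected curve first, lift horizontally (here the lifting lemma is crucial), and then reverse the lift, using the uniqueness of gradient curves of semiconcave functions in Alexandrov spaces to identify the resulting curve with the forward orbit of $\hat\Phi_t$ emanating from a point in $P^{-1}(y)$.
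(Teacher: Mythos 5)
Your proof is correct and follows essentially the same route as the paper: fix a special semiconcave function on $Y$, lift its gradient flow via Lemmas~\ref{lem: compose} and~\ref{lem: liftflow}, and use local Lipschitz continuity of the lifted flow to propagate $\mathcal H^k$-nullity along fibers. The paper simply asserts that the lifted gradient flow ``sends fibers of $P$ surjectively onto fibers of $P$'' and leaves it at that; your reverse-lifting argument (lift the time-reversed gradient curve, reverse, identify it with the forward orbit of $\hat\Phi$ by uniqueness) is a clean and correct way to justify exactly that surjectivity claim.
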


  \begin{proof}
  	Fix $y\in Y(k)$ and   a special semiconcave function
  	$f$ on $Y$ with gradient flow $\Phi$.  Then the gradient flow $\hat \Phi$
  	of $g=f\circ P$   is defined for all times and sends fibers of $P$ surjectively onto fibers of $P$, { due to Lemma \ref{lem: compose} and  Lemma \ref{lem: liftflow}.   Since this gradient flow is locally Lipschitz,
  	$\mathcal H^k (P^{-1} (\Phi _t (y))) =0$, for all $t$.}
  	%we see that for any $t$ the $P$-fiber of $\Phi _t(y)$ has $0$ $k$-dimensional Hausdorff measure.
  	Thus, $Y(k)$ is invariant under $\Phi$.
   	\end{proof}

  Similarly, one shows that the set of points $y\in Y$ whose $P$-fibers have at most $k$ connected components are extremal subsets of $Y$.

  We can  draw the following consequence of the coarea formula:
% \begin{lem}
% 	Let
% \end{lem}

%\subsection{Dimension of fibers revisited}
%The surjectivity of the Lipschitz projections $\hat \Phi _{\delta}$ from neighboring  fibers derived in Proposition \ref{prop: Perelman} directly implies the following semicontinuity  statement about the Hausdorff dimension of the fibers:

%\begin{cor}
%	Let $P:X\to Y$ be a  local submetry between Alexandrov regions.
%	For  $y\in Y$ and $k>0$ assume
%  $\mathcal H^k (P^{-1} (y)) >0$.
   %has positive, respectively infinite $k$-dimensional Hausdorff measure,  for some $k\geq 0$,
 %   Then there is a neighborhood
%	$O$ of $y$ in $Y$ such that, $ \mathcal H^k (P^{-1} (z))>0$,
%	for any $z\in O$.
	% the fiber $P^{-1} (z)$ has Hausdorff dimension at least $k$.
%\end{cor}

%We can apply the lifts of Perelman function to slightly improve the implications of the coarea formula

\begin{cor} \label{cor: zeroset}
Let $X$ and $Y$ be $n$- and $m$-dimensional Alexandrov spaces and let  $P:X\to Y$ be a  submetry.
% from an  $n$-dimensional Alexandrov region to an $m$-dimensional Alexandrov region.
%  Then, for any
%$y\in Y$ and any relatively compact subset $U$ of $X$, the fiber
%$P^{-1} (y) \cap U$ has finite $\mathcal H^{n-m}$-measure.
%
A Borel subset $B\subset Y$ satisfies $\mathcal H^m (B)=0$ if and only if $\mathcal H^n (P^{-1} (B))=0$.	
\end{cor}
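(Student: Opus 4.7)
The approach is to deduce both directions from the coarea formula \eqref{eq: coarea}, reducing the non-trivial direction to a statement about a naturally arising extremal subset of $Y$.

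For the forward implication, if $\mathcal H^m(B)=0$, I would apply \eqref{eq: coarea} with $A=P^{-1}(B)$. Since $A\cap P^{-1}(y)$ equals $P^{-1}(y)$ for $y\in B$ and is empty otherwise, the right-hand side becomes
$$ \int_B \mathcal H^{n-m}(P^{-1}(y))\, d\mathcal H^m(y), $$
which vanishes because a non-negative function is integrated over a $\mathcal H^m$-null set.

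For the converse, suppose $\mathcal H^n(P^{-1}(B))=0$. The same identity forces $\mathcal H^{n-m}(P^{-1}(y))=0$ for $\mathcal H^m$-almost every $y\in B$, so, up to a $\mathcal H^m$-null set, $B$ is contained in $Y(n-m):=\{y\in Y : \mathcal H^{n-m}(P^{-1}(y))=0\}$. By \lref{lem: strataext}, $Y(n-m)$ is extremal in $Y$, so the task reduces to showing $\mathcal H^m(Y(n-m))=0$. Applying \eqref{eq: coarea} once more to a compact set $K\subset X$ with $\mathcal H^n(K)>0$ yields $\int_Y \mathcal H^{n-m}(K\cap P^{-1}(y))\, d\mathcal H^m(y)>0$, producing a positive-measure set of points $y$ with $\mathcal H^{n-m}(P^{-1}(y))>0$; in particular $Y(n-m)\subsetneq Y$.

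The key step on which everything rests is then the standard fact from the Alexandrov-geometric theory of extremal subsets \cite{Petrunin-semi} that any proper closed extremal subset of an $m$-dimensional Alexandrov space has Hausdorff dimension strictly smaller than $m$, and in particular $\mathcal H^m$-measure zero. Applied to $Y(n-m)$ this gives $\mathcal H^m(B)=0$ and closes the argument. Beyond this one invocation, every step is direct book-keeping with the coarea formula; the genuine content of the corollary is the use of extremality of the sets $Y(k)$ established in the preceding lemma.
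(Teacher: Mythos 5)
Your proof is correct and follows essentially the same route as the paper's: both directions reduce to the coarea formula, and the nontrivial direction boils down, via Lemma~\ref{lem: strataext}, to showing $\mathcal H^m(Y(n-m))=0$ using a structural fact about extremal subsets. The only difference is cosmetic: you first establish $Y(n-m)\subsetneq Y$ by applying coarea to a compact $K$ of positive measure and then invoke the theorem that a proper closed extremal subset of an $m$-dimensional Alexandrov space has Hausdorff dimension at most $m-1$; the paper instead uses the equivalent formulation that an extremal subset of positive $\mathcal H^m$-measure contains an open set, and then derives the contradiction directly from coarea (the $P$-preimage of $Y(n-m)$ would contain a nonempty open set yet have $\mathcal H^n$-measure zero), which sidesteps the intermediate properness step. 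Both are fine; the paper's version is marginally shorter.
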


\begin{proof}
	If $\mathcal H^m (B)=0$ then $\mathcal H^n (P^{-1} (B))=0$ as one directly sees from the coarea formula \eqref{eq: coarea}.
	
	To prove the other implication we only need to show,
	due to \eqref{eq: coarea}, that the set $Y(n-m)$ defined in  Lemma \ref{lem: strataext} has $\mathcal H^m$-measure  $0$.
	But by   Lemma \ref{lem: strataext}, the set $Y(n-m)$ is an extremal subset of $Y$. Thus, if it has positive measure, it must contain an open subset.
	Then, by the coarea formula, its $P$-preimage has $\mathcal H^n$-measure
zero in $X$ and contains an open subset, which is impossible.	
	\end{proof}

%\subsection{Extremal subsets}

%From the definition of extremal subsets, Lemma \ref{lem: compose} and Lemma \ref{lem: liftflow} we directly deduce, cf. \cite{Guijarro}:

%\begin{prop}
%	Let $P:X\to Y$ be a surjective  local submetry between Alexandrov regions.
%	Let $E\subset X$ be an extremal subset. Then  the image $P(E)$
%	is an extremal subset of $Y$.
%\end{prop}

%However, extremal subsets  in the quotient are  often much more numerous than in the total space. One can obtain  versions of subsequent results also for local submetries, but  the statements in the global case, when all gradient flows are defined for all times are much clearer.

\section{Infinitesimal submetries} \label{sec: infinite}
\subsection{Lines and rays}
From Toponogov's splitting theorem we  derive:

\begin{prop} \label{prop: line}
	Let $X,Y$ be Alexandrov  spaces with non-negative curvature, let
	$P:X\to Y$ a submetry and assume that $Y$ splits  as $Y=Y_0\times \R$.
	Then there exists an isometric splitting $X=X_0\times \R$, such that $P$
	is given as $P(x,t)=(P_0(x),t)$ for a submetry $P_0:X_0\to Y_0$.
\end{prop}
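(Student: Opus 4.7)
\emph{Proof plan.}

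Write $Y = Y_0 \times \R$, fix $y_0 \in Y_0$, and let $c(t) := (y_0, t)$ be the corresponding line in $Y$. Choose $x_0 \in P^{-1}(c(0))$. Since $X$ is a proper Alexandrov space, \lref{lem: lift} produces, for every $n$, horizontal lifts of $c|_{[0,n]}$ and of $t \mapsto c(-t)|_{[0,n]}$ starting at $x_0$. A standard compactness argument extracts rays $\gamma^+, \gamma^- \colon [0, \infty) \to X$ starting at $x_0$ with $P \circ \gamma^{\pm}(t) = c(\pm t)$, whose concatenation gives a curve $\gamma \colon \R \to X$. For $s \leq 0 \leq t$, the triangle inequality gives $d(\gamma(s), \gamma(t)) \leq t - s$, while the $1$-Lipschitz property of $P$ forces $d(\gamma(s), \gamma(t)) \geq d(c(s), c(t)) = t - s$; hence $\gamma$ is a line in $X$.

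Toponogov's splitting theorem then yields an isometric decomposition $X = X_0 \times \R$ in which, with a suitable identification of basepoint, $\gamma(t) = (x_0, t)$. The main task is to show that, under this splitting and the given one of $Y$, the map $P$ has the product form $P(x, t) = (P_0(x), t)$ for some $P_0 \colon X_0 \to Y_0$. I propose to handle this via Busemann functions. Let $b^{X, \pm}$ and $b^{Y, \pm}$ denote the Busemann functions of the rays $\gamma^{\pm}$ and $t \mapsto c(\pm t)$ respectively. The splittings give $b^{X, \pm}(x, s) = \pm s$ and $b^{Y, \pm}(y, s) = \pm s$; in particular $b^{X, -} = -b^{X, +}$ and $b^{Y, -} = -b^{Y, +}$. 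Since $P$ is $1$-Lipschitz, the bound $d(P(x), c(\pm t)) \leq d(x, \gamma^{\pm}(t))$ yields $b^{X, \pm} \leq b^{Y, \pm} \circ P$ on all of $X$. Combining the two inequalities with the sign identities above forces equality $b^{X, +} = b^{Y, +} \circ P$; equivalently, $P$ preserves the $\R$-coordinate.

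Now, for each $x \in X_0$ define $P_0(x) \in Y_0$ by $P(x, 0) = (P_0(x), 0)$. For arbitrary $t$, the $1$-Lipschitz curve $t \mapsto P(x, t)$ has $\R$-coordinate exactly $t$ by the previous step; the Pythagorean identity in the metric product $Y = Y_0 \times \R$ then forces its $Y_0$-coordinate to be constantly equal to $P_0(x)$, so $P(x, t) = (P_0(x), t)$. Finally, substituting this formula into the submetry identity $P(B_r((x, 0))) = B_r((P_0(x), 0))$ and using the product form of balls, one reads off $P_0(B_s(x)) = B_s(P_0(x))$ for every $s \geq 0$, so $P_0$ is itself a submetry $X_0 \to Y_0$.

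The step I expect to be the main obstacle is the Busemann-function identification of the $\R$-coordinates of $x$ and $P(x)$; this is what upgrades the mere existence of a splitting of $X$ (provided by Toponogov applied to $\gamma$) to its compatibility with the given splitting of $Y$. Once that is in place, the constancy of the $Y_0$-coordinate along vertical lines, and the submetry property of $P_0$, are direct consequences of the isometric product structure and the $1$-Lipschitz property of $P$.
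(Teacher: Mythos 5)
Your proof is correct and takes essentially the same route as the paper: lift a line of $Y$ to a horizontal line in $X$, apply Toponogov's splitting theorem, then use the $1$-Lipschitz property of $P$ to force the product form and the submetry property of $P_0$. The paper's version is terser---it composes $P$ with the projection $Y\to\R$ to get a submetry $Q\colon X\to\R$, applies the splitting theorem, and asserts without detail that $Q$ becomes the coordinate projection---and your Busemann-function argument is precisely what justifies that assertion, so it fills a gap the paper glosses over rather than taking a different path. One small imprecision in wording: in the final step, balls in a metric product are not products of balls, so ``using the product form of balls'' should be replaced by the actual computation (given $y'\in B_r(P_0(x))$, lift $(y',0)\in B_r((P_0(x),0))=P(B_r((x,0)))$ to some $(x',t')\in B_r((x,0))$; the formula $P(x',t')=(P_0(x'),t')$ forces $t'=0$ and $d(x,x')<r$), but this is a matter of exposition, not substance.
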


\begin{proof}
	Consider the composition $Q$ of $P$ and the projection $Y\to \R$. This is a submetry. Lifting $\R$ to a horizontal line in $X$ and using the splitting theorem, we see that $X$ splits as $X_0\times \R$ such that $Q$ is the projection onto the second factor.
	
	Therefore, $P$ has the form $P(x,t)=(P^t(x),t)$ for a map
	$P^t:X_0\to Y_0$,  a priori, depending  on $t$.
	However, since $P$ is $1$-Lipschitz, $P^t$ does not depend on $t$ and equals a   map $P_0:X_0\to Y_0$.
	Since $P$ is a submetry,  $P_0$ must be a submetry as well.
\end{proof}

The first statement of the next observation follows from \eqref{eq: dA}, the second from the concavity of Busemann functions in non-negative curvature  or by a direct comparison argument:

\begin{lem} \label{lem: nonnegative}
	Let $X$ be a non-negatively curved Alexandrov space and let  $P:X\to [0,\infty )$ be a submetry and set  $L = P^{-1} (0)$.
	Then $P=d_L$.
	% is given as  the {\red signed} \footnote{\blue What do you mean here by signed? It is not a map to $\R$} distance function to $L$.
	The function $P$  is convex, in particular, $L$ is a convex subset of $X$.
\end{lem}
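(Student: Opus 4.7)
The identity $P=d_L$ is immediate from Lemma \ref{lem: clear}: specializing that lemma to $y=0\in[0,\infty)$ yields $d_0\circ P=d_{P^{-1}(0)}=d_L$, and since $d_0(t)=t$ on $[0,\infty)$, this reduces to $P=d_L$. Granting convexity of $P$, the set $L=P^{-1}(0)=\{z:P(z)\le 0\}$ (using $P\ge 0$) is the zero sublevel set of a convex function and hence convex, so it remains only to establish that $P$ is convex.

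My plan is to express $P$ as a pointwise supremum of convex functions, taken to be negatives of Busemann functions $b_\gamma(z)=\lim_{t\to\infty}(d(z,\gamma(t))-t)$ of horizontal rays $\gamma$ starting on $L$. In an Alexandrov space of non-negative curvature every such Busemann function is concave (a standard consequence of Toponogov comparison), hence each $-b_\gamma$ is convex. The first step is to produce, through every $x\in X$, a horizontal ray from $L$. Since Alexandrov spaces are proper, iterated application of Lemma \ref{lem: lift} yields a horizontal lift $\eta_x\co[0,\infty)\to X$ of the subray $[P(x),\infty)\subset[0,\infty)$ starting at $x$. Picking any foot point $p\in L$ of $x$ (which exists by properness) and concatenating a horizontal geodesic from $p$ to $x$ with $\eta_x$ yields a curve $\gamma_x\co[0,\infty)\to X$ passing through $x=\gamma_x(P(x))$ whose length on each initial segment $[0,t]$ equals $t$; since $P$ is 1-Lipschitz and $P\circ\gamma_x(t)=t$, this curve must be length-minimizing, so $\gamma_x$ is a horizontal ray.

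Given any horizontal ray $\gamma$ from a point of $L$ and any $z\in X$, the identity $P\circ\gamma(t)=t$ together with the 1-Lipschitz property of $P$ yields $d(z,\gamma(t))\ge t-P(z)$ for $t\ge P(z)$, so $-b_\gamma(z)\le P(z)$. For the particular ray $\gamma_x$ constructed above, the equality $x=\gamma_x(P(x))$ forces $d(x,\gamma_x(t))=t-P(x)$ and thus $-b_{\gamma_x}(x)=P(x)$. Therefore $P(z)=\sup_\gamma(-b_\gamma(z))$ over all horizontal rays $\gamma$ starting on $L$, realizing $P$ as a supremum of convex functions; in particular $P$ is convex. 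The one delicate point I expect is checking that the concatenation of two horizontal curves is itself horizontal and hence automatically length-minimizing — this works cleanly only because the target $[0,\infty)$ is one-dimensional, so that the projected length adds over concatenation and equals the length of the concatenated curve.
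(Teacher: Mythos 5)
Your argument is correct and matches the route the paper indicates (deriving $P=d_L$ from \eqref{eq: dA}/Lemma~\ref{lem: clear}, and obtaining convexity from concavity of Busemann functions in non-negative curvature by writing $P$ as a supremum of $-b_{\gamma_x}$ over the explicitly constructed horizontal rays $\gamma_x$ through each $x$, each touching $P$ at $x$). The only cosmetic point is that the supremum should be taken over the specific rays $\gamma_x$ you build, for which $P\circ\gamma_x(t)=t$ holds by construction; this is exactly the family your argument actually uses, so there is no gap.
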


\subsection{Infinitesimal submetries, horizontal and vertical vectors}
In this section we  call points of a Euclidean cone $X=C(S)$ vectors {
	and denote, for   $h\in X$,   by $|h|$ the distance $|h| := d(h,0_x)$.}

We will call a submetry $P:X =C(S)\to Y =C(\Sigma)$ between two Euclidean cones over Alexandrov spaces $S,\Sigma$ of curvature $\geq 1$
an \emph{infinitesimal submetry}  if it commutes with the natural dilations of the cones. In other words, if the equidistant decomposition
of $X$ induced by $P$ is equivariant under dilations.  Equivalently, we can require that $P$ sends the origin  of the cone $X$  to the origin of $Y$ and  coincides with its own differential at the origin $0_X$.

Given such an infinitesimal submetry $P:X =C(S)\to Y =C(\Sigma)$ we define a vector $v\in X$ to be \emph{vertical} if $P(v)=0_Y$.
We call $h\in X$ \emph{horizontal}  if  $|P(h)| =|h|$.  The set of vertical and horizontal vectors in $X$ will be usually denoted by $V$ and $H$, respectively.
Clearly $V$ and $H$ are subcones of $X$.  By definition,  $V=P^{-1} (0_y)$.

By \eqref{eq: dA}, for any   point $h \in X$, we have
$$|P(h)| = d(0_Y,P(h))= d(V,h) \;.$$
This immediately implies:
\begin{lem}\label{lem-hor-polar-vert}
If $V=\{0\} $ then $H=X$.
If $V\neq \{0\}$  then  the set of unit horizontal vectors $H\cap S$ is the \emph{polar set} of $V\cap S$, i.e., the set of all points $x\in S$ such that the distance in $S$ between
$x$ and any point in $V\cap S$ is at least $\frac \pi 2$.
\end{lem}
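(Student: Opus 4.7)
The plan is to unfold both sides of the equivalence using only the identity $|P(h)| = d(V,h)$ already derived and the geometry of the Euclidean cone $X = C(S)$. The first case is essentially a triviality: if $V = \{0\}$, then $d(V,h) = d(0_X,h) = |h|$ for every $h \in X$, so by the definition of horizontality we immediately get $H = X$.

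For the main case $V \neq \{0\}$, I would first reduce to unit vectors. Since $P$ commutes with dilations, both $V$ and $H$ are subcones, so $h \in X \setminus \{0\}$ is horizontal if and only if $h/|h|$ is. Hence it suffices to characterize $H \cap S$. For a unit vector $h \in S$, horizontality says $d(V,h) = 1$; and since $0 \in V$ gives the opposite inequality $d(V,h) \leq 1$ for free, the condition simplifies to $d(h,v) \geq 1$ for every $v \in V$.

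Now I would write an arbitrary nonzero $v \in V$ as $v = t v_0$ with $t > 0$ and $v_0 \in V \cap S$, and invoke the standard cone distance formula
\[
d(h, tv_0)^2 \;=\; 1 + t^2 - 2t\cos\bigl(\min(d_S(h,v_0),\pi)\bigr).
\]
The inequality $d(h,tv_0) \geq 1$ must hold for all $t > 0$, which (by regarding the right-hand side as a quadratic in $t$ that vanishes at $t = 0$) is equivalent to $\cos(\min(d_S(h,v_0),\pi)) \leq 0$, i.e.\ $d_S(h,v_0) \geq \tfrac{\pi}{2}$. Quantifying over $v_0 \in V \cap S$ gives exactly the polar condition defining the polar set of $V \cap S$ in $S$. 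The converse direction uses the same formula: if $d_S(h,v_0) \geq \pi/2$ for every unit vertical $v_0$, then $d(h,tv_0)^2 \geq 1 + t^2 \geq 1$, so $d(V,h) = 1$ and $h$ is horizontal.

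I do not anticipate any serious obstacle: the only minor subtlety is keeping track of the cap at $\pi$ in the cone metric, which is harmless here because $\cos$ is nonpositive on $[\pi/2,\pi]$. The scaling reduction and the cone distance formula carry all the weight of the argument.
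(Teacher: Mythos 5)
Your argument is correct and is precisely the routine verification behind the paper's one-line remark that the identity $|P(h)| = d(V,h)$ ``immediately implies'' the lemma: you reduce to unit vectors, note $0 \in V$ gives $d(V,h) \le 1$, and then apply the Euclidean cone distance formula to translate $d(h,tv_0)\ge 1$ for all $t>0$ into $d_S(h,v_0)\ge \pi/2$. This is the same approach the paper intends, just spelled out.
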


%From Lemma \ref{lem: nonnegative} and the fact that the \emph{polar set} of any subset in an Alexandrov  space of curvature $\geq 1$ is convex we deduce that $H$ and $V$ are convex subcones of $X$. In the case that  $X$ is a Euclidean space this is equivalent to the following proposition:
%In general Alexandrov spaces, the statement of the Proposition is stronger is much stronger.

The following proposition is essentially contained in \cite{Lysubm}. For convenience of the reader we include a proof here.

\begin{prop} \label{prop: structure2}
	Let $P:X=C(S)\to Y=C(\Sigma)$ be an infinitesimal submetry.  Then the vertical and horizontal spaces $V,H$ are convex subcones of $X$.   The distance functions $d_V$ and  $d_H$ are convex functions on $X$. The foot-point projections    $\Pi ^V$ and $\Pi ^H$ from $X$ on $V$ and $H$, respectively, are well defined and  $1$-Lipschitz.

We have $P=P\circ \Pi^{H}$. The restriction $P:H\to Y$ is a submetry,
which is a cone over the submetry $P:H\cap S\to \Sigma$.
\end{prop}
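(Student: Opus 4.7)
The strategy is to use dilation-equivariance of $P$ to reduce essentially everything to the angular picture on the $CBB(1)$ spaces $S$ and $\Sigma$, and then reconstruct via the cone distance formula. Since $P$ commutes with dilations, $V = P^{-1}(0_Y)$ is a subcone, $V = C(V_0)$ with $V_0 := V \cap S$; once convexity is verified, $H = C(H_0)$ with $H_0 := H \cap S$ as well.

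For $V$: apply \lref{lem: nonnegative} to the composed submetry $Q := |\cdot|_Y \circ P \colon X \to [0,\infty)$, which has $0$-fiber $V$. Since $X$ is non-negatively curved (a Euclidean cone over the $CBB(1)$ space $S$), the lemma yields $Q = d_V$, $d_V$ convex, and $V$ convex. The cone $X$ is proper because $S$ is compact (a finite-dimensional $CBB(1)$ space has diameter $\leq \pi$), so foot-points on $V$ exist; convexity of $d_V^2$ gives uniqueness, and the standard first-variation argument in $CBB(0)$ gives the $1$-Lipschitz property of $\Pi^V$.

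For $H$: either $V = \{0\}$ and $H = X$ by \lref{lem-hor-polar-vert}, or that lemma identifies $H_0$ with the polar set of $V_0$ in $S$. Polar sets in $CBB(1)$ spaces are convex by spherical triangle comparison with $\Ss^2$, so $H = C(H_0)$ is a convex subcone. Explicit formulas for the projections then follow from the cone distance formula: for $x = (s, r)$ with $\theta := d_S(s, V_0) < \pi/2$ and $\bar s \in V_0$ the foot-point of $s$, extending the geodesic $\bar s \to s$ to length $\pi/2$ in $S$ reaches a point $s^\perp \in H_0$ (by the spherical hinge comparison: the perpendicularity at $\bar s$ given by first-variation combined with $CBB(1)$ forces $d_S(s^\perp, v) \geq \pi/2$ for all $v \in V_0$), giving
$$
\Pi^V(x) = (\bar s, r\cos\theta), \quad \Pi^H(x) = (s^\perp, r\sin\theta), \quad d_V(x) = r\sin\theta, \quad d_H(x) = r\cos\theta,
$$
together with the identities $d_V^2 + d_H^2 = |\cdot|^2$ and $d_H = |\cdot|_V \circ \Pi^V$.

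The principal obstacle is convexity of $d_H$, since distance functions to convex subsets need not be convex in $CBB(0)$. My plan is to show that $\Pi^V \colon X \to V$ is itself a submetry, verified in cone coordinates via horizontal transport along $V$ in the flat-sector structure; granting this, $d_H = |\cdot|_V \circ \Pi^V$ is a composition of submetries with $0$-fiber $H$, so \lref{lem: nonnegative} supplies convexity of $d_H$, and uniqueness together with $1$-Lipschitz for $\Pi^H$ then follow as for $\Pi^V$. The remaining statements $P = P \circ \Pi^H$ and the description of $P|_H$ as the Euclidean cone over an induced submetry $H_0 \to \Sigma$ follow from dilation-equivariance: applying \lref{lem: liftflow} to the gradient flow of $d_V^2/2$ on $X$ (which lifts the cone-dilation flow on $Y$), one shows that inside each flat sector spanned by $\bar s$ and $s^\perp$ the angular direction of $P$ in $\Sigma$ is independent of the $V$-component, forcing $P(x) = P(\Pi^H(x))$; restricting to unit horizontal vectors then gives the induced submetry on the links.
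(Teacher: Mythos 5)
Your proof has two genuine gaps, one of which is fatal to the route you chose for the convexity of $d_H$.

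\emph{Gap 1: extension of geodesics.} You construct $s^\perp$ by ``extending the geodesic $\bar s\to s$ to length $\pi/2$ in $S$.'' In a $CBB(1)$ space geodesics need not extend, so $s^\perp$ may simply not exist by this procedure. The paper avoids this: it takes the horizontal lift $\bar\gamma$ of the radial ray of $Y$ through $P(x)$ (which exists by \lref{lem: lift}), notes that $\bar\gamma$ meets the vertical radial ray $\eta$ through $v=\bar\gamma(0)$ orthogonally, and then extracts the perpendicular horizontal ray $\bar\gamma_\infty$ as a limit of the radial rays of $X$ through the points $\bar\gamma(t)$. The flat quarter-plane picture, the formula for $\Pi^H$, and $P=P\circ\Pi^H$ are all obtained from this lift-and-limit construction rather than from extending geodesics inside $S$.

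\emph{Gap 2: $\Pi^V$ is not a submetry.} Your plan hinges on showing that $\Pi^V\colon X\to V$ is a submetry, so that $d_H=|\cdot|_V\circ\Pi^V$ is a submetry and \lref{lem: nonnegative} applies. This is false in general. Take $X=\R^2$ and $V=[0,\infty)\times\{0\}$. Then $P=d_V\colon\R^2\to[0,\infty)$ is an infinitesimal submetry (by \pref{lem: locsubm}, since $V$ is a nowhere dense closed convex cone of infinite reach), and $V$ is its vertical cone. But for $p=(-1,0)$ the image $\Pi^V(B_{1/2}(p))=\{(0,0)\}$ is a single point, whereas $B_{1/2}(\Pi^V(p))\cap V=[0,1/2)\times\{0\}$ is not; hence $\Pi^V$ is not a submetry. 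The paper's actual argument for the convexity of $d_H$ is different and unconditional: it writes $d_H$ on $X\setminus(V\cup H)$ as $d_H=\sup_{w\in V\cap S}\{-b_w\}$, where $b_w$ is the Busemann function of the vertical radial ray through $w$; since Busemann functions are concave on a nonnegatively curved cone, each $-b_w$ is convex, and the supremum of convex functions is convex. The $1$-Lipschitz property of $\Pi^H$ then follows from the contractivity of the gradient flow of $-d_H$, not from a Pythagorean identity.

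Your treatment of $V$ (via \lref{lem: nonnegative} applied to $|\cdot|_Y\circ P$) and the observation that $H\cap S$ is the polar of $V\cap S$ and hence convex in $S$ by $CBB(1)$ triangle comparison match the paper's framework. But the two steps above are where your argument diverges and breaks; in particular you would need to replace the $\Pi^V$-submetry idea by the Busemann-function representation (or an equivalent device) to get the convexity of $d_H$.
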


\begin{proof}
 If $V=\{0\} $  the proposition follows by Lemma \ref {lem-hor-polar-vert}.

Suppose $V\ne \{0\} $.
  We claim that  any $x \in X\setminus (V\cup H)$ is contained in a
  unique subcone $C(\Gamma)$, where $\Gamma$ is a geodesic  of length $\frac \pi 2$ in the unit sphere $S$ of $X$  connecting a point in $H \cap S$ with a point in
  $V\cap S$.

   The uniqueness of $\Gamma$ follows from the fact that $H\cap S$ and $V\cap S$
   have distance $\frac \pi 2$ in $S$ and that geodesics in $S$ do not branch.

   In order to find such a quarter-plane $C(\Gamma)$ bounded by a vertical and a horizontal radial rays, consider the radial ray $\gamma$ in $Y$ through $P(x)$
   and its unique horizontal lift $\bar \gamma$ through $x$, Subsection \ref{subsec: liftgeod}.

   Then $v=\bar \gamma (0)$ is contained in $V$ and, by horizontality of $\bar \gamma$,  we have
   $$d(x,v) =d(x,V)=|P(x)| \;.$$
   Therefore, the radial vertical ray $\eta $ through $v$ meets $\bar \gamma$
   at $v$ orthogonally.

  	 Since $P$ commutes with dilations,  the radial rays $\bar \gamma _t$ through $\bar \gamma (t)$ converge, as $t\to 0$, to a radial ray $\bar \gamma _{\infty}$  such that
  	 $$P \circ  \gamma _{\infty} (t)=P \circ \bar \gamma (t)= \gamma (t) \;.$$
  	 Moreover, this ray $\bar \gamma _{\infty}$ makes  an angle of $\frac \pi 2$ with $\eta$ and the union of the rays $\bar \gamma _t$ is the required quarter-plane spanned by $\bar \gamma _{\infty}$ and $\eta$.
	   	
  	 By construction, $P(x) = P (h) $, where $h\in H$ is the closest point
  	 on $ \bar \gamma _{\infty}$ to $x$.  Moreover, $h$ is the closest point
  	 to $x$ in $H$ (otherwise, we would find a horizontal vector making an angle less than $\frac \pi 2$ with $\eta$).

  	Now, we easily see
  	$$d_H(x)= \sup _{w\in V\cap S } \{-b_w  (x) \} \;,$$
   where $b_w$ is the Busemann function of the radial ray through $w$.
   % and where the supremum is taken over all $w\in V$.
  	
  	 If we replace $-b_w$ by $\max \{-b_w ,0 \}$ then the equality remains to hold on $V$ and $H$ as well.  Thus, we have represented the distance function $d_H$ as a supremum of convex functions, since any Busemann function is concave in non-negative curvature.  This proves the convexity of $d_H$.
  	
  	 The convexity of $d_V$ is proved similarly (or directly by Lemma \ref{lem: nonnegative}).
  	
  	 Now, the gradient flows of $-d_H$, respectively of $-d_V$, converge
  	 to the closest point projection $\Pi^{H}$ and $\Pi^{V}$. The contractivity of such gradient flows proves that $\Pi^H$ and $\Pi ^V$ are well-defined and $1$-Lipschitz.

%\end{proof}

%\subsection{Infinitesimal submetries, structure}
%In the course of the proof of  Proposition \ref{prop: VH} we have in fact seen the proof of the first  statement in the following Proposition.
%We conclude:

%\begin{prop} \label{prop:  structure2}
%Let $P:X =C(S)\to Y =C(\Sigma)$ be an infinitesimal submetry with vertical and horizontal cones $H,V\subset X$.
%Then  $P$ commutes with projection to $H$, $P \circ \Pi ^{H } =P $.	The restriction $P:H\to Y$ is a submetry,
%which is a cone over the submetry $P:H\cap S\to \Sigma$.
%\end{prop}

The statement that $P=P\circ \Pi ^H$ has been obtained in the proof above
on $X\setminus (V\cup H)$. The equality is clear on $V\cup  H$ as well.

  Since $P$ is a submetry and $\Pi ^H$ is $1$-Lipschitz,  this equality implies that the restriction $P:H\to Y$ is a submetry.
  %  the second claim follows from the first one.
\end{proof}

\subsection{Infinitesimal submetries of Euclidean spaces}
We now specialize the above structural results to the case $X=\R^n$.

\begin{prop}  \label{prop: maininfinite}
Let $P:\R^n\to Y$ be an infinitesimal  submetry to   a  Euclidean cone $Y=C(\Sigma )$.  Let $H \subset \R^n$ be the subcone of horizontal vectors as above. Let $ H^E\ni 0$ be the maximal Euclidean subspace of $H$.   If $H^E\ne\{0\}$ then the restriction  $P:H^E \to Y$ is a submetry and the restriction
  of $P$ to the unit sphere in $H^E$ is a submetry onto  $\Sigma$.
  % is a submetry.

  If $H$ is not a Euclidean space then there exists a round hemisphere
  $\mathbb S^k _+ \subset H$ such that the restriction
  $P:  \mathbb S^k_+ \to \Sigma$ is a submetry.
\end{prop}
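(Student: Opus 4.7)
The plan is to reduce both statements to \pref{prop: structure2}, which realizes $P|_H : H \to Y$ as the Euclidean cone over the spherical submetry $P|_{H \cap S^{n-1}} : H \cap S^{n-1} \to \Sigma$. Since cones of submetries are submetries and an infinitesimal submetry between Euclidean cones is determined by its restriction to the unit spheres, both claims reduce to producing an appropriate round subsphere (Case~1) or round hemisphere (Case~2) inside $H \cap S^{n-1}$ whose $P$-image submetrically covers all of $\Sigma$. A first useful observation is that $V \cap H^E \subseteq V \cap V^\perp = \{0\}$, so every $x \in H^E$ is horizontal and $|P(x)| = |x|$.

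For Case~1 ($H^E \neq \{0\}$), orthogonally decompose $H = H^E \oplus H'$ with $H' := H \cap (H^E)^\perp$ a pointed convex cone, inducing the spherical join $H \cap S^{n-1} = (H^E \cap S^{n-1}) \ast (H' \cap S^{n-1})$. The submetric property of $P|_{H^E}$ follows from the equality
\begin{equation*}
d\bigl(x, P^{-1}(y) \cap H^E\bigr) \;=\; d\bigl(x, P^{-1}(y)\bigr) \;=\; d_Y\bigl(P(x), y\bigr)
\end{equation*}
for all $x \in H^E$ and $y \in Y$, which yields both surjectivity (take $x = 0$) and the ball property. To establish this equality, I would first apply \pref{prop: line} iteratively to every line $\R v \subseteq H^E$ whose $P$-image is a genuine line in $Y$, producing a compatible orthogonal splitting $\R^n = X_0 \times \R^m$ and $Y = Y_0 \times \R^m$ with $P = (P_0, \mathrm{id})$. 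In the residual piece $P_0 : X_0 \to Y_0$, \cref{cor: directions} gives $\mathrm{diam}(\Sigma_0) \leq \pi/2$, so no further lines split off; combining this with the quasigeodesic characterization \pref{prop: image} applied to the horizontal lines $\R v \subseteq H^E \cap X_0$ forces $P_0(v) = P_0(-v)$, i.e.\ residual $H^E$-lines map under $P_0$ to folded rays. The displayed equality then follows by combining the splitting piece and the folding piece, and the cone-over-submetry principle gives $P|_{H^E \cap S^{n-1}} : \mathbb{S}^{l-1} \to \Sigma$ is a submetry, where $l = \dim H^E$.

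For Case~2 ($H$ not Euclidean), the pointed part $H'$ has positive dimension. Pick a unit vector $w_0$ in the relative interior of $H' \cap S^{n-1}$ and set $W := H^E \oplus \R w_0 \subset \R^n$. Then $H \cap W = H^E \oplus \R_{\geq 0} w_0$ is a closed half-subspace, whose intersection with the unit sphere is a closed round hemisphere $\mathbb{S}^k_+ \subseteq H$ of dimension $k = \dim H^E$. (If $H^E = \{0\}$, one enlarges $W$ using additional relative-interior directions of $H'$, or one works at a point of $H \cap S^{n-1}$ whose tangent cone is a half-space of the desired dimension.) The restriction $P|_{\mathbb{S}^k_+} \to \Sigma$ is a submetry by combining Case~1 on the equator $H^E \cap S^{n-1}$, the spherical submetry property of $P|_{H \cap S^{n-1}}$ from \pref{prop: structure2} on the open hemisphere, and a continuity argument to cover the boundary.

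The main technical obstacle is the rigidity step in Case~1 forcing residual $H^E$-lines to produce folded rays rather than corners with angle strictly between $0$ and $\pi$ at $0_{Y_0}$. Such corners are a priori compatible with the quasigeodesic condition, since the distributional second derivative of $\tfrac{1}{2}d_q^2 \circ \gamma$ may carry negative jumps at corner points; the cone-equivariance of $P_0$ together with the diameter bound on $\Sigma_0$ must therefore be leveraged to exclude such intermediate corners. Making this folding rigidity precise is the heart of the argument.
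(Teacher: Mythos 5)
Your approach is genuinely different from the paper's and, as written, has two serious gaps, one of which makes the argument circular.

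First, the circularity. You invoke Corollary \ref{cor: directions} (via Proposition \ref{prop: directions}) to conclude $\operatorname{diam}(\Sigma_0) \leq \pi/2$ for the residual piece $Y_0 = C(\Sigma_0)$. But in the paper's logical order, Proposition \ref{prop: directions} is proved \emph{after} and \emph{using} Proposition \ref{prop: maininfinite}: the very first sentence of the proof of Proposition \ref{prop: directions} is ``Due to Proposition \ref{prop: maininfinite}, there exists a submetry $\hat P:\mathbb S^k \to \Sigma$,'' and it is that existence that drives the diameter estimate. You cannot use the diameter bound when establishing the statement that the bound depends on. Without the diameter bound, even the quasigeodesic jump inequality at the vertex (which permits any turning angle whose opening is compatible with $\cos d(w,u_-)+\cos d(w,u_+)\ge 0$ for all $w$) gives you no purchase.

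Second, the folding rigidity step, which you yourself flag as the heart of your argument, is unproven, and I do not see how it can be repaired within your framework. You need to rule out that a horizontal line $\R v \subset H^E$ in the residual piece projects to a quasigeodesic with a genuine corner at $0_{Y_0}$ of opening strictly between $0$ and $\pi$. The quasigeodesic differential inequality does not exclude such corners (negative jumps of the first derivative of $\tfrac12 d_q^2$ are allowed), and even granting $\operatorname{diam}(\Sigma_0)\le\pi/2$ the condition is vacuously satisfied for every pair of directions, so it eliminates nothing. You would need an entirely separate rigidity input. Similarly, your Case 2 construction of the hemisphere $H^E\oplus\R_{\ge 0}w_0$ with $w_0$ in the relative interior of $H'$ is asserted but not justified: there is no reason the $P$-image of that particular hemisphere fills out $\Sigma$ submetrically, and your ``continuity argument to cover the boundary'' is not an argument.

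For comparison, the paper avoids all of this by differentiating $P$ at a well-chosen vertical point $v\in V$. Taking $v$ in the relative interior of $V$ inside its span $V^+$ gives $T_vV=V^+$, hence the horizontal space $H_v$ of the differential $D_vP$ equals the orthogonal complement $(V^+)^\perp=H^E$. Proposition \ref{prop: structure2} then shows $D_vP\colon H_v\to Y$ is a submetry, and a short argument identifies $D_vP$ on $H_v$ with $P$ itself under the canonical identification of $H_v\subset T_v\R^n$ with $H^E\subset\R^n$. For the second claim one instead takes $v$ to be a boundary point of $V$ in $V^+$ at which $T_vV$ is a Euclidean halfspace; then $H_v$ is a halfspace and its unit sphere is exactly the required hemisphere. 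The differential step packages in one move what you were trying to recover by hand through splitting, folding, and ad hoc hemisphere selection. I would recommend you reorganize your proof around that idea rather than attempt to close the two gaps above.
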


\begin{proof}
 Let $V=P^{-1} (0 _Y) \subset \R^n$ be the vertical cone.  If $V=\{0\}$ then $H=\R^n$ and the claim of the Proposition is trivial.
 	
 	 Suppose $V\ne \{0\}$.
Then $H$ is the polar cone of $V$ by Lemma ~\ref {lem-hor-polar-vert}.

 Let $v\in V$ be  arbitrary.
 Then $T_vV$ is a convex subcone of the linear span $V^+$ of $V$. The cone $T_vV$ contains $V$, so that its polar cone $H_v$ is a subset of $H$.

The differential $Q=D_vP:T_v\R^n =\R^n \to C_{0_y} Y =Y$  is an infinitesimal submetry. Its vertical subspace is  $T_vV$ by Proposition  \ref{prop: differ};
hence $H_v$ is the horizontal subspace of the infinitesimal submetry $Q$.

By Proposition \ref{prop: structure2}, the map $Q:H_v \to Y$ is a submetry.
Identifying $H_v$ with the starting directions of $P$-horizontal rays in $\R^n$ starting at $v$, we see that under the canonical identification of $H_v$ with a subset of $H$, the map $Q$ is just the restriction of $P$ to $H_v$.

%Let $V^+$ be the linear span of $V$.
Choosing $v$ to be an inner point of $V$ in its linear span $V^+$, we get
$T_vV =V^+$ and   $H_v =H^E$  proving the first statement.
%is a Euclidean space, the orthogonal complement of $V^+$.

On the other hand,  $V\neq V^+$ if and only if $H \neq H^E$.
% is not a Euclidean space.
In this case, we can always find a point $v$ in the boundary of $V$ in $V^+$ at which $T_vV$ is a Euclidean halfspace.  Then $H_v$ is a Euclidean halfspace as well.

 Restricting to the unit spheres in the so obtained cones $H_v$,  we find the desired submetries with base $\Sigma$.
\end{proof}

 Recall, that any Alexandrov space $Z$ has a unique decomposition
$Z=\R^k \times Z_0$, where $Z_0$ does not admit any $\R$-factor, see  \cite{FL}.  If $Z$ is non-negatively curved, then $Z_0$ does not contain lines, by Toponogov's splitting theorem. Now
we state:

\begin{prop} \label{prop: directions}
	Let $P:\R^n \to C(\Sigma )$ be an infinitesimal submetry, let $H \subset \R^n$
	be the cone of horizontal vectors and let $C(\Sigma ) =\R^l \times C(\Sigma _0)$ be the canonical decomposition, so that $C(\Sigma _0)$ does not contain lines.

	There is a natural splitting
	$\R^n= H^0\times \R^{n-l}$ with $H^0=\R^l$ and
	$$P=(Id, P_0):\R^l \times \R^{n-l}\to \R^l \times C(\Sigma _0)\;.$$
	The set $H^0$ consists  of all points $h\in H$ such that  $P^{-1} (P(h) \cap H)=\{h\}$.
	
	The space $\Sigma _0$ has diameter at most $\frac \pi 2$.
%	
%	Further, 	$H^0$ is the set of all points $h\in H$ such that  $P^{-1} (P(h) \cap H)
%	\{h\}$.
%	
%	Then the set of points $h\in H$ at which  the fibers of the restriction  $P:H\to Y$ are singletons is a Euclidean subspace $\R^l$. The restriction $P_0$ of $P$ to the orthogonal complement is a submetry $P_0:\R^{n-l} \to C(\Sigma _0)$ and $P$ has a product structure
%	$$P=(Id, P_0):\R^l \times \R^{n-l}\to \R^l \times C(\Sigma _0)\;.$$
\end{prop}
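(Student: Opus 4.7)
The plan is to build the splitting via Proposition~\ref{prop: line}, then characterize $H^0$ from the explicit form of $P$, and finally establish the diameter bound.

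First I apply Proposition~\ref{prop: line} iteratively, $l$ times. At each step the image splits off a copy of $\R$ (visibly, since $\R^l \times C(\Sigma_0)$ does), and the non-negatively curved source $\R^n$ splits compatibly; each application returns an isometric splitting of the source in which $P$ acts as the identity on the new factor. After $l$ iterations I arrive at $\R^n = X \times \R^l$ with $P = (P_0, \Id): X \times \R^l \to C(\Sigma_0) \times \R^l$ for some infinitesimal submetry $P_0$. Since $X \times \R^l$ is isometric to $\R^n$ with $\R^l$ Euclidean, $X = \R^{n-l}$. Setting $H^0 := \{0\} \times \R^l$ gives $H^0 \cong \R^l$, and since $P|_{H^0} = \Id$ every vector in $H^0$ is horizontal.

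Second, the splitting yields $H = H_0 \times \R^l$ and $V = V_0 \times \{0\}$, where $H_0, V_0$ are the horizontal and vertical cones of $P_0$. For $h = (x, y) \in H$ one computes $P^{-1}(P(h)) \cap H = (P_0^{-1}(P_0(x)) \cap H_0) \times \{y\}$, so the singleton condition is equivalent to $P_0^{-1}(P_0(x)) \cap H_0 = \{x\}$. When $x = 0$ this reduces to $V_0 \cap H_0 = \{0\}$, which holds by the polar complementarity in Proposition~\ref{prop: structure2}. When $x \neq 0 \in H_0$, that same proposition makes $P_0|_{H_0} : H_0 \to C(\Sigma_0)$ a submetry; if the fiber through $x$ were $\{x\}$, cone-invariance of $P_0$ would force every fiber over the ray $\{tP_0(x)\}$ to be a singleton, producing a horizontal geodesic ray on which $P_0$ is isometric. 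Extending this structure via Proposition~\ref{prop: image} and invoking the no-$\R$-factor hypothesis on $C(\Sigma_0)$ forces $x = 0$.

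Third, and the main obstacle, is the diameter bound. I apply Proposition~\ref{prop: maininfinite} to $P_0$: either $H_0^E \neq \{0\}$ yields a submetry from a Euclidean subspace onto $C(\Sigma_0)$, or $H_0^E = \{0\}$ yields one from a round hemisphere $\mathbb S^k_+$ onto $\Sigma_0$. Assume for contradiction $\diam(\Sigma_0) > \pi/2$; pick $\sigma, \tau \in \Sigma_0$ at distance $> \pi/2$, lift them to the source, and in the hemisphere case extend to a submetry $\mathbb S^k \to \Sigma_0$ by reflection across the equator. The antipodal involution of $\mathbb S^k$, combined with horizontal lifts of a diameter-realizing geodesic in $\Sigma_0$, should produce a pair of points in $\Sigma_0$ at distance exactly $\pi$; by Toponogov splitting, $C(\Sigma_0)$ would then contain a line, contradicting the hypothesis. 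The hard part will be making this antipodal construction rigorous: tracking via Proposition~\ref{prop: image} how horizontal geodesics of length $> \pi/2$ are forced by the submetry property to close up into distance-$\pi$ pairs downstairs.
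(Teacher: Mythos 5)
Your outline follows the paper's route --- split off $\R^l$ via Proposition~\ref{prop: line}, reduce to the factor $P_0$ with $l=0$, then invoke Proposition~\ref{prop: maininfinite} --- but both of the remaining steps have genuine gaps, one of which you acknowledge yourself.

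For the diameter bound, the idea of ``extending a submetry $\mathbb S^k_+\to\Sigma_0$ to a submetry $\mathbb S^k\to\Sigma_0$ by reflection across the equator'' is both unnecessary and incorrect: reflecting a submetry defined on a hemisphere does not in general produce a submetry of the whole sphere. It is also unnecessary, because the vertical cone $V_0$ cannot span all of $\R^{n-l}$ (otherwise the origin would be isolated in $C(\Sigma_0)$), so the maximal Euclidean subspace $H_0^E$ is nonzero and the \emph{first} alternative of Proposition~\ref{prop: maininfinite} already supplies a submetry $\hat P\co \mathbb S^k\to\Sigma_0$ from a full round sphere. The missing ingredient is the actual mechanism that turns $\diam(\Sigma_0)>\frac \pi 2$ into a contradiction. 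The paper's argument: take $p,q\in\Sigma_0$ realizing the diameter $D>\frac \pi 2$; by \eqref{eq: dA} every point of $L_q=\hat P^{-1}(q)$ is at distance exactly $D$ from $L_p=\hat P^{-1}(p)$, and a spherical comparison shows $L_q$ must be a \emph{singleton} (if $L_q$ contained $z\neq z'$, the midpoint $m$ of $zz'$ would satisfy $d(m,w)>D$ for every $w\in L_p$, since $\cos\circ d_w$ is convex where $d_w>\frac \pi 2$, contradicting maximality of $d_{L_p}$). Once $L_q=\{z\}$, equality \eqref{eq: dA} gives $d(q,\hat P(-z))=d(\{z\},-z)=\pi$, hence $\diam(\Sigma_0)\geq\pi$, contradicting the $l=0$ assumption. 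Your ``antipodal construction'' gestures at this but never produces the singleton-fiber step.

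The characterization of $H^0$ has the same gap. A horizontal ray on which $P_0$ is isometric does not by itself contradict the no-$\R$-factor hypothesis, since a ray is not a line. The paper's route is: if some fiber of $P_0\co H_0\cap\mathbb S^{n-l-1}\to\Sigma_0$ is a singleton, Proposition~\ref{prop: maininfinite} supplies a subsphere $\mathbb S^k\subset H_0$ on which $P_0$ restricts to a submetry still having a singleton fiber (the restricted fiber is nonempty by surjectivity and contained in the original singleton); the one-line consequence of \eqref{eq: dA} above (a singleton fiber of a submetry $\mathbb S^k\to\Sigma_0$ forces $\diam(\Sigma_0)=\pi$) then gives the contradiction. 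Both of your remaining steps hinge on exactly this lemma, and it is absent from your proposal.
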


\begin{proof}
The first statement  follows from	Proposition \ref{prop: line}. By definition of horizontal points, $H^0\subset  H$.
	
	 Using the product structure of $P$, we  reduce the statement to the case $l=0$, hence $\Sigma =\Sigma _0$.  Thus, we may assume $\diam (\Sigma ) <\pi$.  We then need to prove that the diameter of $\Sigma$ is at most $\frac \pi 2$ and that for all
	 $h\neq 0$ in $H$, the fiber through $h$ of the submetry $P:H\to C(\Sigma)$ has more than one point.

	 %Since $\Sigma =\Sigma _0$, the diameter of $\Sigma $ is less than $\pi$.
	 Due to Proposition \ref{prop: maininfinite}, there exists a submetry $\hat P:\mathbb S^k \to \Sigma $.
	
	  Assume that the diameter of $\Sigma $ is larger than $\frac \pi 2$. Consider $p,q\in \Sigma$ such that
	  $d(p,q) >\frac \pi 2$ equals the diameter of $\Sigma$.
	
	  Let $L_p=\hat P^{-1} (p)$ and $L_q =\hat P^{-1} (q)$. By \eqref{eq: dA}, $L_q$ is contained in the set of points in $\mathbb S^{k-1}$ with maximal distance to $L_p$, and since this maximal distance is larger than $\frac \pi 2$, we see that $L_q$ is a singleton.
	 Then the antipodal point of $L_q$ is also a fiber of $\hat P$, by the same argument. Therefore, $\diam (\Sigma )=\pi$  in contradiction to the assumption $\Sigma =\Sigma _0$.

Consider the restriction $P:H \cap \mathbb S^{n-1} \to \Sigma$ and assume that some fiber of this restriction is a singleton. By Proposition \ref{prop: maininfinite}, we find a
sphere $\mathbb S^k \subset H \cap \mathbb S^{n-1}$ such that the restriction
$P:\mathbb S^k \to \Sigma$ is  a submetry.  Clearly, this restriction has also a singleton fiber.  But this implies,  $\diam (\Sigma ) =\pi$.

 This contradiction  finishes the proof.	
\end{proof}

The above proof shows that if $P\co \Ss^n\to \Sigma$ is a submetry then either $\diam \Sigma=\pi$ or $\diam \Sigma\le\pi/2$, see also \cite{Mendes-Rad}, \cite{Grovesubmet}.

\section{Fibers have  positive reach} \label{sec: posreach}
%In order to describe many submetries of smooth manifolds with non-manifold fibers, recall the notion  of a subset of positive reach.
\subsection{Distance functions in manifolds}
 From now on let $M$ be  a Riemannian manifold
 with local two sided bounds on curvature. We will always equip $M$ with the
   $\mathcal C^{1,1}$ atlas of distance coordinates, \cite{Ber-Nik}.

For any subset $A\subset X$ the distance function $f=d_A$ is semiconcave on $M\setminus A$ and $f^2$ is semiconcave on $M$.   For an open subset
$O\subset M\backslash A$ the function $f$ is $\mathcal C^{1,1}$ in $O$ if and only if
for any $x\in O$ there exists  at most one geodesic $\gamma _x :[0,\epsilon ) \to O$ starting at
$x$  such that $f(\gamma (\delta)) =f(x)-\delta$ for all $0<\delta< \epsilon$,  \cite{KL}.
In this case, the gradient curves of $f$ are geodesics.
% {\red A similar statement holds for $f^2$ and an open subset $O\subset M$ except gradient curves in this case are reparameterized geodesics.} \footnote{\blue: What is this for? \red I don't remember now why I added it. I thought it this was used somewhere. If not this sentence can certainly be deleted.}

We denote by $\mathcal {U}(A)$ the largest open
subset of $M$ on which $d_A^2$ is $\mathcal C^{1,1}$.   Due to the previous considerations $\mathcal U(A)\setminus A$ is foliated by geodesics,
the gradient curves of $d_A$.

%{\red We will often use the following trivial Lemma
%\begin{lem}\label{lem-triv}
%Let $f\co U\to \R$ be a 1-Lipschitz semiconcave  function on an open set $U$ in an Alexandrov space $X$.
%Suppose $|\nabla f|=1$ on $U$. Then gradient curves of $f$ are geodesics along which $f$ increases with unit speed.
%\end{lem}
%}
% Moreover, $\mathcal U(A)$ contains $A$ if and only if $A$ is a set of positive reach.

\subsection{Positive reach} \label{subsec: posreach}
Recall that a  subset $L$ of a Riemannian  manifold
$M$ has \emph{positive reach}  if the foot-point projection on $L$ is uniquely defined in some open neighborhood $U$   of $L$, \cite{Federer}.
This is a local property, which is moreover independent of the Riemannian metric and any $\mathcal C^{1,1}$ submanifold has this property,  \cite{Bangert,KL}.

%\footnote{\red I added the KL reference because Bangert only deals with smooth metrics.}
A locally closed subset $L$ of $M$ has positive reach if and only if the open set
$\mathcal U(L)$  of points at which $d_L^2$ is $\mathcal C^{1,1}$ contains $L$.

% Moreover,
%for any set $L$ of positive reach in $M$ and any point $x\in L$ there exists some $\delta >0$, such that $\bar B_{\delta} (x)\cap L$ has positive reach in $M$,  \cite[Lemma 3.4]{Rataj}, \cite{Bangert}.

Structure of sets of positive reach is rather well understood. For any  set $L$
of positive reach,  the topological and the Hausdorff dimensions of $L$ coincide
and $L$
is locally contractible \cite[Remark 4.15]{Federer}.
%The topological and the Hausdorff dimensions of $L$ coincide
The intrinsic and the induced metrics on $L$ are locally equivalent and in the intrinsic metric the space $L$ locally has curvature bounded from above in the sense of Alexandrov,  \cite[Theorem 1.1]{Ly-reach}.

For all  $x\in L $, there exists  a well-defined \emph{tangent cone}  $T_xL$ which is a convex subset of $T_xM$, \cite[Theorem 4.8]{Federer}. The \emph{normal cone}
$T ^{\perp}_x L$ is the convex cone of all vectors in $T_xM$ enclosing angles at least $\frac \pi 2$ with all vectors in $T_xL$,  i.e. it is the polar cone of $T_xL$ in $T_xM$.
For any $x\in L$ and unit  $h\in T^{\perp}_xL$ the geodesic $\gamma _h$ starting  in the direction of $h$
satisfies $d_L (\gamma _h (s)) = s$, for all $s$ with $\gamma _h (s)\in \mathcal U(L)$, \cite{KL}.

Moreover, we have, \cite[Proposition 1.4]{Ly-conv}:

% and the topological and the Hausdorff
%We summarize  some geometric and topological properties of
%sets of positive reach, \cite[Remarks 4.15, 4.20]{Federer}, \cite[Theorem 1.1]{Ly-reach},  \cite[Proposition 1.4]{Ly-conv}, \cite{Rataj}:

\begin{prop} \label{cor: fiber}
	Let $L$ be a set of positive reach in  $M$.   Then
	%	\begin{enumerate}
	%		\item  $L$ is locally contractible.
	%		\item  The topological and the Hausdorff dimension of $L$ coincide.
	$L$  contains a  $\mathcal C^{1,1}$ submanifold $K$  of $M$ which is dense and open in $L$.
	%At all points $x\in L $, there exists  a tangent cone  $T_xL$ which is a convex subset of $T_xM$.
	
	%The set  $L$ itself	  is a $\mathcal C^{1,1}$ submanifold if and only if $L$ is a topological manifold.  This happens if and only if
	%all tangent spaces $T_xL$ are Euclidean spaces.
	
	 Moreover, the following are equivalent:
	\begin{enumerate}
	\item The set  $L$  is a $\mathcal C^{1,1}$ submanifold;
	\item The set  $L$  is a  topological manifold;
	\item All tangent spaces $T_xL$ for $x\in L$ are Euclidean spaces.
	\end{enumerate}
	
	%	\item The intrinsic and induced metrics on $L$ are locally biLipschitz.
	%	
	%	\item In the intrinsic metric, the space $L$  locally has curvature bounded from above in the sense of Alexandrov.
	%	\end{enumerate}
\end{prop}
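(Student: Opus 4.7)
The plan is to reduce the entire proposition to the local structure of the convex tangent cone $T_xL\subset T_xM$, exploiting the facts collected in Subsection~\ref{subsec: posreach}: namely, that $T_xL$ is a well-defined convex cone for each $x\in L$, that the topological and Hausdorff dimensions of $L$ coincide (call their common value $k$), and that $d_L^2$ is $\mathcal{C}^{1,1}$ on the open set $\mathcal U(L)\supset L$.

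For the density statement, I would let $K$ consist of those $x\in L$ at which $T_xL$ is a $k$-dimensional linear subspace of $T_xM$. Since the $k$-dimensional Hausdorff measure of $L$ is positive and the tangent cone is convex, a standard differentiation argument (or the rectifiability of sets of positive reach) produces such points in every open subset of $L$, giving density. At such a point, the foot-point projection $\Pi^L$ from $\mathcal U(L)$ onto $L$ together with the $\mathcal{C}^{1,1}$ regularity of $d_L^2$ provides a local chart in which $L$ agrees with its tangent space; in particular $L$ is a $\mathcal{C}^{1,1}$ submanifold of $M$ near $x$, and this shows $K$ is open in $L$.

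The implications $(1)\Rightarrow(2)$ and $(1)\Rightarrow(3)$ are immediate. The implication $(3)\Rightarrow(1)$ follows from the first part: if every $T_xL$ is a linear subspace, then by connectedness of $L$ and semicontinuity of dimension the dimension is constant equal to $k$, so $K=L$ and $L$ is globally a $\mathcal{C}^{1,1}$ submanifold.

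The main obstacle is the implication $(2)\Rightarrow(3)$. Here I would argue that the topological manifold property is inherited by the tangent cone: by Corollary~\ref{cor: differential} applied to the trivial submetry $L\to\{\mathrm{pt}\}$ (or, equivalently, by the blow-up description of $T_xL$ as the pointed Gromov--Hausdorff limit of the rescalings $(\tfrac1tL,x)$ together with the local bi-Lipschitz equivalence of the induced and intrinsic metrics on $L$), a neighborhood of $x$ in $L$ is locally homeomorphic to a neighborhood of the origin in $T_xL$. Hence $T_xL$ is a topological $k$-manifold near $0$. Now $T_xL$ is a convex cone in $T_xM\cong\R^n$, so it decomposes canonically as $T_xL=V\oplus P$ with $V$ its maximal linear subspace and $P$ a pointed convex cone; if $P\neq\{0\}$, then $0$ is a boundary point of $T_xL$ in its affine span, and an invariance-of-domain argument shows that the origin cannot have a Euclidean neighborhood in $T_xL$. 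This forces $P=\{0\}$, i.e.\ $T_xL$ is a linear subspace, completing the proof.
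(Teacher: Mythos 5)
The paper does not prove Proposition~\ref{cor: fiber} at all; the sentence immediately preceding it reads ``Moreover, we have, \cite[Proposition 1.4]{Ly-conv}:'' so the result is imported verbatim from Lytchak's \emph{Almost convex subsets}. I therefore cannot compare your argument to the paper's internal proof, only judge it on its own.

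Your overall reduction to the convex cone $T_xL$ is the right idea, and your invariance-of-domain step (a convex $k$-dimensional cone that is a topological manifold near $0$ must be a $k$-plane) is correct. But the bridge from $L$ to $T_xL$ in the implication $(2)\Rightarrow(3)$ has a genuine gap. You assert that ``a neighborhood of $x$ in $L$ is locally homeomorphic to a neighborhood of the origin in $T_xL$,'' justifying this by the pointed Gromov--Hausdorff convergence of the rescalings $(\tfrac1tL,x)\to T_xL$. Gromov--Hausdorff convergence, even blow-up convergence along a fixed sequence of scales, does not preserve local topology and does not produce a local homeomorphism to the limit; this is exactly the kind of statement that requires a controlled-topology or stability theorem, not just a metric convergence. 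The citation of Corollary~\ref{cor: differential} does not help: that corollary concerns submetries between \emph{Alexandrov regions}, and a set of positive reach with its intrinsic metric is a space with curvature bounded \emph{above}, not below, so $L$ is not an Alexandrov region and the ``trivial submetry $L\to\{\mathrm{pt}\}$'' is not within scope. Establishing that a small neighborhood of $x$ in $L$ is a topological cone homeomorphic to $T_xL$ is the real content of $(2)\Rightarrow(3)$ and must be proved, for instance via the uniform lower reach bound of the rescalings together with a stability statement, or via the CAT structure of $L$ and the local geodesic contraction to $x$.

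There is also a softer issue in the first part: the crucial claim that $T_xL$ being a $k$-dimensional linear subspace forces $L$ to be a $\mathcal C^{1,1}$ submanifold near $x$ (whence $K$ is open) is stated but not argued. ``The foot-point projection together with $\mathcal C^{1,1}$ regularity of $d_L^2$ provides a local chart'' is too vague to check: $\Pi^L$ goes from the ambient space onto $L$, not from an affine $k$-plane onto $L$. The standard route is different: show via Federer's reach inequality that secant directions of $L$ near $x$ converge into $T_xL$, so the orthogonal projection $\pi:T_xM\to T_xL$ restricts to a bi-Lipschitz homeomorphism of $L\cap B_\epsilon(x)$ onto an open subset of $T_xL$, and then use the reach inequality a second time to obtain the quadratic graph estimate that upgrades the inverse to $\mathcal C^{1,1}$. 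Without some version of this, the openness of $K$ and the implication $(3)\Rightarrow(1)$ are not actually established. (The remark about ``constant dimension by connectedness'' is unnecessary: being a $\mathcal C^{1,1}$ submanifold is a local condition and does not require $L$ to be connected or equidimensional.)
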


However,  a connected subset $L$ of positive reach does not need to have locally constant dimension nor  does it have to admit a triangulation.
%The set of positive reach described in the next example can easily be seen to be the fiber $P^{-1} (0)$ of

\begin{ex} Let $L\subset \R^2$ be the set of points $(x,y)$ with $0\leq y\leq f(x)$, where $f:\R\to [0,\infty)$  is smooth with
	$f^{-1} (0)$
	being the union of the rays $(-\infty,0]$ and $[1,\infty) $ and a Cantor set $C\subset [0,1]$. Then $L$ has positive reach in $\R^2$.
%	 Such a set $L$ can be seen to be the  fiber $P^{-1} (0)$ of a submetry $P:H^3\to [0,\infty )$, where $H^3$ is the hyperbolic $3$-space, see Lemma \ref{lem: locsubm}.  Indeed, one can embed $L$ in $H^3$ so that it is contained in a totally geodesic $H^2$ and such that in this $H^2$ the set $L$  has arbitrary small "second fundamental form" hence, due to the negative curvature,
%	infinite reach.
\end{ex}

\subsection{Connection to submetries}
It is now not difficult to see that sets of positive reach are intimately related to submetries.
\begin{prop} \label{lem: locsubm}
	Let $L \subset M$ be a closed subset of positive reach,
	 nowhere dense in $M$.
	Then $P=d_L:\mathcal U(L) \to [0,\infty)$ is a local submetry with $L=P^{-1} (0)$.
\end{prop}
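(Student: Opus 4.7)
The plan is to verify Definition~\ref{defn: loc} at an arbitrary point $x\in\mathcal U(L)$. Pick $r>0$ with $\bar B_{2r}(x)$ compact and contained in $\mathcal U(L)$. For any $x'\in B_r(x)$ and $s<r-d(x,x')$, the $1$-Lipschitz property of $P=d_L$ gives the inclusion $P(B_s(x'))\subset B_s(P(x'))$ at once; the content of the claim is the reverse inclusion: for each $t\in B_s(P(x'))\subset[0,\infty)$, produce $y\in B_s(x')$ with $d_L(y)=t$.

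The natural candidate is the image of $x'$ under the normal geodesic flow of $d_L$. When $x'\in\mathcal U(L)\setminus L$, the $\mathcal C^{1,1}$-regularity of $d_L$ there gives a unique unit-speed geodesic $\gamma$ with $\gamma(0)$ equal to the foot point of $x'$, $\gamma(d_L(x'))=x'$, and $d_L(\gamma(u))=u$ as long as $\gamma$ lies in $\mathcal U(L)$. Setting $y=\gamma(t)$, the triangle inequality yields $d(y,x')=|t-d_L(x')|<s$ and $d(y,x)<s+d(x',x)<r$, so $y\in B_r(x)\subset\mathcal U(L)$ and $d_L(y)=t$, as required.

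If instead $x'\in L$, so $P(x')=0$ and $t\in[0,s)$, the nowhere-dense hypothesis enters. I will argue that $T^\perp_{x'}L\neq\{0\}$: otherwise, by the upper semi-continuity of the normal cone for sets of positive reach (foot points $\Pi^L(y_n)$ of nearby points $y_n\notin L$ converge to $x'$, and the corresponding unit normals accumulate in $T^\perp_{x'}L$), no points near $x'$ would admit non-trivial normal directions, forcing $x'$ into the interior of $L$ and contradicting nowhere density. Picking a unit $h\in T^\perp_{x'}L$, the geodesic $\gamma_h$ from $x'$ in direction $h$ satisfies $d_L(\gamma_h(u))=u$ wherever $\gamma_h(u)\in\mathcal U(L)$ by the property recalled in Subsection~\ref{subsec: posreach}, and $y=\gamma_h(t)$ again lies in $B_r(x)\subset\mathcal U(L)$ by the triangle inequality.

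The main technical obstacle is simply ensuring the constructed geodesics exist and remain within $\mathcal U(L)$ over the required parameter interval; both are secured uniformly by the initial choice of $r$, via standard local geodesic extension inside the compact set $\bar B_{2r}(x)$. The equality $L=P^{-1}(0)$ is immediate from the definition, since $L$ is closed.
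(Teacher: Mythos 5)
Your argument is correct and follows the same route as the paper: realize values near $d_L(x')$ by moving along the gradient curve (normal geodesic) of $d_L$ when $x'\notin L$, and along a normal geodesic in a direction $h\in T^\perp_{x'}L$ when $x'\in L$, with the $1$-Lipschitz property supplying the easy inclusion. The only genuine addition is that you justify $T^\perp_{x'}L\neq\{0\}$ from nowhere density via upper semicontinuity of normal cones, a point the paper's proof takes for granted when it says ``consider a unit vector $h\in T_x^{\perp}$''.
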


\begin{proof}
	 $P$ is $1$-Lipschitz, thus $P(B_r(x)) \subset B_r(P(x))$ for all $x\in \mathcal U(L)$.

{
%Let $\Pi\co U(L)\to L$ be the foot-point map.
Let $x\in \mathcal U(L)$ be such that $\bar B_r(x)$ is compact in $\mathcal U(L)$.

For $x\in \mathcal U(L)\setminus L$, we restrict $P$ to the maximal gradient curve $\gamma $ of $d_L$ through $x$, and use that $|(d_L\circ \gamma )'|=1$
to deduce $P(B_r(x))=B_r(P(x))$.

If $x\in L$, consider a unit vector $h\in T_x^{\perp}$, and the geodesic $\gamma =\gamma _h$ starting in the direction of $h$. Since $d_L\circ \gamma _h (t) =t$ for all $t\leq r$, we see
%
 % and dec By above  on $\mathcal U(L)\backslash L$ it holds that $P$ is $C^{1,1}$, $|\nabla P|=1$ and gradient curves of $P$ are unit speed geodesics along which $P$ increases with unit speed.
%Hence the restriction of $P$ to the (closure of) the maximal gradient line  of $P$ through $x$  shows that $P(B_r(x))=B_r(P(x))$. Moreover this also shows that $\Pi(\alpha(t))=\Pi(x)$  for all $t$.
%
%Now suppose $x\in L$, Let $v\in T_xL^\perp$ be unit. Let $\gamma(t)=\exp_x(tv)$. Then by above $P(\gamma(t))=t$ for all $0\le t\le r$ which also implies that
$P (B_r(x))=[0,r) =B_r(P(x))$, as desired.}
%
%
%
%
 %Then by the previous case there is $\delta>0$ such that the gradient flow $\Phi_t$ of $P$ starting at $x_i$ exists for all $i$ and any $0\le t\le \delta$. Then $y_i=\Phi_\delta(x_i)$ subconverges to a point $y$ such that $d(y,L)=\delta$. By above $\Pi(y_i)=\Pi(x_i)\to x$ and hence $\Pi(y)=x$, i.e.
%$x$ is the closest point to $y$ in $L$. Then $P$ increases with unit speed along the geodesic connecting $x$ to $y$ which implies that $P (B_r(x))=[0,r) =B_r(P(x))$ as desired.
%
%
%
 % If $x\in L$, there is a unit normal vector $h\in T_x^\perp L$, since $L$ is nowhere dense in $M$. Let $\gamma(t)=exp(tv)$ be the geodesic starting in the direction $v$. We claim that $P(\gamma(t))=t$ for all sufficiently small $t$. Indeed, for all small $t$ we have that $\gamma(t)\notin L$ and hence by the previous case the gradient flow of $P$ starting at $\gamma(t)$ exists for some uniform time $\delta>0$, is given by a geodesic starting t $\gamma(t)$ and $P(\Phi_s(\gamma(t))=P(\gamma(t))+s$ for any small $t$ and all $0\le s\le \delta$. Then
%
%
  %Restricting $d_L$ to the geodesic $\gamma _h$, we see $P (B_r(x))=[0,r) =B_r(P(x))$. {\red V: why?}
%
%
\end{proof}

\begin{rem} \label{rem: global}
Changing the metric in a neighborhood  of $L$ by some conformal factor depending  on  the distance function $d_L^2$, it is easy to see the following. Any \emph{compact} subset $L$ of positive reach in $M$ admits a complete metric in a neighborhood $U$ of $L$ in $M$, such that this metric  still has  local two sided curvature bounds and such that the distance function to $L$ in this metric is a (global) submetry $P:U\to [0,\infty)$.  
 This should be possible for noncompact $L$ as well.
\end{rem}

  The following converse  contains Theorem \ref{thm: leaf} as a special  case:

 \begin{thm}\label{thm-fibers-pos-reach}
 	Let $M$ be a  Riemannian manifold and let $P:M\to Y$ be a surjective local submetry.
 	Then any fiber $L=P^{-1} (y)$ is a subset of positive reach in $M$. 	
 \end{thm}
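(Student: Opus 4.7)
My plan is to prove that $L=P^{-1}(y)$ has positive reach at every $x_0\in L$ via a Gromov--Hausdorff blow-up combined with a midpoint (Pythagorean) argument exploiting convexity of the infinitesimal vertical cone. The starting ingredients are: (i) the identity $d_L=d_y\circ P$ on $M$, from Lemma~\ref{lem: clear} applied to $A=\{y\}$; and (ii) at every $x\in L$, the differential $D_xP\colon T_xM\to T_yY$ is an infinitesimal submetry (Proposition~\ref{prop: differ}), its vertical cone $V_x:=(D_xP)^{-1}(0_y)$ equals the metric tangent cone $T_xL$ (Corollary~\ref{cor: differential}), and $V_x$ is a convex subcone of the Euclidean space $T_xM$ with uniquely defined $1$-Lipschitz foot-point projection $\Pi^{V_x}$ (Proposition~\ref{prop: structure2}).

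I argue by contradiction. Suppose $L$ is not of positive reach at $x_0$: extract $z_n\to x_0$ with distinct foot-points $x_n\ne x'_n$ of $z_n$ in $L$ at common distance $d_L(z_n)$, so $x_n,x'_n\to x_0$. Set $\rho_n:=\max\{d_L(z_n),d(x_n,x'_n)\}\to 0$ and rescale $M$ at $x_0$ by $\rho_n^{-1}$. By Proposition~\ref{prop: differ} the rescaled pointed spaces $(\tfrac{1}{\rho_n}M,x_0)$ Gromov--Hausdorff converge to $T_{x_0}M$, and by Corollary~\ref{cor: differential} the rescaled fibers $\tfrac{1}{\rho_n}L$ converge to $V:=V_{x_0}$. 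The rescaled points $\bar z_n,\bar x_n,\bar x'_n$ stay in a bounded region (since $d(x_n,x_0)\le 2\rho_n$) and, after passing to a subsequence, converge to $\bar z\in T_{x_0}M$ and $\bar x,\bar x'\in V$; a routine Gromov--Hausdorff approximation using $d(z_n,w)\ge d_L(z_n)$ for all $w\in L$ shows that $\bar x,\bar x'$ are both foot-points of $\bar z$ in $V$. In the generic case $\bar x\ne\bar x'$, the midpoint $\bar m:=(\bar x+\bar x')/2$ lies in $V$ by convexity and satisfies $d(\bar z,\bar m)<d(\bar z,\bar x)=d(\bar z,V)$ by Pythagoras in the Euclidean space $T_{x_0}M$, contradicting that $\bar x$ realizes $d(\bar z,V)$.

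The main obstacle is the degenerate regime $d(x_n,x'_n)\ll d_L(z_n)$, in which the two foot-points collapse ($\bar x=\bar x'$) under the first blow-up. Here I would run the midpoint argument in $M$ itself: the Riemannian law of cosines yields $d(z_n,m_n)\le d_L(z_n)-c\cdot d(x_n,x'_n)^2/d_L(z_n)$ for the midpoint $m_n$ of the geodesic from $x_n$ to $x'_n$ (with $c>0$ uniform near $x_0$), so it suffices to exhibit a point of $L$ within distance $O(d(x_n,x'_n)^2/d_L(z_n))$ of $m_n$. Such a point is produced by a secondary rescaling at $x_n$ of scale $d(x_n,x'_n)$: by Proposition~\ref{prop: structure2} the vertical cone $V_{x_n}$ is convex, and Corollary~\ref{cor: differential} applied at $x_n$ identifies $V_{x_n}$ as the pointed Gromov--Hausdorff limit of $(\tfrac{1}{t}L,x_n)$ as $t\to 0$; the convex midpoint of the rescaled images of $x_n,x'_n$ in $V_{x_n}$ then yields the required $L$-point via a diagonal approximation. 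The principal technical step is to make this approximation quantitatively strong enough to dominate the $O(d(x_n,x'_n)^2/d_L(z_n))$ threshold uniformly as $x_n$ drifts to $x_0$, for which the $\mathcal C^{1,1}$ regularity of the exponential map of $M$ in distance coordinates is used to identify tangent spaces along $L$.
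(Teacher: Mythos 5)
Your blow-up approach is genuinely different from the paper's, and it has a gap that I do not see how to close. The paper does not blow up at all: it applies Proposition~\ref{prop: Perelman} to produce a semiconcave function $g=f\circ P$ on a neighborhood $U$ of $L$ whose set of maximum points is exactly $L$ and whose ascending slope is $\ge\tfrac12$ off $L$, and then cites Bangert's theorem: a regular superlevel set of a semiconcave function has positive reach. No tangent-cone analysis or convexity of vertical cones is needed.

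Two concrete problems. First, already the generic case is not sound as written. You center the blow-up at $x_0$ and assert $d(x_n,x_0)\le 2\rho_n$, but the triangle inequality only gives $d(x_n,x_0)\le 2\,d(z_n,x_0)$, and $d(z_n,x_0)/\rho_n$ is unbounded in general (take $d(z_n,x_0)\sim 1/n$ and $d_L(z_n)\sim 1/n^2$, so $\rho_n\le 2/n^2$); thus $\bar z_n,\bar x_n,\bar x'_n$ may escape to infinity in the blow-up at $x_0$. Re-centering at $x_n$ prevents the escape, but then Proposition~\ref{prop: differ} and Corollary~\ref{cor: differential} no longer apply verbatim: the limit of $(\tfrac1{\rho_n}L,x_n)$ along \emph{moving} basepoints is the fiber through $0$ of a limit submetry $\hat P\colon\R^d\to\hat Y$ obtained via Lemma~\ref{lem: stable}, but $\hat P$ need not coincide with its own differential at $0$ and its fiber need not be the convex cone that the midpoint--Pythagoras step requires. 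Second, and more fundamentally, the degenerate regime $d(x_n,x'_n)\ll d_L(z_n)$, which you rightly identify as the main obstacle, is not resolved, and the fix you sketch is circular: you need a point of $L$ within $O(d(x_n,x'_n)^2/d_L(z_n))$ of the chord midpoint $m_n$, uniformly in $n$, which is a quantitative second-order statement about how close $L$ lies to its tangent cone uniformly along $L$. For a merely closed set, that uniform second-order bound is essentially equivalent to positive reach. The convergence in Corollary~\ref{cor: differential} is Gromov--Hausdorff for each fixed base point with no modulus, so the ``diagonal approximation'' yields only an $o(1)$ error at scale $\sigma_n=d(x_n,x'_n)$, which does not dominate the $O(\sigma_n/d_L(z_n))$ threshold when $\sigma_n/d_L(z_n)\to 0$. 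The paper's route sidesteps both difficulties because the uniform lower bound $|\nabla^+g|\ge\tfrac12$ on $U\setminus L$ is exactly the kind of quantitative input that Bangert's theorem converts into positive reach.
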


 \begin{proof}
 Applying Proposition \ref{prop: Perelman}, we find a neighborhood $U$ of $L$ and a semiconcave function $g:U\to \R$, which has $L$ as its set of maximum points. Moreover, $|\nabla ^+ g|  (p) >\frac 1 2$ for $p\in U\setminus L$.

 Thus, $L$ is a regular sublevel set of the semiconvex function $-g$, in the sense of \cite{Bangert}, and by the main result of \cite{Bangert}  (see also \cite{KL}) $L$ has positive reach in $M$.
 %{\red (\cite{Bangert} applies to sufficiently smooth metrics; see ~\cite{KL} for the case of a general manifold with two sided curvature bounds)}.
 \end{proof}

 \subsection{Examples}
A  subset $L$ of positive reach in $M$ has \emph{reach $\geq r$}
 if the set $\mathcal U(L)$ contains
the $r$-tube $B_r(L)$ around $L$.

%If, in addition, $L$ is nowhere dense and $\bar B_r(L)$ is complete in $M$ then
%Proposition \ref{lem: locsubm}
% (and its proof)
%shows that $d_L:B_r(L) \to [0,r)$
%is a submetry.

%Assume in addition that $\bar B_r (L)$ is complete. Then $L$ has \emph{positive reach $r>0$}
%if and only if   any point $x\in B_r(L) \setminus L$ lies on a geodesic $\gamma :[0,r ) \to M$ starting on $L$ and such that
%$d_L (\gamma (t))=t$ for all $t\in [0,r)$.

%The above equivalence shows that the property of having positive reach $r>0$ can be interpreted as follows:
%\begin{lem}
%	A closed subset $L$ in a  complete Riemannian manifold $M$ has positive reach $r >0$ if the distance function $d_L :B_r(L)\setminus L \to (0,r)$ is a submetry.
%\end{lem}

%By a simple continuity argument, we deduce from this that
%Theorem \ref{thm: leaf} cannot be improved:
%\begin{lem} \label{lem: locsubm}
%	A closed subset $L$   in a smooth complete Riemannian manifold $M$ is nowhere dense in $M$ and  has positive reach $r >0$ if and only if%
%	the distance function $d_L :B_r(L) \to [0,r)$ is a submetry.
%\end{lem}

Closed convex sets in $\R^n$ are exactly sets of reach $\infty$ in $\R^n$.  By Proposition \ref{lem: locsubm} this implies:
\begin{ex}
	Let $C\subset  \R^n$ be a closed subset.  The distance function $d_C:\R^n\to [0,\infty )$ is a submetry  if and only if  $C=f^{-1} (0)$
	is convex and nowhere dense in $\R^n$.
	%  For any point $x\in C$, the vertical space $V_x$ is the tangent space $T_xC$. Thus, unless $C$ is an affine subset, there are points at which the vertical space $V_x$ (and also the horizontal space $H_x$) are non Euclidean spaces.	
\end{ex}

If $L$ is a co-oriented $\mathcal C^{1,1}$ hypersurface in $M$ then we can consider in Proposition \ref{lem: locsubm} the oriented distance function instead of the non-oriented one and see:

%Any set of positive reach provides a submetry as follows:

%\subsection{Smoothing}% We have seen in Lemma \ref{lem: locsubm} that  any nowhere dense set $L$ of positive reach $r>0$ in a smooth manifold $M$ defines a submetry $P=d_L: B_r(L) \to [0,r)$.
% We are going to change the metric and prove the second part of Theorem \ref{thm: leaf}.

%\begin{lem}
% Let $L$ be a nowhere dense set $L$ of positive reach $r>0$ in a smooth complete Riemannian manifold $(M,g)$. Let  $U$ be the open subset $U=B_r(L)$.
% Then there exists a smooth metric $g_0$ on $U$, such that $(U,g_0)$ is complete and the distance function $f=d_L:U\to [0,\infty)$ is a submetry with respect to this metric.

% subset $U$ be the neighborhood
%\end{lem}

\begin{ex} \label{ex: twoside}
	If $L$ is a   $\mathcal C^{1,1}$ hypersurface in $M$ with oriented normal bundle  then  the signed distance $P$ to $L$
	defines a local submetry $P:\mathcal U(L) \to \R$.
\end{ex}

The following example is a prominent theorem in the theory of non-negative curvature:

\begin{ex}
	Let $M$ be a complete open manifold of nonnegative sectional curvature. Then any soul $S$ of $M$ is a subset of reach $\infty$ and the distance function $d_S:M\to [0,\infty)$  is a submetry, \cite{ Per-soul}, \cite{Wilking}.
\end{ex}

The next examples provide large infinite-dimensional families of submetries
$P:H^2\to \R$ and $P:\mathbb S^4 \to [0,\frac \pi 2]$ with non-smooth leaves.

\begin{ex} \label{ex: twoside+}
	If $H^2$ is the hyperbolic plane and $L\subset H^2$ is a complete $\mathcal C^{1,1}$-curve with geodesic curvature bounded by $1$ at every point.
	Then the signed distance function to $L$ is  a submetry $P:H^2\to \R$.
\end{ex}

\begin{ex}  \label{ex: infinite}
	Let $C\subset  \mathbb S^3$ be a convex subset without interior points.
	Let $L=\mathbb S^0\ast C \subset \mathbb S^4$ be the suspension of $C$.
	Then the distance function $d_L:\mathbb S^4\to [0,\frac \pi 2]$ is a submetry.
\end{ex}

\section{Local structure of the base} \label{sec: onefiber}

%\subsection{Fibers have positive reach}
%We can now prove a generalization of Theorem \ref{thm: leaf}.
%\begin{thm}\label{thm-fibers-pos-reach}
%	Let $M$ be a  Riemannian manifold and let $P:M\to Y$ be a surjective local submetry.
%	Then any fiber $L=P^{-1} (y)$ is a subset of positive reach in $M$. 	
%\end{thm}

\subsection{Injectivity radius in the base}
We get a local  version of Theorem \ref{cor: quotient}, (i), (iii):
\begin{thm} \label{cor: posreach}
	Let $M$ be a Riemannian manifold and $P:M\to Y$  a surjective local submetry. For any  $y\in Y$, there exists   $r >0$, such that  any  $v\in \Sigma _yY$ is the starting direction of a geodesic   of length $r$ in $Y$.
	
	For any $s<r$ the distance sphere $\partial B_s (y)$ is an Alexandrov space.
\end{thm}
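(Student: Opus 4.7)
My plan is to derive both statements from the positive reach of the fiber $L:=P^{-1}(y)$ established in Theorem \ref{thm-fibers-pos-reach} and from the infinitesimal structure of submetries developed in Section \ref{sec: infinite}.

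For (i), I would fix an arbitrary $x\in L$ and choose $r>0$ so that $\bar B_{3r}(x)$ is compact and $B_{3r}(x)\subset \mathcal U(L)$, which is possible by positive reach of $L$. For any direction $v\in\Sigma_yY$, the differential $D_xP\co T_xM\to T_yY$ is a submetry of Euclidean cones (Proposition \ref{prop: differ}); since it commutes with dilations and sends unit balls to unit balls, I can choose a horizontal unit vector $\hat v \in T_xM$ with $D_xP(\hat v)=v$. By Proposition \ref{prop: structure2} together with Corollary \ref{cor: differential}, horizontality of $\hat v$ identifies $\hat v$ with a vector in the normal cone $T_x^\perp L$. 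The Riemannian geodesic $\gamma_{\hat v}$ starting at $x$ in direction $\hat v$ then satisfies $d_L(\gamma_{\hat v}(t))=t$ on $[0,r]$ by the property of normal geodesics to sets of positive reach recalled in Subsection \ref{subsec: posreach}. Applying \eqref{eq: dA} gives $d(y, P(\gamma_{\hat v}(t)))=t$, and since both $P$ and $\gamma_{\hat v}$ are $1$-Lipschitz, $P\circ\gamma_{\hat v}$ is parametrized by arclength and realizes $d(y,\cdot)$, so it is a geodesic of length $r$ in $Y$ starting in direction $v$.

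For the second statement, I shrink $r$ if needed so that $d_L$ is $\mathcal C^{1,1}$ on $B_{2r}(x)\setminus L$ with $|\nabla d_L|\equiv 1$ there. For each $s\in(0,r)$, the level set $N_s:=\{z\in B_{2r}(x):d_L(z)=s\}$ is a $\mathcal C^{1,1}$ hypersurface of $M$, and by \cite{KL} it inherits local two-sided curvature bounds, hence is an Alexandrov region. By \eqref{eq: dA}, $N_s=P^{-1}(\partial B_s(y))\cap B_{2r}(x)$. I then want to show that $P$ restricts to a local submetry $P\co (N_s,d^{N_s})\to(\partial B_s(y),d^{\partial B_s(y)})$ between the intrinsic length metrics. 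The key observation is that Lemma \ref{lem: lift} lifts curves horizontally, and if $\alpha$ is a curve in $\partial B_s(y)$ then $d_L\circ\hat\alpha=d_y\circ\alpha\equiv s$, so the horizontal lift $\hat\alpha$ stays in $N_s$ and has the same length as $\alpha$. Combined with the $1$-Lipschitz property of $P$, this yields the submetry property on balls in the intrinsic metrics. Proposition \ref{prop: cbb} then promotes $(\partial B_s(y),d^{\partial B_s(y)})$ to an Alexandrov region, and compactness of $\partial B_s(y)$ for $s<r$ (inherited from compactness of $\bar B_{3r}(x)$ and the openness of $P$) upgrades it to an Alexandrov space.

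The main obstacle is the verification that $P|_{N_s}$ is genuinely a local submetry in the intrinsic length metrics; this requires juggling three natural metrics on $N_s$ (the ambient, the induced, and the intrinsic) and invoking Corollary \ref{cor: length} in conjunction with the horizontal lifting supplied by Lemma \ref{lem: lift}. A secondary subtlety is that the construction in (i) produces $r$ depending on a choice of $x\in L$; but once $x$ is fixed, the surjectivity of the submetry $D_xP$ on unit spheres ensures the same $r$ works simultaneously for every $v\in\Sigma_yY$.
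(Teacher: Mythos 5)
Your proposal is correct and proves exactly what the paper does, with the second statement following the paper's route essentially verbatim: both realize $\partial B_s(y)$ as the submetry image of the $\mathcal C^{1,1}$ hypersurface $N_s = d_L^{-1}(s)$, invoke \cite{KL} for its two-sided curvature bounds, and pass from the ambient to the intrinsic length metrics via Corollary~\ref{cor: length} before applying Proposition~\ref{prop: cbb} and compactness. For the first statement your route differs in packaging though not in the underlying geometry. The paper works downstairs in $Y$: since $d_L$ has ascending slope $1$ on $\mathcal U(L)\setminus L$, the identity $d_L = d_y\circ P$ forces $|\nabla d_y|\equiv 1$ on $B_r(y)\setminus\{y\}$, and the maximal gradient curves of $d_y$ are then unit-speed geodesics issuing from $y$. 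You instead work upstairs in $M$: fix $x\in L$, use the surjectivity of $D_xP\colon H_x\cap S\to\Sigma_yY$ from Proposition~\ref{prop: structure2} to lift $v$ to a unit normal $\hat v\in T_x^\perp L$, observe $d_L(\gamma_{\hat v}(t))=t$ by positive reach, and conclude via \eqref{eq: dA} and $1$-Lipschitzness that $P\circ\gamma_{\hat v}$ is a geodesic issuing from $y$ in direction $D_xP(\hat v)=v$. These are dual pictures -- the paper's gradient curves of $d_y$ are precisely the $P$-images of the normal geodesics you construct -- but your version has the small merit of producing, for each $v\in\Sigma_yY$, an explicit geodesic in direction $v$, whereas the paper's final step (from ``maximal gradient curves are geodesics from $y$'' to ``every $v$ is a starting direction'') implicitly rests on density of geodesic directions and stability of directions under limits of geodesics.
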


\begin{proof}
{We may assume that	 $y$ is not isolated. Then $L=P^{-1}(y)$ is a nowhere dense set of positive reach in $M$ and the distance function  $d_L$ has ascending slope $1$ at all points in the neighborhood $\mathcal U(L)$ of $L$.
	
	For   $x\in L$ consider $r>0$ such that  $\bar B_{3r} (x)$ in $\mathcal U(L)$ is compact. Then, by Subsection \ref{subsec: Ambrosio} and Corollary
	\ref{cor: locglob}, the distance function $d_y$ has ascending slope $1$
	at all points $z\in B_{r} (y)$. Thus, $d_y$ is semiconcave, 1-Lipschitz and  $|\nabla d_y|=1$ on $B_{r} (y)\setminus \{y\}$. Therefore its maximal gradient curves in $B_{r} (y)\backslash \{y\}$ are unit speed geodesics of length $r$ starting at $y$.
	Thus, any $v\in \Sigma _y$ is the starting direction of a  geodesic $\gamma _v$ of length $r$.}
	
%	Choose some $x\in P^{-1} (y)$
%	and consider a standard local picture $P:B_{3r} (x)\to B_{3r} (y)$ as above.    Since $L= P^{-1} (y)$ has positive reach by Theorem~\ref{thm-fibers-pos-reach}, there exists some $r>\epsilon >0$ such that the  distance function
%	$d_L$ has ascending slope $1$ at any point  in $B_{3\epsilon} (x)$.
	
	%Thus, also $d_y$ has ascending slope  $1$ at all points of $B_{\epsilon} (y)$.
	%This implies the statement about geodesics.
	
	For $s<r$ we set
		$$N^s=P^{-1} (\partial  B_s (y)) \cap B_{3r} (x) \;.$$
Then $P:N^s \to \partial B_s(y)$ is a surjective local submetry.

On the other hand, $N^s$ is the level set $d_L^{-1} (s)$  of the $\mathcal C^{1,1}$-submersion $d_L :B_{3r} (x)\setminus L \to \R$.  Thus, $N^s$ is a
$C^{1,1}$-submanifold of $M$. Hence in its intrinsic metric, $N^s$ has curvature locally bounded from both sides, {see  \cite{KL}}.

 By Proposition \ref{prop: cbb} and Corollary \ref{cor: length},
$\partial B_s$ is an Alexandrov region. Due to compactness, $\partial B_s(y)$ is an Alexandrov space.
\end{proof}	

\subsection{Infinitesimal structure}
Let $P:M\to Y$  be a local submetry,  let $x\in M$ be arbitrary and set $y=P(x)$. By Proposition \ref{prop: differ}, the
differential $D_xP: T_xM\to T_y Y$  is an infinitesimal submetry.
%If $P$ is fixed
 We will denote vectors in $T_xM$ which are vertical, respectively horizontal, with respect to $D_xP$, { \emph{vertical}, respectively \emph{horizontal},  with respect to $P$. }

 By  Proposition \ref{prop: differ},
the vertical space $D_xP ^{-1} (0_y)$ is exactly the tangent space $T_xL$, where $L:=P^{-1} (y)$ is the fiber of $P$ through $x$.

By Lemma~\ref{lem-hor-polar-vert}, a vector $h\in T_xM$ is a horizontal vector   if and only if it is contained in the \emph{normal space}
$T_x ^{\perp} L$ of the set of positive reach $L$ at the point $x$.
 %that is if and only if the angle between $h$ and any non-zero vector in $T_xL$ is at least $\frac \pi 2$.  This condition is equivalent to the fact that
%$$d(L,\exp (sh) )= d(x,\exp sh)=|sh|\,$$
%for all $s<r$, for some $r$ depedning only on $x$.
Note that the convex cone $T_x^{\perp} L$ is a Euclidean space if and only if $T_xL$ is a Euclidean space.

%We obtain a
%directly deduce the following
%local generalization    of Theorem \ref{thm: link}:
\begin{thm} \label{thm: halfspace}
	Let  $M$ be an  $n$-dimensional Riemannian manifold and $P:M\to Y$  a surjective  local submetry.
If $y\in Y$ is non-isolated	then
	 there exists a submetry
	$P':\mathbb S^k \to \Sigma _y$, for some $k < n$.

If   $L=P^{-1} (y)$ is not a $\mathcal C^{1,1}$-submanifold of $M$ then,
for some $k< n$, there
exists a submetry of the closed hemisphere $\mathbb S^k _+$ onto $\Sigma _y$.
\end{thm}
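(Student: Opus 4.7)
The plan is to reduce both assertions to the structural dichotomy for infinitesimal submetries of Euclidean spaces established in \pref{prop: maininfinite}, by choosing the base point $x\in L$ differently in each case.

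For any $x\in L$, \pref{prop: differ} produces an infinitesimal submetry $D_xP\co T_xM=\R^n\to T_yY=C(\Sigma_y)$. By \cref{cor: differential} its vertical subspace is the tangent cone $T_xL$, and hence by \lref{lem-hor-polar-vert} its horizontal cone is the normal cone $H=T_x^{\perp}L$, i.e.\ the polar of $T_xL$ in $\R^n$. The bipolar theorem for closed convex cones yields the key remark: $H$ is a linear subspace of $\R^n$ if and only if $T_xL$ is one.

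For the first assertion I would invoke \pref{cor: fiber} to locate the dense open $\mathcal{C}^{1,1}$-submanifold $K\subset L$ and pick any $x\in K$. Since $K$ is open in $L$, the tangent cone satisfies $T_xL=T_xK$, which is a linear subspace; consequently $H=H^E$ is a linear subspace of $\R^n$. Non-isolatedness of $y$ forces $\Sigma_y\neq\emptyset$, hence $H\neq\{0\}$. The first part of \pref{prop: maininfinite} then yields a submetry from the unit sphere $\mathbb{S}^{\dim H-1}\subset H$ onto $\Sigma_y$; since $\dim H\leq n$, this gives $k=\dim H-1\leq n-1<n$.

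For the second assertion, the failure of $L$ to be a $\mathcal C^{1,1}$-submanifold allows me, via the equivalence $(1)\Leftrightarrow(3)$ of \pref{cor: fiber}, to pick $x\in L$ with $T_xL$ not a Euclidean subspace. By the polarity remark above, $H$ is then also not a Euclidean subspace, and the second part of \pref{prop: maininfinite} furnishes a round hemisphere $\mathbb{S}^k_+\subset H$ on which $D_xP$ restricts to a submetry onto $\Sigma_y$. The bound $k<n$ follows because $\mathbb{S}^k_+$ spans a $(k+1)$-dimensional linear subspace of $\R^n$.

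I foresee no serious obstacle: the proof is essentially a bookkeeping step matching the regular/singular dichotomy for $L$ (\pref{cor: fiber}) with the Euclidean/non-Euclidean dichotomy for the horizontal cone (\pref{prop: maininfinite}). The only minor subtlety is the polarity remark, which is a short application of the bipolar theorem for closed convex cones.
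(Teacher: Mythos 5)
Your proof is correct and follows essentially the same route as the paper's: both reduce the claim to \pref{prop: maininfinite} via the differential $D_xP$ (\pref{prop: differ}) and the dichotomy in \pref{cor: fiber}. The only cosmetic difference is that for the first assertion you pick $x$ in the regular $\mathcal C^{1,1}$ part $K\subset L$, guaranteeing $H=H^E$ outright, whereas the paper takes an arbitrary $x\in L$ and relies on the first alternative of \pref{prop: maininfinite} still applying (using implicitly that $H^E\neq\{0\}$, which follows from $\Sigma_y\neq\emptyset$).
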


\begin{proof}
 Consider an arbitrary $x\in L$. If $L$ is not a $\mathcal C^{1,1}$ submanifold of $M$, we may choose $x$ such that
 the vertical space $T_xL$ is not a Euclidean space, by  Proposition \ref{cor: fiber}. In this case the horizontal space $T_x^{\perp} L$ is not a Euclidean space as well.

The differential $D_xP:T_xM\to T_yY$ is an infinitesimal submetry,  Proposition \ref{prop: differ}. The claim now follows from Proposition \ref{prop: maininfinite}.
\end{proof}

The first part of the above theorem contains as a special case Theorem \ref{thm: link}. Now, Proposition
\ref{prop: directions} implies  Corollary \ref{cor: directions}.

%In addition we get:
%\begin{lem}  \label{lem: bound}
%Let $P:M\to Y$ be a   local submetry, where $M$ is an   $n$-dimensional Riemannian manifold.
%If for some $y\in Y$ the fiber $L=P^{-1} (y)$ is not a $\mathcal C^{1,1}$ submanifold of $M$ then
%for some $k\leq n$ , there
%exists a submetry of closed hemisphere $\mathbb S^k _+$ onto $\Sigma _y$.
%\end{lem}

%\begin{proof}
%By Proposition \ref{cor: fiber}, there exists a point $x\in L$ such that
%$T_xL$ is not a Euclidean space.

%By Proposition \ref{prop: differ}, the differential $D_xP:T_xM\to T_yY$ has as
%its horizontal space $T_xL^{\perp}$ which is not a Euclidean space.

%Now the statement follows from Proposition \ref{prop: maininfinite}.
%\end{proof}

\subsection{Standard local picture} \label{subsec: standard}
 %We introduce some notation.
Let $M$ be a Riemannian manifold  and $P:M\to Y$ be a local submetry.
For a point $x\in M$ we denote by $L_x$ the fiber $P^{-1} (P(x))$.
As before let $\mathcal U(L_x)$ be  the open neighborhood of $L_x$ on which $d_{L_x} ^2$ is $\mathcal C^{1,1}$.
Consider  $r>0$ { such that   the set  $\bar B_{10r} (x)$ is a compact, convex  subset of $\mathcal U(L_x)$. Moreover, we require that geodesics in
	$\bar B_{10r} (x)$ are uniquely determined by their endpoints and such that
%Moreover, we choose $r$ so that
%$\bar B_{10r} (x)$ is compact, strictly convex and uniquely geodesic.
$P(\bar B_{10r} (x))$ is contained in a compact Alexandrov space $Y'\subset Y$}.

Then, for $y=P(x)$, the restriction $P:B_{10r} (x) \to B_{10r} (y)$ will be called the \emph{standard local picture at $x$}.
Eventually, we will later adjust the choice of $r =r_x$.
Note that  $r=r_x$ in the standard local picture around $x$ satisfies the statements of Corollary \ref{cor: posreach}.

%$P:M\to Y$, where $M$ is a smooth Riemannian manifold.  For any $x\in M$
%we fix some $r=r_x>0$ (to be adjusted later)  with the following properties:

%The ball $\bar B_{3r}  (x)$ is compact, strictly convex and uniquely geodesic;
%the ball $B_{3r}(P(x))$ is contained in a compact Alexandrov space $Y'\subset Y$.

%We  call such a situation a \emph{standard local picture}, but will adjust  $r_x$ in the course of the section, so that further  properties  hold true.

\begin{prop}  \label{prop: horvector}
	Let $P:M\to Y$ be a local  submetry. For any $x\in M$, let $r=r_x$ be as in the standard local picture around $x$.
	{For a unit vector $h\in T_xM$  and} the geodesic $\gamma _h$ starting in the direction of $h$ the following  are equivalent:
	\begin{enumerate}
		\item  \label{hor-vector}The vector $h$ is horizontal.
		\item  \label{hor-curve} The geodesic $\gamma _h :[0,r]\to M$     is a horizontal curve.
		\item  \label{p-min-geod} The geodesic $\gamma _h:[0,r]\to M$ is $P$-minimal.
	\end{enumerate}
\end{prop}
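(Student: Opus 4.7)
The plan is to establish the chain $(1) \Rightarrow (3) \Rightarrow (2) \Rightarrow (1)$. Throughout, set $y = P(x)$ and $L = L_x = P^{-1}(y)$. By \tref{thm-fibers-pos-reach}, $L$ is a set of positive reach, and the standard local picture places $\bar B_{10r}(x) \subset \mathcal U(L)$ with unique geodesics between endpoints in $\bar B_{10r}(x)$. The identity $d_L = d_y \circ P$ from \eqref{eq: dA} will translate statements about the fiber $L$ into statements about distances in $Y$; recall also from \secref{subsec: standard} that horizontality of $h$ at $x$ is equivalent to $h \in T_x^\perp L$.

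For $(1) \Rightarrow (3)$: assuming $h \in T_x^\perp L$, the positive reach theory recalled in \secref{subsec: posreach} gives $d_L(\gamma_h(s)) = s$ for all $s \in [0, r]$, since $\gamma_h(s)$ remains in $\mathcal U(L)$. Via \eqref{eq: dA} this reads $d(y, P(\gamma_h(s))) = s$, and taking $s = r$ is exactly the defining property of $\gamma_h|_{[0,r]}$ being $P$-minimal. The implication $(3) \Rightarrow (2)$ is a direct consequence of the definitions recalled in \secref{subsec: liftgeod}.

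The nontrivial step is $(2) \Rightarrow (1)$. Assume $\gamma_h|_{[0,r]}$ is horizontal. Subadditivity of length forces $P \circ \gamma_h$ to be parametrized by arclength on $[0, r]$, and \pref{prop: image} exhibits $P \circ \gamma_h$ as a quasigeodesic in $Y$. By the general theory of quasigeodesics in Alexandrov spaces (cf.\ \cite{PP-quasi}), such an arclength-parametrized curve has a well-defined right-tangent vector at each point, of unit norm in the corresponding tangent cone; in particular $(P \circ \gamma_h)^+(0) \in T_yY$ satisfies $|(P \circ \gamma_h)^+(0)| = 1$. On the other hand, the differentiability of $P$ at $x$ (\pref{prop: differ}) gives $(P \circ \gamma_h)^+(0) = D_xP(h)$. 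Hence $|D_xP(h)| = 1 = |h|$, which is precisely the horizontality of $h$.

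The main obstacle is the unit-norm right-tangent claim for arclength quasigeodesics: this does not follow from arclength parametrization alone (a unit-speed Lipschitz curve can fail to have a right-tangent or have one of any norm in $[0,1]$) but genuinely requires the structural inequality \eqref{eq: quasi}, which forces infinitesimal straightness at every point of the curve.
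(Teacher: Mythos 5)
Your proof is correct and follows the paper's strategy fairly closely — the nontrivial content of both arguments lies in recognizing $P\circ\gamma_h$ as a quasigeodesic via \pref{prop: image} — but you close the loop differently. The paper shows (2) $\Rightarrow$ (3): once $P\circ\gamma_h$ is known to be a quasigeodesic of length $r$ starting at $y$, it invokes Theorem~\ref{cor: posreach} to produce a geodesic of length $r$ with the same starting direction and then appeals to the uniqueness statement in \cite{PP-quasi} (a quasigeodesic sharing its starting direction with a geodesic must coincide with it) to conclude that $P\circ\gamma_h$ is in fact a geodesic, i.e.\ $\gamma_h$ is $P$-minimal. You instead show (2) $\Rightarrow$ (1) directly, using only the pointwise unit-speed property of quasigeodesics together with the identification $(P\circ\gamma_h)^+(0)=D_xP(h)$ from \pref{prop: differ}, and then recover (3) from the already-proved (1) $\Rightarrow$ (3). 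Your route is somewhat leaner, since it bypasses both the local injectivity radius bound of Theorem~\ref{cor: posreach} and the quasigeodesic uniqueness theorem; the price is that the proof now hinges on the pointwise unit-norm-tangent property of quasigeodesics, which you correctly flag as the step with real content — it is indeed a substantive consequence of \eqref{eq: quasi} (via development arguments) and is part of what \cite{PP-quasi} establishes, not something that follows from arclength parametrization alone. Both arguments ultimately rest on the same structural theory of quasigeodesics, so the difference is one of packaging rather than of essential ideas.
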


\begin{proof}
Set $y=P(x)$ and set $L=P^{-1} (y)$.
%Choose $r$ as in   Corollary \ref{cor: posreach} and such that
% $\bar B_{3r} (x)$ is compact in $\mathcal U(L)$.
 %  as in the standard local picture, moreover, such that $r<\epsilon$ in Corollary \ref{cor: posreach}.

{ If $h$ is  horizontal,  then $h\in T_x^{\perp} L$ and  $d(\gamma _h (r), L)= r$.
 Hence, $P\circ \gamma _h$ is a geodesic starting in $y$.
Thus,\eqref{hor-vector} implies \eqref{p-min-geod} .

Clearly \eqref{p-min-geod}  implies \eqref{hor-curve} and \eqref{hor-vector} .  }

Given  \eqref{hor-curve} , the image $P\circ \gamma _h$  is a quasi-geodesic of length $r$ starting in $y$, by Proposition~\ref{prop: image}.
By Corollary \ref{cor: posreach}, there exists a geodesic in $Y$ of length $r$ with the same starting vector as  $P\circ \gamma _h$.
Due to \cite{PP-quasi}, the quasigeodesic $P\circ \gamma _h$  coincides with this geodesic, proving \eqref{p-min-geod}.
\end{proof}

The  last argument
in  the above proof also shows that quasigeodesics in  $Y$ are of a  much more special form than in general Alexandrov spaces:
\begin{cor} \label{cor: quasigeod}
	Let $P:M\to Y$ be a local surjective submetry.  If $\gamma :[a,b]\to Y$
	is a quasigeodesic then there exists a finite subdivision $a=t_1\leq t_2 ...\leq t_k=b$ such that the restriction of $\gamma$ to any of the subintervals $[t_i,t_{i+1}]$ is a geodesic.
\end{cor}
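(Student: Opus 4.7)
The plan is to argue locally — near any point of $Y$, any quasigeodesic is forced to coincide with a genuine geodesic — and then invoke compactness of $[a,b]$ to produce a finite subdivision. This is essentially the same argument used at the end of the proof of \pref{prop: horvector}, now applied at every point along $\gamma$ rather than just once.

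First I would fix any $t\in [a,b)$, set $y_t=\gamma(t)$, and take the starting direction $v_t^+\in \Sigma_{y_t}Y$ of $\gamma|_{[t,b]}$. By \tref{cor: posreach}, every direction in $\Sigma_{y_t}Y$ starts a geodesic of some length $r_t>0$ in $Y$, so there is a geodesic $\eta:[0,r_t]\to Y$ with $\eta(0)=y_t$ and starting direction $v_t^+$. Both $\eta$ and $\gamma|_{[t,\min(t+r_t,b)]}$ are quasigeodesics with the same starting point and starting direction. The uniqueness of quasigeodesics with prescribed initial data, from \cite{PP-quasi}, then forces them to agree on the common domain, so $\gamma$ restricted to that interval is a geodesic. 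Running the same argument with $\gamma$ reversed (reversal preserves the quasigeodesic property, since the defining inequality~\eqref{eq: quasi} is symmetric under $s\mapsto -s$) yields the analogous statement to the left of any $t\in (a,b]$.

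This produces, for every $t\in [a,b]$, an open neighborhood $I_t$ of $t$ in $\R$ such that $\gamma|_{I_t\cap [a,b]}$ is a geodesic. Compactness of $[a,b]$ then yields a finite subcover $I_{t_1},\dots,I_{t_N}$; picking any subdivision $a=s_0<s_1<\dots<s_k=b$ with mesh smaller than a Lebesgue number of this cover guarantees that each $[s_{j-1},s_j]$ lies in some $I_{t_i}$. Since the restriction of a geodesic to a subinterval is again a geodesic, each piece $\gamma|_{[s_{j-1},s_j]}$ is a geodesic, as required.

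The only non-routine ingredient is the uniqueness theorem for quasigeodesics from \cite{PP-quasi}, which carries the real content; everything else is soft compactness plus the trivial observation that subintervals inherit the geodesic property. A minor bookkeeping point — if some $y_t$ were isolated then $\Sigma_{y_t}Y$ is empty, but the quasigeodesic $\gamma$ would then be locally constant at $t$ and the case is degenerate — can be disposed of at the start by assuming $\gamma$ is non-constant.
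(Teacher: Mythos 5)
Your core mechanism is the right one, and it matches the paper's intent exactly: the paper's ``proof'' of Corollary~\ref{cor: quasigeod} is a one-line remark pointing back to the end of the proof of Proposition~\ref{prop: horvector}, which is precisely the combination of Theorem~\ref{cor: posreach} (every direction starts a geodesic of definite length) with the uniqueness of quasigeodesics with prescribed initial data from \cite{PP-quasi}. The gap is in how you assemble the local pieces. Your forward and backward applications of uniqueness give you, at each $t$, two \emph{separate} geodesic arcs $\gamma|_{[t-\epsilon^-(t),t]}$ and $\gamma|_{[t,t+\epsilon^+(t)]}$. You then assert that $\gamma$ is a geodesic on an open neighborhood $I_t$ of $t$, but nothing you established rules out a corner at $t$: the incoming direction $v_t^-$ and the outgoing direction $v_t^+$ need not be antipodal in $\Sigma_{\gamma(t)}Y$, and a quasigeodesic that is geodesic on each side of $t$ yet breaks at $t$ is exactly the phenomenon the corollary is about (think of a quasigeodesic through the apex of a cone of total angle less than $2\pi$, or one reflecting off $\partial Y$). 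Without that two-sided claim the Lebesgue-number step collapses, since a subinterval $[s_{j-1},s_j]$ of small mesh that lands in some $I_{t_i}$ but straddles $t_i$ is not covered by either of your one-sided arcs.

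The repair is small and the conclusion is of course correct. Either (a) run a connectedness argument: set $T:=\sup\{t\in[a,b]:\gamma|_{[a,t]}\text{ is a finite concatenation of geodesics}\}$; then $T$ is attained by your left-sided claim applied at $T$, and if $T<b$ the right-sided claim at $T$ pushes past $T$, a contradiction. Or (b) keep the finite subcover $I_{t_1},\dots,I_{t_N}$ but additionally include the centers $t_1,\dots,t_N$ among the subdivision points; then every small-mesh piece lies in some $I_{t_i}$ \emph{and} entirely on one side of $t_i$, hence inside one of your two one-sided geodesic arcs. Your remarks on reversal of quasigeodesics and on isolated points are both fine.
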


\section{Some technical statements}  \label{sec: tech}
\subsection{Setting for semicontinuity questions} \label{subsec: notat}
In this section we fix  a local submetry $P:M\to Y$.  As before we denote by $L_x$ the fiber $P^{-1} (P(x))$ through a point $x\in  M$.
We are going to analyze the (semi)-continuity of vertical spaces $T_xL_x$, as $x$ varies over $M$.

 The following example should be seen as a warning:
\begin{ex}
For $C=[0, \infty)\subset \R\subset \R^2$, consider the submetry  $P=d_C:\R^2 \to[0,\infty)$.
Then $C=L_{(0,0)}$ and $T_{(0,0)} L_{(0,0)} =C$.  On the other hand, for $x= (-t,0) \in \R^2$, the vertical space
 $T_x L_x =\{0 \} \times \R$ is orthogonal to $C$.
\end{ex}

%Let the submetry $P:\R^2 \to[0,\infty)$ be given by the distance function to a ray $P=d_{\Gamma} $, with $\Gamma = [0, \infty)\subset \R\subset \R^2$.
%Then  the vertical space $T_0 L$ at the origin $0\in \Gamma$ is just $L_0=\Gamma$. On the other hand for any $x= (-\epsilon,0) \in \R \subset \R^2$, the
%vertical subspace $T_x L_x$ is the vector space $\{0 \} \times \R$ orthogonal to the linear hull of $\Gamma$. Thus at the origin $0$  there is absolutely no (semi-) continuity
%of vertical and horizontal spaces.

%All subsequent statements are based on a simple observation: For any local submetry $P:M\to Y$ any limit of $P$-minimal geodesics in $M$ is a  $P$-minimal geodesic,
%in particular, it is a horizontal geodesic.

%Given a fiber $L$ of $P$, any horizontal geodesic starting on $L$ can be extended to a $P$-minimal geodesic of length $\epsilon$
%for some $\epsilon$ bounded on $L$ locally uniformly from below.

%In particular, this implies that for a sequence of points $x_j\in L$ converging to $x$ and for a sequence of unit horizontal vectors $h_j \in T_{x_j} ^{\perp} L$
%any limit point of any    subsequence of $h_j$ is contained in $T_x ^{\perp} L$.  Thus, along a leaf $L$ of the local submetry $P:M\to Y$ the horizontal spaces and, therefore, the vertical spaces,
%vary in a semi-continuous manner.

%In order, to describe what happens for a variation of points not sitting on a single leaf, we will need some notations. We fix a local submetry
%$P:M\to Y$.
  We fix a sequence $x_j \in M$ converging to $x\in M$ and $r=r_x>0$ in the standard local picture around $x$,  Section \ref{subsec: standard}.
  We may assume  $x_j \in B_r(x)$, for all $j$.
  Set $y_j=P(x_j)$ and $y=P(x)$.
  % We  fix  $r=r_x>0$ in the standard local picture around $x$,  Section \ref{sec: onefiber}.
  %We assume that $x_j \in B_r(x)$, for all $j$.
% the standard local picture around $x$.
%In particular

 We set $\Sigma _j = \Sigma _{y_j} Y$ and $\Sigma =\Sigma _yY$.   We denote by $H_j$  and $H$ the set of unit horizontal vectors in $T_{x_j} M$ and $T_xM$ respectively.
By $Q_j $ and $Q$ we denote the differentials $Q_j:=D_{x_j} P$  and $Q=D_xP$ of $P$  and their restrictions $Q_j:H_j \to \Sigma _j$ and $Q:H\to \Sigma$.

After replacing $x_j$ by a subsequence, we  assume that the infinitesimal submetries $Q_j$  converge to a submetry $\hat Q:T_x M\to C(\hat \Sigma)$, where
$\hat \Sigma$ is a Gromov--Hausdorff limit of $\Sigma _j$.   By $\hat H\subset T_xM $ we denote the set of unit horizontal vectors of the infinitesimal submetry $\hat Q$.

We fix some $0<\epsilon <r$ and consider the compact subsets $\Sigma _j ^{\epsilon} \subset \Sigma _j$ %and $\Sigma ^{\epsilon} \subset \Sigma $
of all starting directions
of geodesics of length $\epsilon$.  The preimages $H_j ^{\epsilon}$
of  $\Sigma _j ^{\epsilon}$
%and   $\Sigma ^{\epsilon}$
under the submetries $Q_j$ are exactly the  subsets of  unit vectors at $x_j$,   which are the starting directions of $P$-minimal geodesics of length $\epsilon$.
Choosing a subsequence, we may assume that the subsets $H_j^{\epsilon}$ converge in the Hausdorff topology to a subset $\hat H^{\epsilon}$ of $\hat H$ in   $T_x M$.

All subsequent statements are based on the simple observation that  $P$-minimal geodesics converge to $P$-minimal geodesics. In particular, {such limits are horizontal. Thus,
 $\hat H^{\epsilon}$ is contained in $H$.}

\subsection{Semicontinuity}
The situation is  easily described if $x_j$ vary along the same fiber.  The first statement   of the next Lemma just means that tangent (and normal)
spaces of a set of positive reach vary semicontinuously.  The second statement,
 related  to Question \ref{quest: infinite},   means that along any manifold fiber
the differentials vary continuously.
%   See also Question \ref{quest: infinite}.  \cite{Mendes-Rad-slice} and Question \ref{quest: infinite}.
\begin{lem} \label{lem: openn}
In the notations above, assume in addition that $x_j$ are contained in the leaf $L=L_x$.
Then $\hat H \subset H$. % In other words, horizontal vectors at $x_j$ converge to horizontal vectors at $x$.

If, in addition, $x$ is contained in a $\mathcal C^{1,1}$-submanifold $K$, open in $L$, then $\hat H=H$ and
the submetries $Q_j=D_{x_j} P:T_{x_j} M\to T_y Y$ converge to $Q=D_xP:T_x M\to T_yY$.
\end{lem}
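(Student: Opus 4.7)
Since $x_j \in L$ gives $y_j = P(x_j) = y$, all the $Q_j$ and the subsequential limit $\hat Q$ share the same target $T_y Y$, so no identification of tangent cones in $Y$ is required. Choose $\epsilon < r$ small enough that $\bar B_{3\epsilon}(x_j) \subset \mathcal U(L)$ for all $j$ large (possible since $\bar B_{3r}(x) \subset \mathcal U(L)$); applying the argument of \tref{cor: posreach} at $x_j$, every direction in $\Sigma = \Sigma_y Y$ is the starting direction of a geodesic of length $\epsilon$, so $H_j^\epsilon = H_j$. The plan for part (a) is to establish $\hat H = \hat H^\epsilon$, after which the simple observation $\hat H^\epsilon \subset H$ finishes the argument. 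The inclusion $\hat H^\epsilon \subset \hat H$ is immediate, since any limit of unit vectors $h_j \in H_j$ with $|Q_j(h_j)| = 1$ is itself unit and sent to a unit vector by $\hat Q$. For the reverse, given $\hat h \in \hat H$, pick $v_j \in T_{x_j} M$ with $v_j \to \hat h$ and apply Moreau's orthogonal decomposition in $\mathbb R^n = T_{x_j} M$ to the polar pair of convex cones $V_j, H_j$ from \pref{prop: structure2}, obtaining $v_j = \Pi_j^H(v_j) + \Pi_j^V(v_j)$ with $|v_j|^2 = |\Pi_j^H(v_j)|^2 + |\Pi_j^V(v_j)|^2$. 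The identity $Q_j = Q_j \circ \Pi_j^H$ together with $|Q_j(h)| = |h|$ for $h \in H_j$ gives $|\Pi_j^H(v_j)| = |Q_j(v_j)| \to |\hat Q(\hat h)| = 1$, whence $\Pi_j^V(v_j) \to 0$ and $\Pi_j^H(v_j) \to \hat h$; normalizing produces $h_j \in H_j$ with $h_j \to \hat h$.

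For part (b), the $\mathcal C^{1,1}$ regularity of $K$ at $x$ ensures that the Euclidean subspaces $V_j = T_{x_j} L = T_{x_j} K$ converge to $V = T_x K$ in $\mathbb R^n$. By \lref{lem: stable}, $\hat V = \hat Q^{-1}(0)$ is the Hausdorff limit of the $V_j$, so $\hat V = V$ and $\hat H = V^\perp = H$. To prove $\hat Q = Q$, by \pref{prop: structure2} it is enough to verify agreement on every unit $h \in H$. Pick $h_j \in H_j$ with $h_j \to h$ (available since $H_j \to H$ in the Grassmannian); by \pref{prop: horvector} each $\gamma_{h_j}\colon [0,\epsilon] \to M$ is $P$-minimal, so $P \circ \gamma_{h_j}$ is a geodesic in $Y$ from $y$ in direction $Q_j(h_j)$. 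Continuous dependence of Riemannian geodesics on initial conditions yields $\gamma_{h_j} \to \gamma_h$ uniformly, hence $P \circ \gamma_{h_j} \to P \circ \gamma_h$ uniformly in $Y$; since $P \circ \gamma_h$ is the unique geodesic of length $\epsilon$ from $y$ in direction $Q(h)$ (uniqueness from \tref{cor: posreach}), we deduce $Q_j(h_j) \to Q(h)$ in $\Sigma_y Y$. Combined with $Q_j(h_j) \to \hat Q(h)$ from $Q_j \to \hat Q$, this gives $\hat Q(h) = Q(h)$.

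The most delicate step is the Moreau-projection argument in part (a): the vertical cones $V_j$ at nearby points on the (non-smooth) leaf $L$ need not converge to $V$ (this continuity is precisely what part (b) needs the extra $\mathcal C^{1,1}$ hypothesis for), so one cannot directly lift a prescribed $\hat h \in \hat H$ to horizontal vectors $h_j \in H_j$. The orthogonal decomposition of an arbitrary approximant $v_j \to \hat h$, combined with the length identity supplied by \pref{prop: structure2}, is what extracts horizontal approximants of the correct length without any control on the individual cones.
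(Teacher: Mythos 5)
Your proof is correct and follows the same overall strategy as the paper's. The one substantive difference is in part (a): after establishing $H_j^\epsilon = H_j$, the paper simply asserts $\hat H^\epsilon = \hat H$ and moves on, which implicitly uses that the polar (horizontal) cones converge whenever the vertical cones do. You make this explicit with the Moreau orthogonal decomposition argument, showing that any $\hat h \in \hat H$ is a limit of vectors $h_j \in H_j$ by projecting arbitrary approximants and using $|Q_j(v_j)| = |\Pi_j^H(v_j)|$ together with the Pythagorean identity. This is a correct and welcome expansion of a step the paper treats as obvious, and it correctly isolates what the $\mathcal C^{1,1}$ hypothesis in part (b) is buying you: only there does one get convergence of the actual Grassmannian subspaces $V_j \to V$, which is needed to identify $\hat H$ with $H$ rather than merely containing it. Your part (b) argument --- taking $h_j \to h$, using that $\gamma_{h_j}$ are $P$-minimal of definite length, and passing to the limit geodesic in $Y$ --- is the same as the paper's. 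Two minor remarks: your invocation of Theorem~\ref{cor: posreach} at each $x_j$ is redundant (since $y_j = y$, the directions in $\Sigma_{y_j}Y = \Sigma_y Y$ already all start geodesics of length $r$ by the choice of $r$ at $x$); and in part (b) the paper does not need to invoke uniqueness of the limit geodesic --- convergence of the $P \circ \gamma_{h_j}$ to $P \circ \gamma_h$ and continuity of starting directions already give $Q_j(h_j) \to Q(h)$.
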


\begin{proof}
Since $x_j \in L_x$, we have $y_j=P(x_j)=P(x)=y$. Thus, $\Sigma _j=\Sigma _{y_j} Y=\Sigma _y Y=\Sigma$.
Hence, by the choice of $\epsilon <r$, $\Sigma _j ^{\epsilon} =\Sigma _j$ and $H_j^{\epsilon}=H_j$  for all $j$.
Therefore, $\hat H^{\epsilon}=\hat H\subset H$, proving the first statement.

If $x_j, x$ are contained in  $K$, a $\mathcal C^{1,1}$-submanifold open in $L$, then  $T_{x_j} L =T_{x_j}K$ are vector spaces
converging to $T_xK$. Therefore,  $T_{x_j} ^\perp L$ converge to $T_{x} ^\perp L$. Due to Proposition \ref{prop: structure2},
 it suffices to prove that $Q_j :H_j\to \Sigma $ converge to $Q:H\to \Sigma$.

For $h_j \in H_j$ converging to $h\in H$, the geodesics $\gamma _{h_j}, \gamma _h :[0,r]\to M$ are $P$-minimal.
Hence, $P\circ \gamma _{h_j}$ are geodesics  starting in $y$ and converging to $P\circ \gamma _h$. Thus, $Q_j (h_j)$ converges to $Q(h)$.
\end{proof}

{ If $x_j$ vary in  different fibers we still have:}

%In the notations above, let $L=P^{-1} (y)$ be the fiber through $x$.

%If  $x_j \in L$ then $H_j^{\epsilon} =H_j$. Thus, we deduce that $\bar H\subset H$ in this case. This  is just the well-known semi-continuity of normal (and tangent ) spaces of a subset of positive reach.

%As the examples presented in the previous subsection shows, this semi-continuity does not always hold. But we have the following statement:

\begin{lem} \label{lem: gensemi}
	In the notations above, the linear span $W_j$ of $H_j^{\epsilon}$ contains
	$H_j$ and the linear span of $\hat H^{\epsilon}$ contains  $\hat H$.
\end{lem}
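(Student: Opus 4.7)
The plan is to exploit the submetry structure $Q_j\co H_j\to \Sigma_{y_j}$ from Proposition~\ref{prop: structure2} together with the characterization $H_j^\epsilon = Q_j^{-1}(\Sigma_j^\epsilon)$, in order to reduce the linear-span claim to a statement about the base spaces of the infinitesimal submetries.

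For the first statement, I would proceed by a dichotomy for each $h\in H_j$, using the geodesic $\gamma_h\co[0,\epsilon]\to M$ in direction $h$. Since $h$ is horizontal, $\gamma_h$ is a horizontal curve (this follows by projecting to $Y$ and applying Proposition~\ref{prop: image}), and the projection $P\circ \gamma_h$ is a quasigeodesic in $Y$. By Corollary~\ref{cor: quasigeod}, this quasigeodesic is a concatenation of finitely many geodesic segments. If no break occurs before time $\epsilon$, then $h\in H_j^\epsilon$ and we are done. If a break does occur, one must argue that $h$ still lies in $W_j$ indirectly; the paradigm is the submetry $P(x_1,x_2)=|x_1|\co \R^2\to [0,\infty)$, where at $y_j=1/j$ only the outward direction $+e_1$ belongs to $H_j^\epsilon$ (the inward one breaks before length $\epsilon$ because of the boundary of $[0,\infty)$), yet the linear span of $\{+e_1\}$ already contains both of $\pm e_1=H_j$. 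Generalizing this, for any $h\in H_j\setminus H_j^\epsilon$, one identifies other horizontal vectors in $H_j^\epsilon$ whose linear combinations recover $h$, using the break structure of $P\circ \gamma_h$ and the convex-cone structure of $C(H_j)$.

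For the second statement, the same dichotomy applies to the limit infinitesimal submetry $\hat Q\co T_xM\to C(\hat \Sigma)$: it is itself a submetry between Euclidean cones, so Proposition~\ref{prop: structure2} furnishes the restriction $\hat Q\co \hat H\to \hat \Sigma$ as a submetry. Alternatively (and perhaps more economically), one can invoke the convergence $H_j^\epsilon\to \hat H^\epsilon$ in Hausdorff topology together with the inclusions $H_j\subset W_j$ already established for each $j$, and run a standard semicontinuity argument for linear spans under Hausdorff limits to pass to the limit inclusion $\hat H \subset \mathrm{span}\,\hat H^\epsilon$.

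The principal obstacle is the "broken case" of the dichotomy: making rigorous how an $h\in H_j\setminus H_j^\epsilon$ can be expressed as a linear combination of elements of $H_j^\epsilon$ even when the set $H_j^\epsilon$ is strictly smaller than $H_j$. This requires a quantitative structural result for the relationship between $\Sigma_j^\epsilon$ and $\Sigma_{y_j}$, ensuring that the preimage of $\Sigma_j^\epsilon$ under the submetry $Q_j$ still linearly spans the horizontal subspace. I expect this to follow from combining the compactness and convexity properties of the standard local picture at $x$, the concatenation structure of quasigeodesics in $Y$ from Corollary~\ref{cor: quasigeod}, and the ambient Euclidean structure of $T_{x_j}M$; carrying this out precisely is the technical heart of the lemma.
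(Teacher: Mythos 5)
Your proposal is not a proof: you explicitly leave the ``broken case'' open, and that case is exactly where the content of the lemma lives. The per-direction dichotomy (``either $h\in H_j^\epsilon$, or recover $h$ as a linear combination of elements of $H_j^\epsilon$'') has no mechanism for the second branch---you would need to know \emph{in advance} that $H_j^\epsilon$ is rich enough to span, which is precisely what you are trying to prove. Your $\R^2\to[0,\infty)$ example is a true positive instance but gives no recipe that generalizes: nothing in the cone-convexity of $H_j$ or the concatenation structure of quasigeodesics by itself forbids $H_j^\epsilon$ from being confined to a proper subcone.

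The paper proves the lemma by an entirely different, measure-theoretic route that sidesteps any pointwise analysis of broken geodesics. By Bishop--Gromov comparison in $B_r(y)\subset Y$, the set $\Sigma_j^\epsilon$ of directions extendable to geodesics of length $\epsilon$ has $\mathcal H^{m-1}$-measure uniformly bounded below by some $\delta>0$ (here $m=\dim Y$). If $W_j\not\supset H_j$, then $H_j^\epsilon$ lies in the intersection of $H_j$ with a proper subspace, hence $\mathcal H^{n_j}(H_j^\epsilon)=0$ for $n_j=\dim H_j$; by Corollary~\ref{cor: zeroset} applied to the submetry $Q_j\co H_j\to\Sigma_j$, this forces $\mathcal H^{m-1}(\Sigma_j^\epsilon)=0$, a contradiction. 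For the limit statement, the same argument runs once one invokes the continuity of Hausdorff measure under Gromov--Hausdorff convergence of Alexandrov spaces (\cite{BGP}) to transfer the lower bound $\mathcal H^{m-1}(\hat\Sigma^\epsilon)\geq\delta$ to the limit. Your suggested alternative of ``semicontinuity of linear spans under Hausdorff limits'' is also not sound on its own: Hausdorff limits can collapse dimension, so a span bound does not automatically pass to the limit; it is again the quantitative measure lower bound that rules this out. The key ingredients you should have reached for are the Bishop--Gromov volume comparison and Corollary~\ref{cor: zeroset}, neither of which appears in your outline.
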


\begin{proof}
   Let  $m=\dim Y$. By the Bishop--Gromov inequality  in $B_r(y)\subset Y$, we have a uniform lower bound $\mathcal H^{m-1} (\Sigma _j ^{\epsilon})   \geq \delta >0$ for some $\delta >0$ and all $j$ { large enough.
	Due} to the continuity of the Hausdorff measure, \cite{BGP}, we have the same lower bound
	 $\mathcal H^{m-1} (\hat \Sigma ^{\epsilon} ) \geq \delta $.

% =\hat Q (\hat H^{\epsilon}) )\subset  \hat \Sigma$,   since this set is the limit set of the sets $\Sigma _j ^{\epsilon}$.
	
Assume that the linear space $W_j$  of $H_j ^{\epsilon}$ does not contain the convex set $H_j$.
Then  $\mathcal H^{n_j} (H_j ^{\epsilon}) =0$, where
$n_j=\dim (H_j)$. Due to Corollary \ref{cor: zeroset},
it implies  $\mathcal H^{m-1} (\Sigma _j^{\epsilon} ) =0$,
 %\subset \Sigma _j$ is $0$,
  in contradiction to the previous observations.

The same { reasoning shows} that the linear span of $\hat H^{\epsilon}$ contains  $\hat H$.
\end{proof}

As a consequence we deduce a first general analogue of Lemma \ref{lem: openn}:

\begin{cor} \label{lem: semicont}
	In the notations above, assume in addition that $T_xL$ is a vector space.
% hence  that $H$
%	is a round sphere.
 Then $\hat H \subset H$.
 % Thus, the limit of any sequence of horizontal vectors at $x_j$ is a horizontal vector at $x$.
\end{cor}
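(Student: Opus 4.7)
The plan is to derive this as an almost immediate consequence of Lemma \ref{lem: gensemi} together with the observation at the end of Section \ref{subsec: notat} that $\hat H^\epsilon \subset H$. The role of the extra hypothesis that $T_xL$ is a vector space is precisely to make $H$ lie inside a \emph{linear} subspace of $T_xM$, so that statements about linear spans of horizontal limit vectors translate into statements about horizontality.

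First I would record the linear structure at $x$. Since $L$ has positive reach in $M$ by Theorem~\ref{thm-fibers-pos-reach}, and $T_xL$ is assumed to be a vector space, the normal cone $T_x^{\perp}L$ is the orthogonal complement of $T_xL$ in $T_xM$, hence a linear subspace. By the characterization of horizontal vectors in terms of the normal space (Section~\ref{subsec: notat}, via Lemma~\ref{lem-hor-polar-vert} applied to $D_xP$), the set $H$ of unit horizontal vectors at $x$ is exactly the unit sphere of the linear subspace $T_x^{\perp}L \subset T_xM$.

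Next I would combine this with the two facts already established about $\hat H^{\epsilon}$. From the closing remark of Section~\ref{subsec: notat}, horizontal limits of $P$-minimal geodesics are horizontal, so $\hat H^{\epsilon} \subset H \subset T_x^{\perp}L$. Since $T_x^{\perp}L$ is linear, its linear span satisfies $\operatorname{span}(\hat H^{\epsilon}) \subset T_x^{\perp}L$. On the other hand, Lemma~\ref{lem: gensemi} gives $\hat H \subset \operatorname{span}(\hat H^{\epsilon})$. Combining these two inclusions yields $\hat H \subset T_x^{\perp}L$. As $\hat H$ consists of unit vectors by definition, we conclude $\hat H \subset H$.

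There is really no serious obstacle here: the argument is a two-line combination of Lemma~\ref{lem: gensemi} with the linearity of $T_x^{\perp}L$ provided by the hypothesis. The only subtlety worth flagging is that this hypothesis cannot be dropped in general, since without it $H$ is merely a convex subcone of the unit sphere and taking linear spans of subsets of $H$ can exit $H$; the previous Example in Section~\ref{subsec: notat} (with $C = [0,\infty) \subset \R^2$) already exhibits this failure mode when $T_xL$ is not linear.
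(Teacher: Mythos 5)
Your proof is correct and follows essentially the same route as the paper: both use the inclusion $\hat H^{\epsilon}\subset H$ together with Lemma~\ref{lem: gensemi} to place the linear hull of $\hat H$ inside the linear hull of $H$, and then invoke the hypothesis that $T_xL$ is a vector space to conclude that $H$ is a round sphere (the unit sphere of $T_x^{\perp}L$), so unit vectors in its linear hull lie in $H$. The only difference is presentational: you spell out the identification $H = T_x^{\perp}L \cap \Ss^{n-1}$ before taking spans, whereas the paper states it afterward; the substance is identical.
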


\begin{proof}
We have $\hat H^{\epsilon} \subset H$. By Lemma \ref{lem: gensemi}, the linear hull of $\hat H$ is contained in the linear hull of $H$.
Since $T_xL$ is a vector space, $H$ is a round sphere, thus the unit sphere in its linear hull. Hence, $\hat H\subset H$.
\end{proof}

If the local  submetry $P$ is \emph{transnormal}, Corollary \ref{lem: semicont}  applies to all
points $x$; see also \cite[Lemma 45]{Mendes-Rad}.

{ Using  Lemma \ref{lem: openn} we prove that foot-point projections from fibers to manifold fibers are open maps:}
\begin{cor} \label{cor: op}
Let  $P:M\to Y$ be a surjective local submetry.
Let $y',y \in Y$ be connected in $Y$ by a unique geodesic $\gamma$ of length $s$.
Assume that the starting direction $v\in \Sigma _{y'} Y$ of the geodesic $\gamma$ has in $\Sigma _{y'}Y$ an antipodal direction.
Set $L=P^{-1} (y)$ and $L':=P^{-1} (y')$.
Let $x\in L$ be such that  the closed ball $\bar B_{3s}(x)$ in $M$ is compact.

Then the foot-point projection
$$\Pi^L :L'\cap B_{2s} (x) \to L$$
is uniquely defined and continuous.  If, in addition,
 $L\cap B_{3s} (x)$ is a $\mathcal C^{1,1}$-submanifold,
then this foot-point projection
%$$\Pi^L :L'\cap B_{2s} (x) \to L$$
is an open map.
\end{cor}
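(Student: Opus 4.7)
The plan is to first establish existence and uniqueness of $\Pi^L$ on $L'\cap B_{2s}(x)$ using the antipodal assumption, then obtain continuity by compactness, and finally construct a continuous local right inverse of $\Pi^L$ via horizontal lifts of the reversed geodesic $\bar\gamma(t):=\gamma(s-t)$.

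For existence and uniqueness of $\Pi^L(x')$, note that since $P(x')=y'$ and $P(L)=\{y\}$, formula \eqref{eq: dA} gives $d(x',L)=s$, so any foot-point $z$ of $x'$ in $L$ lies in the compact set $\bar B_{3s}(x)\cap L$ by the triangle inequality. Existence follows by horizontally lifting $\gamma$ from $x'$ via Lemma~\ref{lem: lift}; the endpoint of the lift is such a $z$. For uniqueness, any geodesic $\sigma$ from $x'$ to a foot-point $z$ has length $s$, so $P\circ\sigma$ is a $1$-Lipschitz curve of length $\le s$ between $y'$ and $y$, and uniqueness of $\gamma$ forces $P\circ\sigma=\gamma$. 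Hence $\sigma$ is horizontal with initial direction $w\in T_{x'}^\perp L'$ satisfying $D_{x'}P(w)=v$. By Corollary~\ref{cor: directions}, $T_{y'}Y=\R^l\times T^0_{y'}Y$ with the second factor of diameter $\le\pi/2$, so the existence of an antipodal to $v$ forces $v$ into the $\R^l$-factor, and Proposition~\ref{prop: directions} then asserts that its horizontal preimage under $D_{x'}P$ is a single vector. Consequently $w$ and hence $z=\exp_{x'}(sw)$ is unique. Continuity then follows from compactness: if $x_n'\to x'$ in $L'\cap B_{2s}(x)$, the sequence $\Pi^L(x_n')$ lies in $\bar B_{3s}(x)\cap L$; any subsequential limit is a foot-point of $x'$, hence equals $\Pi^L(x')$.

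For openness, assume $L\cap B_{3s}(x)$ is a $\mathcal C^{1,1}$-submanifold and fix $z_0=\Pi^L(x_0')$ for some $x_0'\in L'\cap B_{2s}(x)$. Let $\bar w_0\in T_{z_0}^\perp L$ be the initial direction at $z_0$ of the reversed $P$-minimal geodesic from $z_0$ to $x_0'$, so that $D_{z_0}P(\bar w_0)=v':=\bar\gamma'(0)$. A continuous local section $\phi$ of $\Pi^L$ on a neighborhood $V$ of $z_0$ in $L$ with $\phi(z_0)=x_0'$ will give openness: for any open $U'\subset L'\cap B_{2s}(x)$ containing $x_0'$, continuity of $\phi$ yields a neighborhood $V$ of $z_0$ with $\phi(V)\subset U'$, hence $V\subset \Pi^L(U')$. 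To construct $\phi$, we use that the manifold hypothesis on $L$ makes the unit normal bundle of $L\cap B_{3s}(x)$ a continuous fiber bundle, and that Lemma~\ref{lem: openn} yields convergence of the infinitesimal submetries $D_zP$ to $D_{z_0}P$ as $z\to z_0$ in $L$. A continuous selection then produces, for $z$ in a neighborhood $V$ of $z_0$, a unit vector $\bar w_z\in T_z^\perp L$ depending continuously on $z$ with $\bar w_z\to\bar w_0$ and $D_zP(\bar w_z)=v'$. Setting $\phi(z):=\exp_z(s\bar w_z)$, Proposition~\ref{prop: horvector} gives that $t\mapsto\exp_z(t\bar w_z)$ is horizontal and projects to $\bar\gamma$, so $\phi(z)\in L'$ with $\Pi^L(\phi(z))=z$ by the uniqueness established above.

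The main obstacle is the continuous selection of $\bar w_z$: although Hausdorff upper semicontinuity of the preimages $(D_zP|_{T_z^\perp L\cap\Ss^{n-1}})^{-1}(v')$ follows from convergence of the differentials and compactness, extracting a genuinely continuous branch through $\bar w_0$ is the delicate point. The $\mathcal C^{1,1}$-manifold hypothesis on $L$ is precisely what is needed: without it the normal cones $T_z^\perp L$ need not depend continuously on $z$, and Lemma~\ref{lem: openn} in its present form no longer applies.
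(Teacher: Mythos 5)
Your treatment of existence, uniqueness and continuity of $\Pi^L$ is essentially identical to the paper's, only with the invocation of Proposition~\ref{prop: directions} spelled out in more detail (the paper abbreviates to ``unique by the assumption on $v$ and Proposition~\ref{prop: directions}''). That part is fine.

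For openness you set yourself a stronger target than the argument requires, and, as you yourself acknowledge, you cannot reach it. Producing a \emph{continuous} local section $\phi$ of $\Pi^L$ through $x_0'$ would certainly give openness, but constructing the continuous selection $z\mapsto\bar w_z\in (D_zP)^{-1}(v')\cap T_z^\perp L$ is genuinely problematic. Lemma~\ref{lem: openn} (applicable because $z$ varies within the $\mathcal C^{1,1}$ submanifold $K=L\cap B_{3s}(x)$) does give convergence of the infinitesimal submetries $D_{z_j}P\to D_{z_0}P$, and therefore Hausdorff convergence of the compact horizontal fibers over $v'$, but those fibers are in general positive-dimensional, and Hausdorff convergence of compact sets does not by itself supply a continuous branch through a prescribed point $\bar w_0$. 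Nothing in the hypotheses of the corollary delivers a selection theorem that would close this gap. So your proof of openness, as written, is incomplete.

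The paper avoids the selection problem entirely by arguing sequentially: $\Pi^L$ is open at $x_0'$ if and only if for every sequence $z_j\to z_0:=\Pi^L(x_0')$ in $K$ one can produce $x_j'\to x_0'$ in $L'$ with $\Pi^L(x_j')=z_j$. Given such a sequence $z_j$, Lemma~\ref{lem: openn} yields $D_{z_j}P\to D_{z_0}P$, and Hausdorff convergence of the horizontal fibers then allows one to pick, one index $j$ at a time, unit horizontal directions $h_j\in T_{z_j}M$ with $D_{z_j}P(h_j)=v'$ and $h_j\to \bar w_0$. Setting $x_j':=\exp_{z_j}(s\,h_j)$ gives $P$-minimal geodesics (Proposition~\ref{prop: horvector}), whence $x_j'\in L'$, $x_j'\to x_0'$ and $\Pi^L(x_j')=z_j$ by the uniqueness of the foot-point that you already established. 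Picking preimages along a single convergent sequence requires only Hausdorff convergence of the fibers, not a continuous section --- that extra flexibility is exactly what your approach is missing.
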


\begin{proof}
 Due to compactness  of  $\bar B_{3s}(x)$ a horizontal lift of $\gamma$ starts
at any  $x'\in L'\cap B_{2s} (x)$. Moreover, it is unique by the assumption on $v$ and Proposition
\ref{prop: directions}.
The map $\Pi^L$ on $L'\cap B_{2s} (x)$ just assigns to $x'$ the endpoint of this $P$-minimal geodesic.  Thus,   the foot-point projection
$$\Pi^L :L'\cap B_{2s} (x) \to L$$
is uniquely defined and  continuous.

The image of this map is contained in $K=L\cap B_{3s} (x)$. Assume now that $K$ is a $\mathcal C^{1,1}$ submanifold.

In order  to prove the openness of $\Pi^L$, set $z=\Pi ^L (x')$ and let $z_j\to z$ be a sequence in $K$.  Consider the starting direction
$h$ of the geodesic $zx'$. Now we apply Lemma \ref{lem: openn} and find unit horizontal directions $h_j \in T_{z_j} M$ converging to $h$
such that $D_{z_j} P(h_j)= D_zP(h)$ is the starting direction of $\gamma$ in $\Sigma _yY$.

 The points $x_j =\exp (s\cdot h_j) $ lie on  $L'$, converge to $x'$ and satisfy $\Pi^L (x_j)=z_j$.
 This finishes the proof.
\end{proof}

\subsection{Continuity  of vertical spaces} We start   with { \blue the  simple} 
%We find an  easy continuity statement as a consequence of  Corollary \ref{lem: semicont}:

 \begin{cor} \label{cor: stable1}
 	Assume that the vertical spaces $T_{x_j} L_{x_j}$ and $T_xL _x$ are vector spaces of the same dimension.
 	%Let $P:M\to Y$ be a local submetry. Let $x_j$ be a sequence of points in $M$ converging to  $x$. Assume that the vertical spaces $V_{x_j}$ and the vertical space $V_x$ are vector spaces of the same dimension.
 	%
 	Then $T_{x_j} L_{x_j}$ converges to $T_xL_x$.
 	%Moreover, there exists a submetry $Q_0: \hat \Sigma \to \Sigma $  such that the submetry
 	%$Q:T_xM\to T_yY =C(\Sigma )$ factorizes as
 	%$$Q=C(Q_0)\circ \hat Q \;, $$
 	%where $\hat Q$ is as before the limit submetry of the differentials $D_{x_j} P :T_{x_j} M\to C(\Sigma _j)$ and
 	%$C(Q_0)$ is the cone over the submetry $Q_0$.
 	%
 	%For $y_j=P(x_j)$ and $y=P(x)$, the spaces  $\Sigma _{y_j} Y$ converge to $\Sigma _y Y$ in the Gromov--Hausdorff sense. The submetries $D_{x_j}P$ converge to $D_xP$.
 \end{cor}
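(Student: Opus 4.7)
The plan is to combine the semicontinuity statement of Corollary~\ref{lem: semicont} with a straightforward dimension count. Working in the notation of Section~\ref{subsec: notat}, I would pass to a subsequence so that the differentials $Q_j = D_{x_j} P$ converge to an infinitesimal submetry $\hat Q \colon T_x M \to C(\hat\Sigma)$. Since fibers of submetries persist under Gromov--Hausdorff convergence (Lemma~\ref{lem: stable}), the vertical cones $T_{x_j} L_{x_j} = Q_j^{-1}(0_{y_j})$ converge, after the natural identification of $T_{x_j}M$ with $T_xM$ (via smooth charts around $x$), to $\hat V := \hat Q^{-1}(0)$. By the usual subsequence principle, it will suffice to show $\hat V = T_x L_x$ for every such subsequential limit.

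Set $V := T_x L_x$ and let $k = \dim V$. Each $T_{x_j} L_{x_j}$ is by assumption a $k$-dimensional linear subspace of $T_{x_j} M \cong \R^n$ and thus lies in the compact Grassmannian $G_k(\R^n)$. After passing to a further subsequence, these subspaces converge in the Grassmannian to a $k$-dimensional linear subspace $V_\infty$, and by uniqueness of Hausdorff limits on bounded sets $V_\infty = \hat V$. In particular $\hat V$ is itself a $k$-dimensional linear subspace of $T_xM$. Since $V$ is a linear subspace, Corollary~\ref{lem: semicont} applies and yields the inclusion $\hat H \subset H$ between the unit horizontal vectors of $\hat Q$ and of $Q$.

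Now Lemma~\ref{lem-hor-polar-vert} identifies $H$ with the polar set of $V$ in the unit sphere of $T_xM$; since $V$ is a $k$-dimensional linear subspace this polar set is exactly the unit sphere in $V^\perp$, an $(n-k-1)$-dimensional round sphere. The same reasoning shows that $\hat H$ is the unit sphere in $\hat V^\perp$, also a round $(n-k-1)$-sphere. A round $(n-k-1)$-sphere contained in another round $(n-k-1)$-sphere in $\R^n$ must coincide with it, so $\hat H = H$, forcing $\hat V^\perp = V^\perp$ and hence $\hat V = V$, as required.

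There is essentially no serious obstacle: the equal-dimension hypothesis is precisely what rules out a drop in dimension when taking the Grassmannian limit, and the vector space assumption on $T_x L_x$ is exactly what is needed to invoke Corollary~\ref{lem: semicont}. The only bookkeeping worth highlighting is the passage from the one-sided inclusion $\hat H \subset H$ to equality of the verticals; the dimension match turns this into a direct sphere-in-sphere argument.
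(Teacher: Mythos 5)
Your proof is correct and follows essentially the same route as the paper's: establish $\hat H \subset H$ via Corollary~\ref{lem: semicont}, observe that both $\hat H$ and $H$ are round spheres of the same dimension (you get this via a Grassmannian limit of the $T_{x_j}L_{x_j}$, the paper gets it directly from $H_j$ being round spheres of the same dimension as $H$), and conclude equality from the fact that a round sphere contained in another of the same dimension must coincide with it.
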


 \begin{proof}
 	The statement is equivalent to the equality $\hat H=H$.  Due to Corollary \ref{lem: semicont},  $\hat H \subset H$. By assumption,
 	$H_j$ and $H$ are round spheres of the same dimension, hence so is $\hat H$. The required equality follows.
 	% o a subsequence of $H_j$ is contained in $H$
 	% Let, as above denote by $H_j$ and $H$ the set of unit vertical vectors in $T_{x_j} M$ and $T_xM$, respectively.   Due to Corollary \ref{lem: semicont},
 	% the  limit of any  subsequence  of $H_j$ is contained in $H$.  Since both spaces, $H$ and the limit, are spheres of the same dimension, they coincide.
 	% Thus, $H_j$ converges to $H$. Hence, $V_{x_j}$ converges to $V_x$.
 \end{proof}

 The next continuity and stability statement is more involved and  more surprising. It is the  key to Theorem \ref{thm: regular}.
 \begin{thm} \label{thm: surprise}
 Let $P:M\to Y$ be a surjective local submetry. Let $y_j$ be a sequence of points in $Y$ converging to $y$.  Assume that the 	spaces of directions $\Sigma _{y_j}Y$ converge in the Gromov--Hausdorff topology to $\Sigma _y Y$.

Then there exists some $\delta >0$ and some  $j_0$ such that, for all  $j>j_0$, the following holds true.  The spaces $\Sigma _{y_j} Y$ and $\Sigma _y Y$ are isometric and any direction in $\Sigma _{y_j} Y$ is a starting direction of a geodesic of length $\delta$.
 \end{thm}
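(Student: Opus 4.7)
My plan is to lift to the total space $M$ and exploit uniformity of the local submetry structure there. I begin by choosing $x \in P^{-1}(y)$ to be a smooth point of $L_x$ (possible by Proposition \ref{cor: fiber}), so that $T_xL_x$ is a Euclidean subspace of $T_xM \cong \R^n$. Using the submetry property I lift $y_j$ to points $x_j \in P^{-1}(y_j)$ with $d(x_j, x) = d(y_j, y) \to 0$, and I fix the standard local picture at $x$ with radius $r > 0$, so that $\bar B_{10r}(x) \subset \mathcal U(L_x)$ and $x_j \in B_r(x)$ for large $j$.

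For the uniform geodesic length (the second conclusion), I would establish a uniform lower bound on the reach of $L_{x_j}$ at $x_j$ by running Perelman's construction uniformly: build a family of special semiconcave functions $f_{y_j}$ on a fixed neighborhood of $y$ in $Y$, each with maximum at $y_j$, uniform strict concavity, and uniform ascending slope $|\nabla^+ f_{y_j}| \geq c > 0$ off $y_j$, using a single fixed configuration of probe points. By Lemma \ref{lem: compose}, the lifts $g_j = f_{y_j} \circ P$ are semiconcave on $M$ with maximum set $L_{x_j}$ and the same uniform slope estimate off $L_{x_j}$. The regular-sublevel-set theorem of Bangert used in Theorem \ref{thm-fibers-pos-reach} then yields a uniform $\delta > 0$ with $\bar B_\delta(x_j) \subset \mathcal U(L_{x_j})$ for all large $j$. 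The argument of Corollary \ref{cor: posreach} applied uniformly at $x_j$ produces geodesics of length $\delta$ from $y_j$ in every direction of $\Sigma_{y_j}Y$.

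For the isometry, I pass to a subsequence so that the infinitesimal submetries $Q_j = D_{x_j}P: T_{x_j}M \cong \R^n \to T_{y_j}Y$ GH-converge to a submetry $\hat Q: \R^n \to T_yY$; the target is $T_yY$ since the hypothesis $\Sigma_{y_j} \to \Sigma_y$ forces $T_{y_j}Y = C(\Sigma_{y_j}) \to C(\Sigma_y) = T_yY$. Since $T_xL_x$ is Euclidean, so is the horizontal space $H = T_x^\perp L_x$, of dimension $m = \dim Y$. Corollary \ref{lem: semicont}, combined with the dimension count of Lemma \ref{lem: gensemi} and Corollary \ref{cor: zeroset}, forces the limit horizontal cone $\hat H$ to equal $H$; by Proposition \ref{prop: structure2}, this identifies $\hat Q$ with $Q = D_xP$ on $H$ and hence everywhere.

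The main obstacle is the final rigidity step: upgrading the GH-convergence $Q_j \to Q$ of submetries from the fixed source $\R^n$ to actual isometries $T_{y_j}Y \cong T_yY$ for large $j$. The approach is that the uniform reach of $L_{x_j}$ forces smoothness of $L_{x_j}$ at $x_j$ for large $j$ (by Proposition \ref{cor: fiber}), so the vertical spaces $V_j = T_{x_j}L_{x_j}$ are Euclidean of the same dimension as $V$ and, by Corollary \ref{cor: stable1}, converge to $V$ as linear subspaces of $\R^n$. This matches the Euclidean sources $H_j = V_j^\perp$ with $H$, so the cone submetries $Q_j|_{H_j}: \R^m \to T_{y_j}Y$ are from a fixed Euclidean space and GH-converge to $Q|_H: \R^m \to T_yY$; combined with the uniform geodesic length of the second claim, this rigidity yields $T_{y_j}Y \cong T_yY$, hence $\Sigma_{y_j} \cong \Sigma_y$, for large $j$.
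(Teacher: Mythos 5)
Your overall plan—establish a uniform reach bound for the fibers $L_j=P^{-1}(y_j)$ near a lift $x_j$, then deduce both conclusions—does match the paper's strategic outline, but the way you try to carry out the two key steps contains genuine gaps, and the mechanism you propose is quite different from the paper's.

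On the uniform reach bound. You propose to build Perelman functions $f_{y_j}$ with maximum \emph{at $y_j$}, with uniform slope and concavity, ``using a single fixed configuration of probe points.'' This does not obviously work: the Perelman function built from probe points chosen for $y$ has its maximum at $y$, not at $y_j$, and perturbing the construction so that the maximum lands at $y_j$ while preserving a uniform slope bound off $y_j$ is precisely where the geometry of $\Sigma_{y_j}Y$ must enter. You never explain where the hypothesis $\Sigma_{y_j}Y\to\Sigma_yY$ is used in this step, yet it is indispensable --- the conclusion is false without it (e.g.\ $P=|\cdot|:\R^2\to[0,\infty)$, $y=0$, $y_j\to 0$: the reach of $L_j$ degenerates, reflecting the fact that $\Sigma_{y_j}Y=\mathbb S^0\not\to\{\mathrm{pt}\}=\Sigma_yY$). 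The paper instead feeds the hypothesis directly into a density statement: using a $1$-Lipschitz comparison map $F_j:\Sigma_{y_j}^\epsilon\to\Sigma_y$ and the rigidity of surjective $1$-Lipschitz self-maps of an Alexandrov space (\cite{Petruninpar}), it shows that $\Sigma_{y_j}^\epsilon$ becomes $\rho$-dense in $\Sigma_{y_j}$. This implies that at every $z\in L_j\cap B_r(x)$ the set of unit horizontal directions starting a $P$-minimal geodesic of length $\epsilon$ is $\rho$-dense in $H_z$, and then a quantitative reach estimate from \cite{Ly-conv} (Proposition~1.8 and Theorem~1.6 there) gives a uniform $\delta>0$. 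That machinery is what makes the reach bound uniform; your Perelman route does not substitute for it without substantial additional work.

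On the isometry $\Sigma_{y_j}Y\cong\Sigma_yY$. Two of your claims here are incorrect as stated. First, ``the uniform reach of $L_{x_j}$ forces smoothness of $L_{x_j}$ at $x_j$'' is false: positive (even uniformly positive) reach does not imply manifold structure --- Proposition~\ref{cor: fiber} gives manifold structure only under the additional hypothesis that the tangent cones are Euclidean, and there are sets of positive reach that fail this at specific points. So you cannot conclude that the vertical spaces $V_j$ are Euclidean of a fixed dimension and apply Corollary~\ref{cor: stable1}. Second, even granted $H_j\to H$, the jump ``combined with the uniform geodesic length $\ldots$ this rigidity yields $T_{y_j}Y\cong T_yY$'' is an assertion, not an argument. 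As the remark inside the proof of Proposition~\ref{prop: difficult} explicitly cautions, convergence of horizontal spaces does not control the fibers of the infinitesimal submetries; submetries with discrete fibers, such as $\R\to\R/\mathbb Z_2$, show that nothing forces the limiting identification on the base. Closing exactly this gap is what makes Proposition~\ref{prop: difficult} ``the most technical statement'' of the paper, and one should not expect to bypass it with a few lines.

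The paper avoids all of this. Once $L_j$ has reach $\geq\delta$ near $x$ it deduces, via the lifting argument of Corollary~\ref{cor: posreach}, that every direction in $\Sigma_{y_j}Y$ starts a geodesic of length $\delta$ (the second conclusion). Then it observes that for $d(y,y_j)<\delta$ the geodesic $yy_j$ can be extended beyond both endpoints (lifting to a horizontal geodesic starting and ending at manifold points of $L$ and $L_j$, whose normal cones are linear so the reversal of a horizontal direction is again horizontal, and using the reach bound to keep the extension $P$-minimal). Both $y$ and $y_j$ are then interior points of a single geodesic, and $\Sigma_{y_j}Y\cong\Sigma_yY$ follows directly from \cite{Petruninpar}. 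That last step is clean, quantitative, and sidesteps the delicate comparison of infinitesimal submetries entirely; your proposal would need a full argument in the spirit of Proposition~\ref{prop: difficult} to reach the same conclusion.
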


 \begin{proof}
 	Choose $x\in P^{-1} (y)$ and $x_j \in P^{-1} (y_j)$ converging to $x$.
 In the following we use the notation introduced in  Subsection \ref{subsec: notat}.

 	%Consider the map
 	 %  Let $r>0$ be as in the standard local picture around $x$. We may assume that $y_j\in B_{\frac r 3} (y)$ for all $j$.
 	%Fix $\epsilon <\frac r 2$. For any $j$ consider the subset $\Sigma_j ^{\epsilon}$  consisting of all starting directions $v\in \Sigma _{y_j}$ of geodesics $\gamma _v$  of lengths at least $\epsilon$.
 	
 	{  Define $F_j : \Sigma_j ^{\epsilon} \to \Sigma$  by sending $v \in \Sigma _{j}^{\epsilon} \subset \Sigma _{y_j} Y$ to the starting direction $w \in \Sigma _yY =\Sigma$
 	of  the  unique geodesic connecting   $y$ and  $\gamma _v(\epsilon )$.
 	
 	Since $\epsilon <r$  and satisfies   Corollary \ref{cor: posreach},   $F_j (\Sigma _j ^{\epsilon}) \subset  \Sigma _y Y $
 	converge in the Hausdorff topology to $\Sigma _y Y$.}
 	
 	Due to the semicontinuity of angles in Alexandrov spaces, \cite[Section 7.7.4]{AKP}, the maps  $F_j$ converge (after choosing a subsequence), to a $1$-Lipschitz map $F:\hat \Sigma ^{\epsilon} \to \Sigma $, where $\hat \Sigma ^{\epsilon}\subset \hat \Sigma$ is the limit
 	of the subsets $\Sigma _j ^{\epsilon}$ in the Gromov--Hausdorff limit
 	$\hat \Sigma $ of the sequence  $\Sigma_j$.

 	 The map $F$ is a surjective and  $1$-Lipschitz and $\hat \Sigma$ is isometric to $\Sigma$.
   Hence,  $\hat \Sigma^{\epsilon} =\hat \Sigma$ and $F$ is an isometry,     \cite[Section 1.2]{Petruninpar}.
 	
 	In particular, for any $\rho >0$ and all sufficiently large $j$, the set
 	$\Sigma _{j} ^{\epsilon}$ is $\rho$-dense in $\Sigma _j$. We fix some $\rho < \frac \pi 2$.

Consider $L_j =P^{-1} (y_j)$.   For any $z\in L_j\cap B_r (x)$
the preimage  $H_z^{\epsilon}$ of $\Sigma _j ^{\epsilon}$ in the set of unit horizontal vectors $H_z \subset T_z M$ under the submetry $D_zP:H_z \to \Sigma _{y_j}$  is $\rho$-dense in $H_z$.  Moreover, for any vector $h\in H_z^{\epsilon}$ the geodesic $\gamma _h :[0,\epsilon] \to M$ is $P$-minimal, thus, it satisfies
$d(L_j, \gamma _h (\epsilon) ) =\epsilon$.

Applying now \cite[Proposition 1.8, Theorem 1.6]{Ly-conv}, we find some
$\delta >0$ depending only on $\epsilon$ and the curvature bounds of $B_r(x)$ such that $L_j$   has  reach $\geq \delta$ in $B_{\frac r 3} (x)$.

This implies the last statement of the Theorem.

For all $j$, such that $d(y_j,y)<\delta $, the unique geodesic between $y$ and $y_j$ can be extended beyond both endpoints as a  geodesic. Due to \cite{Petruninpar}, this implies that $\Sigma _{y_j}$ and $\Sigma _y$ are isometric.
 	\end{proof}

%\begin{lem}
%Let $U$ be a metric space and  $O$ a manifold.
%Let $f_j :U\to O$ be a sequence of open, continuous maps.  Assume that $f:U\to O$ is continuous and $f_j(x_j)\to f(x)$, for all sequences $x_j$ in $U$ converging to $x$.
%Assume that $f^{-1} (x)$ is compact. Then, for
%\end{lem}

\subsection{The most technical statement}
Now we turn to the final continuity statement.   Its proof  uses a part of Theorem \ref{thm: regular}, which will be  proved later in Section \ref{sec: reg} not relying on the present Subsection.  Another simple ingredient in the proof will be the following variant of the classical theorem of Hurwitz in complex analysis:

\begin{lem} \label{lem: openmap}
 Let $U$  be a locally compact space, let $f_j,f:U\to B$ be continuous and open maps to a Euclidean ball $B$. Assume that for any $x_j \to x$ in $U$ the
 points $f_j (x_j)$ converge to $f(x)$.

  Let  $p\in B$ be such that $C:=f^{-1} (p)$ is non-empty and compact.
  Then, for all $j$ large enough, the preimage  $f_j ^{-1} (p)$ is not empty.
\end{lem}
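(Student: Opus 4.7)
The plan is to prove a topological Hurwitz-type theorem by combining uniform convergence on compacts with a local-degree homotopy argument that exploits the openness of $f_j$ and $f$.

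First I would upgrade the sequential hypothesis to uniform convergence of $f_j$ to $f$ on every compact $K\subset U$. Otherwise, a subsequence would produce $x_j\in K$ with $|f_j(x_j)-f(x_j)|\geq \epsilon>0$; after passing to a further subsequence with $x_j\to x\in K$, the assumption $f_j(x_j)\to f(x)$ together with continuity $f(x_j)\to f(x)$ would contradict the lower bound.

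Next, using local compactness of $U$ together with compactness of $C$, I would choose a relatively compact open neighborhood $V$ of $C$. Then $\partial V$ is compact and disjoint from $C=f^{-1}(p)$, so $f(\partial V)$ is a compact set not containing $p$; set $\delta:=d(p,f(\partial V))>0$. Uniform convergence on $\bar V$ yields $d(p,f_j(\partial V))>\delta/2$ for all $j$ large enough. Consequently, the straight-line homotopy $h_t:=(1-t)f+tf_j:\bar V\to B$, well-defined by convexity of $B$, satisfies $p\notin h_t(\partial V)$ for all $t\in[0,1]$.

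The decisive step is to extract $p\in f_j(V)$ from this. I would introduce a local topological degree of $f|_{\bar V}$ at $p$ via the induced \v{C}ech-cohomology homomorphism $H^m(B,B\setminus\{p\})\to H^m(\bar V,\bar V\setminus C)$, which is well-defined because $C$ is compact and $f^{-1}(p)\cap\partial V=\emptyset$. Openness of $f$ combined with $C=f^{-1}(p)$ forces this degree to be nonzero: a vanishing local degree would allow $f|_{\bar V}$ to be continuously deformed through maps avoiding $p$ on $\partial V$ into a map whose image avoids $p$ altogether, contradicting openness of $f$ at any point of $C$. Homotopy invariance along $h_t$, using $p\notin h_t(\partial V)$, propagates the nonzero value to $f_j$, forcing $f_j^{-1}(p)\cap V\neq \emptyset$ for every sufficiently large $j$.

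The main obstacle is the degree-theoretic step, since $U$ is only assumed locally compact and carries no manifold structure a priori; in the paper's applications $U$ is a Euclidean sphere or a top-dimensional stratum of an Alexandrov region, so ordinary Brouwer degree applies to a small manifold neighborhood of $C$ and the \v{C}ech-cohomology formalism is unnecessary.
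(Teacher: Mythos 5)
Your argument is different from the paper's, and it has a gap. The paper's proof is a short elementary compactness argument: assume for contradiction that $p\notin f_j(U)$ along a subsequence, take a compact neighborhood $V$ of $C$, and choose $x_j\in V$ minimizing $|f_j(\cdot)-p|$ on $V$. Openness of $f_j$ forces $x_j\in\partial V$: if $x_j$ were interior, the open image of $f_j$ near $x_j$ would contain a point of $B$ strictly closer to $p$ (or $p$ itself). Applying the convergence hypothesis to a fixed $c\in C$ gives $f_j(c)\to p$, so the minimum distance tends to zero and $f_j(x_j)\to p$. Passing to a subsequence $x_j\to x\in\partial V$, the hypothesis gives $f(x)=p$, contradicting $x\notin C$. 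This uses nothing beyond compactness and the definition of openness.

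Your route, via a \v{C}ech-cohomology local degree and homotopy invariance, has two genuine issues. First, you need the group $H^m(\bar V,\bar V\setminus C)$ to carry a nontrivial degree invariant for a general locally compact $U$, and this is simply not available. Your fallback, that $U$ is always a manifold in the paper's applications, is not immediate: the lemma is invoked in the proof of Proposition \ref{prop: difficult} with $U$ a small neighborhood inside a fiber $L'$ of the submetry, and the fact that such a fiber is a $\mathcal C^{1,1}$ submanifold is only established later in Section \ref{sec: transnorm}; relying on it here would at minimum require a separate argument that the relevant fiber sits over a regular point, hence is a submanifold, and raises circularity concerns. Second, the decisive assertion that openness of $f$ forces the local degree to be nonzero is itself nontrivial; your justification, that a vanishing degree would permit deforming $f$ to avoid $p$ and thereby "contradict openness of $f$," conflates the deformed map with $f$ itself and requires real obstruction-theoretic input even for manifolds. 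Your preliminary reductions (upgrading to uniform convergence on compacts, and the boundary estimate $d(p,f_j(\partial V))>\delta/2$) are correct and useful, but the engine of your proof is both harder to justify and strictly less general than the elementary minimum-distance argument the paper actually uses.
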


\begin{proof}
Assume the contrary. Going to a subsequence we assume that $p\notin f_j (U)$, for all $j$.  Consider a compact neighborhood $V$ of $C$ in $U$.
	
Find  $x_j$ in $V$ such that $p_j:=f_j (x_j)$ is the closest point to $p$ in $f_j(V)$,
which exists, since $V$ is compact.  Since $f_j$ is an open map on $U$, the point $x_j$ must be contained in the boundary $\partial V$. Since $f_j$ converges to $f$ on $C$, the points $p_j$ converge to $p$.

 Going to a subsequence we may assume that $x_j$ converges to a point $x\in \partial V$.  Then $f(x)=p$ but $x\notin C$, which is impossible.
 % This  contradiction finishes the proof.
\end{proof}

The statement we are going to prove  is reminiscent of \cite{Petruninpar} and,   in view of \cite{Petruninpar}, could have been expected  for all
 Alexandrov spaces. However, this expectation is wrong in general, as has been shown by a clever $3$-dimensional counterexample by Nina Lebedeva.

 \begin{prop} \label{prop: difficult}
 	Let $P:M\to Y$ be a surjective local submetry. Let $y, z\in Y$ be  connected by a geodesic $\gamma:[a,b]\to Y$. Let $v\in \Sigma _yY$ and $w\in \Sigma _zY$ be  the starting and  ending directions of $\gamma$.
 Assume that there exist $v'\in \Sigma _yY$ and $w'\in \Sigma _zY$ such that
 	$d(v,v')=d(w,w')=\pi$.
 	
 	Then $\Sigma _yY$ and $\Sigma _zY$ are isometric.
 \end{prop}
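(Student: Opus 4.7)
The plan is to reduce the statement to a local constancy, along an extension of $\gamma$, of the isometry class of the space of directions, and then to apply \tref{thm: surprise}.

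Using $v'$ and $w'$, extend $\gamma$ to a geodesic $\hat\gamma\colon[a-\epsilon_0,b+\epsilon_0]\to Y$, and lift it horizontally to a geodesic $\hat{\tilde\gamma}$ in $M$ via Subsection~\ref{subsec: liftgeod}. Every interior point $\hat\gamma(t)$, $t\in(a-\epsilon_0,b+\epsilon_0)$, inherits the antipodal direction hypothesis from the pair $\pm\hat\gamma'(t)\in\Sigma_{\hat\gamma(t)}Y$. By connectedness of the interval, it thus suffices to show that the isometry class of $\Sigma_{\hat\gamma(t)}Y$ is locally constant in $t$.

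To prove local constancy at some $t_0$, consider any sequence $t_j\to t_0$. Passing to a subsequence, $\Sigma_{\hat\gamma(t_j)}Y$ converges in the Gromov--Hausdorff sense to some Alexandrov space $\hat\Sigma$ of curvature $\geq 1$, and the semicontinuity of angles (exactly as in the proof of \tref{thm: surprise}) produces a surjective $1$-Lipschitz map $F\colon\hat\Sigma\to\Sigma_{\hat\gamma(t_0)}Y$. Once $F$ is known to be an isometry, \tref{thm: surprise} applies and yields $\Sigma_{\hat\gamma(t_j)}Y\cong\Sigma_{\hat\gamma(t_0)}Y$ for large $j$, concluding the argument.

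Upgrading $F$ to an isometry is the main obstacle, since its generic Alexandrov analogue fails by Lebedeva's counterexample mentioned in the introduction to this section. My approach would exploit the horizontal lift $\hat{\tilde\gamma}$ and the infinitesimal submetries $D_{\hat{\tilde\gamma}(t_j)}P\colon T_{\hat{\tilde\gamma}(t_j)}M\to T_{\hat\gamma(t_j)}Y$. Using parallel transport along $\hat{\tilde\gamma}$ in the Riemannian manifold $M$ to identify all $T_{\hat{\tilde\gamma}(t_j)}M$ with $T_{\hat{\tilde\gamma}(t_0)}M$, extract a subsequential limit of these differentials to obtain an infinitesimal submetry $\hat Q\colon T_{\hat{\tilde\gamma}(t_0)}M\to C(\hat\Sigma)$. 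Combining the coarea formula \eqref{eq: coarea} on small balls around $\hat\gamma(t_j)$ and $\hat\gamma(t_0)$ with Bishop--Gromov volume monotonicity, the continuity of Hausdorff measure under Gromov--Hausdorff convergence of Alexandrov spaces, and \cref{lem: semicont} applied at $\hat{\tilde\gamma}(t_0)$, I expect to force $\mathcal H^{m-1}(\hat\Sigma)=\mathcal H^{m-1}(\Sigma_{\hat\gamma(t_0)}Y)$. Once this equality is in hand, the surjective $1$-Lipschitz map $F$ between $\mathrm{CBB}(1)$ spaces of equal Hausdorff measure must be an isometry, as in Petrunin's rigidity for non-contracting surjections between Alexandrov spaces. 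This step, relying essentially on the submetry structure rather than the mere Alexandrov property of $Y$, is where the difficulty concentrates.
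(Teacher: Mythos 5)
Your opening reduction is essentially the paper's: by \cite{Petruninpar} the spaces $\Sigma_{\gamma(t)}Y$ for $t\in(a,b)$ are already pairwise isometric, so the content is to compare $\Sigma_{y}Y$ with $\Sigma_{\gamma(t_j)}Y$ for $t_j\to a^+$, and your geodesic extension via the antipodal directions plus the ``local constancy'' framing is a reasonable repackaging of that. The gap is at the crux. The plan to derive $\mathcal H^{m-1}(\hat\Sigma)=\mathcal H^{m-1}(\Sigma_yY)$ from the coarea formula, Bishop--Gromov, continuity of Hausdorff measure, and Corollary~\ref{lem: semicont} is a wish rather than an argument, and it cannot work as stated: those tools yield at most $\hat H\subset H$ and that $\hat H^\epsilon$ has positive measure in $H$ (Lemma~\ref{lem: gensemi}); they are blind to the fiber multiplicity that could separate the limit submetry $\hat Q$ from $Q=D_{x}P$ (for example, a $2$-to-$1$ collapse of fibers), and nothing in your list quantifies that deficit. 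The paper inserts an explicit warning at exactly this point: convergence $H_j\to H$ alone does not give $\Sigma_j\to\Sigma$, the model failure being a submetry with discrete fibers such as $\R\to\R/\mathbb Z_2=[0,\infty)$ or a product of one with a Riemannian submersion, and the proof ``at a final instance relies on Lemma~\ref{lem: openmap} and the injectivity of the projection of $K_j$ to $K$.'' Both of these ingredients are absent from your proposal.

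Concretely, the missing input is the following. One must choose the lifted base point $x$ inside a $\mathcal C^{1,1}$-submanifold $K$ open in $L=P^{-1}(y)$ (without this, Corollary~\ref{lem: semicont} does not even apply), and then the antipodal-direction hypothesis, fed through Proposition~\ref{prop: directions} and Corollary~\ref{cor: op}, makes the foot-point projection $\Pi^L\colon K_j\to K$ \emph{injective}, hence an open embedding. This forces $K_j$ to be a $\mathcal C^{1,1}$-manifold of the \emph{same dimension} as $K$, so Corollary~\ref{cor: stable1} upgrades $\hat H\subset H$ to $\hat H=H$; and then, after Theorem~\ref{thm: regular} furnishes a positive-measure set of $u\in H$ at which the $\hat Q$-fiber $\hat F_u$ is a union of connected components of the $Q$-fiber $F_u$, the Hurwitz-type Lemma~\ref{lem: openmap}, applied to the open maps $f_j=\Pi\circ\Pi^{K_j}\colon L'\to K$ against $f=\Pi\colon L'\to K$, shows that $\hat F_u$ in fact meets every component of $F_u$. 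That combination---not a measure count---is what identifies the fibers of $Q$ and $\hat Q$ and hence the spaces $\Sigma_yY$ and $\hat\Sigma$.
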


 \begin{proof} 	
 	Due to \cite{Petruninpar}, all spaces of directions $\Sigma _{\gamma (t)} Y$
 	for $t\in (a,b)$ are pairwise isometric.  Thus, by symmetry, it suffices to prove that $\Sigma _{\gamma (t)} Y$ is isometric to $\Sigma _y Y$ for some (and hence any) $t\in (a,b)$.
 	
 	Choose a Lipschitz submanifold $K\subset L:= P^{-1} (y)$, open in $L$.
 	Let $x \in K$  be arbitrary. Choose $r>0$ as in the standard local picture around $x$. In addition, we may assume that $B_{10r} (x)\cap L\subset K$.
 	
 	Consider a $P$-minimal lift of $\bar \gamma :[a,a+r] \to M$ of $\gamma$ starting in $x$.
 	Choose $x_j=\bar \gamma (t_j)$ for $t_j>a$ converging to $a$.
 	Set $y_j=P(x_j)=\gamma (t_j)$.
 	
 	Now we are in the situation described in Subsection \ref{subsec: notat}.
 Moreover, $\Sigma _j =\Sigma _{y_j} Y$ are pairwise isometric and, therefore, isometric to their limit space $\hat \Sigma$.  We need to prove that
 $\Sigma $ is isometric to $\hat \Sigma$.

 Set $K_j= P^{-1} (y_j) \cap B_r(x)$, which is an open subset in the fiber $P^{-1} (y_j)$.  Due to Corollary \ref{cor: op}, the foot-point projection $\Pi ^L :K_j \to L$ is a continuous, open map $\Pi ^L:K_j \to K$ which sends $x_j$ to $x$.
 Moreover, by our assumption on the starting direction of $\gamma$  and Proposition~\ref{prop: directions} the vector $\gamma'(a)$ has a \emph{unique} horizontal lift at any $z\in K $. Therefore
 the map $\Pi ^L:K_j\to K$ is injective.

   Thus $\Pi^L :K_j \to \Pi^L (K_j)$ is a homeomorphism onto an open subset of $K$.  Hence, $K_j$ is a $\mathcal C^{1,1}$ submanifold of the same dimension as $K$.

 In particular, the assumptions of Corollary \ref{cor: stable1} are satisfied. We deduce, that $T_{x_j} K_j$ converge to $T_xK$ and  that $H_j$ converge to $H$. Thus $\hat H=H$.  Therefore, it suffices to prove that the submetries
 $Q:H\to \Sigma $ and $\hat Q:H\to \hat \Sigma $ have the same fibers.

{ \begin{rem} A word of caution: The convergence of  $H_j$  to $H$ is not (!) sufficient to conclude that $\Sigma _j$ converge to $\Sigma$. The counterexample one should have in mind is given by a submetry with discrete fibers, for instance $\R\to \R/\mathbb Z_2 = [0,\infty )$, or a product of such a submetry with a Riemannian submersion. The remaining part of the proof   excludes a behaviour such as in these examples. At a final instance, the proof relies on Lemma \ref{lem: openmap} and the injectivity  of the projection
	of $K_j$ to $K$.
\end{rem}}

 For any $u\in H$ we denote by $F_u$ respectively $\hat F_u$ the $Q$-fiber, respectively the $\hat Q$-fiber through $u$.
 We already have seen, that there exists a subset $\hat H^{\epsilon} \subset H$ of positive measure, such that $\hat F_u \subset F_u$ for all $u\in  H^{\epsilon}$.
 % which is a union of $\hat Q$-fibers, such that for any $u\in H^{\epsilon}$ the $\hat Q$-fiber through $u$ is contained in the $Q$-fiber through $u$.
%
%For any $u\in H$ we denote by $F_u$ respectively $\hat F_u$ the $Q$-fiber, respectively the $\hat Q$-fiber through $u$.

 	The rest of the proof proceeds in three steps.
 	\begin{enumerate}
 	\item \label{step1} There exists a subset $\tilde H \subset \hat H^{\epsilon}$  of
 	positive measure, such that, for any $u\in \tilde H$,  the fiber $\hat F_u$
 	 is a union of  connected components of $F_u$.
 	
 	\item  \label{step2} For any $u\in \tilde H$ the fiber $\hat F_u$ intersects all
 	 components of  $F_u$.
 	
 	\item  \label{step3} For any $u\in H$ the fibers $F_u$ and $\hat F_u$ coincide.
 	\end{enumerate}
 	
 	Starting with Step 1, we apply Theorem \ref{thm: regular} to $Q$ and $\hat Q$ and find a subset $\tilde H$ of { full} measure in $H^{\epsilon}$, such that for any $u\in \tilde H$ the fibers $F_u$ and $\hat F_u$ are closed submanifolds of the unit sphere $H$, both of the same dimension
 	$\dim (H)-\dim (\Sigma)=\dim(H) -\dim (\hat \Sigma)$.
 	By construction, $\hat F_u$ is contained in $F_u$ for all $u\in \tilde H$.  This implies \eqref{step1}.

  We proceed with step \eqref{step3}, assuming that \eqref{step2} has already been verified.  Combining with \eqref{step1}, we see that  $F_u =\hat F_u$ for all $u\in \tilde H$. 	
 	
 For any $h,h'\in H$ we have $F_h=F_{h'}$ if and only if $d(F_u,h) =d(F_u,h')$ for all $u\in \tilde H$.  This, is due to \eqref{eq: dA}, to the positive measure of
 $Q(\tilde H)$, Corollary \ref{cor: zeroset},
 and to the fact that  distance functions to points in a set of positive measure separate points in any Alexandrov space.

  Similarly, for any $h,h'\in H$ we have $\hat F_h=\hat F_{h'}$ if and only if $d(\hat F_u,h) =d(\hat F_u,h')$ for all $u\in \tilde H$.  These two statements together imply that for any $h\in H$ the fibers $F_h$ and $\hat F_h$ coincide.
  % proving that $\Sigma $ and $\hat \Sigma $ are isometric.

 	It remains to prove \eqref{step2}, { for a possibly smaller subset $\tilde H ^0$ of positive measure of   $\tilde H$ chosen in Step (1).  In order to find this subset, we call a point $p\in B_{3\epsilon} (y) \setminus B_{2\epsilon} (y)$ a \emph{good point}, if $T_p Y = \R^m$ and there }
 	  exists exactly one geodesic connecting $y_j$ and $p$, for all $j$.  Almost all points
 	in $B_{2\epsilon} (y) \setminus B_{\epsilon} (y)$ are good points (in any Alexandrov space).
 Since $T_p Y=\R^m$,  for any $j$ the geodesic $y_jp$ satisfies the assumptions of Corollary \ref{cor: op}.

 For any good point $p$ { and any $j$,  consider  $P$-minimal lifts
 	 $\eta _j ^p$ starting at $x_j$ } of the geodesics  $y_jp$.
   The set of starting directions   of all such lifts $\eta _j ^p$  is a fiber of the submetry
 $Q_j:H_j \to \Sigma _j$.   The limit of these fibers exists and it is a fiber of $\hat Q$.   The union $\tilde H ^1$ of all such fibers of $\hat Q$ is, by construction,  contained in $\hat H^{\epsilon}$. The  argument based on the Bishop--Gromov volume comparison,  that we used to verify that $\hat H^{\epsilon}$ has positive measure in $H$, shows that
 $\tilde H^1$ has positive measure in $H$.
 % (In fact, the complement $\hat H^{\epsilon} \setminus \tilde H$ has measure $0$).

{
We now set $\tilde H^0:=\tilde H \cap \tilde H^1$ and are going to verify \eqref{step2}
for any $u\in \tilde H^0$.}

 By  construction, we have a good point $p\in Y$ such that the following holds true. The direction $u$
 is the starting direction of a $P$-minimal geodesic from $x$ to $L':=P^{-1} (p) { \cap B_{5\epsilon} (x)}$.    The $Q$-fiber $F_u$ through $u$ is  the set of all starting directions of $P$-minimal geodesics from $x$ to $L'$.

 Moreover, { letting $E_j \subset L'$ be the set  of endpoints of $P$-minimal geodesics from $x_j$ to $L'$, the $\hat Q$-fiber $\hat F_u$ is described as follows.  The sets $E_j$ converge in
 the Hausdorff topology to  $E\subset L'$ and $\hat F_u$ is
  the set of starting directions  of $x$ to points in $E$.}
 % of $P$-minimal geodesics from $x$ to $L'$ whose endpoints $E\subset L'$ is the Hausdorff limit of the sequence $E_j$ of set  of endpoints of $P$-minimal geodesics from $x_j$ to $L'$.

  The set $E_j$ is the preimage of $x_j$
 under the open, continuous foot-point projection $\Pi ^{K_j} :L'\to K_j$, Corollary  \ref{cor: op}.

 Set $f= \Pi :L'\to K$ and $f_j:= \Pi \circ \Pi ^{K_j} :L'\to K$.  Note that $f$ and $f_j$ are open maps, by Corollary \ref{cor: op}.  Clearly for any $q_j \to q$ in $L'$ the images $f_j (q_j)$ converge to $f(q)$.

The map $\Pi :K_j \to K$   is an open embedding, hence fibers of $f_j$ and of $\Pi ^{K_j}$ coincide.   Therefore,  the claim that $\hat F_u$ intersects all components of $F_u$ just means that  any connected component of the fiber $f^{-1} (x)$ contains a limit point of a sequence $q_j \in f^{-1} _j(x)$.

The fiber $F_u$, homeomorphic to $f^{-1} (x)$,  has finitely many components. Thus, assuming the contrary, we find a component $C$ of $f^{-1} (x)$, and a small  neighborhood $W$ of $C$ in $L'$ such that $f_j^{-1} (x)$
does not intersect $W$, for all $j$ large enough.  But this contradicts Lemma
  \ref{lem: openmap}.

 This contradiction finishes the proof of Step 2.

  Thus, we have proved that $Q$ and $\hat Q$ have the same fibers. Therefore,  $\Sigma $ is isometric to $\hat \Sigma$. This finishes the proof of the Proposition.
 \end{proof}

\section{Strict convexity}  \label{sec: convex}
In this section we prove that small balls in the base space $Y$ are convex. This somewhat  technical section is not used for other results.

Let $N\subset M$ be a closed $\mathcal C^{1,1}$ submanifold of a manifold $M$ with two sided bounded curvature.   Then the distance function $d_N$ is semiconvex in a  small neighborhood $O$ of $N$,  and the function  $f=d_N^2$ is $\mathcal C^{1,1}$ in $O$,    \cite{Ly-conv}, \cite{KL}.
 We are going to see that $f$ is  strictly convex in directions almost orthogonal to $N$.
%For the sake of simplicity, we deal
%with the squared distance function $f=d_N^2$.

\begin{lem} \label{lem: strict}
Let $N\subset M$ be a $\mathcal C^{1,1}$-submanifold, let $x\in N$ be arbitrary. Set   $f =d_N^2 :M\to \R$ and fix some distance coordinates around $x$.
%Let $f =d_N^2$ denote the squared distance function to $N$.   Let $p$ be a point in $N$ and fix some $\mathcal C^{1,1}$ coordinates around the point $p$.
 Then  there exists $\epsilon >0$ { with the following property.  For any geodesic}
 $\gamma :[0,\epsilon] \to B_{5\epsilon} (x)$, with
 $d(\gamma'(0), T_x^{\perp}N )<\epsilon$,  
 we have 
 %
 %at almost all $t\in [0,\epsilon]$ one has }
%
%$<\gamma '(0), v> \leq \epsilon$  for all unit vectors $v \in T_pN$
%we have
  $$(f\circ \gamma) ''(t)  \geq 1  $$
  { for almost all  $t\in [0,\epsilon]$.}

\end{lem}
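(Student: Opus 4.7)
The plan is to compute $\text{Hess }f$ near $N$ and show it is close to the model Hessian $2\Pi_{T^{\perp}N}$ on nearly-normal directions, then integrate along $\gamma$.

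First I would set up the regularity. Since $N$ is a $\mathcal{C}^{1,1}$-submanifold of $M$ and $M$ has two-sided bounded curvature in distance coordinates, the results of \cite{Ly-conv, KL} imply that $d_N$, and hence $f=d_N^2$, is $\mathcal{C}^{1,1}$ on an open neighbourhood $U$ of $N$. Consequently $\text{Hess }f$ is defined Lebesgue-a.e.\ on $U$ and is uniformly bounded on $\bar B_{10\epsilon}(x)$ for all sufficiently small $\epsilon$. Because $\gamma$ is a Riemannian geodesic, $\nabla_{\gamma'}\gamma'=0$, so at almost every $t\in[0,\epsilon]$ one has the classical identity
\[
(f\circ\gamma)''(t) = \text{Hess }f(\gamma(t))\bigl(\gamma'(t),\gamma'(t)\bigr),
\]
and it suffices to bound the right hand side from below by $1$.

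Second, I would establish the pointwise perturbative estimate: for a.e.\ $p\in B_{5\epsilon}(x)\cap U$, with $q=\Pi^N(p)$ and $v$ decomposed as $v=v^{\top}+v^{\perp}$ with respect to $T_qN\oplus T_q^{\perp}N$,
\[
\text{Hess }f(p)(v,v) \;\geq\; 2|v^{\perp}|^2 - C\,d_N(p)\,|v|^2,
\]
where $C$ depends only on the local two-sided curvature bound of $M$ and on the Lipschitz constant of the tangent distribution of $N$. In the smooth category this is the classical tube formula (a consequence of the Weingarten and Riccati equations along the normal exponential of $N$); in the present $\mathcal{C}^{1,1}$ setting it follows by approximating $(M,N)$ by smooth objects converging $\mathcal{C}^{1,1}$, applying the smooth estimate uniformly, and passing to the weak-$*$ limit of Hessians, which preserves a.e.\ lower bounds. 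At a point $q\in N$ this reduces to the familiar $\text{Hess }f(q)=2\,\Pi_{T_q^{\perp}N}$.

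Third, I would control the geometry of $\gamma$. In distance coordinates around $x$, the Christoffel symbols are bounded, so the coordinate acceleration of the unit-speed geodesic $\gamma$ is bounded, giving $|\gamma'(t)-\gamma'(0)|=O(\epsilon)$ for $t\in[0,\epsilon]$. Since $\gamma'(0)$ is within $\epsilon$ of $T_x^{\perp}N$, and the normal space $T_q^{\perp}N$ varies Lipschitz with $q\in N$ (because $N$ is $\mathcal{C}^{1,1}$), the space $T_{q(t)}^{\perp}N$ with $q(t)=\Pi^N(\gamma(t))$ is within $O(\epsilon)$ of $T_x^{\perp}N$ for all such $t$. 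Therefore the normal component of the unit vector $\gamma'(t)$ with respect to $T_{q(t)}^{\perp}N$ satisfies $|(\gamma'(t))^{\perp}|^2 \geq 1-O(\epsilon)$, and $d_N(\gamma(t))\leq 5\epsilon$. Plugging these into the Hessian bound yields $(f\circ\gamma)''(t)\geq 2-O(\epsilon)\geq 1$ a.e., once $\epsilon$ is chosen small enough to absorb all constants.

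The main obstacle is making the tube-type expansion of $\text{Hess }f$ rigorous in the $\mathcal{C}^{1,1}$ category, where both the Hessian of $f$ and the second fundamental form of $N$ are only essentially bounded. This is handled either by citing the precise form of this estimate already contained in \cite{Ly-conv}, or by a smoothing argument in which $N$ and the metric $g$ are approximated in $\mathcal{C}^{1,1}$ by smooth data, the smooth tube computation is performed with uniform constants, and the resulting pointwise lower bound on $\text{Hess }f_n$ descends to an a.e.\ bound on $\text{Hess }f$ through weak-$*$ convergence of Hessians. The remaining perturbative steps are elementary once this estimate is in place.
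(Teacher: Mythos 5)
Your proposal follows the same high-level reduction as the paper — pass to the a.e.\ Hessian of $f$, control the tangent vector $\gamma'(t)$ so that it stays close to the normal directions along the geodesic, and conclude by a pointwise bound on $H_pf$ — but the central step, the pointwise lower bound on $H_pf$, is obtained by a genuinely different mechanism. You invoke the tube (Weingarten/Riccati) expansion of $\mathrm{Hess}\,d_N^2$ and propose to make it rigorous in the $\mathcal{C}^{1,1}$ setting either by citing the reference or by a smoothing argument. The paper instead avoids the tube formula entirely: at a point $p$ with foot point $q=\Pi^N(p)$, it introduces the normal slice $K_p=\exp_q(O_q)$ through $q$, notes that $f=d_N^2$ coincides with the simpler function $d_q^2$ on $K_p$ and that they share value and gradient at $p$, and then uses Taylor comparison to transfer the clean Hessian estimate for $d_q^2$ (close to $2\,\mathrm{Id}$ by the two-sided curvature bound, via the separate preliminary bound $\|(d_z^2\circ\gamma)''-2\|<\delta$) to $f$ on directions in $J_p=T_pK_p$, which is $\delta$-close to $T_x^\perp N$. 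This is more elementary: it replaces the second fundamental form/Riccati input that you need (and whose $L^\infty$-only regularity is exactly what you flag as the main obstacle) with the much tamer ingredient that squared point-distance functions have $\mathcal{C}^{1,1}$ regularity and near-Euclidean Hessian in small balls. In short, your route is correct in outline and would go through once the $\mathcal{C}^{1,1}$ tube estimate is pinned down, but that pinning-down is precisely what the paper's argument is engineered to avoid; if you pursue your version, you must either locate the estimate verbatim in the cited source or carry out the smoothing argument in full, since that step is currently the load-bearing and unproved part of the proposal.
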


\begin{proof}
Fix a sufficiently small $\epsilon >0$ and $U=B_{5\epsilon} (x)$.
   Then, for all
 $z\in B_{\epsilon}  (x) \cap L$,  the function
 $d_z ^2 $ is $\mathcal C^{1,1}$ in $U$ and the restriction to any geodesic $\gamma$ in $U$ satisfies { $||(d_z^2 \circ  \gamma )''(t) - 2|| <\delta$  for almost all  $t$,
 where $\delta $ goes to $0$ as $\epsilon\to 0$.

  Since  $d_N$ is semiconvex in $U$ and $d_N=0$ on $N$, the $\mathcal C^{1,1}$ function $f=d_N^2$ satisfies
 $(f\circ \gamma ) '' \geq -\delta$  almost everywhere on any geodesic $\gamma$
 in $U$, where again $\delta $ goes to $0$ with $\epsilon$.  Together with the lower curvature bound this implies
  $|(f\circ \gamma )''(t)|\le 3$,  for almost all  $t$.}

 %Since the distance function $d_N$ is semiconvex in $U$ and constant $0$ on $N$, the $\mathcal C^{1,1}$ function $f=d_N^2$ satisfies
 %$(f\circ \gamma ) '' \geq -\delta$  almost everywhere on any geodesic $\gamma$
 %in $U$.

	Geodesics in $U$ are uniformly $\mathcal C^{1,1}$, \cite{Yaman}, \cite{Ber-Nik}.
Thus, for any geodesic $\gamma$ in $U$, the assumption	$d(\gamma'(0), T_x^{\perp}N )<\epsilon$
implies $d(\gamma '(t),  T_x^{\perp}N )<C\cdot \epsilon$ for some fixed $C\geq 1$ and all $t$ in the domain of definition of $\gamma$.

Denote by  $H_pf :T_pM\times T_p M\to \R$ the Hessian of $f$ at  a point $p\in O$, whenever it exists.
Since $f$ is  $\mathcal C^{1,1}$ in $U$ the Hessian exists at almost all points in $U$.
Standard measure theoretic arguments  ("on almost all geodesics the Hessian $H_pf$ exists at almost all points")
 show that it suffices to prove the  following  \emph{claim}, for all sufficiently small $\epsilon $:

 For  all
 $p \in U$ at which   the Hessian $H_pf$ exists,
we have   $H_p f(w,w) \geq 1$, for all unit vectors  $w\in T_p M$
with $d(w, T_x^{\perp}N )\leq \epsilon$.
%<w,v> <\epsilon$ for all $v\in T_p N$.

 Consider the projection $q=\Pi ^L  (p)$. Consider  the Lipschitz submanifold $K_p=\exp_q (O_q)$, where $O_q$ is the open ball of radius $5\epsilon$ in  the normal space
 $T_q ^{\perp} N$ around the origin.
 If $\epsilon$ is small enough, then  the (unit sphere in) tangent space  $J_p=T_pK_p\subset T_p M$ to $K_p $ at $p$ is
 $\delta$-close to $T_x^{\perp} N$, where $\delta $ goes to $0$ with $\epsilon$.

  We compare now the $\mathcal C^{1,1}$-functions $d_q ^2$ and $f$ at the point $p$. Note, that $d_q ^2$ and $f$ have the same value and derivative at $x$ and that they coincide on $K_p$.    Applying the Taylor formula at $p$,  we get  $H_pf (w,w)\geq \frac 3 2$,  for any unit direction $w\in J_p$. Since   $||H_pf||\le 3$  %$H_pf$ is bounded from below by an arbitrary small constant $-\delta$,
   we deduce  $H_p f(v,v) \geq 1$, for all unit vectors $v\in T_pM$ which are $\delta$-close to $J_p$, for a sufficiently small $\delta$.
  This proves the claim and the Lemma.
\end{proof}

As a consequence we derive:
\begin{thm} \label{cor: strict}
Let $P:M\to Y$ be a surjective local submetry. Then, for any $y\in Y$, there exists some $r>0$ such that the function $f=d_y^2$ is a strictly convex function
on $B_r (y)$.
\end{thm}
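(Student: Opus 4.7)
The plan is to reduce Theorem~\ref{cor: strict} to Lemma~\ref{lem: strict} via horizontal lifting. Fix $y\in Y$, set $L:=P^{-1}(y)$, and choose $x\in L$. For any geodesic $\eta:[a,b]\to Y$ contained in a small ball around $y$, Lemma~\ref{lem: lift} produces a horizontal $P$-minimal lift $\tilde\eta:[a,b]\to M$, which is itself a geodesic in $M$. By equation~\eqref{eq: dA}, $d_y\circ P=d_L$, so $(f\circ\eta)(t)=d_L^2(\tilde\eta(t))$; consequently $(f\circ\eta)''(t)=(d_L^2\circ\tilde\eta)''(t)$ almost everywhere. The theorem thus follows from the following \emph{ambient claim}: there exists $r>0$ such that along every horizontal geodesic $\tilde\eta$ contained in $\bar B_{5r}(x)\subset M$, one has $(d_L^2\circ\tilde\eta)''\geq 1$ almost everywhere.

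To prove the ambient claim I would follow the proof of Lemma~\ref{lem: strict}, replacing the $\mathcal{C}^{1,1}$ submanifold $N$ by the fiber $L$. The ingredients needed for that argument are provided by Section~\ref{sec: posreach}: by Theorem~\ref{thm-fibers-pos-reach}, $L$ has positive reach, so $d_L^2$ is $\mathcal{C}^{1,1}$ on $\mathcal{U}(L)$, a set which contains $\bar B_{10r}(x)$ in the standard local picture. At a point $p\in\bar B_{5r}(x)$ where the Hessian of $d_L^2$ exists, let $q=\Pi^L(p)\in L$ be the foot point. The crucial identities exploited in the proof of Lemma~\ref{lem: strict}---that $d_L^2$ and $d_q^2$ are $\mathcal{C}^{1,1}$, share their first-order jet at $p$, and coincide on the slice $K_p=\exp_q(T_q^\perp L\cap B_{5r}(0_q))$---are consequences solely of the foot-point property of sets of positive reach, and thus survive without $L$ being a $\mathcal{C}^{1,1}$ submanifold. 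Since $H_pd_q^2$ is close to $2\,\mathrm{Id}$ in distance coordinates, the Taylor expansion of $d_q^2-d_L^2\geq 0$ at $p$ yields $H_pd_L^2(w,w)\geq 1$ for every unit vector $w\in T_pM$ sufficiently close to the tangent directions of $K_p$ at $p$, and in particular for every horizontal $w\in T_p^\perp L_p$ once $p$ is close enough to $x$. A standard measure-theoretic integration, exactly as at the end of the proof of Lemma~\ref{lem: strict}, then promotes the pointwise Hessian bound to the a.e.\ second-derivative estimate along $\tilde\eta$.

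The main obstacle is the uniform control, over all $p\in\bar B_{5r}(x)$, of how close the horizontal cone $T_p^\perp L_p$ (with respect to the fiber through $p$, which is not $L$ unless $p\in L$) lies to the tangent directions of the slice $K_p$ at $p$. This requires the semicontinuity and continuity properties of horizontal/normal spaces of fibers developed in Section~\ref{sec: tech}, together with the fact that as $p\to x$ the spaces $T_p^\perp L_p$ approach $T_x^\perp L$, which coincides with the tangent cone of $K_x$ at $x$. Once this uniformity is established, the bound $(d_L^2\circ\tilde\eta)''\geq 1$ holds almost everywhere along every horizontal geodesic in $\bar B_{5r}(x)$, and the strict convexity of $f=d_y^2$ on $B_r(y)$ follows after possibly shrinking $r$.
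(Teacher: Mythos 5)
Your reduction to an \emph{ambient claim} via horizontal lifting and equation \eqref{eq: dA} is correct and matches the paper's strategy, but the execution has a genuine gap: you fix an \emph{arbitrary} $x\in L$ and then try to reprove Lemma~\ref{lem: strict} with the fiber $L$, a general set of positive reach, in place of the $\mathcal C^{1,1}$ submanifold $N$. Two things break without a smarter choice of $x$. First, at a non-manifold foot point $q=\Pi^L(p)$ the tangent cone $T_qL$ is merely a convex cone, so $T_q^\perp L$ is likewise only a cone and the slice $K_p=\exp_q(T_q^\perp L\cap B)$ need not be a submanifold; the Taylor comparison of $d_q^2$ and $d_L^2$ along $K_p$ that produces the lower Hessian bound $H_pf(w,w)\geq 3/2$ for $w\in J_p$ no longer makes sense. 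Second, the semicontinuity input you want to invoke from Section~\ref{sec: tech} is Corollary~\ref{lem: semicont}, which has the standing hypothesis that $T_xL$ is a vector space; for an arbitrary $x$ this fails, and then there is no control that horizontal directions at nearby $p$ are almost orthogonal to $T_xL$ -- which is exactly the hypothesis $d(\gamma'(0),T_x^\perp N)<\epsilon$ that Lemma~\ref{lem: strict} needs.

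The paper resolves both issues simultaneously by a single choice you omitted: take $x\in L$ in the dense open $\mathcal C^{1,1}$-submanifold part of $L$, which exists by Proposition~\ref{cor: fiber}. Then in a small ball around $x$ the fiber $L$ literally \emph{is} a $\mathcal C^{1,1}$ submanifold $N$, so Lemma~\ref{lem: strict} applies verbatim (no reproof for sets of positive reach is needed), $T_xL$ is a vector space so Corollary~\ref{lem: semicont} is available, and the foot points of nearby $p$ automatically land in the manifold part of $L$. The geodesics in $B_r(y)$ are then lifted so that the lift starts at a point of $P^{-1}(\eta(0))\cap B_r(x)$ (which is non-empty because $P(B_r(x))=B_r(y)$), keeping the whole lift inside $B_{2r}(x)$ where the lemma applies. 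With that choice of $x$ inserted, your argument closes and coincides with the paper's; without it, the proof as written does not go through.
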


\begin{proof}
Consider a point $x\in L= P^{-1} (y)$ such that  a small neighborhood of $x$ in $L$ is a $\mathcal C^{1,1}$ submanifold.

 For sufficiently small $r>0$ any short geodesic $\gamma$ in $B_r(y)$ can be lifted to a horizontal geodesic $\hat \gamma$ in $B_{2r}(x)$.
Due to Lemma \ref{lem: semicont}, the starting direction of any such geodesic encloses an angle almost equal to $\frac \pi 2$
with any vector in $T_xL$  (considered in some fixed distance coordinates).  Then  	$f\circ \gamma =d_L^2\circ \hat \gamma$, and this function is
{$1$-convex by Lemma \ref{lem: strict}.}	
\end{proof}

This result finishes the proof of  Theorem \ref{cor: quotient}.

%\begin{rem}
%With somewhat more care it is possible to extend the statement of Lemma \ref{lem: strict} and to prove that in Corollary  \ref{cor: strict} also the (non-squared) distance function to
%$y$ is a convex function in $B_r (y)$.
%\end{rem}

\section{Stratification, regular part} \label{sec: reg}
\subsection{Stratification, regular part, boundary}
As before, let $P:M\to Y$ be a surjective local submetry and let $m=\dim Y$. For $0\leq l\leq m$, denote by
$Y^l _+$ the set of points $y\in Y$, at which the tangent cone $T_yY$ splits  $\R^l$ as a direct factor.

For all Alexandrov regions,  the complement $Y\setminus Y^l_+$ has Hausdorff dimension at most $l-1$, \cite{BGP}, see also   \cite{Naber}.
%showing as a special case, that $Y\setminus Y^{l-1}$ has locally finite $(l-1)$-dimensional Hausdorff measure.

In our case, the subsets $Y^l$ defined in Corollary \ref{cor: directions} are exactly
$$Y^l=Y^l_+ \setminus Y^{l+1} _+ \;.$$
Semicontinuity of spaces of directions, \cite{BGP}, stability of tangent spaces along  geodesics, \cite{Petruninpar}, and   Corollary \ref{cor: directions} directly imply:
\begin{lem}  \label{lem: genstrat}
	For any $l$,
	the space $Y^l_+$ is open in $Y$.   The set $Y^l$ is locally closed in  $Y$. The closure $\bar Y^l$ of $Y^l$ is contained in $Y\setminus Y_+ ^{l+1}$.
	
	 For any geodesic $\gamma :[0, a)\to Y$ with $\gamma (0)\in Y^l _+$ we have $\gamma (t)\in  Y^l _+$ for all $t>0$.
\end{lem}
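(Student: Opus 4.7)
The plan is to prove the four claims in the order (4), (1), (3), (2), treating (4) as the core, deriving (1) as a direct consequence via extension of geodesics, and obtaining (3) and (2) as formal set-theoretic corollaries.

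For (4), let $\gamma\co[0,a)\to Y$ be a geodesic with $y:=\gamma(0)\in Y^l_+$. By Petrunin's stability of tangent cones along interior points of a geodesic \cite{Petruninpar}, the tangent cones $T_{\gamma(t)}Y$ for $t\in(0,a)$ are pairwise canonically isometric, so it suffices to identify their common isometry type for small $t>0$. I would use the rescaling $(\tfrac{1}{t} Y, y)\to (T_yY, 0)$, under which $\gamma(t)\mapsto\gamma'(0)$. Combined with BGP's semicontinuity \cite{BGP} (ensuring that tangent cones at converging basepoints pass to the iterated tangent cone in this setting) and the scale-invariance $T_{\gamma(t)}(\tfrac{1}{t} Y)=T_{\gamma(t)}Y$, this identifies the common isomorphism class as $T_{\gamma'(0)}(T_yY)$. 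Decomposing $\gamma'(0)=(w,u)\in \mathbb{R}^l\times T^0_yY$ via the splitting in Corollary \ref{cor: directions}, the iterated tangent cone equals $\mathbb{R}^l\times T_u(T^0_yY)$, which carries an $\mathbb{R}^l$-factor (and in fact an $\mathbb{R}^{l+1}$-factor when $u\neq 0$, coming from the radial direction at a non-origin point of the cone $T^0_yY$). Hence $l(\gamma(t))\geq l$ and $\gamma(t)\in Y^l_+$.

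For (1), let $y\in Y^l_+$ and choose $r>0$ via Theorem \ref{cor: quotient}(1) so that every direction at $y$ starts a geodesic of length $r$. For any $y'\in B_{r/2}(y)$, extend the $y$-to-$y'$ geodesic along its initial direction to total length $r$; then $y'$ becomes an interior point of a geodesic issued from $y$, and (4) yields $y'\in Y^l_+$, so $B_{r/2}(y)\subset Y^l_+$. For (3), the inclusion $Y^l\subset Y\setminus Y^{l+1}_+$ is immediate from definitions, and by (1) the latter is closed, hence contains $\overline{Y^l}$. For (2), $Y^l = Y^l_+\cap(Y\setminus Y^{l+1}_+)$ is the intersection of an open set (by (1)) and a closed set (by (3)), hence locally closed.

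The main obstacle is the rescaling step in (4): rigorously justifying that $T_{\gamma(t)}Y$ converges to the iterated tangent cone $T_{\gamma'(0)}(T_yY)$ as $t\to 0^+$, which requires combining the scale-invariance of tangent cones with a continuity-under-basepoint-convergence statement that goes beyond the bare GH convergence $(\tfrac{1}{t} Y, y)\to (T_yY, 0)$. Once this is in hand, the structural computation of the Euclidean factor via Corollary \ref{cor: directions} is routine, and Petrunin's stability does the rest of the work.
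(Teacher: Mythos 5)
Your overall structure---proving (4) first via Petrunin stability, semicontinuity of tangent cones, and the splitting of Corollary~\ref{cor: directions}, then deducing (1) by extending geodesics from $y$ via Theorem~\ref{cor: quotient}(1) (equivalently Corollary~\ref{cor: posreach}), and obtaining (3) and (2) as set-theoretic corollaries---is correct and is exactly the argument the paper's two-line proof alludes to.

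The one genuine flaw is in (4), precisely where you flag an ``obstacle.'' You state that semicontinuity ``ensures that tangent cones at converging basepoints pass to the iterated tangent cone,'' identifying the common isometry type of $T_{\gamma(t)}Y$ with $T_{\gamma'(0)}(T_yY)$. That identification is \emph{not} what semicontinuity gives, and for general Alexandrov spaces it is simply false; the paper's remark before Proposition~\ref{prop: difficult} about Lebedeva's $3$-dimensional counterexample is a warning against exactly this expectation, and Proposition~\ref{prop: difficult} (which you cannot invoke here, since $\gamma'(0)$ need not lie in $\R^l$ and so need not have an antipode) is the hard-won substitute in the submetry setting. Fortunately you do not need equality. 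Semicontinuity of spaces of directions, applied as in the paper's proof of Theorem~\ref{thm: sing} to the rescalings $(\frac{1}{j}Y,y)\to(T_yY,0)$ under which $\gamma(\frac1j)\to\gamma'(0)$, yields a distance non-decreasing map from $\Sigma_{\gamma'(0)}(T_yY)$ into the Gromov--Hausdorff limit of the $\Sigma_{\gamma(1/j)}Y$, which by Petrunin is isometric to $\Sigma_{\gamma(t)}Y$ for any fixed $t\in(0,a)$. Since $T_{\gamma'(0)}(T_yY)=\R^l\times T_u(T^0_yY)$ by your computation, $\Sigma_{\gamma'(0)}(T_yY)$ is a spherical join $\mathbb S^{l-1}\ast\Sigma'$ and hence contains $l$ pairs of antipodal points at pairwise distance $\frac\pi2$. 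Pushing these forward under the non-contracting map and using that $d(p,\cdot)+d(q,\cdot)\equiv\pi$ once $d(p,q)=\pi$ in curvature $\geq 1$, you recover $l$ such pairs in $\Sigma_{\gamma(t)}Y$, hence the $\R^l$-factor of $T_{\gamma(t)}Y$ and $\gamma(t)\in Y^l_+$. Replacing your claimed identification with this non-contracting-map transfer closes the gap; the deductions of (1), (3), (2) as you wrote them are fine.
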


{ The set of regular points is} $$Y_{reg}: = Y^m = Y^m_+ \;.$$
Thus, $y\in Y_{reg}$ if and only if $T_yY=\R^m$.
From Lemma \ref{lem: genstrat} and the density of regular points in any Alexandrov space, \cite{BGP}, we deduce:

\begin{lem} \label{lem: yreg}
Let $P:M\to Y$ be a surjective local submetry.  Then the set $Y_{reg}$ is open and dense in $Y$.
Any geodesic $\gamma:[0,a)\to Y$  which starts at a point in $Y_{reg}$ is completely contained in $Y_{reg}$.
\end{lem}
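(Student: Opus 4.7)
The plan is to deduce Lemma \ref{lem: yreg} directly from Lemma \ref{lem: genstrat} together with the general Alexandrov-geometric fact about dimension of non-regular strata that was recalled immediately before Lemma \ref{lem: genstrat}. There is really no new argument to make; the statement is essentially the specialization of the previous lemma to the top stratum $l=m$.

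First I would establish openness. By Lemma \ref{lem: genstrat}, each set $Y^l_+$ is open in $Y$. Since $Y_{reg}=Y^m=Y^m_+$ by definition (there is no higher stratum, as $\dim Y=m$), it follows at once that $Y_{reg}$ is open.

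Next I would establish density. The passage preceding Lemma \ref{lem: genstrat} recalls that in any Alexandrov region the complement $Y\setminus Y^l_+$ has Hausdorff dimension at most $l-1$. Applied with $l=m$, this gives $\dim_{\mathcal H}(Y\setminus Y_{reg})\leq m-1$. Since $Y$ itself is $m$-dimensional and locally $m$-dimensional around every point (any open subset of an $m$-dimensional Alexandrov region has Hausdorff dimension exactly $m$), the set $Y\setminus Y_{reg}$ has empty interior. Equivalently, $Y_{reg}$ is dense.

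Finally, for the stability along geodesics, I would simply invoke Lemma \ref{lem: genstrat} with $l=m$: if $\gamma\colon[0,a)\to Y$ is a geodesic with $\gamma(0)\in Y_{reg}=Y^m_+$, then $\gamma(t)\in Y^m_+=Y_{reg}$ for all $t>0$, and $\gamma(0)\in Y_{reg}$ by hypothesis, so the whole image lies in $Y_{reg}$. The only step that could conceivably require extra thought is density, but this is immediate from the dimension bound and so there is no real obstacle; the whole lemma amounts to reading off the $l=m$ case of the previously established structural statements.
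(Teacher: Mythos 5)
Your proof is correct and follows essentially the same route as the paper, which simply deduces the lemma from Lemma~\ref{lem: genstrat} together with the density of regular points in Alexandrov spaces cited from \cite{BGP}. The only cosmetic difference is that you derive density explicitly from the Hausdorff-dimension bound $\dim_{\mathcal H}(Y\setminus Y^m_+)\leq m-1$, whereas the paper invokes the density fact directly; both are the same underlying observation from \cite{BGP}.
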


A point $y$ is contained in $Y^{m-1}$ if and only if  its tangent space $T_yY$ splits off $\R^{m-1}$ but is not isometric to $\R^m$. This happens if and only if $T_yY$ is isometric to the Euclidean halfspace $\R^m_+= \R^{m-1} \times [0,\infty)$.

In particular, $T_yY$ has non-empty boundary at any point $y\in Y^{m-1}$, thus
$Y^{m-1} \subset \partial Y$. On the other hand,
$Y^{m-1}$ is dense in $\partial Y$, as it is the case  in any Alexandrov region.

%Recall that the boundary $\partial Y$ of $Y$ can be defined as the closure of

%\begin{proof}
%The last statement has been proved for general Alexandrov spaces in  \cite{Petruninpar}. The density of $Y_{reg}$ in $Y$ is
%a well-known statement in arbitrary Alexandrov spaces, \cite{BGP}.  For $y\in Y_{reg}$ and $y_j \in Y$ converging to $y$
%the spaces of directions $\Sigma _{y_j}$ converge to $\mathbb S^{m-1}$ in the Gromov--Hausdorff topology, due to the semi-continuity of
%angles, \cite{BGP}.   From    Corollary \ref{cor: directions}, this implies that $\Sigma _{y_j} =\mathbb S^{n-1}$ for all $j$ large enough.
%Thus, $Y_{reg}$ is open.
%\end{proof}

\subsection{Higher regularity of $Y_{reg}$}

From Theorem \ref{thm: surprise} we now deduce:
\begin{cor} \label{cor: uniform}
Let $P:M\to Y$ be a surjective local submetry. For any $y_0\in Y_{reg}$, there exists $r_0=r_0 (y_0) >0$,
such that for any $y\in B_{r_0} (y_0)$ any
$v\in \Sigma _yY$ is the starting direction of a geodesic of length $r_0$.
\end{cor}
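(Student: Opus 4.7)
The plan is to reduce the uniform claim to the pointwise statement of Theorem \ref{thm: surprise} by a straightforward contradiction argument, exploiting the fact that at regular points the spaces of directions are all isometric to the round sphere $\mathbb{S}^{m-1}$, so that Gromov--Hausdorff convergence of spaces of directions holds automatically.

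First I would use Lemma \ref{lem: yreg} to fix some $s > 0$ with $B_s(y_0) \subset Y_{reg}$. By the very definition of $Y_{reg}$, for every $y \in B_s(y_0)$ we have $T_yY = \mathbb{R}^m$, and hence $\Sigma_y Y$ is isometric to the round unit sphere $\mathbb{S}^{m-1}$. In particular, for any sequence $y_n \to y_0$ in $B_s(y_0)$, the spaces of directions $\Sigma_{y_n}Y$ converge trivially in the Gromov--Hausdorff topology to $\Sigma_{y_0}Y$, so the hypothesis of Theorem \ref{thm: surprise} is automatically satisfied for any such sequence.

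Now I would argue by contradiction: assuming the conclusion fails, there exist sequences $y_n \to y_0$ in $B_s(y_0)$ and $v_n \in \Sigma_{y_n}Y$ such that $v_n$ is not the starting direction of any geodesic of length $1/n$ in $Y$. Applying Theorem \ref{thm: surprise} to the sequence $y_n \to y_0$ yields some $\delta > 0$ and $n_0 \in \mathbb{N}$ such that, for every $n \geq n_0$, \emph{every} direction in $\Sigma_{y_n}Y$ is the starting direction of a geodesic of length $\delta$; picking $n \geq n_0$ with $1/n < \delta$ contradicts the choice of $v_n$. This yields the desired $r_0 = r_0(y_0)$.

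The argument is essentially short, so there is no serious obstacle once Theorem \ref{thm: surprise} is in hand; the only subtlety to watch is that Theorem \ref{thm: surprise} is stated for convergent sequences rather than uniformly on a neighborhood, so one must pass via compactness/contradiction rather than by extracting $\delta$ directly. The fact that all spaces of directions in $B_s(y_0)$ are literally isometric (and not merely close) makes the convergence hypothesis of Theorem \ref{thm: surprise} costless, which is why the local regularity $y_0 \in Y_{reg}$ is exactly the right hypothesis to invoke it.
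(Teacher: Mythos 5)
Your argument is correct and follows the route the paper intends: the corollary is stated as a direct consequence of Theorem \ref{thm: surprise}, and the key observation that all spaces of directions in a neighborhood $B_s(y_0)\subset Y_{reg}$ are isometric to $\mathbb{S}^{m-1}$ (so the Gromov--Hausdorff convergence hypothesis is automatic) plus a contradiction argument along a sequence $y_n\to y_0$ is exactly the natural way to fill in the details. No gaps.
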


%By the lower  semi-continuity of tangent spaces, at any regular point $y\in Y$ the
%stability assumptions  of Proposition \ref{prop: injstable} are fulfilled with
%$A=Y$. Therefore we obtain:

%\begin{cor}
%	Let $P:M\to Y$ be a surjective  local submetry, let $y\in Y$ be a regular point of $Y$. Then there exists some $6\epsilon >0$  such that an any point in $B_{\epsilon} (y)$ the injectivity radius is at least $6\epsilon$.
	
%	In particular, any geodesic $\gamma$ between  points $y_1,y_2$ in
%	$B_{\epsilon} (y)$ can be extended to a geodesic between $z_1$ and $z_2$ with {\red $
%		d(y_i,z_i)=2\epsilon$ for $i=1,2$}.
%\end{cor}

For any $y_0\in Y_{reg}$, we fix  $x_0\in P^{-1} (y_0)$ and $r_0>0$ as in the standard local picture around $x_0$ and satisfying   Corollary \ref{cor: uniform}.
%Moreover, we can assume that $y_0= P(x_0)$ and that $\bar B_{10r_0} (x_0)$ is compact.

 Now we consider  \emph{strainer maps} around $y_0$, \cite{BGP}.
Namely, we choose points $y_1,...,y_m$ at distance $r_0/2$ to $y_0$ such that the incoming directions of geodesics connecting $y_0$ to $y_i$ enclose
pairwise angles $\frac \pi 2$  (or sufficiently close to $\frac \pi 2$).  Then, we consider the map $F:B_s(y_0) \to \R^m$ with a sufficiently small $s =s(r_0)$,
whose coordinates are distance functions $f_i = d_{y_i}$.  The map $F$ is biLipschitz onto an open subset of $\R^m$, with biLipschitz constant arbitrary close to $1$,
\cite{BGP}, provided $r_0$ is sufficiently small. We  show:

\begin{lem} \label{lem: c11}
	Strainer maps $F:B_s(y_0) \to \R^m$ as above define a $\mathcal C^{1,1}$ atlas on $Y_{reg}$.  The distance function on $Y_{reg}$ is defined by a Riemannian metric, which is  Lipschitz continuous with respect to this atlas.
\end{lem}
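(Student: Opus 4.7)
The plan is to prove the lemma by lifting the strainer distance functions to $\mathcal{C}^{1,1}$ distance functions on $M$, and then constructing local horizontal $\mathcal{C}^{1,1}$ sections through manifold points of the fibers to transfer regularity back down to $Y_{reg}$.

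First, I would establish that $G := F \circ P = (g_1,\dots,g_m)$, where $g_i = d_{L_i}$ and $L_i = P^{-1}(y_i)$, is $\mathcal{C}^{1,1}$ on $P^{-1}(B_s(y_0))$. By equation \eqref{eq: dA} and Corollary \ref{cor: locglob}, $g_i = f_i \circ P$ holds on this preimage; since $y_i$ lies at distance $r_0/2$ from $y_0$ and since $P$ is $1$-Lipschitz, shrinking $s$ makes $P^{-1}(B_s(y_0))$ contained in $\mathcal{U}(L_i)$, where $g_i$ is $\mathcal{C}^{1,1}$ by Theorem \ref{thm-fibers-pos-reach} and the discussion of positive-reach sets in Subsection \ref{subsec: posreach}.

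Next, for the atlas property, pick any $x_0 \in K_0$, where $K_0$ is the dense $\mathcal{C}^{1,1}$-submanifold part of $P^{-1}(y_0)$ (Proposition \ref{cor: fiber}). Since $y_0 \in Y_{reg}$ means $T_{y_0}Y = \R^m$, the infinitesimal submetry $D_{x_0}P : T_{x_0}M \to \R^m$ is (by Propositions \ref{prop: structure2} and \ref{prop: directions}) an orthogonal projection whose restriction to the Euclidean normal space $T_{x_0}^{\perp}L_0$ is a linear isometry onto $\R^m$. Let $H \subset M$ be a small piece of $\exp_{x_0}(T_{x_0}^{\perp}L_0)$, a $\mathcal{C}^{1,1}$-submanifold. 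By Proposition \ref{prop: horvector}, radial geodesics in $H$ are precisely the horizontal lifts of geodesics in $Y$ issuing from $y_0$, so $P|_H$ is a homeomorphism onto a neighborhood of $y_0$. The derivative of $G|_H$ at $x_0$ has columns $-h_i = -\nabla g_i(x_0)$, the horizontal lifts of the strainer directions $v_i$; since the $v_i$ are pairwise almost orthogonal in $T_{y_0}Y = \R^m$ and $D_{x_0}P|_{T_{x_0}H}$ is an isometry, the same holds for the $h_i$, so $D_{x_0}(G|_H)$ is invertible. The $\mathcal{C}^{1,1}$ inverse function theorem then makes $G|_H$ a $\mathcal{C}^{1,1}$ diffeomorphism onto an open subset of $\R^m$, so locally $F = G|_H \circ (P|_H)^{-1}$. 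For two overlapping strainer charts $F_\alpha, F_\beta$ and any $y$ in the overlap, applying this construction at a common $x \in K_y \subset P^{-1}(y)$ with a common section $H$ yields
\begin{equation*}
F_\alpha \circ F_\beta^{-1} \;=\; (G_\alpha|_H) \circ (G_\beta|_H)^{-1},
\end{equation*}
a composition of $\mathcal{C}^{1,1}$ diffeomorphisms, establishing the $\mathcal{C}^{1,1}$ atlas.

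Finally, for the Riemannian structure I would push forward the Lipschitz Riemannian metric on the $\mathcal{C}^{1,1}$-submanifold $H \subset M$ via $P|_H$ to a metric $g$ on $B_s(y_0)$. Because $G|_H$ is $\mathcal{C}^{1,1}$, $g$ is Lipschitz in the strainer chart $F$. To see that $g$ induces the original distance, note that at the base point $x_0$ the map $D_{x_0}P|_{T_{x_0}H}$ is a linear isometry onto $T_{y_0}Y$, while the horizontal lifts of geodesics from $y_0$ are precisely the radial geodesics in $H$, so lengths of short curves through $y_0$ are preserved; performing the same construction with a horizontal section through any $x \in K_y$ gives a metric matching the canonical Euclidean structure on $T_yY$ at every $y$, hence the pushforward metrics agree on overlaps and define a global Lipschitz Riemannian metric on $Y_{reg}$ inducing $d_Y$. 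The hardest part, I expect, will be verifying independence of the horizontal section and that the pushforward distance truly equals $d_Y$ rather than merely being biLipschitz equivalent to it; for this the continuity statements from Section \ref{sec: tech}, in particular Corollary \ref{cor: stable1} and Theorem \ref{thm: surprise}, should guarantee that the horizontal distribution along $H$ varies Lipschitz near $x_0$ in a way compatible with the intrinsic Euclidean tangent structure of $Y_{reg}$.
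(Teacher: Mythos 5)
Your first step --- lifting the strainer coordinates to the $\mathcal{C}^{1,1}$ functions $g_i = d_{L_i}$ on a neighborhood of a manifold point $x_0$ of the fiber --- agrees with the paper. After that your strategy diverges, and there is a genuine gap.

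The central problem is the claim that $H = \exp_{x_0}\bigl(T_{x_0}^{\perp}L_0\bigr)$ is a $\mathcal{C}^{1,1}$ submanifold of $M$. In the regularity class the paper works in (metrics that are $\mathcal{C}^{1,1}$ in some coordinates, equivalently local two-sided curvature bounds), the geodesic equation has only Lipschitz coefficients, so the exponential map $\exp_{x_0}$ is Lipschitz but in general \emph{not} $\mathcal{C}^1$, and the normal exponential image is only a Lipschitz submanifold. The paper itself is careful about exactly this: in the proof of Lemma \ref{lem: strict} the set $K_p = \exp_q(O_q)$, $O_q \subset T_q^{\perp}N$, is explicitly called a ``Lipschitz submanifold,'' not a $\mathcal{C}^{1,1}$ one. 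With $H$ only Lipschitz, $G|_H$ is only Lipschitz, the $\mathcal{C}^{1,1}$ inverse function theorem does not apply, and the identity $F_\alpha \circ F_\beta^{-1} = (G_\alpha|_H)\circ(G_\beta|_H)^{-1}$ only gives a Lipschitz transition, which is what was already known. One way to repair the atlas part is to abandon the geometric section $H$ entirely and take instead a $\mathcal{C}^{1,1}$ local section $s$ of the $\mathcal{C}^{1,1}$ submersion $G_\beta$, which exists by the $\mathcal{C}^{1,1}$ implicit function theorem; then $F_\alpha\circ F_\beta^{-1}=G_\alpha\circ s$ is $\mathcal{C}^{1,1}$. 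But that section is not made of horizontal geodesics, which your metric argument relies on.

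For the metric statement a second gap appears even if one grants $H\in\mathcal{C}^{1,1}$: the pushforward of the induced Riemannian metric on $H$ by $P|_H$ coincides with the quotient metric only at the base point $y_0$. Away from $x_0$ the tangent space $T_pH$ is spanned by the radial horizontal direction together with Jacobi fields, and the latter need not be horizontal, so $D_pP|_{T_pH}$ need not be a linear isometry onto $T_{P(p)}Y$ with the quotient Euclidean structure. You acknowledge this and propose a pointwise construction with a section $H_y$ through each fiber, but then the Lipschitz continuity of the resulting metric tensor (the entire content of the second assertion) is exactly what still needs to be proved, and the appeal to Corollary \ref{cor: stable1} and Theorem \ref{thm: surprise} is too vague to close it. The paper's route is quite different on both counts: for the atlas, it shows that $F$ maps geodesics to almost-affine curves with a quantitative defect $O(\ell^2)$, deduces that $g\circ F^{-1}$ is semiconcave whenever $g$ is, and concludes that transition map coordinates (being $\pm$ distance functions) are simultaneously semiconcave and semiconvex, hence $\mathcal{C}^{1,1}$; for the metric it invokes Perelman's DC-structures result (\cite{P3}, \cite{Ambrosio-Bertrand}) expressing the metric tensor algebraically in distance functions and their derivatives, which are $\mathcal{C}^{1,1}$ in the chart, giving a Lipschitz metric. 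That argument sidesteps exponential sections completely.
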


\begin{proof}
{	Consider $x_0\in P^{-1} (y_0)$, $r_0, s>0$  and a strainer map $F :B_s(y_0) \to \R^m $  with coordinates $f_i =d_{y_i}$ as above. }
	
	Then $f_i \circ P = d_ {L_i}$ on $B_{r_0}(x_0)$, where $L_i=P^{-1} (y_i)$.
	By the choice of $r_0$,   the ball $ B_{r_0} (x_0)$ is contained in the  set  $\mathcal U(L_i)$ on which  $d_{L_i}^2$ is $\mathcal C^{1,1}$.
	
	 % Considering the restriction of $d_{L_i}$ to horizontal lifts of geodesics  in $B_s(y_0)$, we see that   $f_i $ is $\mathcal C^{1,1}$ in
	  %$B_s (y_0)$ in the following sense.
	
	  {  There exists  $C>0$ such that for any geodesic $\gamma$ in $B_s(y_0)$ the restriction $f_i\circ \gamma$ is differentiable everywhere and $(f_i\circ \gamma )' $ is $C$-Lipschitz.  This follows by restricting $d_{L_i}$ to a horizontal lift of $\gamma _i$ to $B_r(x_0)$.}

This implies the existence of   some $C_1 =C_1(C)>0$ with the following property.  For any  geodesic $\gamma:[a,b]\to   B_s(y_0)$ with midpoint $q$
the distance between $F(q)$ and the midpoint in $\R^m$  between the endpoints $F(\gamma (a))$ and $F(\gamma (b))$ is at most $C_1\cdot  \ell ^2(\gamma)$, compare \cite[Lemma 2.1]{Yaman}.

Therefore,
 for any Lipschitz continuous semiconcave function
 $g:B_s(y_0) \to \R$, the composition $g\circ F^{-1}$ is semiconcave.  This implies that any transition map between different strainer maps as above, has coordinates which are semiconvex and semiconcave at the same time, hence they are  of class $\mathcal C^{1,1}$.

This proves that the strainer maps define a $\mathcal C^{1,1}$ atlas on $Y_{reg}$.

  The  distance function on $Y_{reg}$ is described in any distance coordinates  $F:B_s(y_0)\to \R^m$  by  a Riemannian metric $g$ whose coordinates
  are expressed  as algebraic functions of some distance functions to some points and their derivatives, as explained in \cite{P3},  see also \cite{Ambrosio-Bertrand}. Thus,
  the Riemannian metric $g$ is locally Lipschitz continuous.
\end{proof}

  We can now easily prove a local generalization of  Theorem \ref{thm: regular}:
 \begin{thm}
 	Let $P:M\to Y$ be a surjective local submetry. Then  $Y_{reg}$ is open and dense   in $Y$.  Any geodesic $\gamma :[0,a)\to Y$ starting on $Y_{reg}$
 	is contained in $Y_{reg}$.
 	
 	$Y_{reg}$ carries a natural structure of a Riemannian manifold with a Lipschitz continuous Riemannian metric and  the restriction
 	$P:P^{-1} (Y_{reg}) \to Y_{reg}$ is a $\mathcal C^{1,1}$ Riemannian submersion.
 \end{thm}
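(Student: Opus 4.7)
The first two assertions (openness and density of $Y_{reg}$, and geodesic invariance of $Y_{reg}$) are exactly the content of Lemma~\ref{lem: yreg}, so nothing new is required for them. The Riemannian manifold structure on $Y_{reg}$ together with the Lipschitz continuity of its metric is Lemma~\ref{lem: c11}: the strainer maps $F:B_s(y_0)\to \R^m$ form a $\mathcal C^{1,1}$ atlas, and in any such chart the distance is induced by a Lipschitz Riemannian metric.

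It remains to show that $P:P^{-1}(Y_{reg})\to Y_{reg}$ is a $\mathcal C^{1,1}$ Riemannian submersion. I will fix $y_0\in Y_{reg}$, $x_0\in P^{-1}(y_0)$, and $r_0,s>0$ with a strainer map $F=(f_1,\dots,f_m)$ on $B_s(y_0)$ as in the proof of Lemma~\ref{lem: c11}; after shrinking, I may assume $B_{r_0}(x_0)\subset \mathcal U(L_i)$ for each fiber $L_i=P^{-1}(y_i)$. Then, by \eqref{eq: dA} and Corollary~\ref{cor: locglob}, the composition $F\circ P=(d_{L_1},\dots,d_{L_m})$ is $\mathcal C^{1,1}$ on $B_{r_0}(x_0)$. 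Since the target chart on $Y_{reg}$ is precisely $F$, this shows that $P$ is a $\mathcal C^{1,1}$-map from $P^{-1}(Y_{reg})$ to $Y_{reg}$ and that its differential $D_xP$ depends locally Lipschitz continuously on $x$.

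To verify the Riemannian submersion property, I will analyze the differential $D_xP:T_xM\cong\R^n\to T_{P(x)}Y\cong \R^m$ at each $x\in P^{-1}(Y_{reg})$. By Proposition~\ref{prop: differ}, this is an infinitesimal submetry onto a Euclidean space; since $T_yY=\R^m$ has full Euclidean splitting ($l=m$ in Corollary~\ref{cor: directions}), Proposition~\ref{prop: directions} forces the vertical cone $V=(D_xP)^{-1}(0)=T_xL_x$ to be a linear subspace and $D_xP$ to be the linear orthogonal projection onto $V^\perp$. In particular, on the horizontal space $V^\perp$, the map $D_xP$ is an isometry with respect to the ambient Euclidean structures read from the distance functions $f_i$. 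The key point is then that the Lipschitz Riemannian metric on $Y_{reg}$ provided by Lemma~\ref{lem: c11} is the one induced by distance, and horizontal vectors $h\in T_xM$ satisfy $|D_xP(h)|=|h|$ by definition; this is precisely the Riemannian submersion condition in the chart $F$.

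The main subtlety I foresee is the last matching step: identifying the Euclidean inner product on $T_yY$ coming from the Lipschitz Riemannian metric on $Y_{reg}$ with the one in which $D_xP$ is an orthogonal projection. This is handled by noting that in the strainer chart $F$ the components of the metric on $Y_{reg}$ are algebraic expressions in derivatives of distance functions to points $y_i$ (as in the proof of Lemma~\ref{lem: c11} following \cite{P3}), while on $M$ the components of the distance-squared functions $d_{L_i}^2$ restricted to horizontal vectors at $x$ give the exact same expressions, because $d_{L_i}\circ\gamma=d_{y_i}\circ(P\circ\gamma)$ for horizontal geodesics $\gamma$ (Proposition~\ref{prop: horvector}) and because the horizontal lift preserves length. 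Putting these together yields that $D_xP|_{V^\perp}$ is a linear isometry onto $T_{P(x)}Y$, completing the verification that $P$ is a $\mathcal C^{1,1}$ Riemannian submersion.
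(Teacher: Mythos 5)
Your proposal is correct and follows essentially the same route as the paper: Lemma~\ref{lem: yreg} for openness/density/geodesic invariance, Lemma~\ref{lem: c11} for the Lipschitz Riemannian structure, the $\mathcal C^{1,1}$ regularity of $P$ via $F\circ P = (d_{L_1},\dots,d_{L_m})$, and finally the pointwise linearity of the differential to get the submersion property. The one spot where you do more than the paper is the final ``matching'' paragraph: the paper simply notes that, once $P$ is $\mathcal C^{1,1}$, the differential $D_{x_0}P$ is a linear map which is also a submetry between Euclidean spaces, hence automatically a surjection that is isometric on the orthogonal complement of its kernel, and regards the identification of the Alexandrov tangent cone $T_yY$ with the Riemannian tangent space as immediate (the Lipschitz Riemannian metric of Lemma~\ref{lem: c11} \emph{generates} the distance on $Y_{reg}$, so the blow-up at $y$ coincides with the metric tangent space). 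Your detour through Proposition~\ref{prop: directions} and the algebraic form of the metric components is sound but unnecessary given the $\mathcal C^{1,1}$ regularity you have already established; the paper's shortcut is cleaner.
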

  \begin{proof}
   $Y_{reg}$  is open, dense and has the stated convexity property, due to  Lemma \ref{lem: yreg}.
   Due to Lemma \ref{lem: c11}, $Y_{reg}$  has a  natural $\mathcal C^{1,1}$ atlas of distance coordinates which makes it into a Riemannian manifold with a Lipschitz
  continuous metric tensor.

  It remains to verify that $P:P^{-1} (Y_{reg})\to Y_{reg}$ is a $\mathcal C^{1,1}$-Riemannian submersion.
    We fix $y_0\in Y_{reg}$ and $x_0\in P^{-1} (y_0)$
     and consider the standard local picture around $x_0$.
  For $y_0=P(x_0)$ consider distance coordinates $F:B_s(y_0) \to \R^m$ around $y_0$. We have already observed, that  the composition
  of $P$ with any coordinate function $f_i$  of $F$ is given in $B_s(x)$  by the distance  to a fiber  of $P$, which is $\mathcal C^{1,1}$, thus $F\circ P$ is $\mathcal C^{1,1}$ in a small ball around $x_0$.  Hence $P$ is $\mathcal C^{1,1}$.

  The differential $D_{x_0} P$  of $P$ at $x_0$ is a linear map and a  submetry between Euclidean spaces. Hence, it is a surjective linear map which preserves the length of any vector orthogonal to the kernel. Thus, $P$ is a Riemannian submersion.
  \end{proof}

 \section{Singular strata}  \label{sec: sing}
\subsection{Main statement}
We now turn to the singular strata and prove:

\begin{thm} \label{thm: sing}
	Let $P:M\to Y$ be a surjective local submetry. Then the set $Y^l$
	is an $l$-dimensional manifold.
	
	For every point $y\in Y^l$
	there exists some $r>0$ with the following properties. The closed ball $\bar B_r(y)$
	is compact and convex in $Y$ and so is the intersection $\bar B_r(y) \cap Y^l$.
	Moreover, for any $y'\in B_r(y)\cap Y^l$  any geodesic starting in $y'$ can be extended to a geodesic of length $r$.

	The set $Y^l$ has a natural $\mathcal C^{1,1}$ atlas, such that the distance  on $Y^l$ is locally defined by a locally Lipschitz
	continuous Riemannian metric.
\end{thm}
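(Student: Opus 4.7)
The plan is to adapt the strategy from Section~\ref{sec: reg}, replacing the splitting $T_y Y = \R^m$ by the canonical decomposition $T_y Y = \R^l \times T_y^0 Y$ of Corollary~\ref{cor: directions}. The basic geometric idea is that the $\R^l$-factor corresponds to directions in $\Sigma_y Y$ admitting antipodes, and these directions propagate along geodesics by Proposition~\ref{prop: difficult}, providing a local $\R^l$-parametrization of the stratum.

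First I would fix $y_0 \in Y^l$ and choose $r > 0$ so that $\bar B_{2r}(y_0)$ is compact, strictly convex (Theorem~\ref{cor: strict}), geodesics between points of $\bar B_{2r}(y_0)$ are unique, and every $v \in \Sigma_{y_0} Y$ starts a geodesic of length $2r$ (Theorem~\ref{cor: posreach}). Fix an orthonormal frame $e_1, \dots, e_l$ in the canonical $\R^l$-factor $\mathbb S^{l-1} \subset \Sigma_{y_0} Y$; each $e_i$ has antipode $-e_i$, and extending in both directions yields geodesic lines $\gamma_i \co (-r, r) \to Y$ through $y_0$. Applying Proposition~\ref{prop: difficult} to each $\gamma_i$ shows that $\Sigma_{\gamma_i(t)} Y$ is canonically isometric to $\Sigma_{y_0} Y$, so $\gamma_i(t) \in Y^l$ and $\gamma_i'(t)$ lies in the $\R^l$-factor of $T_{\gamma_i(t)} Y$. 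Fixing $s \in (0, r)$ small and setting $y_i^{\pm} = \gamma_i(\pm s)$ produces $2l$ points of $Y^l$ forming an orthogonal $l$-strainer at $y_0$ in the sense of \cite{BGP}.

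Next I would use these strainer points to extract the manifold structure and convexity. Consider the exponential-type map $F \co B_\rho(0) \subset \R^l \to Y^l$ defined by $F(v) = \gamma_{v/|v|}(|v|)$, with the direction $v/|v|$ interpreted inside the linear span of $e_1, \dots, e_l$ in $\Sigma_{y_0} Y$. By the first step, $F$ takes values in $Y^l$, and it is injective by uniqueness of geodesics in $\bar B_{2r}(y_0)$. The key technical point is to show $F$ is surjective onto a neighborhood of $y_0$ in $Y^l$. For this I would rescale at $y_0$: in the tangent cone $T_{y_0} Y = \R^l \times T_{y_0}^0 Y$ the maximal-splitting stratum is precisely $\R^l \times \{0\}$, so any sequence in $Y^l$ converging to $y_0$ gives, after rescaling, a limit in this Euclidean factor, and Proposition~\ref{prop: difficult} upgrades the asymptotic identification to an exact containment along the relevant short geodesic segments. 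Then the strainer map $F_l \co B_\rho(y_0) \to \R^l$ with coordinates $d_{y_i^+}$ is a biLipschitz homeomorphism of $Y^l \cap B_\rho(y_0)$ onto an open subset of $\R^l$, giving $Y^l$ the structure of an $l$-dimensional topological manifold. The convexity of $\bar B_r(y_0) \cap Y^l$ and the extendability of $Y^l$-geodesics to length $r$ then follow: since the $\R^l$-splitting propagates along $F$ by Proposition~\ref{prop: difficult}, $F$ is a local isometric embedding, so $Y$-geodesics between points of $F(B_\rho(0))$ coincide with $F$-images of Euclidean segments and remain in $Y^l$.

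Finally, for the $\mathcal{C}^{1,1}$ atlas and Lipschitz Riemannian metric on $Y^l$, I would argue exactly as in Lemma~\ref{lem: c11}. The lifts $d_{y_i^\pm}^2 \circ P$ are $\mathcal{C}^{1,1}$ on $M$ by positive reach of the fibers $P^{-1}(y_i^\pm)$ (Theorem~\ref{thm-fibers-pos-reach}) and the standard local picture, geodesics of $Y^l$ lift to $P$-horizontal geodesics in $M$, and hence transitions between strainer coordinates centered at nearby points of $Y^l$ are simultaneously semiconvex and semiconcave along $Y^l$-geodesics, so $\mathcal{C}^{1,1}$; the induced distance is expressed through a locally Lipschitz Riemannian metric via the formulae recalled in \cite{P3}. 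The main obstacle is the surjectivity of $F$, namely that every point of $Y^l$ near $y_0$ is actually reached by a geodesic in an $\R^l$-direction. This is where the submetry structure is essential: beyond general Alexandrov geometry, the rigidity provided by Proposition~\ref{prop: difficult} replaces the failure of the analogous statement in generic Alexandrov spaces (as in the Lebedeva counterexample referenced before that proposition), and it is precisely this rigidity which forces the stratum $Y^l$ to be flat enough to admit the advertised $\mathcal{C}^{1,1}$ manifold structure.
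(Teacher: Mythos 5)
Your overall strategy — parametrizing $Y^l$ near $y_0$ by geodesics issuing in $\R^l$-directions, then passing to strainer coordinates for the $\mathcal C^{1,1}$ structure — is close in spirit to the paper, and the ``if'' direction of your characterization (a point $z$ lying on an $\R^l$-direction geodesic from $y_0$ belongs to $Y^l$) is handled exactly as in the paper via Proposition~\ref{prop: difficult}. However, two steps have genuine gaps.

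First, the surjectivity of $F$, which amounts to the ``only if'' direction: if $z\in Y^l\cap B_r(y_0)$ then the geodesic from $y_0$ to $z$ starts in the $\R^l$-factor. Your sketch (``rescale at $y_0$ \dots Proposition~\ref{prop: difficult} upgrades the asymptotic identification to an exact containment'') does not produce an argument. Proposition~\ref{prop: difficult} takes as \emph{input} that both the starting and ending directions of a geodesic have antipodes; but whether the starting direction $v$ of $y_0z$ has an antipode in $\Sigma_{y_0}Y$ is precisely what you need to establish, so the proposition cannot be applied. The paper proves this step by a different mechanism: if $v$ is \emph{not} in the $\R^l$-factor, then $T_v(T_{y_0}Y)$ splits off $\R^{l+1}$; under the blow-up at $y_0$ the points $\gamma(1/j)$ converge to $v$ and, by Petrunin's parallel transport \cite{Petruninpar}, all have spaces of directions isometric to $\Sigma_zY$; semicontinuity of spaces of directions then gives a distance-nondecreasing map $\Sigma_v(T_{y_0}Y)\to\Sigma_zY$, which forces $T_zY$ to split off $\R^{l+1}$, contradicting $z\in Y^l$. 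This argument is genuinely different from what you describe, and without it your map $F$ is not shown to cover a neighborhood of $y_0$ in $Y^l$.

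Second, your convexity argument is based on the assertion that $F$ is a ``local isometric embedding'' so that $Y$-geodesics between points of $F(B_\rho(0))$ coincide with $F$-images of Euclidean segments. This is unjustified and generically false: $Y^l$ carries a Lipschitz Riemannian metric but there is no reason for it to be flat, and $F$ is a radial exponential-type map, not an isometry of the Euclidean ball onto its image. The paper's convexity argument sidesteps this entirely: by choosing $r$ via Theorem~\ref{thm: surprise} so that any geodesic starting at a point of $B_r(y_0)\cap Y^l$ extends to length $3r$, a geodesic between two points $z,z'\in Y^l\cap B_r(y_0)$ extends beyond both endpoints; \cite{Petruninpar} then forces all interior points of the geodesic to have spaces of directions isometric to $\Sigma_{z'}Y$, hence to lie in $Y^l$. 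You should replace the ``isometric embedding'' reasoning with this extension-plus-Petrunin argument.
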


\begin{proof}
Fix $y\in Y^l$. Thus, $T_y Y= \R^l \times C(\Sigma _0)$ with
$\diam (\Sigma _0) \leq \frac \pi 2$.

  Let $r>0$ be such that  the  distance function $d_y^2$ is convex on the compact ball  $\bar B_{2r} (y)$, Corollary \ref{cor: strict}. Due to Theorem \ref{thm: surprise},  we may choose $r$ so
   that, for any  $z\in B_r(y)$ such that $\Sigma _{z}Y$ is isometric to $\Sigma _y Y$,   any geodesic starting at $z$ can be extended to a geodesic of length $3r$.
	
	We claim that $z\in B_r(y)$ is contained in $Y^l$ if and only if the starting direction $v\in \Sigma _y Y$ of the geodesic  $\gamma$ connecting $y$ and $z$  lies in the direct factor $\R^l$ of the tangent space $T_yY$.
	
	Indeed, if $v$ is contained in $\R^l$ then it has an antipode in $\Sigma _yY$. Since, $z$ is an inner point of the geodesic $\gamma$, (by the choice of $r$), also the incoming direction  $w \in \Sigma _z Y$ of $\gamma$ has in $\Sigma _z Y$ an antipode.  Due to Proposition \ref{prop: difficult},
	$\Sigma _z Y$ and $\Sigma _yY$ are isometric. In particular, $z\in Y^l$.
	
 { On the other hand, assume
 %The other direction of the claim is valid in all Alexandrov spaces, as the following folklore argument reveals. If
 $v$ is not contained in the $\R^l$-factor of $T_yY$. Then the tangent space $T_v (T_yY)$ has $\R^{l+1}$ as a direct factor.}
	Under the convergence of the rescaled spaces $(\frac 1 j Y,y) \to (T_y Y,0)$
	the points $y_j=\gamma (\frac 1 j)$ converge to the point $v \in T_yY$.
	For all $j$, the spaces of directions $\Sigma _{y_j}Y$ are all isometric to $\Sigma _zY$, due to \cite{Petruninpar}.  By the semicontinuity of spaces of directions, there exists a distance non-decreasing map from  $\Sigma _v (T_yY)$ to $\Sigma _z Y$ (the Gromov--Hausdorff limit of $\Sigma _{\gamma (\frac 1 j)} Y$).   This implies, that $\Sigma _z Y$ has at least $l$ pairs of antipodal points
	at pairwise distance $\geq \frac \pi 2$. Thus, $T_zY$ splits off    $\R^{l+1}$, hence $z\notin Y^l$.

The exponential map $\exp_ y$ defines a homeomorphism
from the $r$-ball around the origin in $T_yY$ to the $r$-ball in $Y$. By above, this homeomorphism restricts to a homeomorphism from the $r$-ball around the origin in the Euclidean factor $\R^l \subset T_yY$  with $B_r(y)\cap Y^l$.  Thus, $Y^l$ is an $l$-dimensional topological manifold.

 We have seen that, for any $z\in B_r(y)\cap Y^l$, the space of directions
 $\Sigma _z Y$ is isometric to $\Sigma _yY$.  By the choice of $r$,  for any $z\in Y^l \cap B_r(y)$, any geodesic starting in $z$ can be extended to a geodesic of length $3r$.

 We now claim that $B_r(y) \cap Y^l$ is a convex subset of $Y$. Consider a geodesic  $\gamma$ connecting two points $z,z'$ in $Y^l \cap B_{r} (y)$. The geodesic $\gamma$ can be extended beyond $z'$, by the choice or $r$. Thus the space of direction $\Sigma _{z'} Y$ is isometric to spaces of directions at all other points on $\gamma$, by \cite{Petruninpar}.  Thus, all points of $\gamma$ are in $Y^l$, proving the claim.

Now we apply the same arguments as in the proof of Lemma \ref{lem: c11}, to see that the distance coordinates in the Alexandrov region $B_r(y)\cap Y^l$ define
a $\mathcal C^{1,1}$-atlas such that the distance is given in these coordinates by a Lipschitz continuous Riemannian metric.
	\end{proof}

The connected components of the sets $Y^l$ are the so-called \emph{primitive extremal subsets}:
\begin{thm} \label{thm: extrloc}
	Let $P:M\to Y$ be a surjective local submetry, with $M$ a Riemannian manifold.  Let $y$ be a point in $Y^l$ and let $E$ be the connected component of $y$ in $Y^l$. Then the closure $\bar E$ is the smallest extremal subset of $Y$ which contains the point $y$.
\end{thm}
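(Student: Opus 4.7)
The plan is to prove two inclusions: any extremal subset containing $y$ contains $\bar E$, and $\bar E$ itself is extremal.

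For the first inclusion, let $F \subset Y$ be extremal with $y \in F$. By general Alexandrov theory, $T_yF$ is an extremal subset of $T_yY$ containing the origin. In the canonical splitting $T_yY = \R^l \times C(\Sigma_0^y)$ from \cref{cor: directions}, I would first establish $\R^l \times \{0\} \subset T_yF$. For any $v \in \R^l \times \{0\}$ choose, using the existence of geodesics in every direction (\tref{cor: quotient}), a nearby point $q \in Y$ with $\uparrow_y^q = -v/|v|$, so that the gradient flow trajectory of $d_q^2$ starting at $y$ sweeps out the geodesic from $y$ in direction $v$; extremality of $F$ then forces this arc to lie in $F$. Varying $v$ and using the local $l$-manifold and convex structure of $E$ from \tref{thm: sing}, $F$ contains an open neighborhood of $y$ inside $E$. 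Running the same argument at each $y' \in E \cap F$ shows $E \cap F$ is open in $E$; it is also closed in $E$ since $F$ is closed in $Y$; by connectedness of $E$, we get $E \subset F$, and closedness of $F$ yields $\bar E \subset F$.

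For extremality of $\bar E$, I would verify invariance under the gradient flow $\Phi_t$ of every $d_q^2$, $q \in Y$. The key infinitesimal claim is: for every $z \in E$, the gradient $\nabla_z d_q^2$ lies in the Euclidean factor $\R^l \times \{0\} = T_z E$ of $T_z Y = \R^l \times C(\Sigma_0^z)$. The tangent function of $d_q^2$ at $z$ is $F(v) = -2d(z,q)\langle v, \uparrow_z^q\rangle$, and its gradient at the origin is the unit vector of $T_zY$ maximizing $F$. Decomposing $\uparrow_z^q = u_1 + u_2$ and $v = v_1 + v_2$ along the product, one needs to maximize $-\langle v_1, u_1\rangle - \langle v_2, u_2\rangle$ under $|v_1|^2 + |v_2|^2 = 1$. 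Because $\diam(\Sigma_0^z) \leq \pi/2$ forces the cone product $\langle v_2, u_2\rangle \geq 0$, the second term is nonpositive; setting $v_2 = 0$ and reallocating full norm to $v_1 = -u_1/|u_1|$ cannot decrease and in fact strictly increases the objective unless $u_2 = v_2 = 0$ already. Thus the gradient lies in $\R^l \times \{0\} = T_z E$. Since $\nabla_z d_q^2 \in T_z E$ at each $z \in E$ and $E$ is locally convex (\tref{thm: sing}), $\Phi_t$ restricted to $E$ coincides with the intrinsic gradient flow on $E$, so $\Phi_t(E) \subset E$ on the interval where the intrinsic flow stays in $E$. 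Combined with density of $E$ in $\bar E$, continuity of $\Phi_t$, and closedness of $\bar E$, one obtains $\Phi_t(\bar E) \subset \bar E$.

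The main technical obstacle is the infinitesimal gradient computation; here the diameter bound $\diam(\Sigma_0^z) \leq \pi/2$ from \cref{cor: directions} is precisely what guarantees that no nonzero $C(\Sigma_0^z)$-component of a candidate direction can improve the gradient over the pure Euclidean choice. A secondary delicate point is extending short-time invariance of $E$ to global invariance of $\bar E$, particularly handling trajectories that reach the boundary $\bar E \setminus E$ (where the Euclidean rank of the tangent splitting drops); the density-plus-continuity argument above circumvents the need to analyze gradients at such singular points of $\bar E$ directly.
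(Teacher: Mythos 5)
Your infinitesimal observation --- that the diameter bound $\diam(\Sigma_0^z)\le\pi/2$ forces $\nabla_z d_q^2$ into the Euclidean factor $\R^l\times\{0\}=T_zE$ for $z\in E$ --- is correct and captures the same mechanism the paper invokes when it appeals to \cite[Proposition 1.4]{Pet-Per} (the factor $\R^l$ is an extremal subset of $T_zY$ precisely because $\Sigma_0$ has diameter $\le\pi/2$). But the global step from this local tangency to extremality of $\bar E$ has a genuine gap. The infinitesimal claim controls the flow only while it stays inside the open stratum $E$; a gradient trajectory starting in $E$ can reach a point $z^*\in\bar E\setminus E$ (a lower stratum $Y^k$, $k<l$) in finite time, and your argument says nothing about $\nabla_{z^*}d_q^2$ or about the flow past $z^*$. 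The ``density plus continuity plus closedness'' step does not circumvent this: to conclude $\Phi_t(\bar E)\subset\bar E$ for \emph{all} $t$ by density of $E$ you would first need $\Phi_t(E)\subset\bar E$ for all $t$, and that is exactly the statement in doubt once trajectories cross into $\bar E\setminus E$. The paper sidesteps this precisely by working stratum by stratum: it shows each $B_r(z)\cap Y^k$ is extremal in $B_r(z)$ (Pet-Per, Prop.~1.4, applied at every $k$, not just $k=l$), then shows $\bar E$ contains a whole neighborhood $B_r(z)\cap Y^k$ of each $z\in\bar E\cap Y^k$ by the geodesic-extension argument of Theorem~\ref{thm: sing} together with \cite{Petruninpar}, and finally invokes Pet-Per's local criterion to conclude extremality of $\bar E$. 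To repair your approach you would have to establish tangency of the gradient to $\bar E$ at every stratum of $\bar E$, which is essentially the same work.

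There is also a smaller unjustified jump in your first inclusion: from $\R^l\times\{0\}\subset T_yF$ you assert ``$F$ contains an open neighborhood of $y$ inside $E$.'' Having the full Euclidean factor in the tangent cone of $F$ at $y$ does not by itself control $F$ near $y$ --- the gradient curves you used are not geodesics and need not fill out a neighborhood in $E$. The paper's argument here is both shorter and complete: the Riemannian manifold $E$ has no proper extremal subsets, so the trace of any extremal $F\ni y$ on $E$ must be all of $E$, whence $\bar E\subset F$ by closedness of $F$. Your open-and-closed connectedness argument would then work if one first establishes that $F\cap E$ is open in $E$ at each point, for which the paper's appeal to the absence of proper extremal subsets of $E$ is the cleaner route.
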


\begin{proof}
Clearly, the Riemannian manifold $E$ does not contain proper extremal subsets.
Thus, any extremal subset of $Y$ which contains $y$ must contain $E$. Since any
extremal subset is closed, it must contain the closure $\bar E$.

For any point $z\in  Y^k$, for any $k$, the tangent space $T_zY$ splits as $T_zY=\R^k \times T'$ and this direct factor $\R^k =T_z Y^k$ is an extremal subset of $T_zY$  (since $T'$ is the cone over a space with diameter $\leq \frac \pi 2$).  From this and  \cite[Proposition 1.4]{Pet-Per},
the intersection $B_r(z) \cap Y^k$ is an extremal subset of the Alexandrov region $B_r(z)$ for all sufficiently small $r$.

Due to \cite{Pet-Per}, it remains to prove, that for any natural $k$ and any $z\in \bar E \cap Y^k$, the set $\bar E$ contains
a  small ball $B_r(z)\cap Y^k$.  However, we can choose $r$ (for any fixed point $z\in\bar E \cap Y^k$) as in Theorem \ref{thm: sing}.  Thus, for any $z' \in B_r(z)\cap Y^k$, any geodesic from $z$ can be extended as a geodesic to length $r$.  Thus, for some $y '\in E$ sufficiently close to $z$ the geodesic
$\eta$ from $z'$ to $y'$ can be extended beyond $y'$. By \cite{Petruninpar}, all
points on $\eta$ but $z'$ have the same spaces of directions as $y'$, hence they belong to $E$.  Therefore, $z'\in \bar E$. This finishes the proof.	
\end{proof}

 \subsection{Topological structure} Over any fixed stratum $Y^l$, the local submetry $P$ has a local product structure.
%	 In case of completeness, we obtain a global structure of a fiber bundle over any connected component:
	
	\begin{prop} \label{prop: loctop}
	Let $P:M\to Y$ be a surjective local submetry, with $M$ a Riemannian manifold. Let $y\in Y^l$ and $x\in P^{-1} (y)$ be arbitrary.
	Then there exists a neighborhood $O$  of $x$ in $P^{-1} (Y^l)$ and a homeomorphism
	$$J: P(O) \times (O\cap P^{-1} (y)) \to O$$
	such that $P\circ J$ is the  projection onto the first factor.

	% Let $E$ be a connected component of a stratum $Y^l$ and   Then any point $x\in P^{-1} (E)$ has  a neighborhood $O$ in $P^{-1} (E)$ homeomorphic to $P(O) \times C$, where $C$ is a neighborhood of $x$ in its fiber $L_x=P^{-1} (P(x))$, such that
	%$$P:O =P(O)\times C \to P(O)$$ becomes a projection to the first factor under this homeomorphism.
	
	If $M$ is complete then, for any connected component $E$ of $Y^l$, the restriction $P:P^{-1} (E)\to E$ is a fiber bundle.
	\end{prop}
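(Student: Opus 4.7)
The plan is to construct $J$ via unique horizontal lifts of geodesics lying in $Y^l$, exploiting the fact that by Corollary~\ref{cor: directions} the tangent space at any point of $Y^l$ splits as $\R^l\times T^0$, and by Theorem~\ref{thm: sing} small enough neighborhoods $V=B_r(y)\cap Y^l$ are convex Riemannian manifolds. First, I would fix $r>0$ as in Theorem~\ref{thm: sing}, so that $V$ is an $l$-dimensional convex neighborhood of $y$ in $Y^l$, geodesics between points of $V$ stay in $Y^l$, and at every $y'\in V$ the tangent space still splits as $\R^l\times T^0$ with the Euclidean factor tangent to $Y^l$. The standard local picture around $x$ (Subsection~\ref{subsec: standard}) and Lemma~\ref{lem: lift} then provide, for every pair $(z,y')$ with $z$ close to $x$ on $L=P^{-1}(y)$ and $y'\in V$ close to $y$, a horizontal lift $\bar\gamma_{y',z}:[0,d(y,y')]\to M$ of the unique geodesic $\gamma_{y,y'}$ starting at $z$. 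I define $J(y',z)=\bar\gamma_{y',z}(d(y,y'))$; by construction $P\circ J$ is the projection to the first factor and $J$ lands in $P^{-1}(Y^l)$.

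Next I would verify that $J$ is well-defined and continuous. The starting direction $v\in\Sigma_yY$ of $\gamma_{y,y'}$ lies in the Euclidean factor $\R^l$, so by Proposition~\ref{prop: directions} applied to the infinitesimal submetry $D_zP$, the horizontal lift of $v$ at $z$ is unique; hence $\bar\gamma_{y',z}$ is unique. Continuity in $(y',z)$ follows from continuous dependence of $P$-minimal geodesics on endpoints in the standard local picture, combined with the semicontinuity of horizontal lifts already encoded in Lemma~\ref{lem: openn}.

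For the homeomorphism assertion I would show injectivity and surjectivity onto a neighborhood $O$ of $x$ in $P^{-1}(Y^l)$. Injectivity on the second factor uses the reverse direction: at $y'\in V$, the incoming direction of $\gamma_{y,y'}$ has an antipode in $\Sigma_{y'}Y$ (the outgoing direction of the extension of $\gamma_{y,y'}$ past $y'$, which exists by convexity of $V$). Hence Corollary~\ref{cor: op} applies, and the foot-point projection $\Pi^L$ from a small ball in $P^{-1}(y')$ to $L$ is uniquely defined and continuous; this projection is a one-sided inverse to $z\mapsto J(y',z)$, giving injectivity. Surjectivity onto a neighborhood of $x$ in $P^{-1}(Y^l)$ is obtained by shrinking: any $w$ close to $x$ with $P(w)=y'\in V$ can be joined to $L$ by the horizontal lift of the reverse geodesic $\gamma_{y',y}$, whose starting direction is again in the $\R^l$-factor at $y'$, so this lift is unique and lands in $L$ near $x$. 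Continuity of the inverse then follows from the same uniqueness argument, yielding the local trivialization $J$.

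Finally, for the fiber bundle statement under completeness, local triviality already holds by the first part. To upgrade to a fiber bundle over a connected component $E$ of $Y^l$, I would cover $E$ by neighborhoods of the form $P(O_\alpha)$ constructed above; on overlaps the transition maps are the obvious composition of horizontal-lift homeomorphisms, hence continuous. Completeness of $M$ ensures that horizontal lifts along any curve in $E$ can be continued indefinitely (Lemma~\ref{lem: lift}), so all fibers $P^{-1}(y')$ for $y'\in E$ are pairwise homeomorphic via horizontal parallel transport along paths in $E$, giving a common model fiber. The main obstacle is the continuity and injectivity bookkeeping in the second and third steps, where one needs that the $\R^l$-factor of $T_{y'}Y$ depends continuously on $y'\in V$ in a way compatible with horizontal lifts at fiber points; this is exactly the content of Corollary~\ref{lem: semicont} together with the uniqueness provided by Proposition~\ref{prop: directions}.
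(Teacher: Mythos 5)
Your proposal is correct and follows essentially the same route as the paper: fix $r$ from Theorem~\ref{thm: sing}, define $J$ by uniquely lifting the geodesic from $y$ to $y'\in B_r(y)\cap Y^l$ horizontally at each $z\in L$ near $x$ (uniqueness because the direction lies in the $\R^l$-factor, Proposition~\ref{prop: directions}), and recover injectivity, openness and continuity of the inverse from the reverse lifts; completeness gives the global fiber over all of $L$, and connectedness of $E$ upgrades local triviality to a bundle. One small slip: extendability of the geodesic past $y'$ is not a consequence of convexity of $V$ but of the explicit extension clause in Theorem~\ref{thm: sing} (or, more directly, of the incoming direction lying in the Euclidean $\R^l$-factor of $T_{y'}Y$, which forces an antipode in $\Sigma_{y'}Y$); otherwise the argument matches the paper's.
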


\begin{proof}
Let $r>0$ as in the standard local picture around $x$ and satisfying Theorem \ref{thm: sing}. Set $C :=B_r(x) \cap P^{-1} (y)$ and $U:=B_r(y)\cap Y^l$. For
any $z \in U$, we have a unique geodesic $\gamma ^z$ from $y$ to $z$.
This geodesic  $\gamma ^z$ is contained in $Y^l$, Theorem \ref{thm: sing}. Moreover, for any $x' \in C$ there exists
a unique horizontal lift $\gamma ^{z,x'}$ of $\gamma ^z$ starting in $x'$.

These lifts define a map $J:U\times C\to B_{2r} (x) \cap P^{-1} (U)$ given as
$$J(z,x'):= \gamma ^{z,x'} (d(z,y)) \;.$$
 The map   $J$ is continuous, injective and the composition $P\circ J$ is just the projection onto the first factor $U$.

In order to see that $J$  is an open map, consider   $x_j \in B_{2r} (x) \cap P^{-1} (U)$ converging to a point
$x_0$  in the image of $J$. Consider  geodesics $\eta _j$ in $Y^l$ from $P(x_j)$ to $y$ and their unique
 unique horizontal lifts  $\gamma _j$
  %lifts of the geodesics $\eta _j$ in $Y^l$ from $P(x_i)$ to $y$
  starting in $x_j$. These horizontal geodesics converge to the unique shortest curve from $x_0$ to $P^{-1} (y)$. Therefore, the  endpoints of $\gamma _j$  converge to a point in $C$.
Since $C$ and $U$ are open,  for large $j$,  the endpoints of $\gamma _j$ are in $C$ and the point $x_j$ in the image of $J$.

If $M$ is complete, then the above construction, works for $C=L_x$, showing that
$P:P^{-1} (U)\to U$ is a trivial fiber bundle. Since being a fibre bundle is a local condition for connected base spaces, the restriction $P:P^{-1} (E)\to E$ is a fiber bundle.
%Thus
%$P^{-1} (U)$ is homeomorphic to $U\times L_x$ and 	
\end{proof}

\subsection{Strata have positive reach}
We are going to prove that the preimage $P^{-1} (Y^l)$ of any stratum is
a subset of positive reach and start with some preliminaries. The first two statements about differentiable manifolds  are probably well-known, but we could not find a reference.

\begin{lem} \label{lem: c11+}
 Let $N$ be a $k$-dimensional topological submanifold of a Riemannian manifold $M$. Assume that, for all $x\in N$, the blow up
	$$T_xN:= \lim _{t\to 0} (\frac 1 t N, x) \subset T_xM$$
	is a well-defined $k$-dimensional linear subspace. Finally, let
	the map $x\to T_xN$ be locally Lipschitz.   Then $N$ is a $\mathcal C^{1,1}$ submanifold of $M$.
\end{lem}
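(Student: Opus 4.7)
The plan is to work locally. Fix $x_0 \in N$ and choose distance coordinates on a ball $B$ around $x_0$ in $M$; this identifies $B$ with an open subset of $\R^n$ carrying a $\mathcal{C}^{1,1}$ Riemannian metric, and the differentiable structure on $B$ does not depend on the choice of such coordinates. Since being a $\mathcal{C}^{1,1}$-submanifold is a local property and invariant under $\mathcal{C}^{1,1}$-diffeomorphisms, it suffices to verify the conclusion in this chart. I will identify $T_{x_0}M$ with $\R^n$ and regard all $T_xM$ for $x \in B$ as $\R^n$ via the coordinate trivialization.

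First I would shrink $B$ to a smaller ball $B'$ so that, by the hypothesized Lipschitz continuity of $x \mapsto T_xN$, every tangent subspace $T_xN$ for $x \in N \cap B'$ makes an angle less than, say, $\pi/4$ with the fixed $k$-plane $V := T_{x_0}N$. Let $\pi \colon \R^n \to V$ be orthogonal projection. The combination of the transversality of the tangent spaces with $V^\perp$ and the fact that $T_xN$ is the \emph{genuine} blow-up of $N$ at $x$ implies that $\pi|_{N \cap B'}$ is locally injective. Invariance of domain applied to the topological $k$-manifold $N$ then shows that $\pi(N \cap B'')$ is open in $V$ for a possibly smaller ball $B''$, and that $N \cap B''$ is the graph of a continuous function $f \colon U \to V^\perp$ over some open $U \subset V$.

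The main step is to show $f$ is $\mathcal{C}^{1,1}$. For any $u \in U$, transversality of $T_{(u,f(u))}N$ with $V^\perp$ yields a unique linear map $A(u) \colon V \to V^\perp$ whose graph equals $T_{(u,f(u))}N$. The assumption that $T_xN$ is the blow-up of $N$ at $x$, unwound through the graph description, says precisely that $f$ is differentiable at $u$ with $Df(u) = A(u)$. The Lipschitz continuity of $x \mapsto T_xN$ as a map into the Grassmannian of $k$-planes in $\R^n$ translates by elementary linear algebra into Lipschitz continuity of $u \mapsto A(u)$ as linear maps $V \to V^\perp$, provided the angle between $T_xN$ and $V^\perp$ stays bounded away from zero, which was arranged in the previous step. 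Hence $Df$ is locally Lipschitz, so $f \in \mathcal{C}^{1,1}(U, V^\perp)$, and $N \cap B''$ is a $\mathcal{C}^{1,1}$-submanifold of $M$.

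The main technical point is the passage from Lipschitz dependence of the abstract $k$-planes $T_xN \subset T_xM$ at \emph{varying} basepoints to Lipschitz dependence of $A(u)$ as a linear map between the \emph{fixed} spaces $V$ and $V^\perp$; this requires identifying the different $T_xM$ with a single $\R^n$ via the chart, and checking that the resulting identification is bi-Lipschitz on Grassmannians. This is automatic once we have fixed a distance chart, but should be stated carefully. A second, smaller subtlety is verifying the graph representation, for which the argument needs invariance of domain together with local injectivity of $\pi$ on $N$ — the latter being a consequence of the blow-up hypothesis and the transversality arranged at the outset.
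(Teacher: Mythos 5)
Your overall strategy matches the paper's: fix coordinates so that $T_{x_0}N = V$ is a coordinate $k$-plane, project to $V$, represent $N$ locally as a graph of a function $f$, read off differentiability of $f$ at each point from the blow-up hypothesis, and get Lipschitz continuity of $Df$ from the Lipschitz dependence of the tangent planes. The final step --- passing from ``$f$ is differentiable at each $u$ with $Df(u)=A(u)$ and $A$ is Lipschitz'' to ``$f\in\mathcal C^{1,1}$'' --- is fine (and you can even deduce that $f$ is Lipschitz from differentiability with uniformly bounded derivative, which you implicitly need to convert Lipschitz dependence of $T_zN$ on $z$ into Lipschitz dependence of $A(u)$ on $u=\pi(z)$).

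The genuine gap is the graph representation, specifically the claim that the blow-up hypothesis and the angle bound imply that $\pi|_{N\cap B'}$ is locally injective. The obvious argument only shows that each fiber of $\pi$ meets $N$ discretely: if $z_j\to z$ in $N$ with $\pi(z_j)=\pi(z)$, then $(z_j-z)/|z_j-z|$ lies in $V^\perp$ but must accumulate on $T_zN\cap\Ss^{n-1}$, which is far from $V^\perp$. This does \emph{not} give injectivity of $\pi|_N$ on any open set: if $z_j, w_j\in N$ with $\pi(z_j)=\pi(w_j)$ and $|z_j-w_j|\to 0$ while $z_j,w_j\to x_0$, the blow-up at $z_j$ only controls directions $(w_j-z_j)/|w_j-z_j|$ once $|w_j-z_j|$ is small \emph{relative to a rate depending on $z_j$}, and the pointwise blow-up hypothesis offers no uniform control of that rate. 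Invariance of domain cannot be applied until injectivity on a neighborhood is established, so this is not merely a ``smaller subtlety''; it is exactly the content the paper delegates to \cite[Proposition 1.3]{Lyt-open}, a quantitative open-mapping/bi-Lipschitz theorem for maps whose blow-up differentials are uniformly close to isometries. The paper then uses the resulting bi-Lipschitz inverse $F=G^{-1}$ and a moving frame $b_1,\dots,b_k$ of $T_zN$ to show the partial derivatives of $F$ are Lipschitz --- a mild rephrasing of your graph computation. So: same architecture, but your step from pointwise blow-ups to injectivity of the projection is missing the one nontrivial ingredient, and the paper's citation is what fills it.
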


\begin{proof}
The assumptions and the claim are local. Choosing a chart around a point $x$, we may assume that $M=\R^n$ and $T_x N=\R^k \subset \R^n$.

Consider the orthogonal projection $G:\R^n \to \R^k$.   We find a neighborhood $O$ of $x$ in $N$, such that $T_zN$ is close to $\R^k$, for all $z\in O$. In particular,
$D_zG:T_z N\to \R^l$  has biLipschitz constant close to $1$.
Hence, making $O$ smaller if needed, we see that $G:O\to \R^l$
 is a biLipschitz map onto an open subset $O'$ of $\R^l$, \cite[Proposition 1.3]{Lyt-open}.
	
	The inverse $F:=G^{-1}:O'\to  O$ is a  biLipschitz map onto $O$. The map
	$F:O'\to \R^n$ is differentiable at each point $y\in O'$, and the differential is the inverse map of $D_z G$, where $z=F (y)$.
	
	By assumption, we find Lipschitz continuous  functions $b_1,...,b_k :O\to  \R^n$ such that for any  $z\in O$ the vectors $b_j (z)$ define
	a basis of $T_z N$.  Then $a_j (y) :=D_z G (b_j (z))$, with $z=F (y)$,
	is a basis of  Lipschitz continuous vector fields in $O'$. Moreover,
	$D_y F (a_j (y)) =b_j(F(y))$ for all $y\in O'$.
	
	 Now, we can express the canonical vector fields  $e_j$ on $O'$ as  linear combinations of the vector fields $a_i$ with Lipschitz continuous coefficients. Therefore,
	$y\to D_y F (e_j)$ are linear combinations of  vector fields $b_i$ with Lipschitz continuous coefficients.
		Hence, the partial derivatives of $F$ are Lipschitz continuous and the map $F$ is $\mathcal C^{1,1}$.
		% This proves the claim.	
\end{proof}

\begin{lem} \label{lem: lipcont}
	Let
	$b$ be a Lipschitz continuous  vector field on an open subset  $O\subset \R^k$. Let $\phi _t$ denote the local flow of $b$.
	Then, for any $p\in O$, there exists a neighborhood $O'$  and $A>0$  such that  the inequality
$$||(\phi _t(z)-\phi _t(y)) -(z-y)|| \leq A\cdot t\cdot ||z-y||$$
holds true	 for all $y,z\in O'$ and all $t$, with $|t| \leq \frac 1 A$.
\end{lem}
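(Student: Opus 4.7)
The plan is to use the integral form of the flow. Fix a compact neighborhood $O'$ of $p$ whose forward and backward trajectories stay in a fixed compact subset $K \subset O$ for all sufficiently small $|t|$; let $L$ denote the Lipschitz constant of $b$ on $K$. For $y, z \in O'$ and $t$ small, the defining equation of the flow gives
\begin{equation*}
\phi_t(z) - \phi_t(y) = (z - y) + \int_0^t \bigl( b(\phi_s(z)) - b(\phi_s(y)) \bigr) \, ds,
\end{equation*}
so that
\begin{equation*}
\bigl\| (\phi_t(z) - \phi_t(y)) - (z - y) \bigr\| \leq L \int_0^{|t|} \bigl\| \phi_s(z) - \phi_s(y) \bigr\| \, ds.
\end{equation*}

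The next step is to control $\| \phi_s(z) - \phi_s(y) \|$ uniformly by $\| z - y \|$. This is the standard Gr\"onwall argument: from the same integral identity and the triangle inequality, $\| \phi_s(z) - \phi_s(y) \| \leq \| z - y \| + L \int_0^s \| \phi_\tau(z) - \phi_\tau(y) \| \, d\tau$, which by Gr\"onwall yields $\| \phi_s(z) - \phi_s(y) \| \leq e^{L|s|} \| z - y \|$. Substituting this back into the displayed inequality above gives
\begin{equation*}
\bigl\| (\phi_t(z) - \phi_t(y)) - (z - y) \bigr\| \leq \| z - y \| \cdot ( e^{L|t|} - 1 ).
\end{equation*}

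Finally, choose $A$ large enough that both $A \geq 2L$ and $e^{L|t|} - 1 \leq A |t|$ for all $|t| \leq 1/A$ (both conditions are satisfied once $A$ is large, since $e^u - 1 \leq 2u$ for $u \in [0, \log 2]$), and such that $O'$ as chosen above is contained in the domain of $\phi_t$ for all such $t$. This yields the desired estimate. No step is really an obstacle here; the only thing to be careful about is choosing $O'$ and $A$ simultaneously so that trajectories remain inside a compact set on which $b$ has a uniform Lipschitz constant, which is routine since $b$ is locally Lipschitz and $\phi_t(q)$ depends continuously on $(t, q)$.
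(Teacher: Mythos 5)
Your proof is correct and follows essentially the same route as the paper's: both rest on the identity $\frac{d}{dt}\bigl[(\phi_t(z)-\phi_t(y))-(z-y)\bigr]=b(\phi_t(z))-b(\phi_t(y))$ (you state its integral form), bound the right-hand side using the Lipschitz constant of $b$ and the local Lipschitz continuity of the flow, and then integrate. The only difference is one of exposition: the paper cites the local Lipschitz continuity of the flow as known, while you derive it explicitly via Gr\"onwall.
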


\begin{proof}
% The flow of a Lipschitz vector field $b$ is locally Lipschitz.
	Differentiating we see
	$$\frac d {dt}(\phi _t(z)-\phi _t(y)) -(z-y))= b(\phi _t(z)) -b(\phi_t (y))\; .$$
	The flow $\phi$ of the Lipschitz vector field $b$ is locally Lipschitz.
%	By he local Lipschitz continuity of $b$ and $\phi _t$
	Thus, the right hand side has norm bounded by $A\cdot ||z-y||$ for some $A>0$, all sufficiently small $t$
	and all $z,y$ sufficiently close to $x$.
 %{\red V: This uses that the flow $\phi_t$ is Lipschitz. Is that obvious? It can be proved of course in the same way one proves that flow of a semiconcave function is Lipschitz but you seem to suggest it's even easier than that?}   
 The claim now follows by integration.
	\end{proof}

Now we can prove:
{
\begin{thm} \label{thm: posreachstr}
	Let $M$ be a Riemannian manifold and let $P:M\to Y$ be a surjective local submetry. Then, for any stratum $Y^l \subset Y$, the preimage $P^{-1} (Y^l)$
	has positive reach in $M$.
\end{thm}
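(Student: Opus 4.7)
The plan is to mimic the proof of \tref{thm-fibers-pos-reach}: construct, around any $x \in P^{-1}(Y^l)$ with $y_0 := P(x)$, a semiconcave function $g$ on an open neighborhood $V \subset M$ of $x$ whose set of maxima is $V \cap P^{-1}(Y^l)$ and whose ascending slope satisfies $|\nabla^+ g| \geq c > 0$ on $V \setminus P^{-1}(Y^l)$. Bangert's theorem, as applied in the proof of \tref{thm-fibers-pos-reach}, then produces positive reach of $P^{-1}(Y^l)$ in $M$. Local closedness of $P^{-1}(Y^l)$ follows from local closedness of $Y^l$ (\lref{lem: genstrat}) and continuity of $P$.

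By \lref{lem: compose} together with the slope identity $|\nabla^+(f \circ P)| = |\nabla^+ f| \circ P$ from Subsection~\ref{subsec: Ambrosio}, it suffices to find a special semiconcave $f$ on a neighborhood $U \subset Y$ of $y_0$ whose maximum set is $U \cap Y^l$ and with $|\nabla^+ f| \geq c > 0$ on $U \setminus Y^l$; then $g := f \circ P$ works on $V := P^{-1}(U)$. Fix $r > 0$ satisfying \tref{thm: sing} at $y_0$, so that $C := \bar B_r(y_0) \cap Y^l$ is compact and convex in $Y$. For sufficiently small $\delta > 0$, set $A := \{y \in Y : d(y, C) \geq \delta\}$ and take $f := d_A^2$; this is special semiconcave via $\Theta(t) = t$.

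The crucial step is to establish the identity $d(y, A) = \delta - d(y, C)$ on the $\delta$-tube $T := \{y : d(y, C) < \delta\} \cap B_{r/2}(y_0)$. The inequality $d(\cdot, A) \geq \delta - d(\cdot, C)$ is immediate from the reverse triangle inequality. For the opposite inequality, I would fix $y' \in C$ and use the splitting $T_{y'} Y = \R^l \times C(\Sigma_0)$ from \cref{cor: directions} to pick a direction $v \in \Sigma_0 \subset \Sigma_{y'} Y$ normal to $Y^l$; \tref{thm: sing}(1) provides the geodesic extension $y'' := \exp_{y'}(\delta v)$. To verify $d(y'', C) = \delta$, hence $y'' \in A$, I would show that $y'$ is the unique nearest point in $C$ to $y''$: for any $y''' \in C \setminus \{y'\}$, the orthogonality $\angle(v, T_{y'} Y^l) = \pi/2$ together with an Alexandrov triangle comparison gives $d(y'', y''')^2 \geq d(y', y''')^2 + \delta^2 + o(\delta^2)$, which is strictly greater than $\delta^2$ for small $\delta$. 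For general $y \in T$, applying the same argument at the nearest point $\pi_C(y)$ (shown unique via the strict convexity of \tref{cor: strict}) and extending the geodesic from $\pi_C(y)$ through $y$ to length $\delta$ yields the desired identity.

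Granted the identity, $f = (\delta - d_C)^2$ on $T$, attaining its maximum $\delta^2$ precisely on $C$; on $U := \{y : d(y, C) < \delta/2\} \cap B_{r/4}(y_0)$ off $Y^l$, we have $|\nabla^+ f|(y) = 2 d_A(y)\, |\nabla^+ d_A|(y) \geq 2(\delta/2) \cdot 1 = \delta$, using that $d_A$ is $1$-Lipschitz with ascending slope $1$ along geodesics toward $C$ inside $T$. The main obstacle lies in the verification of the identity $d(y, A) = \delta - d(y, C)$, which is in effect a normal positive-reach statement for $Y^l$ in $Y$ at scale $\delta$: it requires combining the orthogonal decomposition of \cref{cor: directions}, the geodesic extension of \tref{thm: sing}(1), the strict convexity of \tref{cor: strict}, and the semicontinuity of tangent cones developed in \secref{sec: tech}.
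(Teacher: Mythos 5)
Your overall strategy --- lift a special semiconcave function from $Y$ that peaks exactly on $Y^l$, and then invoke Bangert's theorem as in the proof of \tref{thm-fibers-pos-reach} --- is a genuinely different route from the paper's. The paper works upstairs in $M$: it chooses $x$ in a manifold part of the fiber, applies Proposition~\ref{prop: loctop} to see that $N = P^{-1}(Y^l)$ is a topological manifold near $x$, and then shows via Lemma~\ref{lem: c11+}, the strainer gradients $\nabla f_i$, and the flow estimate of Lemma~\ref{lem: lipcont} that the splitting $T_pN = U_p \times V_p$ into horizontal and vertical parts varies locally Lipschitz-continuously, so that $N$ is a $\mathcal C^{1,1}$ submanifold (hence of positive reach). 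Your proposal instead tries to recognize $Y^l$ as a regular minimum set downstairs in $Y$ and pull back.

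The pivotal step in your argument, however, has a gap: the inequality $d(y'',y''')^2 \geq d(y',y''')^2 + \delta^2 + o(\delta^2)$ that you extract from orthogonality and ``Alexandrov triangle comparison'' goes in the wrong direction. In a $CBB(\kappa)$ space, for a hinge at $y'$ with angle $\alpha$ and sides $a=\delta$, $b=d(y',y''')$, Toponogov comparison gives $\tilde\angle_\kappa(y'_{y''}^{y'''})\leq\alpha$, hence $d(y'',y''')\leq \tilde d_\kappa(a,b,\alpha)$: one gets an \emph{upper} bound on the opposite side, not a lower bound. The lower bound you need is a CAT-type estimate, and $Y$ carries no upper curvature bound. (Moreover, even granting a lower bound with an $o(\delta^2)$ error depending only on $\delta$, it cannot uniformly absorb $d(y',y''')^2$ as $y'''$ ranges over $C$, so ``$>\delta^2$'' would not follow for $y'''$ near $y'$.) The identity $d(y,A)=\delta-d(y,C)$ is in effect a statement that $C$ (equivalently $P^{-1}(C)$ in $M$) has reach $\geq\delta$, which is close to the conclusion of the theorem; as written the step is circular. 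The paper avoids this by establishing the $\mathcal C^{1,1}$ structure of $P^{-1}(Y^l)$ inside $M$, where the two-sided curvature bound is available and normal exponential maps from $\mathcal C^{1,1}$ submanifolds are automatically well-behaved; positive reach of $Y^l$ in $Y$ is then a consequence, not an input.
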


 \begin{proof}
The claim is local. Thus, we may fix some $x\in P^{-1} (Y^l)$ and  may replace $M$ by an arbitrary small neighborhood  $O$ of  $x$.  Set $y=P(x)$ and $L=P^{-1} (y)$.

 For any $x' \in L$, we   consider standard local pictures around $x$ and $x'$. We observe that $P^{-1} (Y^l)$ has positive reach in a neighborhood of $x$ if and only if it has positive reach in a neighborhood of $x'$ (and this happens if and only if  $Y^l$ has positive reach around $y$ in $Y$).
Thus, we may replace $x$ by any other point $x'$  in $L$. Therefore, we can assume that a neighborhood of $x' $ in $L$ is a $\mathcal C^{1,1}$ submanifold.

Restricting to a  neighborhood around such  $x=x'$, we can apply
 Proposition \ref{prop: loctop} and assume    that $N:=P^{-1} (Y^l)$ is a topological manifold  of dimension $k=l+e$, where
$e$ is the dimension of $L$.

 Since fibers converge to fibers under
convergence of submetries, Lemma \ref{lem: stable}, we see as in Corollary \ref{cor: differential}, that $T_p N \subset T_p M$ exists for all
$p\in N$ and equals $(D_pP)^{-1} (T_p Y^l)$.	 Due to Proposition \ref{prop: directions}, $T_p N$ is a direct product $T_pN= U_p \times V_p$ of a vector space $U_p=\R^l$, the set of  horizontal vectors in $T_pN$ and $V_p =\R^e$, the tangent space to the fiber $L_p$.

%{\red By \cite{Bangert, KL}, to establish that $N$ has positive reach it's enough to verify that $N$ is  a $C^{1,1}$ submanifold of $M$.}

%Due to Proposition \ref{cor: fiber}
% {\red V: We need a statement that a $C^{1,1}$ submanifold has positive reach. Proposition \ref{cor: fiber} assumes positive reach. }and 

%Since $\mathcal C^{1,1}$ submanifolds have positive reach,
Due to Lemma \ref{lem: c11+}, 
we  need to prove that $U_p$
and $V_p$ depend in a locally Lipschitz way on $p \in N$,
 since $\mathcal C^{1,1}$ submanifolds have positive reach, see Subsection \ref{subsec: posreach}.

We fix a small $r>0$   as in Theorem \ref{thm: sing}  and  fix points $y_1,...,y_l$ in $B_{\frac r 3} (y)   \cap Y^l$, such that the distance functions $d_{y_i}$ define  a distance coordinate  chart around $y$ in $Y^l$.
Set $L_i:=P^{-1} (y_i)$.  Then, the distance functions  $f_i:=d_{L_i} =d_{y_i}\circ P$ are $\mathcal C^{1,1}$ in a small ball $O_0$ around $x$.
Moreover, the gradients $\nabla _p {f_i}$ build  at every point  $p\in O_0 \cap N$
a basis of the vector space $U_p$, the horizontal part of $T_pN$.
This shows that the assignment $p\to U_p$ is locally Lipschitz continuous in $O_0 \cap N$.

In order to see that also the distribution of tangent spaces to fibers $p\to V_p$ is locally Lipschitz continuous on $O_0\cap N$, we proceed as follows. The gradient flows $\phi ^i$ of $-f_i$, thus the flows of the Lipschitz continuous vector fields  $-\nabla f_i$ preserve the subset $N\cap O_o$. Moreover, this flow preserves the leaves of $P$, Lemma \ref{lem: liftflow}.  From Lemma \ref{lem: lipcont}, we see that along the flow lines of the flows  $\phi^i$, the tangent spaces to leaves $V_{\phi^i_t (p)}$ depend in a Lipschitz manner on the time $t$.

For $p,q\in O_0 \cap N$, we find some $t_i, 1\leq i \leq l$, such that $\sum |t_i | \leq C\cdot d(p,q)$, for some (universal) constant $C$, such that the composition of $\phi _{t_i }^i$ sends the fiber of $p$ to the fiber of $q$, as we  verify by looking at the projection of the flows to $Y^l$.

Thus, we find $p'$ in the fiber $L_p$ through $p$, such that $d(q,p') \leq 2C\cdot d(q,p)$ and such that $V_q$ and $V_{p'}$ are at distance $C'\cdot d(q,p)$ for some constant $C'$.

All fibers $L_p$, for $p\in O_0\cap N$ have uniformly positive reach and therefore, they are locally  uniformly $\mathcal C^{1,1}$, \cite{Ly-conv}.

This shows that $U_p'$ and $U_p$ are at distance bounded by a constant times $d(p,p')$.  Thus, the distance between $V_p$ and $V_q$ is bounded by a constant times $d(p,q)$, for all $p,q\in  N\cap O_0$.

This finishes the proof of the theorem.	
 \end{proof}

Note that a combination of Theorem \ref{thm: posreachstr} and Proposition \ref{prop: loctop} finishes the proof of Theorem \ref{thm: strata}.
}

\section{Manifold fibers} \label{sec: transnorm}
\subsection{Existence of long geodesics} \label{subsec: quasigeod}
 The following result can be localized and   generalized, see the subsequent Remark \ref{rem: new} and compare  \cite[Theorem 1.6]{LT}.
 {   In the proof below we apply the machinery of \cite{KLP}, to conclude
 that the geodesic flow on $Y_{reg}$  preserves the Liouville measure. It is possible that this can be seen in a more direct way.}

 %For our present means the following is sufficient:
% and In the latter part of this section we describe the quasigeodesic flow on the base $Y$ of any locally surjective submetry $P:M\to Y$.  However, the only result which will be used beyond the discussion of quasigeodesics is the following Proposition, which we therefore, state and prove separately.
%We derive the following result from \cite{KLP}.  The result can in fact be derived by more elementary means verifying that the geodesic flow in the Riemannian manifold $Y_{reg}$ preserves the Liouville measure.

\begin{prop} \label{prop-liuv}
	Let  $M$ be a Riemannian manifold and $P:M\to Y$ a submetry. Assume that $Y$ is compact and has no boundary. Then the union of bi-infinite local geodesics $\gamma :\R \to Y_{reg}$ is dense in $Y$.
%	{\red V: I would state it more strongly as: at almost every point  $y\in Y_{reg}$ almost every direction $v\in \Sigma_yY$ is the starting direction of a biinfinite local geodesic $\gamma :\R \to Y_{reg}$  }
\end{prop}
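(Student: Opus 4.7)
The plan is to apply the quasigeodesic flow machinery of \cite{KLP}. Since $Y$ is a compact Alexandrov region with $\partial Y=\emptyset$, this machinery produces a flow $\Phi_t$ on the unit tangent bundle $T^1Y$, defined for all $t\in\R$ on a subset of full Liouville measure and preserving the Liouville measure. Writing $\pi\colon T^1Y\to Y$ for the footpoint projection, each trajectory of $\Phi$ is the canonical lift of a quasigeodesic $\gamma_v=\pi\circ\Phi_\cdot(v)$ in $Y$.

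Next, I would identify bi-infinite local geodesics in $Y_{reg}$ with those orbits which never meet $Y_{sing}:=Y\setminus Y_{reg}$. By \cref{cor: quasigeod}, every quasigeodesic is locally a finite concatenation of geodesics whose corner points must lie in $Y_{sing}$; by \lref{lem: yreg}, any geodesic starting in $Y_{reg}$ stays in $Y_{reg}$. Therefore $\gamma_v$ is a bi-infinite local geodesic in $Y_{reg}$ precisely when $\pi(\Phi_t(v))\in Y_{reg}$ for every $t\in\R$.

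The crux is to show that the ``bad set''
$$N \;=\; \bigcup_{t\in\R}\Phi_t^{-1}\bigl(\pi^{-1}(Y_{sing})\bigr)$$
has Liouville measure zero. The hypothesis $\partial Y=\emptyset$ enters decisively here: since $Y^{m-1}\subseteq\partial Y$ as noted in Section~\ref{sec: reg}, we have $Y^{m-1}=\emptyset$, whence $\dim Y_{sing}\leq m-2$ and $\dim\pi^{-1}(Y_{sing})\leq 2m-3=\dim T^1Y-2$. Consequently the $\R$-orbit of $\pi^{-1}(Y_{sing})$ has dimension at most $2m-2$, so should be Liouville-null. Making this rigorous uses that $\Phi$ is locally Lipschitz on $\pi^{-1}(Y_{reg})$ (because by \lref{lem: c11} the Riemannian metric on $Y_{reg}$ is Lipschitz continuous), so each map $\Phi_t$ carries null sets to null sets; combined with flow invariance and a countable covering of $N$ by images $\Phi_{[n,n+1]}\bigl(\pi^{-1}(Y_{sing})\cap K_j\bigr)$ for compact $K_j$, a Fubini estimate yields that $N$ has Liouville measure zero.

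Finally, the complement $G:=T^1Y\setminus N$ has full Liouville measure. By Fubini its projection $\pi(G)\subseteq Y$ has full $\hm^m$-measure in $Y$ and is therefore dense; each $y\in\pi(G)$ lies on a bi-infinite local geodesic in $Y_{reg}$, so the union of all such geodesics is dense in $Y$. The main obstacle is precisely the Liouville-null claim for $N$: the $\R$-orbit of a null set under a measure-preserving flow is not automatically null (as irrational linear flows on flat tori show), and we crucially exploit both the codimension gap $\dim N\leq\dim T^1Y-1$ afforded by $\partial Y=\emptyset$ and the Lipschitz regularity of the flow on the regular locus provided by \lref{lem: c11}.
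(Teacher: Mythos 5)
Your proposal has a genuine gap at its very first step. You assert that, because $Y$ is a compact Alexandrov space with $\partial Y=\emptyset$, the machinery of \cite{KLP} ``produces a flow $\Phi_t$ on $T^1Y$ defined for all $t$ on a full-measure set and preserving the Liouville measure.'' That is not what \cite{KLP} gives for free. The central theme of \cite{KLP} is that for an Alexandrov space without boundary the Liouville measure is preserved by the quasigeodesic flow \emph{if and only if} the metric-measure boundary of $Y$ vanishes, and that under the same hypothesis almost every quasigeodesic is an infinite local geodesic confined to $Y_{reg}$ ([KLP, Theorem 1.6]). Absence of topological boundary alone does not give this. Verifying that the metric-measure boundary of $Y$ vanishes is precisely the content of the paper's proof: it combines [KLP, Theorem 1.7] (the limiting measure $\nu$ is concentrated on $Y_{reg}$ because $\partial Y=\emptyset$, and is singular to $\mathcal H^m$ there) with the Lipschitz continuity of the Riemannian metric on $Y_{reg}$ from \lref{lem: c11} and [KLP, Lemma 6.4] (which forces $\nu\ll\mathcal N\ll\mathcal H^m$), and concludes $\nu=0$. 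You use the no-boundary hypothesis for a completely different purpose (to get $Y^{m-1}=\emptyset$ and a codimension-$2$ singular set), but you never touch the MM-boundary hypothesis that the whole argument hinges on.

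Moreover, your replacement for the conclusion of [KLP, Theorem 1.6] — the Hausdorff-dimension/Fubini estimate showing $N$ is null — is itself shaky and, if it worked, would be a reproof of that theorem in this special case. You cover $N$ by sets $\Phi_{[n,n+1]}\bigl(\pi^{-1}(Y_{sing})\cap K_j\bigr)$, but the flow $\Phi$ is not defined, let alone locally Lipschitz, at points of $\pi^{-1}(Y_{sing})$; the quasigeodesic there may branch, and the argument that $\Phi$ is Lipschitz rests on the Riemannian metric on $Y_{reg}$ being merely Lipschitz (\lref{lem: c11}), which does not in general make the geodesic flow itself Lipschitz (that would require control of the Christoffel symbols, i.e., a $\mathcal C^{1,1}$ metric). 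So even granting measure preservation, the ``each $\Phi_t$ carries null sets to null sets'' step is not justified at the level of regularity the paper actually provides. The paper sidesteps all of this by invoking [KLP, Theorem 1.6] directly and devoting the proof to the MM-boundary verification, which is the step your proposal skips.
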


 \begin{proof}
 	The space $Y$ is an Alexandrov space without boundary. The set $Y_{reg}$ of all regular points in $Y$ is a $\mathcal C^{1,1}$ manifold with a Lipschitz continuous Riemannian metric, Theorem \ref{thm: regular}.
 	
 	 Due to \cite[Theorem 1.6]{KLP},  almost every unit direction $v$
 	 at almost every point in $Y_{reg}$ is the starting direction of a bi-infinite local geodesic $\gamma _v:\R\to Y$ completely contained in $Y_{reg}$ provided the so-called \emph{metric-measure boundary} of $Y$ vanishes.

 	Now  \cite[Theorem 1.7, Lemma 6.4]{KLP} imply that $Y$ has vanishing metric measure boundary.  More precisely, following the notations of \cite{KLP},
 	one needs to verify that any limiting measure in the space of signed Borel measures $\mathcal M(Y)$ on $Y$  of the sequence $$\nu:=\lim_{r_j \to 0}  \frac {\mathcal  V_{r_j} } {r_j}$$
 	is the $0$ measure.  { The signed measures $\mathcal V_{r}$ appearing in the formula is the average  deviation measure from the  Euclidean volume growth:
 		$$ \mathcal V_{r}  (A) = \int _A (1 - \frac {\mathcal H^m (B_r(y))} {\omega _m \cdot r^m}) \; d\mathcal H^m (y) \;,$$
 		where $A$ is a Borel subset of $Y$ and $\omega _m$ the volume of unit ball in $\R^m$. }

 	Due to \cite[Theorem 1.7 (3)]{KLP}, any such measure $\nu$ is a Radon measure
 	concentrated on the set of regular points $Y_{reg}$, since $Y$ has no boundary.  Moreover, due to \cite[Theorem 1.7 (1)]{KLP}, the measure is absolutely singular with respect to the Hausdorff  measure  on $Y_{reg}$.
 	
 	Since the distance on $Y_{reg}$ is  obtained  from a Lipschitz continuous Riemannian metric,   the \emph{minimal metric derivative measure} $\mathcal N$, defined in \cite[Section 6.3]{KLP},   (essentially, just a bound on the derivatives of the coordinates of the Riemannian metric), is absolutely continuous with respect to the Hausdorff measure. Finally, due to
 	\cite[Lemma 6.4]{KLP}, the measure $\nu$ is absolutely continuous with respect to  $\mathcal N$.  These facts together imply the vanishing of $\nu$ and therefore the result. 	
 	\end{proof}
{
\begin{rem} \label{rem: new}
 Localizing the above argument and   using \cite[Section 3.6]{KLP} the following can be  shown.  For any surjective local submetry $P:M\to Y$  and almost every unit tangent vector $v\in TY_{reg}$ there exists is one maximal quasigeodesic $\gamma _v :(-a_v,b_v)\to Y$ starting in the direction of $v$.
 This quasigeodesic $\gamma _v:(-a_v,b_v) \to Y$ is completely contained in
 $Y^m \cup Y^{m-1}$, intersects $Y^{m-1}$ in a discrete set of times. Outside these intersection points $\gamma _v$ is a local geodesic.	 Finally, the local quasigeodesic flow  preserves the Liouville measure.
\end{rem}
}

\subsection{Non-manifold fibers and  the boundary} The following Lemma is formulated for general Alexandrov spaces.  The existence of many infinite local geodesics assumed on $Y$ is conjecturally satisfied for all
boundaryless Alexandrov spaces, \cite{PP-quasi}, \cite{KLP}. For some cases it has been verified in \cite{KLP}; for bases of submetries of manifolds it has been shown in Proposition \ref{prop-liuv}.

\begin{lem}  \label{cor: boundary}
	Let $P:X\to Y$ be a submetry between Alexandrov spaces of curvature $\geq 1$.   Assume
	that for any  $y\in Y$ there exist  local geodesics $\gamma _n:[0,\infty )\to Y$ such that $\gamma _n(0)$ converge to $y$.
		 Then $\partial X=\emptyset$.
\end{lem}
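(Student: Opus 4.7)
The plan is to transfer the abundance-of-infinite-local-geodesics hypothesis from $Y$ to $X$ via horizontal lifts, and then to derive a contradiction from the specific structure of boundaries in $CBB(1)$ spaces.

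For the transfer, fix any $x\in X$ and set $y=P(x)$. The hypothesis provides local geodesics $\gamma_n\co [0,\infty)\to Y$ with $\gamma_n(0)\to y$. Since $X$ is compact (a $CBB(1)$ Alexandrov space of finite dimension has diameter at most $\pi$), the fibers $P^{-1}(\gamma_n(0))$ converge to $P^{-1}(y)$ in the Hausdorff sense by \lref{lem: stable}, so we may choose $x_n\in P^{-1}(\gamma_n(0))$ with $x_n\to x$. By \lref{lem: lift} each $\gamma_n$ admits a horizontal lift $\tilde\gamma_n\co [0,\infty)\to X$ starting at $x_n$; horizontal lifts of minimizing pieces are $P$-minimal geodesics and hence geodesics in $X$ by Subsection~\ref{subsec: liftgeod}, so $\tilde\gamma_n$ is itself a local geodesic in $X$. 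Thus $X$ also carries the hypothesis: at every point of $X$ there exist infinite local geodesics whose starting points converge to it.

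Now assume for contradiction that $\partial X\ne\emptyset$ and pick $x_0\in\partial X$. The transferred hypothesis at $x_0$, combined with an Arzel\`a--Ascoli argument on compact time intervals and a diagonal subsequence, produces a local geodesic $\tilde\gamma\co [0,\infty)\to X$ with $\tilde\gamma(0)=x_0$; the limit is again a local geodesic because local geodesics are preserved by pointwise convergence in $CBB$ spaces. The key input from $CBB$ geometry is the following antipodal dichotomy at boundary points: at any $p\in\partial X$, a direction $v\in\Sigma_p X$ possesses an antipode in $\Sigma_p X$ if and only if $v\in \partial\Sigma_p X=\Sigma_p(\partial X)$. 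Extending a local geodesic past an interior time $t_0$ requires precisely such an antipode, so any interior $t_0>0$ with $\tilde\gamma(t_0)\in \partial X$ must find $\tilde\gamma$ tangent to $\partial X$ there; by the extremality of $\partial X$, $\tilde\gamma$ then lies inside $\partial X$ locally. A connectedness argument yields $\tilde\gamma([0,\infty))\subset \partial X$.

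Running the construction with an arbitrary starting point $x_0\in \partial X$ produces an infinite local geodesic inside $\partial X$ through every boundary point. Since $\partial X$ with its intrinsic length metric is again a $CBB(1)$ Alexandrov space of strictly smaller dimension, and $P$ restricts to a submetry $\partial X\to P(\partial X)$ with $P(\partial X)\subset Y$ closed extremal and inheriting the hypothesis via the geodesics constructed inside $\partial X$ (which project to local geodesics in $P(\partial X)$), an induction on $\dim X$ reduces the argument to the zero-dimensional case, where the hypothesis is absurd. The main obstacle will be making this inductive descent rigorous: one must verify the antipodal characterization at boundary points in $CBB(1)$ spaces of arbitrary dimension, the $CBB(1)$ structure on $\partial X$ with its intrinsic metric, and the precise form in which the geodesic hypothesis propagates from $Y$ through $P(\partial X)$ to $\partial X$.
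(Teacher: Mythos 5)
Your approach is genuinely different from the paper's and, as written, it does not close. The most serious issue is that the inductive descent rests on the claim that $\partial X$, equipped with its intrinsic length metric, is again a $CBB(1)$ Alexandrov space. This is Perelman's boundary conjecture, which is open for general Alexandrov spaces (what is known is Perelman's doubling theorem and the fact that $\partial X$ is extremal, not that $\partial X$ is itself $CBB(\kappa)$). You flag this as something ``one must verify,'' but it is not a verification, it is a missing theorem. A second structural problem is that even granting the boundary conjecture, the induction does not terminate in a contradiction: the boundary of the boundary is always empty, so applying the lemma to $\partial X$ simply reproduces a true statement ($\partial(\partial X)=\emptyset$) rather than contradicting anything. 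Relatedly, the assertion that $P$ restricts to a submetry $\partial X\to P(\partial X)$ would require $\partial X$ to be a union of $P$-fibers, which need not hold; in general a fiber over a point of $P(\partial X)$ can meet both $\partial X$ and its complement.

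There is also a gap in the step where you claim $\tilde\gamma([0,\infty))\subset\partial X$ by a connectedness argument. The extremality/antipode reasoning only controls behaviour at \emph{interior} times $t_0>0$; it is compatible with $\tilde\gamma$ leaving $\partial X$ immediately at $t=0^+$ and never returning, in which case the set $\{t:\tilde\gamma(t)\in\partial X\}$ is $\{0\}$, which is closed but not open. Ruling out that escape requires exactly the tool the paper uses and your proposal omits: by \cite[Theorem 1.1]{AB-convex}, $f:=\sin\circ\, d_{\partial X}\circ\bar\gamma$ satisfies $f''+f\le 0$ weakly along any local geodesic $\bar\gamma$ avoiding $\partial X$, and a nonnegative solution of that differential inequality cannot stay positive on an interval of length greater than $\pi$. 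The paper therefore picks an \emph{interior} starting point $x\notin\partial X$, lifts an infinite local geodesic of $Y$ to a horizontal infinite local geodesic $\bar\gamma$ in $X$ starting near $x$ (so still off $\partial X$), observes via extremality that $\bar\gamma$ can never meet $\partial X$, and gets the contradiction directly from the differential inequality. No induction, no boundary conjecture, and no restriction of $P$ to $\partial X$ is needed. If you want to salvage your version, you should drop the induction entirely and invoke the concavity of $\sin\circ d_{\partial X}$ once you have the infinite lift; at that point you are essentially reproducing the paper's proof with an unnecessary detour through boundary points.
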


\begin{proof}
	Assume the contrary and choose some $x \in X\setminus \partial X$.  By assumption, we find  a  local geodesic $\gamma :[0,\infty)\to Y$, such that
	$$d(\gamma (0), P(x)) < d(x, \partial X)\;.$$
	Consider a horizontal lift $\bar \gamma :[0,\infty ) \to X$ of $\gamma$  such that $d(\bar \gamma (0) ,x) =d(\gamma (0) , P(x))$.  Thus $\bar \gamma (0) \notin \partial X$.

	  The lift $\bar \gamma$ is a  local geodesic,
	 since $\gamma$ is a local geodesic.  Since $\partial X$ is an extremal subset of $X$,  any geodesic  meeting $\partial X$ in an inner point of the geodesic must be  contained in $\partial X$.  Thus, $\bar \gamma$ cannot intersect $\partial X$.

	The distance function to $\partial X$ is strictly concave on $X$, \cite{P2}. More precisely, \cite[Theorem 1.1]{AB-convex} shows that
	$$f:=\sin \circ d _{\partial X} \circ \bar \gamma  :[0,\infty)\to \R$$
	satisfies in the weak sense
	$$f''(t) +f(t) \leq 0 \;.$$
But such a positive function $f$ can be defined at most on an interval of length $\pi$.
	This  contradiction finishes the proof.
\end{proof}

We are going to prove the following local version of Theorem \ref{thm: boundary}.

\begin{thm}
	Let $P:M\to Y$ be a  local submetry.  If a fiber $L=P^{-1} (y)$ is not a
	$\mathcal C^{1,1}$-submanifold of $M$  then $y \in \partial Y$.
\end{thm}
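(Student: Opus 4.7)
The plan is to argue by contrapositive: assuming the fiber $L=P^{-1}(y)$ is not a $\mathcal{C}^{1,1}$-submanifold, I will show that $y\in\partial Y$. The key tool is Theorem \ref{thm: halfspace}, which simultaneously supplies two infinitesimal models of the submetry at $y$: a submetry $P_1\co\mathbb{S}^k\to\Sigma_yY$ from a round sphere (available whenever $y$ is non-isolated, which is the case since $L$ has non-trivial normal geometry), and, because $L$ is not a $\mathcal{C}^{1,1}$-submanifold, a submetry $P_2\co\mathbb{S}^{k'}_+\to\Sigma_yY$ from a closed hemisphere for some $k'\ge 1$.

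Next I would assume for contradiction that $y\notin\partial Y$. By the inductive definition of boundary for Alexandrov regions, this means that the compact Alexandrov space $\Sigma_yY$ of curvature $\ge 1$ has empty boundary. Proposition \ref{prop-liuv} then applies to the submetry $P_1\co\mathbb{S}^k\to\Sigma_yY$ (whose total space is a closed Riemannian manifold), yielding that the union of bi-infinite local geodesics in $(\Sigma_yY)_{reg}$ is dense in $\Sigma_yY$. In particular, for every $v\in\Sigma_yY$ there exist local geodesics $\gamma_n\co[0,\infty)\to\Sigma_yY$ with $\gamma_n(0)\to v$.

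This density is precisely the hypothesis of Lemma \ref{cor: boundary}. Applying that lemma to the other submetry $P_2\co\mathbb{S}^{k'}_+\to\Sigma_yY$ would force $\partial\mathbb{S}^{k'}_+=\emptyset$. But $\partial\mathbb{S}^{k'}_+=\mathbb{S}^{k'-1}\neq\emptyset$ for $k'\ge 1$, which is the desired contradiction. So $y\in\partial Y$, as required.

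The main obstacle is the degenerate case $k'=0$, where $\mathbb{S}^{k'}_+$ collapses to a point; then $\Sigma_yY$ is a singleton, so $\dim Y=1$ and $y$ is an endpoint of a one-dimensional arc. This case must be handled directly, by observing that such an endpoint already lies in $\partial Y$ under the standard convention (an interior point of a one-dimensional Alexandrov region has $\Sigma_yY=\mathbb{S}^0$, so a non-$\mathbb{S}^0$ space of directions of dimension zero corresponds to a boundary point). Apart from this small bookkeeping issue, the proof is a clean three-step combination of Theorem \ref{thm: halfspace}, Proposition \ref{prop-liuv}, and Lemma \ref{cor: boundary}, with the role of the two submetries neatly separated: one supplies enough geodesics in the quotient, the other converts this into a statement about the boundary.
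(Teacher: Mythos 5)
Your proposal is correct and follows essentially the same route as the paper: invoke Theorem \ref{thm: halfspace} for both the sphere submetry and the hemisphere submetry onto $\Sigma_yY$, feed the sphere submetry to Proposition \ref{prop-liuv} to get density of bi-infinite local geodesics in $\Sigma_yY$ (under the assumption $\partial(\Sigma_yY)=\emptyset$), then apply Lemma \ref{cor: boundary} to the hemisphere submetry to reach the contradiction. The paper's version compresses this into a single sentence and handles your ``degenerate'' case by observing that $\Sigma_y$ is connected (as the image of a connected hemisphere), so if $\dim\Sigma_y=0$ it must be a single point and $T_yY=[0,\infty)$ gives $y\in\partial Y$ directly --- the same bookkeeping you identify.
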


\begin{proof}
Since $L$ cannot be open in $M$, the point $y$ cannot be isolated.
	
 Due to Theorem \ref{thm: halfspace} we  find  a submetry $Q:\mathbb S^k_+ \to \Sigma _y$ for a hemisphere $\mathbb S^k_+$ for some $k\ge 0$.  Thus, $\Sigma _y$ is connected. If $\dim (\Sigma _y )=0$ then $\Sigma _y$ must be a point. {Hence,  $T_yY =[0,\infty)$ and $y\in \partial Y$.

 Assume  $\dim (\Sigma _y Y) \geq 1$.  Since $\partial \mathbb S^k_{+} \neq \emptyset$ and $\mathbb S^k _+$,  we deduce that $\partial (\Sigma _yY) \neq \emptyset$
 from Lemma \ref{cor: boundary} and Proposition \ref{prop-liuv}.
 % that $\partial (\Sigma _yY) \neq \emptyset$,
 %since $\partial \mathbb S^k_{+} \neq \emptyset$ and $\mathbb S^k _+$ is positively curved.}
 % the space $\Sigma _y$ must have non-empty boundary as well, if $\dim (\Sigma _y) \geq 1$, as we deduce from Lemma \ref{cor: boundary} and Proposition \ref{prop-liuv}.
   Thus, $y\in \partial Y$.}
\end{proof}

 \subsection{Transnormal submetries}
 As in the introduction,
 { a (local) submetry $P:M\to Y$ satisfying the equivalent conditions of the next proposition will be called \emph{transnormal}.}
 %
 %
 % \emph{transnormal}  if, for    any $P$-horizontal unit vector  $h$, the local geodesic $\gamma _h$ in $M$ is a $P$-horizontal curve, for all times of its existence. {\red V: this is the proper definition but in the intro transnormal submetries are defined as having manifold fibers because that's easier to state. I don't really like this.}
 % We have the following characterization:

 \begin{prop}\label{prop-subnormal}
 Let $P:M\to Y$ be a local submetry. Then the following are equivalent:
 \begin{enumerate}
 \item\label{fiber-man} All fibers of $P$ are topological manifolds.
 \item \label{fiber-c1-1}All fibers of $P$ are $\mathcal C^{1,1}$ submanifolds.
 \item \label{p-transnormal} For any  $P$-horizontal unit vector  $h$, the local geodesic $\gamma _h$ in $M$ is a $P$-horizontal curve, for all times of its existence.
 \item \label{dp-transnormal} For any $x\in M$, the differential $D_xP:T_xM\to T_{P(x)}Y$  satisfies  condition \eqref{p-transnormal}.
% \footnote{\red V: I changed the phrasing because it was circular.}
 %is a transnormal submetry
 \end{enumerate}

 \end{prop}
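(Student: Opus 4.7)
The plan is to establish the cycle $(2)\Rightarrow(3)\Rightarrow(1)\Rightarrow(2)$ first and then treat the equivalence with $(4)$ on top. The easy equivalence $(1)\Leftrightarrow(2)$ is immediate from the paper: Theorem~\ref{thm-fibers-pos-reach} says every fiber has positive reach in $M$, and Proposition~\ref{cor: fiber} then states that a set of positive reach is a topological manifold if and only if it is a $\mathcal{C}^{1,1}$ submanifold. So the substantive content is the equivalence $(2)\Leftrightarrow(3)$ and the infinitesimal counterpart $(4)$.

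For $(2)\Rightarrow(3)$, I would fix a $P$-horizontal unit vector $h$ at $x$ and look at the set $I$ of parameters $t$ in the maximal domain of the local geodesic $\gamma_h$ for which the subsegment between $0$ and $t$ is $P$-horizontal. Proposition~\ref{prop: horvector} shows $I$ contains a neighborhood of $0$; lower semicontinuity of length together with the fact that $P$ is $1$-Lipschitz forces $I$ to be closed. For openness at $t_0\in I$, reversing the short $P$-minimal segment of $\gamma_h$ ending at $\gamma_h(t_0)$ and applying Proposition~\ref{prop: horvector} shows that $-\dot\gamma_h(t_0)$ lies in $T^\perp_{\gamma_h(t_0)}L_{\gamma_h(t_0)}$. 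Hypothesis $(2)$ makes this normal cone a linear subspace, so $\dot\gamma_h(t_0)$ is horizontal as well, and one more application of Proposition~\ref{prop: horvector} at $\gamma_h(t_0)$ extends horizontality strictly past $t_0$, contradicting maximality.

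For $(3)\Rightarrow(1)$, the idea is reversibility: given a horizontal unit vector $h\in T^\perp_xL_x$, condition $(3)$ makes $\gamma_h$ horizontal on its whole interval of existence, including a short piece $(-\epsilon,0]$. Reversing that $P$-minimal piece gives a $P$-minimal geodesic starting at $x$ with initial velocity $-h$, so Proposition~\ref{prop: horvector} forces $-h\in T^\perp_xL_x$. A closed convex cone symmetric under negation must be a linear subspace, so $T_xL_x$ is linear, and Proposition~\ref{cor: fiber} concludes $L_x$ is a topological manifold.

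For the equivalence with $(4)$, observe that $D_xP\co T_xM\to T_{P(x)}Y$ is itself a submetry whose total space $T_xM$ is a (flat) Riemannian manifold, so the already-established equivalence $(1)\Leftrightarrow(3)$ applies to $D_xP$. This gives $(4)\Rightarrow(1)$ cheaply: the fiber of $D_xP$ through the origin equals the tangent cone $T_xL_x$, a convex cone that is a topological manifold at its vertex, hence a linear subspace, and then $L_x$ is a manifold by Proposition~\ref{cor: fiber}. The converse $(3)\Rightarrow(4)$ again reduces, via the equivalence for $D_xP$, to proving that every fiber of $D_xP$ is a topological manifold. The fiber through $0$ is immediate, and for a fiber through $v\neq 0$ the plan is to realize it as a Gromov--Hausdorff limit of rescaled manifold fibers of $P$ along a $P$-horizontal geodesic issuing from $x$, exploiting Proposition~\ref{prop: structure2} and the convergence of fibers from Lemma~\ref{lem: stable} together with Corollary~\ref{cor: transstable}. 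The main obstacle is precisely this last point: one must identify which fibers of $P$ rescale to a prescribed fiber of the differential at a point away from the origin, and verify that the uniform positive-reach bounds along a compact neighborhood of the horizontal geodesic transfer to the limit to yield the manifold property.
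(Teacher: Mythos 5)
Your treatment of $(1)\Leftrightarrow(2)$, $(2)\Rightarrow(3)$, $(3)\Rightarrow(1)$ and $(4)\Rightarrow(1)$ is sound and essentially matches the paper: the open-closed argument for $(2)\Rightarrow(3)$ is the paper's maximal-time argument in different clothing, and the reversibility argument for $(3)\Rightarrow(1)$ is the contrapositive of the paper's reasoning. The derivation of $(4)\Rightarrow(1)$ by applying the already-closed cycle $(1)\Leftrightarrow(3)$ to the submetry $D_xP\colon\mathbb{R}^n\to T_yY$ (legitimate, since $\mathbb{R}^n$ is a Riemannian manifold with two-sided curvature bounds) is slightly different from the paper's route, where $(4)\Rightarrow(2)$ is extracted directly from the contrapositive of $(3)\Rightarrow(2)$; both are fine, and yours is cleaner modularly. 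One small caveat: in $(3)\Rightarrow(1)$ the phrase ``including a short piece $(-\epsilon,0]$'' should be made precise, since $\gamma_h$ as used throughout the paper is the forward geodesic; the correct step is to reverse $\gamma_h\vert_{[0,\epsilon]}$, observe that $-\gamma_h'(\epsilon)$ is horizontal, and then invoke $(3)$ at $\gamma_h(\epsilon)$ to extend the reversed geodesic past $x$, giving $-h$ horizontal. You want this for every $x\in L$ to feed Proposition~\ref{cor: fiber}.

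The genuine gap is in $(3)\Rightarrow(4)$. Your proposed route --- show each fiber of $D_xP$ is a topological manifold by realizing it as a Gromov--Hausdorff limit of rescaled fibers of $P$ and controlling reach --- has two problems. First, you explicitly invoke Corollary~\ref{cor: transstable}; but that corollary is stated in the paper as a consequence of precisely the implication you are trying to prove (``the proof of the last implication above shows the following''), so invoking it is circular. Second, even absent that circularity, the step you flag as an obstacle is a real one: to pass the manifold property to the limit fiber through $v\neq 0$ you would need a lower bound on the reach of $P^{-1}(P(\exp_x(tv)))$ that scales at least linearly in $t$ as $t\to 0$, and that estimate is nowhere at your disposal and does not obviously follow from Theorem~\ref{thm: surprise} or the positive-reach results as stated. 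The paper sidesteps the whole issue by working with quasigeodesics rather than fibers: horizontal geodesics $\hat\gamma_i$ in $M$ project to quasigeodesics $P\circ\hat\gamma_i$ in $Y$ (Proposition~\ref{prop: image}), quasigeodesics pass to limits under non-collapsed Gromov--Hausdorff convergence, and arc-length parametrization of the limit forces horizontality of the limit line in $T_xM$, yielding the contradiction. You would need to replace your unfinished reach argument with something of this kind, or produce the scale-invariant reach estimate you identified as missing.
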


\begin{proof}
The equivalence of \eqref{fiber-man}  and \eqref{fiber-c1-1} follows from
Proposition \ref{cor: fiber}, since  all fibers of $P$ have positive reach in $M$, by
Theorem \ref{thm: leaf}.
% and the properties of sets of positive reach.

Assume \eqref{fiber-c1-1} and let $\gamma _h :[0,a] \to M$ be a local geodesic starting in a horizontal direction.  By Proposition \ref{prop: horvector},
there exists some $0<r \leq a$ such that $\gamma _h :[0,r) \to M$  is horizontal, and we can choose $r$ to be maximal with this property.
Applying Proposition \ref{prop: horvector} at $\gamma _h (r)$, we deduce that the incoming direction $-\gamma _h '(r) $ is horizontal in $T_{\gamma _h (r)} M$.

Due to \eqref{fiber-c1-1},  the horizontal  space at $\gamma _h(r)$ is a vector space. Therefore,  the direction $\gamma _h '(r)$ is horizontal as well. Thus,
for a small $\epsilon >0$ also the restriction of $\gamma _h:[r,r+\epsilon) \to M$  is horizontal. If $r<a$ we obtain a contradiction to the maximality of $r$.
Hence, \eqref{fiber-c1-1}  implies \eqref{p-transnormal}.

Assume \eqref{p-transnormal} and suppose a fiber $L=P^{-1} (y)$  not be a  submanifold.  Then for some
 $x\in L$ the tangent space $T_xL$  is not a vector space, Proposition \ref{cor: fiber}.  Thus,  the horizontal space $T_x ^{\perp} L$ is not a vector space and we find a unit vector $h\in T_x^{\perp} L$ such that $-h$ is not horizontal.
Thus, for a small $\epsilon >0$, the geodesic $\gamma _h :[0,\epsilon] \to M$ is horizontal, while $\gamma _{-h}$ is not horizontal.
Therefore, setting $w=-\gamma _h '(t)$ for some $\epsilon >t>0$, we find a horizontal vector, such that the geodesic in the direction of this vector does not stay horizontal for all times. This contradiction shows that
   \eqref{p-transnormal} implies  \eqref{fiber-c1-1}.

Moreover, the argument above also implies that the submetry
$D_xP:T_xM\to T_y Y$  does not satisfy  \eqref{p-transnormal} %is not transnormal 
 in this case, hence \eqref{dp-transnormal} implies  \eqref{fiber-c1-1}.

It remains to prove that   \eqref{p-transnormal} implies \eqref{dp-transnormal}. Thus, assume that $P$  satisfies \eqref{p-transnormal}but $D_xP$   does not  satisfy \eqref{p-transnormal} for some $x\in M$.  Find $v,w \in T_xM$ and  a point $u$ on the segment  $[v,w] \in T_xM$  such that the segment $[v,u]$ is horizontal for $D_xP$ but the segment $[u,w]$ is not  horizontal.

 Choosing  $v$ closer to $u$ we may assume  that $[v,u]$ is a
 unique   $D_xP$-minimal geodesic from $v$ to the fiber of $D_xP$ through $u$.
   We find sequences $p_i$ in $M$, such that  under the convergence $(\frac 1 i M,x) \to (T_xM,0)$
 the sequences $p_i$ converges to $v$.  By lifting appropriate geodesics
 horizontally, we find a sequence of $P$-minimal geodesics $\gamma _i$ from $p_i$ to some point $q_i$, such that  under the convergence $(\frac 1 i M,x) \to (T_xM,0)$ the sequence $\gamma _i$ converge  to a $D_xP$-minimal geodesic starting in $v$ and going to the  $D_xP$-fiber  through $u$.  By uniqueness, $\gamma _i$ must converge to the segment $[v,u]$.

 We extend $P$-minimal geodesics $\gamma_i $ to  geodesics $\hat \gamma _i$  in $M$ by some fixed length $r>0$  beyond $p_i$ and $q_i$.  Under the convergence to $T_xM$, these geodesics converge to the line $\hat \gamma$ extending the segment $[v,u]$.

 By assumption $\hat \gamma _i$ is horizontal. By Proposition ~\ref{prop: image} the images $P\circ \hat \gamma _i$ are quasigeodesics in $Y$. By construction, they converge  to the curve $P\circ \hat \gamma$ in $T_yY$. But under a non-collapsed convergence, a limit of quasi-geodesics is a quasi-geodesic, \cite{Petrunin-semi}.
 Thus, $P\circ \hat \gamma$ is parametrized by arclength. Therefore, the line $\hat \gamma$ is horizontal in contrast to our assumption.

This contradiction finishes the proof of the final implication.
\end{proof}

	 The proof of the last implication  above shows the following:
	 \begin{cor} \label{cor: transstable}
	 	Let $M_i \to M$ and $Y_i\to Y$ be  convergent   sequences
	 	 in the  pointed Gromov--Hausdorff topology
	 	 of  Riemannian manifolds
	 	(with locally uniform bounds on curvature)  and Alexandrov spaces, respectively. Let  $P_i:M_i\to Y_i$ be a sequence of  transnormal submetries converging to a submetry $P: M\to Y$.
	 	If the the convergence
	 	$Y_i\to Y$ is non-collapsing then the submetry $P$ is transnormal.
\end{cor}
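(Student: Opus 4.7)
\smallskip

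The plan is to verify condition \eqref{p-transnormal} of Proposition \ref{prop-subnormal} for the limit submetry $P$, by mimicking the argument already carried out in the proof of the implication \eqref{p-transnormal}$\Rightarrow$\eqref{dp-transnormal}. Fix a point $x\in M$, a $P$-horizontal unit vector $h\in T_xM$, and a local geodesic $\gamma_h\co [0,a]\to M$ with $\gamma_h'(0)=h$; we want $\gamma_h$ to remain $P$-horizontal throughout.

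First, I would approximate $h$ by $P_i$-horizontal vectors. Choose $x_i\in M_i$ converging to $x$. Since $P_i$ is transnormal, the fiber $L_i$ through $x_i$ is a $\mathcal{C}^{1,1}$-submanifold, so $T_{x_i}^{\perp}L_i$ is a linear subspace. Using Lemma \ref{lem: stable} and Proposition \ref{prop: differ}, after passing to a subsequence the differentials $D_{x_i}P_i$ converge to an infinitesimal submetry on $T_xM$, and by Corollary \ref{lem: semicont} (applied to the converging sequence $P_i$) the horizontal subspaces $T_{x_i}^{\perp}L_i$ accumulate inside $T_x^{\perp}L_x$. The non-collapsing hypothesis on $Y_i\to Y$ forces the limit infinitesimal submetry to be $D_xP$, so this accumulation is in fact a convergence; hence one can find a sequence of $P_i$-horizontal unit vectors $h_i\in T_{x_i}^{\perp}L_i$ with $h_i\to h$.

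Next, let $\gamma_i\co [0,a]\to M_i$ be the local geodesic with $\gamma_i'(0)=h_i$. Uniform two-sided curvature bounds give uniform $\mathcal{C}^{1,1}$ control on geodesics, so $\gamma_i\to\gamma_h$ uniformly. Because $P_i$ is transnormal, $\gamma_i$ is $P_i$-horizontal for every $t\in[0,a]$, and therefore by Proposition \ref{prop: image} the image $P_i\circ\gamma_i$ is a quasigeodesic in $Y_i$. Under the non-collapsed Gromov--Hausdorff convergence $Y_i\to Y$, the stability of quasigeodesics (\cite{Petrunin-semi}) implies that the limit curve $P\circ\gamma_h$ is a quasigeodesic in $Y$. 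In particular it is parametrized by arclength, so $\gamma_h$ is $P$-horizontal on $[0,a]$, as required.

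The main obstacle is the first step: producing horizontal approximants $h_i\to h$. Away from transnormality, horizontal cones need not vary upper-semicontinuously in the direction needed, so one must exploit that for transnormal $P_i$ the vertical spaces are honest linear subspaces (so their polar cones are spheres) together with the non-collapsing hypothesis to ensure that no direction is ``lost'' when passing to the limit. Everything else---the uniform $\mathcal{C}^{1,1}$ control of geodesics, Proposition \ref{prop: image}, and the non-collapsed stability of quasigeodesics---is off-the-shelf and ports the earlier argument to the present family of submetries with no further twist.
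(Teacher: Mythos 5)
The second half of your argument --- that $P_i$-horizontal geodesics map to quasigeodesics, and that quasigeodesics are stable under non-collapsed Gromov--Hausdorff convergence, forcing $P\circ\gamma_h$ to be parametrized by arclength --- is exactly the engine of the paper's proof and is sound. The gap is in the first step: producing $P_i$-horizontal unit vectors $h_i\to h$ for an \emph{arbitrary} $P$-horizontal $h\in T_xM$. What the observation "$P_i$-minimal geodesics converge to $P$-minimal geodesics" (and Corollary~\ref{lem: semicont}) actually gives is the inclusion $\hat H\subset H$, where $\hat H$ is the Hausdorff limit of the horizontal spheres $H_i=T_{x_i}^\perp L_i$ and $H$ is the set of $P$-horizontal unit vectors at $x$; you need the opposite inclusion $H\subset\hat H$. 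Since each $H_i$ is a round sphere (transnormality of $P_i$), $\hat H$ is again a round sphere, whereas if some fiber of $P$ is not a manifold then at a bad point $H$ is not a round sphere and $\hat H\subsetneq H$. Your appeal to non-collapsing to conclude that the limit of $D_{x_i}P_i$ equals $D_xP$ is not substantiated: spaces of directions are only semicontinuous under non-collapsed convergence (one gets a $1$-Lipschitz surjection from the limit of the $\Sigma_{y_i}Y_i$ onto $\Sigma_yY$, not an isometry in general), so the limit infinitesimal submetry can genuinely differ from $D_xP$ and there is no reason for every direction in $H$ to be captured by the $H_i$.

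The paper circumvents this by arguing by contradiction and approximating a \emph{$P$-minimal geodesic segment} rather than a prescribed horizontal vector, mimicking the proof of \eqref{p-transnormal}$\Rightarrow$\eqref{dp-transnormal} in Proposition~\ref{prop-subnormal}. Assuming $P$ is not transnormal, one finds a $P$-minimal segment $\gamma$ whose geodesic extension $\hat\gamma$ leaves the horizontal distribution; one then approximates $\gamma$ by $P_i$-minimal geodesics $\gamma_i$ in $M_i$ (obtained by horizontal lifting, and converging to $\gamma$ after shrinking the segment so that the $P$-minimal connection is unique), and extends them to geodesics $\hat\gamma_i$. Because the $\gamma_i$ are $P_i$-minimal, their endpoint directions are automatically $P_i$-horizontal --- no semicontinuity of horizontal cones enters --- and transnormality of $P_i$ forces $\hat\gamma_i$ to stay $P_i$-horizontal. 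From there your quasigeodesic-limit step runs unchanged and produces the contradiction. The fix is therefore not cosmetic: the $h$ to be treated must be the endpoint direction of a $P$-minimal segment, and the approximants must come from $P_i$-minimal segments rather than from a putative convergence of horizontal cones.
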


\subsection{Equifocality}
The following result has appeared in a slightly more special form in  \cite{Mendes-Rad}.  We only formulate and prove here  global version of the result, which is known in the theory of singular Riemannian foliations under the misleading name of \emph{equifocality}.

\begin{prop} \label{prop: equifoc}
	Let $P_{i}:M_i\to Y, i=1,2$ be transnormal submetries with the same base space. If $\gamma _{i} :[0,a) \to M_i$ are horizontal local geodesics such that $P\circ \gamma _i$ coincide on
	$[0,\epsilon)$, for some $\epsilon >0$ then $P\circ \gamma _1=P\circ \gamma _2$ on $[0,a)$.
	
	 Let $Q_{i}:\mathbb S^{n_i} \to Z$ be  transnormal submetries to the same base space $Z$. If $Q_1 (v_1)=Q_2(v_2)$ the $Q_1(-v_1)=Q_2(-v_2)$.	
\end{prop}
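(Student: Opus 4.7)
The plan is to prove both statements jointly by induction on the ambient dimension $n$ (meaning $\max(\dim M_1,\dim M_2)$ for the first and $\max(n_1,n_2)$ for the second), with the base case $n=0$ trivial. In the inductive step I first deduce the first statement at dimension $n$ using the second statement at strictly smaller dimensions, and then deduce the second statement at dimension $n$ from the first statement at the same dimension.

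For the first statement, set $T:=\sup\{t\in[0,a):\ P_1\circ\gamma_1=P_2\circ\gamma_2 \text{ on } [0,t]\}\geq\epsilon>0$ and assume for contradiction that $T<a$. By continuity $P_1(\gamma_1(T))=P_2(\gamma_2(T))=:y$; let $x_i:=\gamma_i(T)$ and $v_i:=\gamma_i'(T)$, a unit horizontal vector by transnormality (Proposition \ref{prop-subnormal}). It suffices to show $D_{x_1}P_1(v_1)=D_{x_2}P_2(v_2)$ in $\Sigma_yY$: granted this, Proposition \ref{prop: horvector} applied in a standard local picture around $x_i$ makes $\gamma_i|_{[T,T+\delta]}$ a $P_i$-minimal geodesic for small $\delta>0$, so $P_i\circ\gamma_i|_{[T,T+\delta]}$ are honest geodesic segments in $Y$ starting at $y$ with a common initial direction, whence non-branching of geodesics in Alexandrov spaces forces them to coincide on $[T,T+\delta]$, contradicting the maximality of $T$. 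To match the outgoing directions, note that by the Gromov--Hausdorff rescaling description of differentials (Proposition \ref{prop: differ}) applied to the smooth Riemannian curves $\gamma_i$, the left tangent vector of $P_i\circ\gamma_i$ at $T$ equals the direction of $D_{x_i}P_i(-v_i)$ in $\Sigma_yY$, and these agree by the equality on $[0,T]$. Each $D_{x_i}P_i$ is a transnormal infinitesimal submetry (Proposition \ref{prop-subnormal}), and its restriction to the unit sphere of the horizontal space $H_i\subset T_{x_i}M_i$ is a transnormal submetry $\tilde Q_i\co \mathbb S^{k_i-1}\to\Sigma_yY$ with $k_i-1<\dim M_i$; the inductive second statement applied to $(\tilde Q_1,\tilde Q_2)$ turns the matching inputs $-v_i$ into matching outputs $v_i$, giving the required equality.

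For the second statement, given $Q_1(v_1)=Q_2(v_2)=:z$, choose any unit direction $w\in\Sigma_zZ$ and horizontal unit lifts $h_i\in T_{v_i}\mathbb S^{n_i}$ with $D_{v_i}Q_i(h_i)=w$, which exist since each $DQ_i$ is a surjective submetry. The great-circle geodesics $\gamma_i(t):=\cos(t)\,v_i+\sin(t)\,h_i$ on $[0,\pi]$ reach $-v_i$ at $t=\pi$ and remain horizontal throughout by Proposition \ref{prop-subnormal}(3). Near $t=0$ the curves $Q_i\circ\gamma_i$ are $P_i$-minimal geodesics (Proposition \ref{prop: horvector}) starting at $z$ with the same direction $w$, so they coincide on some initial interval $[0,\epsilon)$; the first statement at dimension $n$ then propagates the equality to $[0,\pi)$, and continuity at $\pi$ yields $Q_1(-v_1)=Q_2(-v_2)$.

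The main obstacle is the infinitesimal matching of outgoing directions at the break-time $T$ in the first statement: the projected quasigeodesics $P_i\circ\gamma_i$ can genuinely have corners (for instance, a horizontal geodesic in the total space crossing a singular fiber can produce a cusp in $Y$ whose left and right tangent vectors coincide rather than being antipodal), and a priori two such quasigeodesics can agree on $[0,T]$ and then diverge. Transnormality at the tangent level together with the inductive second statement at strictly smaller ambient dimension is exactly the ingredient that rigidifies the corner behaviour and closes the continuation argument.
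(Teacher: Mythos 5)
Your proof is correct and follows essentially the same route as the paper's: a joint induction on ambient dimension, with the first statement at dimension $n$ deduced via the break-time argument (using the inductive second statement for the differential submetries on horizontal unit spheres, which live in dimension $<n$), and the second statement at dimension $n$ deduced from the first via horizontal great-circle geodesics reaching the antipode at time $\pi$. The only differences are cosmetic: you spell out the role of Proposition \ref{prop: horvector} in converting the infinitesimal matching into matching geodesic segments, and you are slightly more explicit about the continuity step at $t=\pi$, while the paper leaves both implicit.
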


\begin{proof}
%We set $n=\dim (M)$ in the first and
Let $n_i=\dim M_i$ in the first statement and let
$n= \max \{n_1,n_2\}$ in both statements. We will prove both statements simultaneously by induction on $n$.
%\footnote{\red V: I changed the wording here because in the first statement we also don't need to assume that the dimensions are equal.}
The case $n=1$  is left to the reader.

Assume that both claims are know in dimension $n-1$.  Then to prove the first claim, we find a maximal $b \leq a$ such that $P_i\circ \gamma _i$ coincide on
$[0,b)$. Assume $b<a$ and    consider $x_i=\gamma _i (b)$.  Then $P_1(x_1)=P_2(x_2)$.  Denote this point by $y$. Let  $H_i$ be the set of unit $P_i$-horizontal vectors at $x_i$, which are unit spheres.

The differentials  $Q_i:=D_{x_i} {P_i} :H_i \to Z:=\Sigma_y Y $ are transnormal submetries which send the incoming directions $v_i$ of $\gamma _i$ at $x_i$ to the same vector in $\Sigma _yY$.  By the inductive assumption, also $Q_i (-v_i)$ coincide. Thus, for small $r>0$  the $P_i$-minimal geodesics $\gamma_i:[b, b+r) \to M_i$ are sent to  geodesics in $Y$ starting in $y$ in the same direction. Thus $P\circ \gamma _i$ coincide on $[b,b+r)$ in contradiction to the maximality of $b$.  This proves the first statement.

To prove the second, we take an arbitrary $Q_1$-horizontal direction $w_1$ at $v_1$ and the geodesic
$\gamma _1 :[0,\pi] \to \mathbb S^{n_1}$ starting in this direction.  Consider then a $Q_2$-horizontal lift $w_2\in T_{v_2} \mathbb S^{n_2}$ of the direction $D_{v_1} Q_1 (w_1)$. Let $\gamma _2:[0,\pi]\to \mathbb S^{n_2}$ be the geodesic starting in the direction $w_2$.

    Since $Q_i$ are  transnormal the geodesic $\gamma _i$ is $Q_i$-horizontal.
   By construction their  $Q_i$-images coincide initially.
  By the already proved first  statement, $Q_1\circ \gamma _1 =Q_2\circ \gamma _2$ on $[0,\pi]$.  At the time $\pi$ we obtain $Q_1(-v_1) =Q_2(-v_2)$.
\end{proof}

%{\red The above result implies that the base space of a transnormal
%	submetry $P:M\to Y$ of a complete Riemannian manifold $M$ admits
%	a \emph{continuous quasigeodesic flow} in the following sense.

%Let $Y$ be a complete Alexandrov region. Let $\mathcal R$ be the set of all pairs of points $(y,z)\in Y\times Y$ which are connected by a unique geodesic $\gamma _{y,z}$. Note, that the set $\mathcal R$ has full measure in $Y\times Y$, in particular, it is everywhere dense.
%Denote by $Q(Y)$ the set of quasigeodesics $\gamma :\R \to Y$ with the topology of pointwise (equivalently, of uniform) convergence.

%We say that $Y$ admits a continuous quasigeodesic flow if there exists a
%continuous map $G:\mathcal R \to Q(Y)$ such that the quasigeodesic $G((y,z))$ assigned to the pair $(y,z)$ coincides with the geodesic $\gamma _{y,z}$ on the interval $[0, d(y,z)]$.
 	
%Note, that for any $(y,z) \in \mathcal R$ there exists an extension of $\gamma _{(y,z)}$ to an bi-infinite quasigeodesic, \cite{PP-quasi}. Conjecturally, for almost all $(y,z)\in \mathcal R$ there exist exactly 
%one such extension, see \cite{KLP}.  This conjecture would imply, that if a continuous map $G$ exists it is uniquely defined. 

%Now we can easily show:

%	\begin{cor}
%		Let $M$ be a complete Riemannian manifold and let   $P:M\to Y$ be  a  transormal subemtry.  Denote by $\mathcal R$ the set 
%		of pair $(y,z) \in Y\times Y$ which are connected by a unique geodesic.
%	\end{cor}

%}
{
\subsection{Topological structure of   projections between close fibers}
We are going to describe the topology of foot-point projections onto manifold fibers. For the sake of simplicity we only  state a global result in the case of transnormal submetries and for  connected fiber.
For disconnected fibers, the claim remains true for all connected components.

 %leaving the extension to general  local submetries to interested readers.

The proof of the next theorem heavily relies on deep results in geometric topology, characterizing  fiber bundles.  We refer to    \cite[Theorem 4.5, Theorem 4.8]{lyt-nagano-top-reg} for a more detailed discussion of these results.

%between two close fibers of a submetry.

%Let $M$ be a Riemannian manifold and $P:M\to Y$ be a local  submetry, let $L$ be a fiber $P^{-1} (y)$.  Then we find an open neighborhood $U$ of $L$, such that the foot point projection 
%$\Pi ^L :U\to L$ is well-defined and locally Lipschitz continuous.  We are going to describe the topology of the restriction of $\Pi$ to another leaf $L'$.

%The subsequent result is  local in nature and can be formulated for local submetries, however we restrict to the global case, for the sake of simplicity.

\begin{thm} \label{thm-tubular}
Let $M$ be a complete Riemannian manifold, let $P\co M\to Y$ be a transnormal submetry.  Let $L\subset M$ be a  connected leaf of $P$.
%which is a connected topological manifold.

Then there exists $r>0$ such that the foot point projection $\Pi^L\co U=B_r(L)\to L$ is a fiber bundle. Moreover, for any fiber $L'\subset U$ the restriction $\Pi^L|_{L'}\co L'\to L$ is
% a Hurewicz fibration.
%This restriction is
 a fiber bundle as well.
%  if the submetry $P$ is transnormal.
%Let $M$ be a Riemannian manifold and $P:M\to Y$ be a local  submetry,		
\end{thm}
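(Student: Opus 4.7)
\emph{First statement.} Since $P$ is transnormal, Proposition \ref{prop-subnormal} combined with Proposition \ref{cor: fiber} tells us that every leaf of $P$---in particular $L$---is a $\mathcal C^{1,1}$ submanifold of $M$. Standard $\mathcal C^{1,1}$ tubular neighborhood theory then produces some $r>0$ such that the normal exponential map is a $\mathcal C^{1,1}$ diffeomorphism between the open $r$-subdisk bundle of $T^\perp L$ and $U=B_r(L)$, identifying $\Pi^L$ with the bundle projection. Hence $\Pi^L\co U\to L$ is a $\mathcal C^{1,1}$ fiber bundle whose fibers are open $k$-disks, $k=\mathrm{codim}_M L$.

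\emph{Second statement, strategy.} Fix a leaf $L'\subset U$ and set $y_0=P(L)$, $y_1=P(L')$, $t_0=d(L,L')=d(y_0,y_1)$. By equidistance every $y\in L'$ satisfies $d(y,L)=t_0$, so $y=\exp_x(v)$ with $x=\Pi^L(y)\in L$ and $v\in T_x^\perp L$ of norm $t_0$; surjectivity of $\Pi^L|_{L'}\co L'\to L$ follows symmetrically from $d(x,L')=t_0$ for every $x\in L$. My plan is to verify that $\Pi^L|_{L'}$ is a proper, open, continuous $\mathcal C^{1,1}$-submersion, and then invoke the classical Ehresmann fibration theorem.

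\emph{Verification and main obstacle.} Continuity and $\mathcal C^{1,1}$ regularity of $\Pi^L|_{L'}$ are clear from the corresponding properties of $\Pi^L\co U\to L$ and of the inclusion $L'\hookrightarrow U$. For properness, let $K\subset L$ be compact: since $L'$ is closed in $M$ and $L'\cap \Pi^L^{-1}(K)$ sits inside the compact set $\{\exp_x(v) : x\in K,\ v\in T_x^\perp L,\ |v|=t_0\}$, the preimage $(\Pi^L|_{L'})^{-1}(K)$ is compact. For openness I plan to apply Corollary \ref{cor: op} with $y'=y_1$ and $y=y_0$: shrinking $r$ beforehand so that Theorem \ref{cor: quotient} gives uniqueness of the short geodesic from $y_1$ to $y_0$ in $Y$, transnormality (Theorem \ref{thm: transnormal}(3)) supplies the required antipode in $\Sigma_{y_1}Y$, because any horizontal geodesic in $M$ representing the given starting direction can be extended backwards through its initial point using completeness, and its $P$-image is a geodesic of $Y$. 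Openness of a $\mathcal C^{1,1}$ map then forces $D(\Pi^L|_{L'})$ to have maximal rank everywhere---a rank drop on an open set would, by the constant rank theorem, confine the image to a proper submanifold of $L$, contradicting openness---so $\Pi^L|_{L'}$ is a $\mathcal C^{1,1}$-submersion. Ehresmann's theorem for proper submersions then yields the desired fiber bundle structure. The main technical hurdle is the openness step: verifying the antipode hypothesis of Corollary \ref{cor: op} is precisely where transnormality enters essentially (and fails in general), and one must shrink $r$ in advance so that Theorem \ref{cor: quotient} applies.
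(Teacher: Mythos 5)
Your first-part argument matches the paper's. For the second part, the paper avoids Ehresmann entirely: after establishing openness (via Corollary~\ref{cor: op}) and identifying the fibers $f^{-1}(x)$ with the sets $D_xP^{-1}(v)\subset H_x$ via the normal exponential map, it shows that these fibers are compact topological manifolds of uniformly positive reach (hence locally uniformly contractible), and then invokes the geometric--topological fibration criteria of Ungar, Dyer--Hamstrom, Ferry and Raymond, which say that a proper open map with such fibers is a fiber bundle. The paper never claims $f$ is a submersion, and this is precisely the point where your argument breaks.

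The gap is in the step ``openness of a $\mathcal C^{1,1}$ map forces $D(\Pi^L|_{L'})$ to have maximal rank everywhere --- a rank drop on an open set would, by the constant rank theorem, confine the image to a proper submanifold.'' This reasoning is unsound. First, the constant-rank theorem applies only where the rank is constant; the set where the rank is \emph{not} maximal is merely closed, and may be a nowhere-dense set on which you have no rank theorem. Second, and more fundamentally, openness of a $\mathcal C^{1,1}$ map does \emph{not} imply that its differential has full rank at every point. A standard counterexample is $(x,y)\mapsto (x,y^3)$, a $\mathcal C^{1,1}$ (indeed $\mathcal C^\infty$) open map of $\R^2$ whose differential has rank $1$ along the line $\{y=0\}$. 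So from openness you cannot conclude that $f=\Pi^L|_{L'}$ is a submersion, and without that Ehresmann does not apply. This is exactly the obstacle that motivates the paper's detour through the Hurewicz-fibration / completely-regular-mapping machinery: one replaces the (possibly unavailable) submersion hypothesis by the weaker, verifiable hypothesis that the fibers are compact topological manifolds with uniform local contractibility, which follows here from transnormality (fibers are manifolds), from the identification $f^{-1}(x)\cong D_xP^{-1}(v)$, and from the uniform positive reach supplied by Theorem~\ref{cor: quotient}. Your remaining verifications (surjectivity, properness, openness via Corollary~\ref{cor: op} and the quasigeodesic/antipode argument) are essentially consonant with the paper's, but the submersion step is the load-bearing one and it does not hold up.
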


 \begin{proof}
		Let $r$ be smaller than the constant provided by Theorem~\ref{cor: quotient}.
		Then the normal exponential map  $Exp^N$ gives a homeomorphism between the $r$-neighborhood of the zero section of the normal bundle to $L$ and $U$ which commutes with the foot-point projections.
		This proves the first part of the theorem.

	%	The proof of the second part of the theorem  relies on several deep and highly nontrivial topological results.

		Let $L'$ be another fiber contained in $U$. Due to Corollary
	 ~\ref{cor: op} the map $f:=\Pi^L|_{L'}\co L'\to L$ is open.
	 Set $y:=P(L), z:=P(L')$ and $v_0\in \Sigma _y Y$  be the starting direction of the unique geodesic from $y$ to $z$.  Consider 
	 the point $v\in T_yY$ lying in direction $v_0$ at distance $d(y,z) $ from the origin $0_y \in T_yY$ (thus, $\exp _y(v)=z$).
	 
	  Under the homeomorphism given by the
	 normal exponential map $Exp ^N$ the fibers $f^{-1 }(x)$  of $f:L'\to L$ are sent to fibers $F_x:=D_xP ^{-1} (v)$, where $D_xP :H_x \to T_yY$ is the restriction of the differential of the submetry to the horizontal part.  
	 
	 By Proposition \ref{prop-subnormal}, the fibers of $D_xP$ and, therefore, the fibers of $f$ are topological manifolds.   Moreover,
	 the point $v$ has positive injectivity radius in $T_yY$, by Theorem \ref{cor: quotient}. Thus, the fibers $F_x=D_xP ^{-1} (v)$ all have positive reach $s>0$, independent of $x$.  Therefore, there exists 
	 some $\epsilon >0$ independent of $x$, such that any  ball of radius $\delta <\epsilon$ in $F_x$ is contractible, \cite{Federer}, \cite{Ly-conv}.
	 
	 In other words, the fibers of the map $f$ are compact topological manifolds which are \emph{locally uniformly contractible}. Now a combination of results  \cite[Theorem 1]{ungar-hur}, \cite{Dyer-Hamstrom}, \cite[Theorems 1.1-1.4]{Ferry-duality}, \cite[Theorem 2]{Raymond-hur} implies that $f$ is a fiber bundle, see \cite[Section 4]{lyt-nagano-top-reg} for a detailed discussion.
	\end{proof}

%Let $M$ be a Riemannian manifold and $P:M\to Y$ be a local  submetry, let $L$ be a fiber $P^{-1} (y)$.  Then we find an open neighborhood $U$ of $L$, such that the foot point projection 
%$\Pi :U\to L$ is well-defined and locally Lipschitz continuous.
%The subsequent result 

%Assume now that $L$ is a topological manifold. Hence, $L$ is a $\mathcal C^{1,1}$ manifold.  Then, the normal exponential map $\exp ^N: \nu(N )\to M$ from the normal bundle $\nu (N)$  is a Lipschitz map and, $U$ is the image $U=\exp ^N (O)$, where $O$ is an open neighborhood of the $0$-section in $\nu (N)$ and $\exp^N:O\to U$ is biLipschitz. 

%We can restrict $O$ and thus $U$ by a smaller neighborhood and assume that, for any $x\in L$, the intersection  $O_x:==\cap T_x^{\perp} L$ 
%is a ball $B_{r_x} (0_x) \subset T_x ^{\perp}L$, where $r_x>0$ depends continuously on $x$.    Note that $r_x$ can be  
}

{ \subsection{A comment on the factorization theorem}
The following result has been proved in \cite{Lysubm}.
\begin{thm}
	Let $P:X\to Y$ be a submetry between Alexandrov spaces. Then
	the connected components of fibers of $P$ define an equidistant decomposition of $X$. Thus, $P$ admits a canonical factorization
	$P=P_1\circ P_0$, where the submetry $P_0:X\to Y_0$ has connected fibers and the submetry $P_1:Y_0\to Y$ has discrete fibers.
\end{thm}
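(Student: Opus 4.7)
The strategy is to reduce the theorem to proving that the decomposition of $X$ into connected components of fibers of $P$ is equidistant. Once this is established, $Y_0$ is defined as the set of these components equipped with the induced metric, $P_0\colon X\to Y_0$ is automatically a submetry by the general correspondence between equidistant decompositions and submetries recalled in Section \ref{sec: prel}, and $P$ factors tautologically as $P = P_1\circ P_0$, with $P_1\colon Y_0\to Y$ a submetry by the composition property.

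The crucial preliminary is that fibers of $P$ are locally connected. By Proposition \ref{prop: Perelman}, the gradient flow $\hat\Phi_t$ of the lifted Perelman function $g = f_y\circ P$ provides a continuous retraction $R := \hat\Phi_\delta \colon U \to L := P^{-1}(y)$ from an open neighborhood $U\supset L$. Since $X$ (hence $U$) is locally connected, an elementary argument transfers the property to $L$: for a connected open neighborhood $W$ of $x\in L$ in $U$, the image $R(W)$ is connected (continuous image) and contains the relatively open set $W\cap L$, hence is a connected neighborhood of $x$ in $L$. Consequently, connected components of fibers are clopen in the fiber.

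For the equidistance, fix components $C\subset L = P^{-1}(y)$ and $C'\subset L' = P^{-1}(y')$, and set $a := d(y,y')$. By Lemma \ref{lem: lift}, every $x\in L$ satisfies $d(x,L') = a$, attained by the endpoint of a horizontal lift of a geodesic $\gamma\colon[0,a]\to Y$ from $y$ to $y'$. My plan is to show, for each component $C^*$ of $L'$, that the set
\[
  A(C^*) := \{\, x\in L : d(x,C^*) = a \,\}
\]
is clopen in $L$: closedness follows from continuity of $d_{C^*}$ together with the lower bound $d_{C^*}\geq d_{L'}\equiv a$ on $L$; openness combines openness of $C^*$ in $L'$ (from the local connectedness above) with local rigidity of $P$-minimal lifts at small scale. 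Specifically, for $x_0\in A(C^*)$ with witness $x_2^0\in C^*$, the $P$-minimal segment $[x_0,x_2^0]$ projects to a geodesic in $Y$; by Corollary \ref{cor: posreach}, geodesics in $Y$ at small scale are constrained by an injectivity-radius bound, so an Arzel\`a--Ascoli subsequence argument forces any accumulation point of horizontal-lift endpoints from $x\in L$ close to $x_0$ to equal $x_2^0$; by openness of $C^*$ in $L'$, these endpoints lie in $C^*$ for $x$ near $x_0$, giving $d(x,C^*)\leq a$ hence equality. For arbitrary $a$, one subdivides the geodesic from $y$ to $y'$ and iterates across intermediate fibers. Granting each $A(C^*)$ is clopen, connectedness of $C$ combined with the covering $L = \bigcup_{C^*} A(C^*)$ forces $C\subset A(C^*)$ for each ``reachable'' component $C^*$; reversing the roles of $L, L'$ and combining the two directions yields the full equidistance.

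Discreteness of $P_1$-fibers follows from the local connectedness: $P_1^{-1}(y)$ is the set of components of $L = P^{-1}(y)$ with the $X$-distance, and by local connectedness together with properness of $X$, compact balls meet $L$ in a compact locally connected set with finitely many components at positive pairwise distance; thus every point of $P_1^{-1}(y)$ is isolated. The main obstacle in this plan is the openness of $A(C^*)$: horizontal lifts of geodesics are non-unique in general, so the lower-semicontinuity needed to place nearby endpoints in $C^*$ is delicate and must be extracted from local geodesic rigidity in $Y$ (Corollary \ref{cor: posreach}) combined with a careful iteration across subdivided geodesics.
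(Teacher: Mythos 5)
Your high-level reduction is correct: once the decomposition into connected components of fibers is shown to be equidistant, the factorization $P=P_1\circ P_0$ follows formally from the submetry--decomposition correspondence, and the local connectedness of fibers (obtained via the gradient retraction of Proposition \ref{prop: Perelman}) is a sound first step. However, the central step in your plan, the openness of $A(C^*)$ in $L$, is not established, and the tools you invoke cannot close it. The nearest-point relation $\Pi^{L'}$ and the collection of horizontal lifts are only \emph{upper}-semicontinuous in the base point: an Arzel\`a--Ascoli argument on lifts $\eta_n$ from $x_n\to x_0$ produces a subsequence converging to \emph{some} $P$-minimal geodesic from $x_0$ into $L'$, but there is no mechanism that forces its endpoint to be $x_2^0$ or even to land in $C^*$, nor that forces $\Pi^{L'}(x_n)$ to meet $C^*$ for large $n$. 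The needed \emph{lower}-semicontinuity of $x\mapsto \Pi^{L'}(x)\cap C^*$ is precisely the crux of the theorem and does not follow from ``geodesic rigidity in $Y$.'' Moreover, Corollary \ref{cor: posreach}, which you cite for this rigidity, assumes the total space is a Riemannian manifold, whereas the statement is for an arbitrary Alexandrov space $X$, so it is not even applicable here.

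The paper does not prove this theorem in the main text but refers to \cite{Lysubm}, remarking that the proof remains technical even when $X$ is a Riemannian manifold. The argument used in that Riemannian case is quite different from yours: it proceeds by induction on $\dim M$, reduces equidistance to a local claim about the surjectivity of the foot-point projection $\Pi^L$ onto a fixed component $L$ of a fiber, and then argues by contradiction using the following ingredients specific to that setting: positive reach of fibers, the fact that distance spheres $S_\delta$ around $L$ are $\mathcal C^{1,1}$-submanifolds to which $P$ restricts as a local submetry (so the inductive hypothesis applies in one lower dimension), and the gradient flow of $-d_L$ which pushes the offending components $L_0',L_1'$ into $S_\epsilon$ and produces two components whose intrinsic distance in $S_\epsilon$ both collapses to $O(\epsilon)$ and stays bounded below by $\rho$, a contradiction with equidistance in $S_\epsilon$. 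Your clopen-set framing is a natural first attempt, but it hides the real difficulty inside the openness step; if you wish to pursue it you would need to prove the lower-semicontinuity of the component-restricted projection, which in effect reproduces the paper's local claim and requires a comparably heavy argument.
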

The proof of this result is  technical and remains technical if the Alexandrov space $X$ is replaced by a complete Riemannian manifold $M$.  However, if the submetry is transnormal, the proof is much easier.

Indeed, in this case the connected components $L_p^0$  of fibers $L_p$ of $P$ define a \emph{transnormal decomposition} of $M$ in the sense of \cite{Molino}:  thus $L_p^0$ are $\mathcal C^{1,1}$ submanifolds and any
local geodesic in $M$ which starts orthogonal to any  leaf of the decomposition remains orthogonal to all leaves it intersects.  But such a transnormal decomposition is equidistant, as one readily verifies by the first variation formula.}

%{\blue
%\subsection{Whitneys conditions}
%Let again $P:M\to Y$ be a  surjective local submetry. For any point
%$x\in M$ consider the point $y=P(x)$,  the unique stratum $Y^l$ of $Y$ containing $y$ and the connected component $E$ of $Y^l$ through $y$.

%We set $G^x :=P^{-1} (E)$ and
% denote by $G^x_0$ the connected component of $P^{-1} (E)$ through $x$.
%The set $G^x$
% and $G^x_0$
% have positive reach in $M$, by Theorem \ref{thm: posreachstr}.  If $P$ is transnormal then each of the subsets $G^x _0$ are connected  $\mathcal C ^{1,1}$ submanifolds of $M$.

%The family of all extremal subsets is  locally finite in any Alexandrov space, \cite{Pet-Per}, thus   also in the Alexandrov region $Y$. Therefore,  the families of subsets $G^x$ and  $G^x_0$ are locally finite in $M$.

%As has been shown in the proof of Theorem \ref{thm: extrloc}, the closure
%$\bar E$ of $E$ is a union of $E$ and some components $E_i$ of some other strata $Y^{l'}$ with $l'<l$. Since $P$ is a local submetry, we see that
%the closure of $G^x=P^{-1} (E)$ is a union of the preimages $P^{-1} (E_i)$. Thus,
%$$\bar G^x = \cup _{z\in \bar G^x} G^z \;.$$
%}

%\bibliographystyle{my-amsalpha}
%\bibliography{submet}

\providecommand{\bysame}{\leavevmode\hbox to3em{\hrulefill}\thinspace}
\providecommand{\MR}{\relax\ifhmode\unskip\space\fi MR }
% \MRhref is called by the amsart/book/proc definition of \MR.
\providecommand{\MRhref}[2]{%
  \href{http://www.ams.org/mathscinet-getitem?mr=#1}{#2}
}
\providecommand{\href}[2]{#2}

\end{document}